\newcommand{\mathscripty}{\mathscr}
\newcommand{\rs}{\mathord{\upharpoonright}}
\newcommand{\sm}{\setminus}
\newcommand{\e}{\varepsilon}
\newcommand{\GG}{\mathbb{G}}
\newcommand{\NN}{\mathbb{N}}
\newcommand{\RR}{\mathbb{R}}
\newcommand{\er}{\mathbb{R}}
\newcommand{\ce}{\mathbb{C}}
\newcommand{\PP}{\mathbb{P}}
\newcommand{\Cstar}{\mathrm{C}^*}
\newcommand{\Cmeas}{\mathrm{C}}
\newcommand{\CT}{\mathcal{T}}
\newcommand{\CI}{\mathcal{I}}
\newcommand{\CJ}{\mathcal{J}}
\newcommand{\CX}{\mathcal{X}}
\newcommand{\CY}{\mathcal{Y}}
\newcommand{\SA}{\mathscripty{A}}
\newcommand{\SB}{\mathscripty{B}}
\newcommand{\SC}{\mathscripty{C}}
\newcommand{\SD}{\mathscripty{D}}
\newcommand{\SE}{\mathscripty{E}}
\newcommand{\SF}{\mathscripty{F}}
\newcommand{\SG}{\mathscripty{G}}
\newcommand{\SH}{\mathscripty{H}}
\newcommand{\SK}{\mathscripty{K}}
\newcommand{\SL}{\mathscripty{L}}
\newcommand{\SM}{\mathscripty{M}}
\newcommand{\SP}{\mathscripty{P}}
\newcommand{\SQ}{\mathscripty{Q}}
\newcommand{\SR}{\mathscripty{R}}
\newcommand{\SU}{\mathscripty{U}}
\newcommand{\SX}{\mathscripty{X}}
\newcommand{\SY}{\mathscripty{Y}}
\newcommand{\ZFC}{\mathrm{ZFC}}
\newcommand{\CH}{\mathrm{CH}}
\newcommand{\MA}{\mathrm{MA}}
\newcommand{\OCA}{\mathrm{OCA}}
\newcommand{\PFA}{\mathrm{PFA}}
\newcommand{\U}{\SU}
\DeclareMathOperator{\Skel}{\bf{Skel}}
\newtheorem{theorem}{Theorem}[section]
\newtheorem*{theorem*}{Theorem}
\newtheorem{proposition}[theorem]{Proposition}
\newtheorem*{proposition*}{Proposition}
\newtheorem{lemma}[theorem]{Lemma}
\newtheorem*{lemma*}{Lemma}
\newtheorem{corollary}[theorem]{Corollary}
\newtheorem*{corollary*}{Corollary}
\newtheorem*{fact*}{Fact}
\theoremstyle{definition}
\newtheorem{definition}[theorem]{Definition}
\newtheorem*{definition*}{Definition}
\newtheorem{claim}[theorem]{Claim}
\newtheorem*{claim*}{Claim}
\newtheorem{conjecture}[theorem]{Conjecture}
\newtheorem*{conjecture*}{Conjecture}
\newtheorem{notation}[theorem]{Notation}
\newtheorem{theoremi}{Theorem}
\newtheorem{corollaryi}[theoremi]{Corollary}
\newtheorem{definitioni}[theoremi]{Definition}
\newtheorem{conjecturei}[theoremi]{Conjecture}
\theoremstyle{remark}
\newtheorem*{example*}{Example}
\newtheorem{remark}[theorem]{Remark}
\newtheorem*{remark*}{Remark}
\newtheorem*{note*}{Note}
\newtheorem{question}[theorem]{Question}
\newtheorem*{question*}{Question}
\newcommand{\set}[2]{\left\{#1\mathrel{}\middle|\mathrel{}#2\right\}}
\newcommand{\seq}[2]{\left\langle #1\mathrel{}\middle|\mathrel{}#2\right\rangle}
\newcommand{\norm}[1]{\left\lVert #1 \right\rVert}
\DeclareMathOperator{\spann}{span}
\DeclareMathOperator{\ran}{ran}
\DeclareMathOperator{\dom}{dom}
\DeclareMathOperator{\supp}{supp}
\DeclareMathOperator{\Fin}{Fin}
\DeclareMathOperator{\Aut}{Aut}
\newcommand{\iso}{\Lambda}
\title{Forcing axioms and coronas of $\Cstar$-algebras}%
\author[P. McKenney]{Paul McKenney}
\address[P. McKenney]{Department of Mathematics \\
Miami University \\
501 E. High Street \\
Oxford, Ohio 45056 USA
}
\email{mckennp2@miamioh.edu}
\urladdr{}
\author[A. Vignati]{Alessandro Vignati}
\address[A. Vignati]{
Institut de Math\'ematiques de Jussieu - Paris Rive Gauche (IMJ-PRG)\\
Universit\'e Paris Diderot\\
B\^atiment Sophie Germain\\
8 Place Aur\'elie Nemours \\ 75013 Paris, France}
\email{ale.vignati@gmail.com}
\urladdr{http://www.automorph.net/avignati}
\subjclass[2010]{46L40, 46L05, 03E50}
\keywords{$\Cstar$-algebras; corona algebra; rigidity of quotients; Forcing Axioms;
automorphisms}
\date{\today}%
\begin{document}

\begin{abstract}
 We prove rigidity results for large classes of corona algebras, assuming the Proper Forcing Axiom. In particular, we prove that a conjecture of Coskey and Farah holds for all separable $\Cstar$-algebras with the metric approximation property and an increasing approximate identity of projections.
\end{abstract}
\maketitle

\section{Introduction}
The Weyl-von Neumann theorem, one of the fundamental results of operator theory, classifies the self-adjoint elements of the Calkin algebra\footnote{$H$ denotes a separable complex Hilbert space. We write $\mathcal B(H)$ for the $\Cstar$-algebra of bounded linear operators on $H$, and $\mathcal K(H)$ for its ideal of compact operators. The quotient, $\SQ(H)=\mathcal B(H)/\mathcal K(H)$, is called the Calkin algebra.} $\SQ(H)$ up to unitary equivalence via their spectra. This classification depends heavily on the fact that a self-adjoint element of $\SQ(H)$ necessarily lifts to a self-adjoint operator in $\mathcal B(H)$.  Berg (\cite{Berg}) and Sikonia (\cite{Sikonia}) independently generalized such classification to those operators in $\SQ(H)$ which are images, via the canonical quotient map, of normal operators of $\mathcal B(H)$. Since normal elements of $\SQ(H)$ do not necessarily lift to normal operators in $\mathcal B(H)$, an extension of the Weyl-von Neumann theorem to normal elements of $\SQ(H)$ had to wait for the pioneering work of Brown, Douglas and Fillmore, who in \cite{BDF.ext} illuminated deep connections between algebraic topology, index theory, and extensions of $\Cstar$-algebras, and brought to light new questions about $\SQ(H)$. Among them, the following: does $\SQ(H)$ have an automorphism which sends the unilateral shift $S$ to its adjoint? Or, even weaker, does $\SQ(H)$ have an outer automorphism?  (Since inner automorphisms of $\SQ(H)$ preserve the Fredholm index, they cannot send the unilateral shift to its adjoint.)

These problems remained open for decades, until Phillips and Weaver (\cite{Phillips-Weaver}) showed that the Continuum Hypothesis ($\CH$) implies the existence of outer automorphisms of $\SQ(H)$, and Farah (\cite{Farah.C}) showed that the Open Colouring Axiom ($\OCA$) implies that every automorphism of $\SQ(H)$ is inner. These results show that the existence of outer automorphisms of $\SQ(H)$ is independent of $\ZFC$. Whether there can be an automorphism sending the shift to its adjoint in some model of $\ZFC$ remains open.

Farah's theorem is just one of many results illustrating the effect of the Proper Forcing Axiom $\PFA$ (of which $\OCA$ is a consequence) on the rigidity of certain uncountable quotient structures; similarly, $\CH$ has often been found to have the opposite effect on the same structure (\cite{Just, Velickovic.OCAA, Farah.AQ}). In retrospect, the proofs of most of these rigidity results take the following common form: 
\begin{itemize}
\item first, one uses $\PFA$ to show that every automorphism of the structure of interest is, in a  canonical way, determined by a Borel subset of $\RR$. These automorphisms are `absolute', being present in every model of $\ZFC$ (see the argument, using Shoenfield's absoluteness Theorem \cite[Theorem 13.15]{Kanamori}, present in the proof of \cite[Lemma 7.2]{Coskey-Farah}); we think of them as being trivial in a topological sense. 
\item Second, one shows, using only $\ZFC$,  that such topologically trivial automorphisms must have a certain desired algebraic structure (depending on the nature of the quotients of interests, e.g., groups, algebras, etc..). This second part is tightly connected with Ulam stability, the study of whether approximate morphisms can be perturbed uniformly to on-the-nose morphisms; the connections between Ulam stability and quotient rigidity were first explored in  \cite{Farah.Lifts} (see also \cite[\S1.4--1.8]{Farah.AQ}). 
\end{itemize}
This subject started with the analysis of the Boolean algebra $\SP(\NN)/\Fin$. Here topologically trivial automorphisms and the ones admitting a lift preserving the algebraic structure coincide (\cite[Theorem 1.2]{Velickovic.OCAA}), and we refer at these automorphisms simply as trivial. An automorphism of $\SP(\NN)/\Fin$ is then trivial if it is induced by an almost permutation of $\NN$ (see \S\ref{subsec:ForcingAxiom}). Countable saturation implies that, under $\CH$, $\SP(\NN)/\Fin$ has $2^{\aleph_1}$ automorphisms and in particular nontrivial ones. On the other hand, Shelah, in the groundbreaking \cite{Shelah.PF}, showed the consistency of the statement `All automorphisms of $\SP(\NN)/\Fin$ are trivial'. Shelah's intricate forcing construction was replaced by the assumption of $\PFA$ in \cite{Shelah-Steprans.PFAA}; this argument was later simplified by \cite{Velickovic.OCAA} where it was shown that all automorphisms of $\SP(\NN)/\Fin$ are trivial under $\OCA+\MA_{\aleph_1}$. ($\MA_{\aleph_1}$ is Martin's Axiom at level $\aleph_1$, another consequence of $\PFA$, see \cite[\S3]{Kunen.2011}). Other relevant results along these lines were obtained for automorphisms of quotients of $\SP(\NN)$ by analytic ideals different than $\Fin$ (\cite{Just,Farah.AQ}), or considered question regarding the existence of embeddings between different quotients (\cite{DowHart.ContImg,DowHart.Measure,Farah.AQ}).

On the opposite side of the spectrum, $\CH$ can usually be used with back-and-forth arguments to show that there are too many automorphisms for all of them to be determined by a Borel subset of $\RR$. It is often the case that $\CH$ is used together with certain degrees of countable saturation to show the existence of $2^{\aleph_1}$ automorphisms (e.g., \cite[Theorem 2.13]{Farah-Hart.CS}). That $\CH$ provides the optimal set-theoretic assumption to show that the existence of a large amount of automorphisms of a given quotient structure is a consequence of Woodin's $\Sigma_1^2$-absoluteness (\cite{Woodin.Abs}).

In the category of $\Cstar$-algebras, the objects relevant to this nonrigidity/rigidity phenomena are \emph{corona algebras}. This class includes the Calkin algebra $\SQ(H)$ and all algebras of the form $C(\beta X\sm X)$, where $X$ is locally compact, noncompact, and Hausdorff. Corona algebras form a wide class of $\Cstar$-algebras, since to every nonunital $\Cstar$-algebra $A$ one may associate its corona algebra $\SQ(A)$, which is the quotient $\SQ(A) = \SM(A)/A$, where $\SM(A)$ is the \emph{multiplier algebra} of $A$, defined (up to isomorphism) to be the largest unital $\Cstar$-algebra containing $A$ as an essential ideal. The quotient map $\SM(A)\to\SQ(A)$ is denoted by $\pi_A$. Multiplier and corona algebras have been of use to $\Cstar$-algebraists since at least the 1960's, when Busby showed that the extensions of $A$ by $B$ are determined (up to a certain notion of equivalence) by $^*$-homomorphisms from $A$ into $\SQ(B)$, see~\cite{Busby}. 

Following the strategy highlighted earlier, at first we give a topological notion of triviality, and we show that under Forcing Axioms automorphisms of coronas of separable $\Cstar$-algebras (with some technical assumptions, see Theorem~\ref{theoi:Borel}) are topologically trivial. Secondly, we provide algebraic notions of triviality and we discuss how these are related to Ulam stability phaenomena.

Even though $\SM(A)$ is nonseparable in norm topology, there is a second topology turning $\SM(A)$ into a standard Borel space. For $a\in A$, define two seminorms on $\SM(A)$ by $\ell_a(x)=\norm{xa}$ and $r_a(x)=\norm{ax}$. The \emph{strict topology} on $\SM(A)$ is the weak topology induced by $\{\ell_a,r_a\mid a\in A\}$. If $A$ is separable, then $\SM(A)$ is separable and metrizable in its strict topology;  moreover every closed norm-bounded subset of $\SM(A)$ is Polish in this topology (see for example \cite[\S13.1]{Farah.Book}). In this category, the triviality requirements for an automorphism $\varphi\in\Aut(\SQ(A))$ are given by the existence of a well behaved \emph{lift}, that is, a map $\Phi\colon\SM(A)\to\SM(A)$ preserving some (topological or algebraic) properties and making the following diagram commute:
\begin{center}
\begin{tikzpicture}
 \matrix[row sep=1cm,column sep=2cm] {
&\node (A1) {$\SM(A)$};
& \node (A2) {$\SM(A)$};
\\
&\node (B1) {$\SQ(A)$};
& \node (B2) {$\SQ(A)$};
\\
};
\draw (A1) edge[->] node [above] {$\Phi$} (A2) ;
\draw (A1) edge[->] node [left] {$\pi_A$} (B1) ;
\draw (A2) edge[->] node [right] {$\pi_A$} (B2) ;
\draw (B1) edge[->] node [above] {$\varphi$} (B2) ;
\end{tikzpicture}.
\end{center}

In \cite{Coskey-Farah}, Coskey and Farah formalized a notion of `topologically trivial' for automorphisms of a corona algebra $\SQ(A)$, when $A$ is separable. They purposefully chose a broad notion, since their intent was to show the existence of many nontrivial automorphisms under the assumption of $\CH$. 
\begin{definitioni}\label{defin:Boreliso}
 Let $A$ and $B$ be separable $\Cstar$-algebras. A function $\iso\colon\SQ(A)\to\SQ(B)$ is \emph{topologically trivial} if its \emph{graph}
 \[
 \Gamma_\iso= \set{(a,b)\in\SM(A)_{\le 1}\times \SM(B)_{\le 1}}{\iso(\pi_A(a)) = \pi_B(b)}
 \]
 is Borel in the product of the strict topologies, where $\SM(A)_{\leq 1}$ and $\SM(B)_{\le 1}$ denote the closed unital balls of $\SM(A)$ and $\SM(B)$ respectively.
\end{definitioni}

Aiming to capture the rigidity phenomena described above, Coskey and Farah made the following two conjectures:

\begin{conjecturei}\label{conj:theconj}
 Suppose $A$ is a separable, nonunital $\Cstar$-algebra. Then,
 \begin{enumerate}
 \item\label{theconj:CH} $\CH$ implies that there are $2^{2^{\aleph_0}}$ automorphisms of $\SQ(A)$ which are not topologically trivial, and
 \item\label{theconj:PFA} $\PFA$ implies that every automorphism of $\SQ(A)$ is topologically trivial.
 \end{enumerate}
\end{conjecturei}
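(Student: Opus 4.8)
The plan is to treat the two halves of Conjecture~\ref{conj:theconj} separately, since they call for opposite techniques. For part~\eqref{theconj:CH} I would build a nontrivial automorphism of $\SQ(A)$ by a transfinite recursion of length $\omega_1$. Under $\CH$ the corona $\SQ(A)$ has density character $\aleph_1$; enumerating a dense subset as $(d_\alpha)_{\alpha<\omega_1}$ and a cofinal family of Borel codes as $(c_\alpha)_{\alpha<\omega_1}$, one constructs an increasing continuous chain of separable subalgebras $B_\alpha\subseteq\SQ(A)$ together with $^*$-isomorphisms $\Lambda_\alpha\colon B_\alpha\to B_\alpha'$ onto separable subalgebras, arranging at stage $\alpha$ that $d_\alpha\in B_{\alpha+1}\cap B_{\alpha+1}'$ (so the union is a genuine automorphism) and that $\Lambda_{\alpha+1}$ is incompatible with the graph coded by $c_\alpha$ (so the result is nontrivial). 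The lemma that makes the successor step go is that any $^*$-isomorphism between separable subalgebras of $\SQ(A)$ extends to one between larger separable subalgebras containing any prescribed element and any prescribed "fresh" behaviour — a countable-saturation-type property of coronas of separable algebras. This direction does not use the metric approximation property or the projections.

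Part~\eqref{theconj:PFA} is the hard half, and it is where the hypotheses of the abstract (metric approximation property, increasing approximate identity $(e_n)$ of projections) are essential. Fix an automorphism $\Lambda$ of $\SQ(A)$. First I would cut $A$ into consecutive finite blocks $p_n = e_{k_{n+1}}-e_{k_n}$, so that $\bigoplus_n p_nAp_n$ is an ideal of $A$ and $\SQ(A)$ contains an isomorphic copy of the reduced product $\prod_n p_nAp_n/\bigoplus_n p_nAp_n$, together with the Boolean algebra of diagonal projections. The crucial step is to show that $\Lambda$ restricted to this reduced product, and to the diagonal projections, is \emph{approximately coordinate-wise}: there is an almost-partition of $\NN$ and a sequence of $^*$-homomorphisms between the corresponding finite blocks that induces $\Lambda$ modulo $\bigoplus_n p_nAp_n$. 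This is the set-theoretic heart of the argument: one runs an Open Coloring Axiom argument, in the strengthened form $\mathrm{OCA}_\infty$, on the Polish space of coherent approximate lifts to exclude a wild, everywhere-nontrivial lift (exactly as in the known analyses of $\Aut(\SQ(H))$ and of automorphisms of reduced products), and then uses $\MA_{\aleph_1}$ to amalgamate the locally trivial pieces into a single Borel, Shoenfield-absolute assignment. The metric approximation property is what lets one replace each $p_nAp_n$ by a finite-dimensional approximant, so that these spaces of partial maps are Polish and the coloring machinery applies uniformly.

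The second step is to propagate triviality from the reduced-product subalgebra to all of $\SQ(A)$. Here I would show that every $x\in\SM(A)$ is, modulo $A$, strictly approximated by its "diagonal" part $\sum_n p_nxp_n$ together with finitely many off-diagonal corrections $p_nxp_m$, with all norms controlled via the MAP, and that $\Lambda$ is determined on the whole corona once it is known on the diagonal part and on finitely supported perturbations. Feeding in the coordinate-wise description from Step~1 then yields a Borel (hence absolute) definition of the graph $\Gamma_\Lambda$, which is precisely triviality in the sense of Definition~\ref{defin:Boreliso}. The main obstacle I expect is completing Step~1 for \emph{arbitrary} separable $A$ of this type rather than just for $\SQ(H)$: the blocks $p_nAp_n$ need not be full matrix algebras, so the Weyl--von Neumann normalisation is unavailable, and controlling the images of the off-diagonal elements $p_nxp_{n+1}$ — showing that they too behave coordinate-wise — will require a delicate block-by-block "coherent family of partial isometries" argument driven by $\OCA$, together with enough of the internal structure of $\SM(A)$ (supplied by the projections and the MAP) to make the genericity arguments land inside Polish spaces.
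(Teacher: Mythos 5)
What you are asked to justify is a \emph{conjecture}, and the paper never claims to prove it; there is therefore no ``paper's proof'' to compare your attempt against. The paper's actual contribution is to verify part~\eqref{theconj:PFA} of Conjecture~\ref{conj:theconj} in a restricted setting --- namely Theorem~\ref{theoi:Borel}, for separable $\Cstar$-algebras with the metric approximation property and an increasing approximate identity of projections --- under $\OCA_\infty + \MA_{\aleph_1}$, and it cites \cite{Coskey-Farah} and \cite{V.Nontrivial} for partial progress on part~\eqref{theconj:CH}. Your outline of part~\eqref{theconj:CH} is a reasonable sketch of the saturation-plus-diagonalization strategy used in those references, but it is not present in this paper, and the step you call ``a countable-saturation-type property of coronas of separable algebras'' is itself a nontrivial result requiring proof (and is known to fail without hypotheses such as $\sigma$-unitality; even when it holds, the ``fresh behaviour'' condition is where the real content lies). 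In any case, your write-up reads as though you are proving the statement, when at most you could be proposing a program for it.

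On part~\eqref{theconj:PFA}, your high-level outline does track the architecture of Theorem~\ref{thm:Borel}, but two aspects are misdescribed in a way that would derail the argument if taken literally. First, the core object is not a sequence of $^*$-homomorphisms between the blocks $p_nAp_n$: the lifting theorem (Theorem~\ref{thm:lifting}) produces only an \emph{asymptotically additive} lift of a bounded linear coordinate-preserving map defined on a reduced product of \emph{finite-dimensional Banach spaces} $E_n$ (not the blocks themselves, which are generally infinite-dimensional). The metric approximation property is used precisely to replace $p_nAp_n$ by finite-dimensional operator systems $E_{n,m}$ via maps $\phi_{n,m},\psi_{n,m}$, and the whole $\OCA_\infty$ machinery lives at that finite-dimensional level; $^*$-homomorphisms between blocks only reappear later, in restricted situations via Theorem~\ref{thm:apmap-known}. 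Second, your ``diagonal part plus finitely many off-diagonal corrections'' is not how the reduction to block form is done: the correct statement is Lemma~\ref{lem:stratification}, which writes any $t\in\SM(A)$, modulo $A$, as a sum $t_0+t_1$ of \emph{two} block-diagonal elements along two \emph{interleaved} interval decompositions. This forces the parametrization by a pair $(f,\mathbb I)$ with two sequences of blocks $\mathbb I^0,\mathbb I^1$ and two lifts $\alpha^0,\alpha^1$ that must be made to agree on overlaps, which is exactly where most of the technical work in \S\ref{sec:cons.Borel} is concentrated and which your sketch elides.
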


In~\cite{Coskey-Farah}, part~\ref{theconj:CH} of Conjecture~\ref{conj:theconj} was verified for a wide class of $\Cstar$-algebras, including the case where $A$ is simple or stable; a large class of abelian $\Cstar$-algebras have been dealt with in \cite{V.Nontrivial}. Our focus is on part~\ref{theconj:PFA} of the conjecture, which we confirm for a large class of $\Cstar$-algebras. (Part~\ref{theconj:PFA} of the conjecture was finally verified for all separable $\Cstar$-algebras in \cite{V.Rigidity}; this result heavily relies on the `noncommutative $\OCA$ lifting Theorem' proved here as Theorem~\ref{thm:lifting}). For the definition of the metric approximation property, see \S\ref{subsec:cstar}.

\begin{theoremi}\label{theoi:Borel}
 Assume $\OCA$ and $\MA_{\aleph_1}$. Let $A$ and $B$ be separable $\Cstar$-algebras, each with an increasing approximate identity of projections, and suppose $A$ has the metric approximation property. Then every isomorphism $\SQ(A) \to \SQ(B)$ is topologically trivial.
\end{theoremi}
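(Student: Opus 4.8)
The plan is to follow the now-standard two-step template for this kind of rigidity theorem. Fix an isomorphism $\iso\colon\SQ(A)\to\SQ(B)$. The goal is to show $\Gamma_\iso$ is Borel in the strict topology, and the strategy is to produce a \emph{single} Borel (indeed, $\sigma$-$^*$-homomorphic) map $\Lambda^*\colon\SM(A)\to\SM(B)$ that lifts $\iso$ on a large enough part of $\SM(A)$ to pin down the graph. The engine is an application of $\OCA_\infty$ together with $\MA_{\aleph_1}$ to the separable, metrizable strict-topology structures $\ball{\SM(A)}$ and $\ball{\SM(B)}$: as in Farah's work on $\SQ(H)$ and its generalizations in \cite{McKenney.UHF, Ghasemi.FDD}, one first shows that $\iso$ is ``locally Borel''/$C^*$-approximately represented along a suitable filtration of $A$, and then amalgamates the countably many local pieces using $\MA_{\aleph_1}$ into a coherent global lifting.

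More concretely, I would proceed as follows. Using the increasing approximate identities of projections in $A$ and $B$, write $A=\overline{\bigcup_n p_nAp_n}$ and similarly for $B$, with $(p_n)$, $(q_n)$ increasing sequences of projections; this realizes $\SM(A)$ as an inverse-limit-type object and lets one chop elements of $\SM(A)$ into ``blocks'' $p_{[n,m)}xp_{[n,m)}$. The first step is a reduction to reduced products: modulo the approximate identity, $\SM(A)/A$ looks locally like a reduced product $\prod_n C_n/\bigoplus_n C_n$ of the finite corners $C_n = p_{[n,n+1)}Ap_{[n,n+1)}$ (this is where an argument in the spirit of \S\ref{sec:cons.RedProd} enters, and where the metric approximation property on $A$ is used — it guarantees that elements of $\SM(A)$ can be approximated by block-diagonal elements well enough that the reduced-product picture is faithful). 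The second step applies $\OCA_\infty$: consider, for each pair of finite block-intervals, the colorings that separate pairs of contractions on which $\iso$ behaves ``continuously'' from those on which it does not; $\OCA_\infty$ either yields an uncountable homogeneous set (which $\MA_{\aleph_1}$ is then used to rule out, by a counting/almost-disjointness argument as in \cite{Farah.C}), or a countable cover by sets on which $\iso$ is, on each piece, implemented by an honest $^*$-homomorphism between finite-dimensional-ish corners. The third step uses $\MA_{\aleph_1}$ to stitch these countably many partial representations into a single strictly-continuous (hence Borel) $^*$-homomorphism $\Phi$ on a ``large'' subalgebra, and a final back-and-forth/uniqueness argument (using that $\iso$ is an isomorphism, so the same applies to $\iso^{-1}$) upgrades $\Phi$ to witness that $\Gamma_\iso$ is Borel: indeed $\Gamma_\iso$ becomes the preimage under a Borel map of the diagonal, together with a countable union of Borel correction terms coming from $A$.

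I expect the main obstacle to be the second step — extracting, from the $\OCA_\infty$ dichotomy, genuinely \emph{algebraic} local data (approximate $^*$-homomorphisms between corners) rather than merely continuity of $\iso$ on pieces. The subtlety is that $\iso$ a priori respects only the $C^*$-structure of the \emph{quotient}, so one must transfer multiplicativity and $^*$-preservation down to the finite corners with controlled error; this requires carefully choosing the metric on which $\OCA_\infty$ is run so that homogeneous sets encode the failure of \emph{all} the relevant algebraic relations simultaneously, and then doing a stabilization/averaging argument (again leaning on the metric approximation property and the projection approximate identity) to replace an approximate corner homomorphism by an exact one on a slightly smaller corner. The amalgamation in the third step is routine once the local data is coherent, and the absence of PFA (working only with $\OCA_\infty+\MA_{\aleph_1}$) costs nothing here because, as in \cite{McKenney.UHF}, no use is made of properness beyond these two consequences. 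The statement that $A$ need not equal $B$ — that $\iso$ is merely an isomorphism — requires only that all the above be carried out symmetrically for $\iso$ and $\iso^{-1}$, which the argument supports with no extra work.
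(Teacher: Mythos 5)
Your high-level template (chop along the projection approximate identity, run an $\OCA_\infty$ coloring, amalgamate with $\MA_{\aleph_1}$) is the right one, but there is a concrete gap at the very first structural step that the rest of the proposal inherits.

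You assert that $\SM(A)/A$ ``looks locally like a reduced product $\prod_n C_n/\bigoplus_n C_n$ of the finite corners $C_n = p_{[n,n+1)}Ap_{[n,n+1)}$.'' This is false in general and, in fact, fails for the motivating example $A = \mathcal K(H)$, $\SM(A) = \mathcal B(H)$: if $(p_n)$ is a fixed increasing sequence of projections and $S$ is the unilateral shift with respect to a basis adapted to $(p_n)$, then $p_{[n,n+1)}Sp_{[n,n+1)} = 0$ for every $n$, so the block-diagonal compression of $S$ is zero, yet $S\notin\mathcal K(H)$. The approximate identity is not (and cannot be chosen) quasicentral, so off-diagonal blocks carry essential information. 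The paper's fix, which is the missing idea here, is the stratification lemma (Lemma~\ref{lem:stratification} together with Lemma~\ref{lem:summing}): for each $t\in\SM(A)$ one chooses a \emph{$t$-dependent} partition $\mathbb I$ of $\NN$ into intervals and writes $t = t_0 + t_1 + a$ with $a\in A$, where $t_0$ commutes with the projections of $\mathbb I^0 = \seq{I_{2n}\cup I_{2n+1}}{n}$ and $t_1$ with those of $\mathbb I^1 = \seq{I_{2n+1}\cup I_{2n+2}}{n}$. The whole of Section~\ref{sec:cons.Borel} is then parametrized by pairs $(f,\mathbb I)$ with the two interleaved interval sequences carried along, which is precisely what makes the corona case ``much more technical'' than the honest reduced-product case of Section~\ref{sec:cons.RedProd}. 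Without this double-interleaved decomposition, the coloring argument you sketch never sees the elements of $\SM(A)$ that are not almost block-diagonal with respect to your fixed sequence of corners.

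Two secondary points. First, the metric approximation property is not used to make ``the reduced-product picture faithful'' (Lemma~\ref{lem:stratification} needs no MAP at all); it is used to replace each corner $p_{I_n}tp_{I_n}$, which lives in the infinite-dimensional algebra $A$, by an element of a finite-dimensional operator system $E_{f(\max I_n)}$ via contractions $\phi_n,\psi_n$, so that the lifting theorem (Theorem~\ref{thm:lifting}) --- whose domain must be a reduced product of finite-dimensional Banach spaces --- can be invoked. Second, the endgame you propose (amalgamating into a single strictly-continuous $^*$-homomorphic lift and pulling back the diagonal) is stronger than what the paper proves and is not how the Borel graph is obtained: the paper instead runs a second $\OCA_\infty$ coloring on the space $\CX$ of tuples $(f,\mathbb I,\alpha^0,\alpha^1)$ of skeletal partial lifts, extracts a $\sigma$-directed family of $K_1$-homogeneous pieces, and shows that membership in $\Gamma_\iso$ (restricted to positive contractions) is expressible both by a $\mathbf\Sigma^1_1$ formula quantifying existentially over approximating sequences in the countable dense sets $\SD_k$, and by the corresponding $\mathbf\Pi^1_1$ formula. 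Borelness follows from analytic $+$ coanalytic, with no single global lift ever produced.
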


The metric approximation property is known to hold for a large class of $\Cstar$-algebras including, but not limited to, all exact $\Cstar$-algebras, and therefore all nuclear $\Cstar$-algebras. It does not hold for all separable $\Cstar$-algebras, however (\cite{Szank:AP}). Combined with the main result of \cite{Coskey-Farah}, Theorem~\ref{theoi:Borel} gives the following corollary:
\begin{corollaryi}
 Let $A$ be a separable unital $\Cstar$-algebra with the metric approximation property. Then the statement ``all automorphisms of $\SQ(A\otimes\mathcal K(H))$ are topologically trivial'' is independent of $\ZFC$.
\end{corollaryi}

We turn to discuss the second part of the rigidity problem, concerned in inferring, in $\ZFC$, that topologically trivial automorphisms necessarily carry some algebraic properties. We focus on reduced products. (An algebraic notion of triviality for general coronas was formulated in \cite{V.Rigidity}). Given a sequence of unital $\Cstar$-algebras $A_n$, for $n\in\NN$, the reduced product of the sequence $A_n$,  $\prod A_n / \bigoplus A_n$, is the corona algebra of $\bigoplus A_n$.  Suppose that $A_n$ and $B_n$ are $\Cstar$-algebras and $\varphi_n\colon A_n\to B_n$ is an isomorphism for all but finitely many $n$. The sequence $(\varphi_n)_n$ induces an isomorphism between $\prod A_n/\bigoplus A_n$ and $\prod B_n/\bigoplus B_n$. If we allow reindexing of the algebras $B_n$, we have our definition of \emph{algebraically trivial} isomorphism (see~\ref{defin:trivialredprod} for the specific definition).  We study whether all topologically trivial isomorphisms of reduced products are algebraically trivial, and we connect this with Ulam stability phaenomena and the notion of $\e$-isomorphisms.  (Here an \emph{$\e$-isomorphism} is a map which preserves all of the $\Cstar$-algebraic operations on the unit ball, up to an error (in norm) of $\e$, see~Definition~\ref{defin:apmaps} for a precise definition.) In doing so, we introduce an intermediate notion of algebraic triviality for isomorphisms of reduced products, referred to as \emph{asymptotical algebraicity} in Definition~\ref{defin:trivialredprod}. 
 The following is a consequence of Theorem~\ref{thm:trivialredprod} and Proposition~\ref{prop:trivials}.
\begin{theoremi}\label{theoi:redprod}
Let $A_n$ and $B_n$, for $n\in\NN$,  be unital, separable $\Cstar$-algebras with no nontrivial central projections, and suppose that each $A_n$ has the metric approximation property. Then all topologically trivial isomorphisms between $\prod A_n/\bigoplus A_n$ and $\prod B_n/\bigoplus B_n$ are asymptotically algebraic.
\end{theoremi}

The hypotheses of the theorem are satisfied, for example, if each $A_n$ and each $B_n$ is a simple nuclear $\Cstar$-algebra. The above result shows that two such reduced products are isomorphic under $\OCA+\MA_{\aleph_1}$ only if they are isomorphic provably in $\ZFC$. Parovi\v{c}enko's Theorem (\cite{Parovicenko}) in the commutative case, and the results of Ghasemi in \cite{Ghasemi.FDD} in the noncommutative case, give example of reduced products which are isomorphic under $\CH$ but are not isomorphic in $\ZFC$.

The next natural question to ask is whether all asymptotically algebraic isomorphisms of reduced products are algebraically trivial. This question is related to Ulam stability in the same way as Farah related isomorphisms of quotients of the form $\SP(\NN)/\mathcal I$ and approximate morphisms between finite Boolean algebras (\cite[\S4]{Farah.Lifts}). Motivated by the conclusion of Theorem~\ref{theoi:redprod}, we study the conditions under which two $\e$-isomorphic $\Cstar$-algebras $A$ and $B$ must be isomorphic, with a focus on whether we can make $\e$ uniform over all $A$ and $B$ in some class of $\Cstar$-algebras. These are known as Ulam stability phaenomena (see \S\ref{sec:apmaps} for the specifics). In Corollaries~\ref{cor:trivialredprod1}--\ref{cor:trivialredprod4}, we obtain positive results for certain well behaved classes of $\Cstar$-algebras. The statement in Corollary~\ref{cor:trivialredprod1} was shown to be consistent in (\cite{Ghasemi.FDD}), and to follow from Forcing Axioms in \cite{McKenney.UHF}.

The proofs of Theorem~\ref{theoi:Borel} and Theorem~\ref{theoi:redprod} are based on a technical and powerful lifting result, Theorem~\ref{thm:lifting}, which is the noncommutative extension of the `$\OCA$ lifting theorem', \cite[Theorem~3.3.5]{Farah.AQ}. 
Our version provides well behaved liftings for maps of the form
\[
 \psi\colon \prod E_n / \bigoplus E_n \to \SQ(A)
\]
where each $E_n$ is a finite-dimensional Banach space, $A$ is a separable $\Cstar$-algebra, and the map $\psi$ preserves a limited set of algebraic operations. Many other lifting results in this area can be viewed as restricted versions of Theorem~\ref{thm:lifting}. By Stone duality, Boolean algebra correspond to zero-dimensional topological spaces, which in turns, thanks to Gelfand-Naimark duality, correspond to real rank zero abelian $\Cstar$-algebras. Using this, one can rephrase Farah's results for quotients of Boolean algebra, and show that the `$\OCA$ lifting theorem' of~\cite{Farah.AQ} applies to the case where each $E_n$ is $\mathbb C$ and $A=c_{\mathcal I}$, the algebra those elements in $\ell_\infty$ vanishing on $\mathcal I$ for some analytic $P$-ideal $\mathcal I\subseteq\SP(\NN)$. 

The proof of our lifting theorem makes up the technical core of the paper, and most of our other results are derived from it; we should therefore spend few words on it. Our proof mimics that of \cite[Theorem~2.1]{Velickovic.OCAA} in its general structure and strategy. Indeed, \cite{Velickovic.OCAA} has served as a blueprint for many results regarding rigidity of isomorphisms of quotient structures from $\OCA + \MA_{\aleph_1}$. (The blueprint for homomorphism was designed in \cite{Farah.AQ}). The idea of showing that the ideal of subsets of $\NN$ on which one has a well behaved lift cannot miss an uncountable treelike almost disjoint family was already present in \cite[Lemma 2.2]{Velickovic.OCAA}, and the general strategy mimics the one  of  \cite[\S3]{Farah.AQ}. In particular, the proof of Lemma~\ref{lemma:C->asymptotic2} uses a modification of the notion of stabilizers, already present in the work of Just (\cite{Just.EU,Just}), see also \cite[Definition 3.11.5]{Farah.AQ}. 

Although the strategy to prove Theorem~\ref{thm:lifting} was similar to the one of \cite[\S3]{Farah.AQ}, ours is not  merely a technically challenging adaptation to a broader setting of the work of Farah's. Just to mention a few, the lack of commutativity, the fact that we might not have projections in $\SM(A)$, and the fact that $A$ is not necessarily an AF algebra, each add a level of difficulty which we bypass adding new ideas to the blueprint provided by Veli\v{c}kovi\'c and Farah. Another novelty can be found in the proof of Theorem~\ref{theoi:Borel}, where we use a stratification for the corona of $A$ which takes inspiration from the one used for the Calkin algebra in \cite{Farah.C} (in fact this idea comes from \cite[Theorem~3.1]{Elliott.Der2}). In this case, the use of internal approximation properties (such as the Metric Approximation Property) to obtain a stratification indexed by partition of $\NN$ into consecutive intervals \emph{and} by functions in ${}^{\NN}\NN$ is completely new. (This was further refined in \cite{V.Rigidity}.)




The paper is structured as follows. \S\ref{sec:prel} is dedicated to preliminaries from both set theory and operator algebras. In \S\ref{sec:apmaps} we define the notion of an $\varepsilon$-isomorphism and prove several results concerning $\e$-isomorphisms and their relationship to isomorphisms between reduced products. In \S\ref{sec:liftstatements} and \S\ref{sec:lemma_proofs} we state and then prove our lifting theorem (Theorem~\ref{thm:lifting}). In \S\ref{sec:cons.RedProd1}, \S\ref{sec:cons.RedProd2}, \S\ref{sec:cons.Borel}, and \S\ref{sec:cons.NonembLift} we provide several consequences of Theorem~\ref{thm:lifting}, including Theorems~\ref{theoi:Borel} and \ref{theoi:redprod}. Finally, in \S\ref{sec:conclusion} we offer some open problems.
\subsubsection*{Acknowledgements}
This work started in 2016 when AV was a student at York University and visited Miami University, supported by the grants of I. Farah and C. Eckart. Since then, AV was partially supported by a Susan Man Scholarship, a Prestige co-fund programme and a FWO scholarship. Currently AV is partially funded by the ANR Project AGRUME (ANR-17-CE40-0026). We would like to thank all the funding bodies and hosting institutions.
We are indebted to Ilijas Farah and the anonymous referee for many useful remarks.



\section{Preliminaries}\label{sec:prel}
Here we define the main objects of our investigation and we record few well known facts about them.

\subsection{Descriptive set theory and ideals in $\SP(\NN)$}\label{subsec:DSTandIdeals}

A topological space is \emph{Polish} if it is separable and completely metrizable. As all compact metrizable spaces are Polish, so is $\SP(\NN)$ when identified with $2^{\NN}$. If $X$ is Polish, $Y\subset X$ is \emph{meager} if it is the union of countably many nowhere dense sets.  The following characterizes comeager subsets of certain compact spaces; it is a consequence of the work of Jalali-Naini and Talagrand (\cite{Jalali-Naini, Talagrand.Compacts}). For its proof, see~\cite[\S3.10]{Farah.AQ}.

\begin{proposition}
 \label{prop:talagrand}
Let $Y_n$ be finite sets, for $n\in\NN$. A set $\SG\subseteq\prod Y_n$ is comeager if and only if there is a partition $\seq{E_i}{i\in\NN}$ of $\NN$ into finite intervals and a sequence $t_i\in \prod_{n\in E_i} Y_n$, such that $y\in\SG$ whenever $\{i\mid y\restriction (\prod_{n\in E_i} Y_n)=t_i\}$ is infinite.\qed
\end{proposition}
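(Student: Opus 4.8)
The plan is to prove the two implications separately with the standard Baire-category machinery. For the implication from the combinatorial description to comeagerness, suppose we are given the interval partition $\seq{E_i}{i\in\NN}$ and the sequence $t_i\in Z_i=\prod_{n\in E_i}Y_n$. Writing $[t_i]=\set{y\in\prod_n Y_n}{y\restriction E_i=t_i}$ for the corresponding basic clopen set, one checks that $\SH:=\set{y}{\exists^\infty i\ (y\restriction E_i=t_i)}=\bigcap_{N\in\NN}\bigcup_{i\ge N}[t_i]$. Each $\bigcup_{i\ge N}[t_i]$ is open, and it is dense: a nonempty basic clopen set is determined by finitely many coordinates, and since the $E_i$ are pairwise disjoint intervals, all but finitely many of them avoid those coordinates, so such a set can be refined into some $[t_i]$ with $i\ge N$. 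Hence $\SH$ is a dense $G_\delta$, so it is comeager, and therefore so is any $\SG\supseteq\SH$.

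For the converse, assume $\SG$ is comeager and fix open dense sets $D_k$ with $\bigcap_k D_k\subseteq\SG$ (these exist since the complement of $\SG$ is meager); after replacing $D_k$ by $D_0\cap\dots\cap D_k$ we may assume $D_0\supseteq D_1\supseteq\dots$. The heart of the argument is the claim that \emph{if $D\subseteq\prod_n Y_n$ is open dense and $a\in\NN$, then $D$ contains a basic clopen set whose support is a finite subset of $[a,\infty)$}. To see this, write $\prod_n Y_n=W\times X'$ with $W=\prod_{n<a}Y_n$ a finite discrete space and $X'=\prod_{n\ge a}Y_n$. For each $w\in W$ the slice $D_w=\set{x\in X'}{(w,x)\in D}$ is open, and it is dense in $X'$, being the trace on the clopen set $\{w\}\times X'\cong X'$ of the dense open set $D$. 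As $W$ is finite, $\bigcap_{w\in W}D_w$ is a nonempty open subset of $X'$, hence contains a basic clopen $[\tau]$ with $\supp\tau$ a finite subset of $[a,\infty)$; then $W\times[\tau]\subseteq D$, as wanted. This is the only place where finiteness of the $Y_n$ enters, and I expect it to be the one genuinely nonroutine step.

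Armed with the claim, I would recursively construct an increasing sequence $0=n_0<n_1<n_2<\dots$ and conditions $t_i\in\prod_{n\in E_i}Y_n$, where $E_i=[n_i,n_{i+1})$: at stage $i$, apply the claim to $D_i$ and $a=n_i$ to get a basic clopen subset of $D_i$ with support $s\subseteq[n_i,\infty)$, set $n_{i+1}=\max(s\cup\{n_i\})+1$, and let $t_i$ be any element of $\prod_{n\in E_i}Y_n$ extending the corresponding condition on $s$, so that $[t_i]\subseteq D_i$ (here $s\subseteq E_i$ by the choice of $n_{i+1}$). Then $\seq{E_i}{i\in\NN}$ and $\seq{t_i}{i\in\NN}$ are as required: if $y\restriction E_i=t_i$ for infinitely many $i$, then for each $k$ we may choose such an $i\ge k$ and get $y\in[t_i]\subseteq D_i\subseteq D_k$, so $y\in\bigcap_k D_k\subseteq\SG$.
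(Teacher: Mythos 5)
Your proof is correct, and it is essentially the standard Baire-category argument for this fact; the paper itself does not prove the proposition but only cites it (to Farah's book, \S 3.10, where the same two-step argument appears: the forward direction via the dense open sets $\bigcup_{i\ge N}[t_i]$, and the converse via the observation that in a product of finite spaces every dense open set contains a basic clopen set whose support can be pushed past any given coordinate). Your key claim, its proof via finiteness of $W=\prod_{n<a}Y_n$ and density of the slices $D_w$, and the recursive construction of the interval partition and the conditions $t_i$ are all accurate, so there is nothing to add.
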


If $X$ is Polish, $Y\subseteq X$ is \emph{Baire-measurable} if it has meager symmetric difference with an open set, and \emph{analytic} if it is the continuous image of a Borel subset of a Polish space. Every analytic subset of a Polish space is Baire-measurable. If $X$ and $Y$ are Polish spaces, a function $f\colon X\to Y$  is $\Cmeas$-\emph{measurable} if for every open $U\subseteq Y$, $f^{-1}(U)$ is in the $\sigma$-algebra generated by the analytic sets. $\Cmeas$-measurable functions are Baire-measurable (see~\cite[Theorem~21.6]{Kechris.CDST}).
We state two descriptive set-theoretic results about uniformization of functions. The first is the Jankov-von Neumann Theorem\footnote{It has to be noted that von Neumann's original motivation for Theorem~\ref{thm:JVN} came from the study of operator algebras, see e.g., \cite[\S16, Lemma 5]{vonNeumannROO}}.

\begin{theorem}[{{\cite[Theorem~18.1]{Kechris.CDST}}}]\label{thm:JVN}
Let $X$ and $Y$ be Polish and $A\subseteq X\times Y$ be analytic. Let $D = \set{x\in X}{\exists y\in Y\; (x,y)\in A}$. Then there exists a $\Cmeas$-measurable function $f\colon X\to Y$ such that for all $x\in D$, $(x,f(x))\in A$. We say that $f$ \emph{uniformizes} $A$.\qed
\end{theorem}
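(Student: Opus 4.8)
The plan is to prove this by the classical Jankov--von Neumann argument (the result is cited from \cite[Theorem~18.1]{Kechris.CDST}, and the proof sketched here is the standard one): reduce the problem to uniformizing a \emph{closed} subset of $X\times\NN^{\NN}$, and then uniformize such a set by the leftmost-branch selector.

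First I would reduce to a closed set in $X\times\NN^{\NN}$. Since $A$ is analytic, pick a closed $F\subseteq X\times Y\times\NN^{\NN}$ with $A=\set{(x,y)}{\exists z\,(x,y,z)\in F}$; it is enough to uniformize $F$, since composing a $\Cmeas$-measurable uniformization of $F$ with the coordinate projection onto $Y$ gives one of $A$, with the same domain $D$. Then, using that $Y\times\NN^{\NN}$ is Polish and hence a continuous image of $\NN^{\NN}$ under some surjection $q$, replace $F$ by $F'=\set{(x,\alpha)\in X\times\NN^{\NN}}{(x,q(\alpha))\in F}$, which is closed (a preimage of $F$ under a continuous map) and satisfies $\operatorname{proj}_X(F')=\operatorname{proj}_X(F)$; a $\Cmeas$-measurable uniformization of $F'$ yields one of $F$ by post-composing with $q$. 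So it suffices to uniformize an arbitrary closed $F'\subseteq X\times\NN^{\NN}$.

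Next I would build the selector. For $s\in\NN^{<\NN}$ write $[s]=\set{\alpha\in\NN^{\NN}}{s\subseteq\alpha}$, and for $x\in X$ put $F'_x=\set{\alpha}{(x,\alpha)\in F'}$, which is closed; the domain $D=\operatorname{proj}_X(F')=\set{x}{F'_x\neq\emptyset}$ is analytic. For $x\in D$ define $f(x)$ recursively: given $s=f(x)\restriction k$ with $[s]\cap F'_x\neq\emptyset$, let $f(x)(k)$ be the least $n$ with $[s\cat n]\cap F'_x\neq\emptyset$ (such $n$ exists since $[s]\cap F'_x=\bigcup_n [s\cat n]\cap F'_x$). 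Since the sets $[f(x)\restriction k]\cap F'_x$ are then nonempty, closed, decreasing, and of vanishing diameter in the standard complete metric on $\NN^{\NN}$, their intersection is a single point, which must equal $f(x)$ and lie in $F'_x$; hence $(x,f(x))\in F'$ for all $x\in D$, so $f$ uniformizes $F'$.

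Finally I would check $\Cmeas$-measurability, which is where the hypothesis that $F'$ is closed is actually used. As the sets $[s]$ form a basis and the $\sigma$-algebra generated by the analytic sets is closed under countable unions, it is enough to show each $f^{-1}([s])$ lies in this $\sigma$-algebra. From the construction, for $x\in D$ one has $f(x)\in[s]$ iff $[s]\cap F'_x\neq\emptyset$ and $[(s\restriction i)\cat m]\cap F'_x=\emptyset$ for all $i<|s|$ and all $m<s(i)$. For each $t\in\NN^{<\NN}$ the set $\set{x}{[t]\cap F'_x\neq\emptyset}=\operatorname{proj}_X\big(F'\cap(X\times[t])\big)$ is a projection of a closed set, hence analytic, so its complement is coanalytic; therefore $f^{-1}([s])$ is the intersection of $D$ with one analytic and finitely many coanalytic sets, and so lies in the $\sigma$-algebra generated by the analytic sets. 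The only genuine subtlety in all of this is recognizing that the leftmost-branch trick fails on an arbitrary analytic set (the greedily chosen branch need not land in it) and that passing to a closed $F'$ is exactly what makes both the limit step and this last measurability computation go through; everything else is routine bookkeeping.
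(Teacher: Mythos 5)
The paper does not prove this theorem: it is quoted verbatim as a known result and cited directly to Kechris \cite[Theorem~18.1]{Kechris.CDST}, so there is no internal proof to compare against. Your argument is the standard Jankov--von Neumann proof, and it is correct: the reduction from an analytic $A\subseteq X\times Y$ to a closed $F'\subseteq X\times\NN^{\NN}$ via a closed witness and a continuous surjection $q\colon\NN^{\NN}\to Y\times\NN^{\NN}$, the leftmost-branch selector on the closed sections $F'_x$ (using completeness and vanishing diameter to land in $F'_x$), and the measurability computation showing $f^{-1}([s])$ is a finite Boolean combination of projections of closed sets intersected with $D$, hence lies in $\sigma(\mathbf{\Sigma}^1_1)$. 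One small point worth making explicit: the statement asks for a total function $f\colon X\to Y$, so you should fix a value for $f$ on $X\sm D$ (any constant works); since $X\sm D$ is coanalytic this does not disturb the measurability calculation. You also implicitly use that $Y$ is nonempty when invoking the surjection $q$, but that case is vacuous anyway. This is essentially the proof in the cited reference, so there is no genuine divergence to report.
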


In general it is not possible to uniformize a Borel set with a Borel function. This is however possible when the vertical sections of $A$ are well behaved.

\begin{theorem}[{{\cite[Theorem~8.6]{Kechris.CDST}}}]\label{thm:BorelUnif}
 Let $X,Y$ be Polish and $A\subseteq X\times Y$ be a Borel set with the property that its vertical sections are either empty or nonmeager. Then there is a Borel function uniformizing $A$.\qed
\end{theorem}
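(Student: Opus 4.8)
The plan is to prove this in three movements: a reduction, via category quantifiers, to the case where every nonempty vertical section is comeager in $Y$; an inductive extraction of a ``Borel‑uniform'' dense $G_\delta$ inside the sections; and a fusion producing a generic point of that $G_\delta$. For the first, observe that each section $A_x=\{y:(x,y)\in A\}$ is Borel and hence has the Baire property, so a nonmeager section is automatically comeager in some basic open set. By Montgomery's theorem that the category quantifier takes Borel sets to Borel sets (see \cite{Kechris.CDST}), for every open $V\subseteq Y$ the set $A^{*V}:=\{x:A_x\text{ is comeager in }V\}$ is Borel; hence $D:=\{x:A_x\neq\emptyset\}=\bigcup_n A^{*V_n}$ (over a countable basis $\{V_n\}$) is Borel and splits into Borel pieces $D_n=A^{*V_n}\setminus\bigcup_{m<n}A^{*V_m}$. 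It suffices to uniformize, for each $n$, the restriction $A\cap(D_n\times V_n)$, a Borel subset of the Polish space $D_n\times V_n$ (give $D_n$ a finer Polish topology; $V_n$ is open, hence Polish) all of whose sections over $D_n$ are comeager in $V_n$; patching the resulting partial Borel uniformizations and extending by a constant off $D$ finishes the proof. So from now on every section of $A$ is comeager in $Y$, and — replacing $Y$ by its closure in the Hilbert cube, in which it sits as a dense $G_\delta$ — we may take $Y$ compact metrizable (comeagerness of a section passes to this larger space, and the recursion below will only pass to open, hence locally compact, subspaces of it).

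Second, I would prove by transfinite induction on the Borel rank of $A$ the following statement $(R)$: \emph{there is a sequence $(U^A_n)_{n\in\NN}$ of Borel subsets of $X\times Y$, each of the form $\bigcup_j(E_j\times V_j)$ with $E_j\subseteq X$ Borel and $V_j$ basic open, such that every section $(U^A_n)_x$ is open and dense in $Y$ and $\bigcap_n(U^A_n)_x\subseteq A_x$.} The base cases are immediate: if $A$ is open in $X\times Y$ then $A_x$ is open dense (open plus comeager), and one takes $U^A_0=A$, $U^A_n=X\times Y$ for $n\geq1$; if $A$ is closed then $A_x=Y$ and all $U^A_n=X\times Y$. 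The $\Pi^0_\xi$ step is also easy: if $A=\bigcap_k A_k$ then each $(A_k)_x\supseteq A_x$ is comeager, so the inductive hypothesis gives families $(U^{A_k}_n)_n$, and re‑enumerating $\{U^{A_k}_n:k,n\}$ as one sequence works, since a countable intersection is comeager iff each factor is and $\bigcap_{k,n}(U^{A_k}_n)_x\subseteq\bigcap_k(A_k)_x=A_x$.

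Third — and this is where the real work lies — comes the $\Sigma^0_\xi$ step: if $A=\bigcup_k A_k$ with the $A_k$ of lower rank, the individual $A_k$ need \emph{not} have nonmeager sections, since the largeness of $A_x$ is spread across the pieces. The remedy is to localize. For each basic open $V$, since $A_x$ is comeager in $V$ it is nonmeager there, so by the Baire category theorem some $(A_k)_x$ is nonmeager in $V$, hence comeager in some basic $V'\subseteq V$; the least such pair $(k,V')$ is a Borel function of $x$ by Montgomery's theorem applied to each $A_k$. Processing the basic opens in order, one extracts Borel‑uniformly in $x$ a maximal pairwise‑disjoint family of basic opens $V'_\ell$, each carrying some $A_{k_\ell}$ comeagerly, with $\bigcup_\ell V'_\ell$ dense in $Y$. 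Applying the inductive hypothesis to each $A_{k_\ell}$ relativized to $V'_\ell$ (fixed on the relevant Borel piece of $X$), and gluing the resulting families — using that a union over pairwise‑disjoint open sets of dense $G_\delta$'s is again a dense $G_\delta$ of the union — yields the required $(U^A_n)_n$.

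Finally, with $(R)$ in hand one performs the fusion. Build, Borel‑uniformly in $x$, a decreasing sequence of basic opens $O_0(x)\supseteq O_1(x)\supseteq\cdots$ of diameters tending to $0$, with $\overline{O_{m+1}(x)}\subseteq O_m(x)$ and such that for each $n$ eventually $\overline{O_m(x)}\subseteq V_j$ for some $j$ with $x\in E^{(n)}_j$; this is possible because $(U^A_n)_x$ is open, dense, and generated by the $V_j$ with $x\in E^{(n)}_j$, and — crucially, since $Y$ is compact — the condition ``$\overline W\subseteq V_j$'' is decidable for basic $W,V_j$ and ``$\exists j\,(\overline W\subseteq V_j\wedge x\in E^{(n)}_j)$'' is a countable union of Borel sets. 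Then $f(x):=$ the unique point of $\bigcap_m\overline{O_m(x)}$ lies in $\bigcap_n(U^A_n)_x\subseteq A_x$, and $x\mapsto f(x)$ is Borel. I expect the main obstacle to be exactly the $\Sigma^0_\xi$ step together with keeping every selection in the fusion genuinely \emph{Borel} rather than merely analytic — it is this requirement that forces both the appeal to Montgomery's theorem on category quantifiers and the passage to a compact ambient space; everything else (the reduction of the first paragraph, the base and $\Pi^0_\xi$ steps, the mechanics of the fusion) is routine once that point is secured.
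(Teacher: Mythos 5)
The paper does not prove this statement; it cites it from Kechris's \emph{Classical Descriptive Set Theory} (the large section uniformization theorem for the meager ideal; the paper's label ``Theorem~8.6'' is presumably a misprint for 18.6 or its Corollary 18.7), so there is no in-paper argument to compare against. Your sketch is a correct reconstruction of the standard proof: reduce via the category quantifier (Borel by the Montgomery--Novikov theorem) to the case of comeager sections; by transfinite induction on Borel rank, extract a Borel-uniform dense $G_\delta$ written in the rectangular form $\bigcap_n\bigcup_j(E_j\times V_j)$; then fuse. Your identification of the $\Sigma^0_\xi$ step as the crux is right, and the localization-and-gluing device you describe there (greedily extract a Borel-in-$x$ maximal disjoint family of basic opens on which some $A_{k_\ell}$ is comeager, apply the inductive hypothesis inside each, then union over the countably many possible $(k,V')$ pairs using the corresponding Borel selection sets in $X$) does work and preserves the rectangular form. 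Two minor remarks: the compactification of $Y$ is a convenience rather than a necessity, since in any Polish $Y$ one can still choose basic opens with small closures inside a given open set and Cantor's intersection theorem already yields the singleton limit; and, as you note, the inductive hypothesis must be formulated for arbitrary Polish $Y$, because the $V'_\ell$ are merely open --- harmless, since compactness is wanted only at the fusion stage. I see no genuine gap.
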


\subsubsection*{Ideals in $\SP(\NN)$}

A set $\SH\subseteq\SP(\NN)$ is \emph{hereditary} if for all $B\in\SH$ and $A\subseteq B$, we have $A\in\SH$. Proposition~\ref{prop:talagrand} implies that the intersection of finitely many hereditary and nonmeager subsets of $\SP(\NN)$ is hereditary and nonmeager (\cite[\S3.10]{Farah.AQ}). This extends to countable intersections when the sets contain $\Fin$, the ideal of finite subsets of $\NN$.

A subset $\CI\subseteq\SP(\NN)$ is an \emph{ideal} on $\NN$ if it is hereditary and closed under finite unions. $\CI$ is \emph{proper} if $\CI\neq\SP(\NN)$. All ideals are, unless otherwise stated, assumed to be proper. An ideal $\CI$ is \emph{dense} if for every infinite $X\subseteq\NN$ there is an infinite $Y\subseteq X$ with $Y\in\CI$. Ideals are in duality with filters: if $\CI$ is an ideal, then $\CI^*=\{X\subseteq\NN\mid\NN\setminus X\in\CI\}$ is a filter. A proper ideal $\CI$ is maximal if and only if $\CI^*$ is an ultrafilter.
Proposition~\ref{prop:talagrand} implies the following:\footnote{This characterises the ideal of finite sets as minimal, with respect to the Rudin-Blass ordering, among all ideals with the Baire Property.}
\begin{proposition}[{{\cite{Jalali-Naini, Talagrand.Compacts}}}]\label{prop:JT2}
 Let $\CJ\subseteq\SP(\NN)$ be an ideal containing $\Fin$. Then the following are equivalent:
 \begin{enumerate}
 \item $\CJ$ has the Baire Property;
 \item $\CJ$ is meager;
 \item\label{Jal3} there is a partition $\seq{E_i}{i\in\NN}$ of $\NN$ into finite intervals such that for any infinite set $L$, $\bigcup_{n\in L} E_n$ is not in $\CJ$.\qed
 \end{enumerate}
\end{proposition}
Notice that \eqref{Jal3} is equivalent to the existence of an algebra embedding $\varphi\colon \SP(\NN)/\Fin\to\SP(\NN)/\CJ$ admitting a lifting $\Phi\colon\SP(\NN)\to\SP(\NN)$ which maps finite sets to finite sets and is completely additive (i.e., $\Phi(\bigcup A_n)=\bigcup\Phi(A_n)$ whenever $A_n\subseteq \NN$, for $n\in\NN$, are sets), or, equivalently, continuous.

A family $\SF\subseteq\SP(\NN)$ of infinite sets is \emph{almost disjoint} if for every distinct $A, B\in\SF$ we have that $A\cap B$ is finite. An almost disjoint family is \emph{treelike} if there is a bijection $f \colon \NN\to 2^{<\omega}$ such that for every $A\in\SF$, $f[A]$ is a branch through $2^\omega$, i.e., a pairwise comparable subset of $2^{<\omega}$. An ideal $\CI\subseteq\SP(\NN)$ is \emph{ccc/Fin} if $\CI$ meets every uncountable, almost disjoint family $\SF\subseteq\SP(\NN)$. An easy argument shows that if $\CJ$ satisfies condition~3 of Proposition~\ref{prop:JT2}, then there is an almost disjoint family of size continuum which is disjoint from $\CJ$. (The same is true if $\CJ$ does not contain the finite sets). Thus, every ccc/Fin ideal is nonmeager.

\subsection{Forcing axioms and their consequences}\label{subsec:ForcingAxiom}

Forcing axioms were introduced as extensions of the Baire Category Theorem. For a comprehensive account on (some of) them, see for example~\cite{Todorcevic.FA}.

One of the most studied forcing axioms is the Proper Forcing Axiom ($\PFA$), introduced by Baumgartner in \cite{Baum}, although implicitly present in earlier work of Shelah (\cite{Shelah.PF}). Its consistency was proved in \cite{Shelah.PF}. We focus on two consequences of $\PFA$: Todor\v cevi\'c's $\OCA$ and $\MA_{\aleph_1}$, a local version of Martin's Axiom.

We work with a formally stronger version of $\OCA$ defined by Farah in \cite{Farah.LG}. This axiom is known as $\OCA_\infty$. Recently, it was shown by Justin Moore (\cite{Moore.OCA}) that $\OCA_\infty$ and $\OCA$ are equivalent, therefore while assuming the second, we often use the first. The axiom currently known as $\OCA$ has roots in the work of Baumgartner (\cite{Baum.Aleph1}), and it is a modification of several colouring axioms appearing in work of Abraham, Rubin, and Shelah (\cite{ARS}). For a more detailed historical account on the evolution of this axiom, see \cite[\S8]{Todorcevic.PPIT}. 

We write $[X]^2$ to indicate the set of unordered pairs of elements of a set $X$. The axiom $\OCA_\infty$ asserts the following: for every separable metric space $X$ and every sequence of partitions $[X]^2=K_0^n\cup K_1^n$, if every $K_0^n$ is open in the product topology on $[X]^2$, and $K_0^{n+1}\subseteq K_0^n$ for
every $n$, then either
\begin{enumerate}
 \item there are $X_n$ ($n\in \NN$) such that $X=\bigcup_n X_n$ and $[X_n]^2\subseteq K_1^n$ for every $n$, or
 \item there is an uncountable $Z\subseteq 2^\NN$ and a continuous bijection $f\colon Z\to X$ such that for all distinct $x$ and $y$ in $Z$ we have
 \[
 \{f(x),f(y)\}\in K_0^{\Delta(x,y)}
 \]
 where $\Delta(x,y)=\min\set{n}{x(n)\neq y(n)}$.
\end{enumerate}
The original statement of $\OCA$ is the restriction of $\OCA_{\infty}$ to the case where $K_0^n=K_{0}^{n+1}$ for every $n$, but, as mentioned, the two are equivalent.

$\OCA$ contradicts $\CH$. Moreover, $\OCA$ implies that $\mathfrak b = \omega_2$, where $\mathfrak b$ is the minimal cardinality of a family of functions in $\NN^\NN$ that is unbounded with respect to the relation $f\leq^*g$ defined by $f\leq^*$ if there is $m\in\NN$ such that $f(n)\leq g(n)$ whenever $n\geq m$, see \cite[\S8]{Todorcevic.PPIT}.

Let $\PP$ be a partially ordered set (poset). Two elements of $\PP$ are \emph{incompatible} if there is no element of $\PP$ below both of them. A set of pairwise incompatible elements is an \emph{antichain}. If all antichains of $\PP$ are countable, $\PP$ has the \emph{countable chain condition} (ccc). A set $D\subseteq \PP$ is \emph{dense} if $\forall p\in\PP\exists q\in D$ with $q\leq p$. A \emph{filter} is a subset $G$ of $\PP$ which is upward closed (that is, if $p\in G$ and $p\leq q$, then $q\in G$) and downward directed (meaning if $p$ and $q$ are in $G$, then there is $r\in G$ with $r\leq p,g$).

Martin's Axiom at the cardinal $\kappa$ (written $\MA_\kappa$) states: for every poset $(\PP,\leq)$ that has the ccc, and every family of dense subsets $D_\alpha\subseteq\PP$ ($\alpha < \kappa$), there is a filter $G\subseteq\PP$ with $G\cap D_\alpha\neq\emptyset$ for every $\alpha < \kappa$. $\MA_{\aleph_0}$ is a theorem of $\ZFC$, as is the negation of $\MA_{2^{\aleph_0}}$. In particular, $\MA_{\aleph_1}$ contradicts $\CH$.

For many of the results of this paper we assume $\OCA$ and $\MA_{\aleph_1}$ in addition to $\ZFC$. Every model of $\ZFC$ has a forcing extension which has the same $\omega_1$ and satisfies $\OCA$ and $\MA_{\aleph_1}$, see the sketch contained in \cite[\S2]{Velickovic.OCA1} resembling a technique already used in \cite{ARS}. In most of the proofs we use $\OCA_\infty$ instead of $\OCA$. 

Probably the best known consequence of the assumption of $\OCA$ and $\MA_{\aleph_1}$ is due to Veli\v{c}kovi\'c. Recall that a bijection $g\colon\NN\setminus F_1\to\NN\setminus F_2$, where $F_1$ and $F_2$ are finite subsets of $\NN$, is called an \emph{almost permutation} of $\NN$. If $g$ is an almost permutation, then it induces an automorphism $\iso_g$ of the Boolean algebra $\SP(\NN)/\Fin$ defined by 
\[
\iso_g([X])=[g[X]],
\]
where, for $X\subseteq\NN$, $[X]$ denotes its class in $\SP(\NN)/\Fin$. Automorphisms of this sort are called trivial, and Veli\v{c}kovi\'c showed in the groundbreaking \cite{Velickovic.OCAA} that all automorphisms of $\SP(\NN)/\Fin$ are trivial if $\OCA$ and $\MA_{\aleph_1}$ are assumed. The blueprints of  Veli\v{c}kovi\'c's argument are still actual, and we will develop a sophisticated version of them to prove our main technical result, Theorem~\ref{thm:lifting}.
\subsection{$\Cstar$-algebras}\label{subsec:cstar}

For the basics of $\Cstar$-algebras see~\cite{Dixmier} or \cite{Blackadar.OA}. If $A$ is a $\Cstar$-algebra, we write $A_{\leq 1}$ and $A_{1}$ for the closed unit ball and the closed unit sphere of $A$, and $\U(A)$ for the set of unitaries in $A$. If $J\subseteq A$ is a subalgebra\footnote{Subalgebras are always $\Cstar$-subalgebras}, $J$ is an \emph{ideal} of $A$ if $ax$ and $xa$ are in $J$ for all $a\in A$. An ideal $J$ in $A$ is \emph{essential} if the only $a\in A$ satisfying $ax = xa = 0$ for all $x\in J$ is $a = 0$.

\subsubsection*{Multipliers, coronas and lifts}\label{subsection:multipliersandcoronas}

The main $\Cstar$-algebraic objects we study are multiplier algebras and their associated corona algebras.

\begin{definition}
Let $A$ be a $\Cstar$-algebra. The \emph{multiplier algebra} $\SM(A)$ is the unique, up to isomorphism, unital $\Cstar$-algebra which contains $A$ as an essential ideal and which has the property that whenever $A$ is an essential ideal of a $\Cstar$-algebra $B$, there is a unique embedding $B\to \SM(A)$ which is the identity on $A$. The quotient $\SQ(A) = \SM(A)/A$ is the \emph{corona algebra} of $A$, and we write $\pi_A$ for the quotient map $\SM(A)\to \SQ(A)$.
\end{definition}

The multiplier algebra $\SM(A)$ is the largest unital $\Cstar$-algebra containing $A$ in a `dense' way. It plays the same role in $\Cstar$-algebras theory as the \v{C}ech-Stone compactification does in topology. In fact, if $X$ is a locally compact space and $A=C_0(X)$ then $\SM(A)=C(\beta X)$ and $\SQ(A)=C(\beta X\setminus X)$, $\beta X$ and $\beta X\setminus X$ being the \v{C}ech-Stone compactification and reminder of $X$.

In general, the construction of $\SM(A)$ from $A$ is nontrivial. We refer the reader to \cite[II.7.3]{Blackadar.OA} for a discussion. For our purposes we find very useful the following alternative characterization of the multiplier algebra, which the reader may take as a concrete description of $\SM(A)$. Recall that any $\Cstar$-algebra $A$ may be realized as a $\Cstar$-subalgebra of $\mathcal B(H)$, for a Hilbert space $H$, by the Gelfand-Naimark-Segal construction. In this setting, we define the \emph{strict topology} on $\mathcal B(H)$ to be the the topology generated by the seminorms $x\mapsto \norm{ax}$ and $x\mapsto \norm{xa}$, for $a\in A$. $\SM(A)$ is the closure of $A$ with respect to the strict topology.

If $A$ is nonunital, then $\SM(A)$ is nonseparable in its norm topology. However, if $A$ is separable, then $\SM(A)$ is separable in the strict topology, and every bounded norm-closed subset of $\SM(A)$ is Polish in the strict topology.
We point out a few examples of particular multiplier algebras and corona algebras.

\begin{itemize}
\item If $A$ is unital, $\SM(A)=A$;
\item If each $A_n$ is unital, then $\SM(\bigoplus A_n)=\prod A_n$. The corona $\prod A_n/\bigoplus A_n$ is the \emph{reduced product} of the $A_n$'s.
\item If each $A_n$ is unital and $\CI\subseteq\SP(\NN)$ is an ideal, the algebra $\bigoplus_{\CI}A_n$ is defined as follows:
\[
\bigoplus_{\CI}A_n=\{(a_n)\in\prod A_n\mid\forall\varepsilon>0 (\{n\mid \norm{a_n}>\varepsilon\}\in\CI).\}
\]
If $\CI$ contains $\Fin$, the multiplier algebra of $\bigoplus_{\CI}A_n$ is $\prod A_n$. The corresponding corona algebra is known as the $\CI$-reduced product of the $A_n$'s or, if $\CI$ is maximal, the ultraproduct.
\end{itemize}
$\CI$-reduced products were studied in \cite{Ghasemi.FFV} and \cite{Farah-Shelah.RCQ}, see also \cite[\S16.2]{Farah.Book}.

The following lemma provides a stratification of $\SQ(A)$ into subspaces analogous to reduced products. Various forms of this stratification have been used in the literature already: see, for instance, \cite[Theorem~3.1]{Elliott.Der2}, \cite[Lemma 1.2]{Farah.C} and \cite{Arveson.Notes}.

\begin{lemma}
 \label{lem:stratification}
 Let $A$ be a $\Cstar$-algebra with an increasing countable approximate identity of positive contractions $\{e_n\}$ with, for all $n$, $e_n e_{n+1} = e_n$. Given an interval $I\subseteq\NN$ we write $e_I = e_{\max(I)} - e_{\min(I)}$. Let $t\in\SM(A)$. Then there are finite intervals $I_n^i\subseteq \NN$, for each $n\in\NN$ and $i = 0,1$, and $t_0$ and $t_1$ in $\SM(A)$, such that for each $i\in\{0,1\}$,
 \begin{enumerate}
 \item the intervals $I_n^i$, for $n\in\NN$, are pairwise disjoint and consecutive,
 \item $t_i$ commutes with $e_{I_n^i}$ for each $n\in\NN$, and
 \item $t - (t_0 + t_1)\in A$.
 \end{enumerate}
\end{lemma}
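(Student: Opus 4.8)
The plan is to build the two families of intervals greedily, alternating between $i=0$ and $i=1$, so that on each interval $I_n^i$ the operator $t$ is ``almost constant'' in the strict topology, and then to absorb the errors into the ideal $A$. The key quantitative fact is that $e_n t \to t$ and $t e_n \to t$ strictly (since $e_n$ is an approximate identity and $t\in\SM(A)$), so for any finite set of elements $a$ and any $\epsilon>0$ one can find $m$ beyond which $\norm{(1-e_m)ta}<\epsilon$, etc.; moreover $e_m t e_m$ converges strictly to $t$. So given a ``cutoff'' index $m$, the truncation $t^{(m)} = e_m t e_m$ differs from $t$ by something small against $e_m$, and more importantly the piece of $t$ living strictly below a cutoff $k<m$ and strictly above $m$ can be made to commute (up to $A$) with the spectral band $e_{[k,m]}$.

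First I would recursively choose a strictly increasing sequence $0=k_0<k_1<k_2<\cdots$ as follows. Suppose $k_0,\dots,k_{2j}$ are chosen. Using the strict convergence $e_n t \to t$ and $te_n\to t$, pick $k_{2j+1}>k_{2j}$ large enough that
\[
  \norm{(1-e_{k_{2j+1}})\,t\,e_{k_{2j}}} < 2^{-j}
  \quad\text{and}\quad
  \norm{e_{k_{2j}}\,t\,(1-e_{k_{2j+1}})} < 2^{-j},
\]
and similarly choose $k_{2j+2}>k_{2j+1}$ with the analogous inequalities with $2j$ replaced by $2j+1$. Now set, for each $n$, $I_n^0 = [k_{2n}, k_{2n+2}]$ and $I_n^1 = [k_{2n+1}, k_{2n+3}]$ (with a suitable convention at the start, e.g. $I_0^1=[k_0,k_3]$). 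These are consecutive and, for fixed $i$, pairwise disjoint, giving (1).

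Next I would define $t_0$ and $t_1$ by cutting $t$ into the ``band pieces'' corresponding to the even, respectively odd, partition. Concretely, writing $f_n^i = e_{I_n^i}$ and noting $\sum_n f_n^i$ converges strictly to $1$ for each $i$, set $t_i = \sum_n f_n^i\, t\, f_n^i$ (a strictly convergent sum, hence an element of $\SM(A)$). The point of the $2^{-j}$ estimates is that the ``cross terms'' $f_n^i t f_{n'}^i$ for $n\neq n'$ are small and in fact summably small, so $t - (t_0+t_1)$ — which is built out of such cross terms together with corner corrections — lies in $A$ (it is a norm limit of elements of $A$, each supported in a finite band $e_m A e_m$); this is clause (3). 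For clause (2), one checks directly that $f_m^i$ commutes with $t_i = \sum_n f_n^i t f_n^i$: since the bands $f_n^i$ are mutually orthogonal projections, $f_m^i \cdot f_n^i t f_n^i = \delta_{mn} f_n^i t f_n^i = f_n^i t f_n^i \cdot f_m^i$, and the strict continuity of multiplication by $f_m^i$ lets one pass this through the infinite sum.

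The main obstacle I anticipate is clause (3): verifying that $t-(t_0+t_1)\in A$ requires care, because $t$ is not literally $\sum_{n,n'} f_n^0 t f_{n'}^0$ in any naive sense, and one must set up the bookkeeping so that the ``diagonal'' contributions from the two interleaved partitions together recover $t$ modulo a norm-convergent series of finite-band (hence compact-in-$A$) corrections. The cleanest route is probably to first show $t = \sum_n (e_{k_{n+1}}-e_{k_{n-1}})\,t\,(e_{k_{n+1}}-e_{k_{n-1}})$ modulo $A$ using the chosen estimates, then observe that each summand splits along the even/odd bands with an error controlled by $2^{-j}$, and regroup. Everything else is routine strict-topology manipulation, using repeatedly that $e_n e_{n+1}=e_n$ so that the bands $e_I$ are genuine projections satisfying $e_I e_J = e_{I\cap J}$ for overlapping intervals and $e_I e_J = 0$ for disjoint ones.
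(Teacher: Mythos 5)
Your choice of the cutoff sequence $k_0 < k_1 < \cdots$ and of the interleaved overlapping intervals $I_n^0,I_n^1$ matches the paper's construction, but the definition $t_i = \sum_n f_n^i\, t\, f_n^i$ with $f_n^i = e_{I_n^i}$ is wrong, and the identity you propose to ``regroup'' around is false for the same reason: both double-count the diagonal. Writing $J_m = [k_m,k_{m+1})$, so that $\sum_m e_{J_m} = 1$ strictly, each of the two families $\{I_n^0\}$ and $\{I_n^1\}$ covers $\NN$, so $\sum_n f_n^0 = 1 = \sum_n f_n^1$ strictly (up to the initial stub). Expanding $f_n^0 = e_{J_{2n}} + e_{J_{2n+1}}$ and $f_n^1 = e_{J_{2n+1}} + e_{J_{2n+2}}$, the sum $t_0 + t_1$ contains every nearest-neighbour cross term $e_{J_m} t e_{J_{m\pm1}}$ once, but every block-diagonal term $e_{J_m} t e_{J_m}$ (for $m\ge 1$) \emph{twice}. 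Since $\sum_m e_{J_m} t e_{J_m}$ is not in $A$ (it is essentially $t$ again, modulo off-diagonals), your $t - (t_0+t_1)$ is not in $A$; rather $t_0 + t_1 \equiv t + \sum_m e_{J_m} t e_{J_m} \pmod{A}$, roughly $2t$. The same double-count breaks your proposed identity $t \equiv \sum_n (e_{k_{n+1}}-e_{k_{n-1}})\,t\,(e_{k_{n+1}}-e_{k_{n-1}}) \pmod{A}$.

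The fix, which is what the paper does, is to work at the level of the disjoint bands $J_m$ and to assign each tridiagonal contribution to exactly one of $t_0$ or $t_1$, rather than compressing $t$ by two overlapping families of projections. Concretely, take
\[
  t_0 = \sum_{n\ge 0}\bigl(e_{J_{2n}} t e_{J_{2n}} + e_{J_{2n+1}} t e_{J_{2n}} + e_{J_{2n}} t e_{J_{2n+1}}\bigr),
\quad
  t_1 = \sum_{n\ge 0}\bigl(e_{J_{2n+1}} t e_{J_{2n+1}} + e_{J_{2n+2}} t e_{J_{2n+1}} + e_{J_{2n+1}} t e_{J_{2n+2}}\bigr).
\]
Now $t_0 + t_1$ is the full tridiagonal part of $t$ with each term counted once, and your $2^{-j}$ estimates show the non-tridiagonal part lies in $A$; meanwhile $t_0$ visibly commutes with each $e_{J_{2n}\cup J_{2n+1}}$ and $t_1$ with each $e_{J_{2n+1}\cup J_{2n+2}}$, giving the intervals $I_n^i$. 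The rest of your outline (strict convergence of the sums, the $e_n e_{n+1}=e_n$ algebra of the bands, the commutation check) goes through with this corrected definition.
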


\begin{proof}
 For each $k\in\NN$, $e_k t$ and $t e_k$ are both in $A$, and hence we may find for each $\varepsilon > 0$ a $k' > k$ such that $\norm{e_k t (1 - e_{k'})}$ and $\norm{(1 - e_{k'}) t e_k}$ are both less than $\varepsilon$. Applying this recursively we may construct a sequence $0 = k_0 <k_0+1< k_1 < \cdots$ such that
 \[
 \norm{(1 - e_{k_{n+1}}) t e_{k_n}} + \norm{e_{k_n} t (1 - e_{k_{n+1}})} \le 2^{-n}.
 \]
 Define $J_n = [k_n,k_{n+1})$, and let
 \[
 t_0 = \sum_{n=0}^\infty e_{J_{2n}} t e_{J_{2n}} + e_{J_{2n+1}} t e_{J_{2n}} + e_{J_{2n}} t e_{J_{2n+1}}
 \]
 and
 \[
 t_1 = \sum_{n=0}^\infty e_{J_{2n+1}} t e_{J_{2n+1}} + e_{J_{2n+2}} t e_{J_{2n+1}} + e_{J_{2n+1}} t e_{J_{2n+2}}
 \]
 Note that these sums converge in the strict topology since $e_k e_J = 0$ for any interval $J$ with $k< \min(J)$, and each $J_n$ has size $\geq2$. Moreover, since
 \begin{eqnarray*}
 \norm{(1-e_{k_m}) (t - t_0 - t_1)} & \le& \sum_{i=m}^\infty \norm{(1 - e_{k_i}) (t-t_0-t_1) e_{k_{i-1}}} \\&+&\sum_{i=m+1}^\infty \norm{e_{k_i} (t-t_0-t_1) (1 - e_{k_{i+2}})} \\
  & \le& 2^{-m+2}+2^{-m+1},
 \end{eqnarray*}
then $t - (t_0 + t_1) \in A$. Finally, we have
 \[
 e_{J_{2n}\cup J_{2n+1}} t_0 = e_{J_{2n}} t e_{J_{2n}} + e_{J_{2n+1}} t e_{J_{2n}} + e_{J_{2n}} t e_{J_{2n+1}} = t_0 e_{J_{2n}\cup J_{2n+1}}
 \]
 and
 \[
 e_{J_{2n+1}\cup J_{2n+2}} t_1 = e_{J_{2n+1}} t e_{J_{2n+1}} + e_{J_{2n+2}} t e_{J_{2n+1}} + e_{J_{2n+1}} t e_{J_{2n+2}} = t_1 e_{J_{2n+1}\cup J_{2n+2}}.
 \]
Setting $I_n^i = J_{2n+i}\cup J_{2n+i+1}$, we have the required intervals.
\end{proof}

The main concern of this paper is the study of isomorphisms $\varphi\colon \SQ(A)\to\SQ(B)$, where $A$ and $B$ are nonunital separable $\Cstar$-algebras. Given such $\varphi$, a map $\Phi\colon \SM(A)\to\SM(B)$ is a \emph{lift} of $\varphi$ if the following diagram commutes:
\begin{center}
\begin{tikzpicture}
 \matrix[row sep=1cm,column sep=2cm] {
&\node (A1) {$\SM(A)$};
& \node (A2) {$\SM(B)$};
\\
&\node (B1) {$\SQ(A)$};
& \node (B2) {$\SQ(B)$};
\\
};
\draw (A1) edge[->] node [above] {$\Phi$} (A2) ;
\draw (A1) edge[->] node [left] {$\pi_A$} (B1) ;
\draw (A2) edge[->] node [right] {$\pi_B$.} (B2) ;
\draw (B1) edge[->] node [above] {$\varphi$} (B2) ;
\end{tikzpicture}
\end{center}
The existence of a lift is always ensured by the Axiom of Choice; however, such a lift is not guaranteed to respect the algebraic or topological structure of the multiplier algebras involved. If $X\subseteq\SM(A)$ and $\Phi$ has the property that $\pi_B(\Phi(x))=\varphi(\pi_A(x))$ for all $x\in X$ we say that \emph{$\Phi$ is a lift of $\varphi$ on $X$}. If $A=\bigoplus A_n$ for some unital $\Cstar$-algebras $A_n$, and $x\in\prod A_n$, define
\[
\supp(x)=\{n\mid x_n\neq 0\}.
\]
If $\CI\subseteq\SP(\NN)$, abusing the notation, we say that \emph{$\Phi$ is a lift of $\varphi$ on $\CI$} if $\Phi$ lifts $\varphi$ on $\{x\in\prod A_n\mid \supp(x)\in\CI\}$.

\subsubsection{Classes of $\Cstar$-algebras}\label{ssubsec:classes}
There are several interesting classes of $\Cstar$-algebras we mention in this paper. Some of the most important ones are obtained by considering objects that can be approximated in a certain way by finite-dimensional building blocks, for example the classes of UHF (limits of full matrix algebras), AF (limits of finite-dimensional algebras), and nuclear algebras\footnote{That the class of nuclear algebras coincides with the one satisfying the Completely Positive Approximation Property, and in turn with the class of $\Cstar$-algebras which are amenable, is a combination of deep theorems, see \cite[IV.3]{Blackadar.OA}}. Another important class of $\Cstar$-algebras, the one of \emph{exact} algebras, can be defined by an external approximation property. (For more on these classes, see \cite{Blackadar.OA}).

Among these classes the one  whose technical definition we use in the following is the one of algebras satisfying the Metric Approximation Property. Recall that an \emph{operator system} is a unital $^*$-closed vector subspace of $\mathcal B(H)$.
\begin{definition}\label{defin:MAP}
A $\Cstar$-algebra $A$ has the \emph{metric approximation property} (MAP) if the identity map can be approximated, uniformly on finite sets, by contractive linear maps of finite rank. Formally, $A$ has the MAP if and only if for all finite $F\subseteq A$ and $\varepsilon>0$ there is a finite-dimensional operator system $E$ and unital linear contractions $\varphi\colon A\to E$ and $\psi\colon E\to A$ with $\norm{\psi\circ\varphi(a)-a}<\varepsilon$ for all $a\in F$.
\end{definition}
Examples of $\Cstar$-algebras with the MAP are nuclear and exact $\Cstar$-algebras. Szankowski \cite{Szank:AP} has proven that $\mathcal B(H)$ does not even have the weaker \emph{approximation property}\footnote{This is obtained by removing the requirement that the maps involved are contractions.}, therefore, by Blackadar's closing off argument/L\"owenheim-Skolem Theorem (\cite[\S7]{Farah.Book}) there is a separable $\Cstar$-algebra without the approximation property.

A $\Cstar$-algebra $A$ is purely infinite and simple if for all $a$ and $b$ in $A$ there are $x$ and $y$ in $A$ with $a=xby$. A unital purely infinite, separable, simple, and nuclear algebra is a Kirchberg algebra. Among Kirchberg algebras the most important ones (and the only ones known so far)  are those satisfying the Universal Coefficient Theorem (UCT, see \cite[V.1.5]{Blackadar.OA}). This class is extremely well behaved: if $A$ and $B$ are Kirchberg algebras satisfying the UCT, $A$ and $B$ are isomorphic if and only if they have the same $K$-theory (e.g., \cite{KirchPhil,Phil:Class}). (For the basics of $K$-theory, see \cite[V]{Blackadar.OA}). For this reason, these $\Cstar$-algebras are often refered to as `classifiable'. It is a (very deep) open problem whether all separable nuclear algebras satisfy the UCT. For (a lot) more on these matters and on the Elliott classification programme (aiming to classify large classes of $\Cstar$-algebras by algebraic and topological invariants), see for example \cite{WinterAbel}, or any text focusing on classification of $\Cstar$-algebras (we recommend \cite{Giordano2018} for a friendly introduction to the subject).

\section{Approximate maps}\label{sec:apmaps}

If $\varphi\colon A\to B$ is any (non necessarily linear) function between normed vector spaces, we write $\norm{\varphi}$ for the quantity $\sup_{\norm{a}\leq1}\norm{\varphi(a)}$.  We say that $\varphi$ is a contraction if $\norm{\varphi}\leq 1$.

\begin{definition}\label{defin:apmaps}
Let $A$ and $B$ be $\Cstar$-algebras, and $\varepsilon\geq 0$. A map $\varphi\colon A\to B$ is
\begin{enumerate}
\item\label{apmap-lin} \emph{$\varepsilon$-linear} if $\sup_{x,y\in A_{\leq 1},|\lambda|,|\mu|\leq 1}\norm{\varphi(\lambda x+\mu y)-\lambda\varphi(x)-\mu\varphi(y)}<\varepsilon$;
\item\label{apmap-star} \emph{$\varepsilon$-$^*$-preserving} if $\sup_{x\in A_{\leq 1}}\norm{\varphi(x^*)-\varphi(x)^*}<\varepsilon$;
\item\label{apmap-mult} \emph{$\varepsilon$-multiplicative} if $\sup_{x,y\in A_{\leq 1}}\norm{\varphi(xy)-\varphi(x)\varphi(y)}<\varepsilon$;
\item\label{apmap-nonzero} \emph{$\varepsilon$-nonzero} if there is $a\in A_1$ with $\norm{\varphi(a)}\geq 1-\varepsilon$;
\item\label{apmap-inj} \emph{$\varepsilon$-isometric} if $\sup_{x\in A_{\leq1}}|\norm{x}-\norm{\varphi(x)}|<\varepsilon$;
\item\label{apmap-surj} \emph{$\varepsilon$-surjective} if for all $b\in B_{\leq 1}$ there is $x\in A_{\leq 1}$ with $\norm{b-\varphi(x)}<\varepsilon$.
\end{enumerate}
\end{definition}

A contraction satisfying  \eqref{apmap-lin}--\eqref{apmap-mult} is an \emph{$\varepsilon$-$^*$-homomorphism}. An $\varepsilon$-$^*$-homomorphism satisfying~\eqref{apmap-inj} is an \emph{$\varepsilon$-embedding}, and an $\varepsilon$-embedding satisfying~\eqref{apmap-surj} is an \emph{$\varepsilon$-isomorphism}.

\begin{definition}
We say that two $\Cstar$-algebras $A$ and $B$ are \emph{$\varepsilon$-isomorphic} if there is an $\varepsilon$-isomorphism from $A$ to $B$.
\end{definition}


The notions of $\varepsilon$-embedding and $\varepsilon$-isomorphism are related to the Hausdorff distance between the unit balls of $\Cstar$-subalgebras of $\mathcal B(H)$. This is known as Kadison-Kastler distance: if $A$ and $B$ are $\Cstar$-subalgebras of the same $\mathcal B(H)$, we define
\[
d_{KK}(A,B)=\max\{\sup_{a\in A_1}\inf_{b\in B}\norm{a-b},\sup_{b\in B_1}\inf_{a\in A}\norm{a-b}\}.
\]
We say that $A\subseteq_\varepsilon B$ if for all $a\in A_1$ there is $b\in B$ with $\norm{a-b}<\varepsilon$. If $d_{KK}(A,B)<\varepsilon$, the Axiom of Choice gives a $2\varepsilon$-isomorphism from $A$ to $B$, and if $A\subseteq_\varepsilon B$ one can find a $2\varepsilon$-embedding from $A$ to $B$. On the other hand, it is not clear whether $\Cstar$-algebras which are $\varepsilon$-isomorphic must have isomorphic images in $\mathcal B(H)$ with small Kadison-Kastler distance (see Question~\ref{ques2}).

The first to study proximity phaenomena in the setting of operator algebras were Kadison and Kastler in the seminal \cite{Kadison-Kastler}. The first of the the following two conjectures is known as the Kadison and Kastler conjecture, and the second one as its strong version\footnote{This is not the way these were stated in the original Kadison and Kastler's paper, see \cite[p.38]{Kadison-Kastler}}.
\begin{conjecture}
\begin{enumerate}
\item There is $\e>0$ such that whenever $A$ and $B$ are unital separable $\Cstar$-algebras with $d_{KK}(A,B)<\e$ then $A$ and $B$ are isomorphic;
\item for every $\e>0$ there is $\delta>0$ such that if $A$ and $B$ are separable subalgebras of $\mathcal B(H)$ with $d_{KK}(A,B)<\delta$ then there is a unitary $u\in\mathcal B(H)$ such that $\norm{u-1}<\e$ and $uAu^*=B$.
\end{enumerate}
\end{conjecture} 

These conjectures were verified in various situations. Notably, if $A$ and $B$ are separable nuclear $\Cstar$-algebras which are Kadison-Kastler close, they are isomorphic (\cite{CSSWW}). 
In the same spirit, we ask the following questions about $\varepsilon$-isomorphisms between $\Cstar$-algebras in a given class $\mathcal{C}$.
\begin{enumerate}[label=($Q$\arabic*)]
\item\label{stab1}  Does there exist, for every $\varepsilon>0$, a $\delta>0$ such that if $A$ and $B$ are elements of $\mathcal C$ and  $\varphi\colon A\to B$ is a $\delta$-isomorphism, then there is an isomorphism $\psi \colon A\to B$ with $\norm{\varphi - \psi} < \varepsilon$?

\item\label{stab2} Is there an $\varepsilon_0 > 0$ such that for all $A\in\mathcal C$ and $B\in\mathcal C$, if $A$ and $B$ are $\varepsilon_0$-isomorphic then $A$ and $B$ must be isomorphic?

\item\label{stab3} Is there an $\varepsilon > 0$ such that if $A\in \mathcal C$, $B$ is any $\Cstar$-algebra, and $A$ and $B$ are $\varepsilon$-isomorphic then one can conclude that $B\in\mathcal C$?
\end{enumerate}
The phaenomenon described in \ref{stab1} has its origin from the work of Ulam on approximate group homomorphism. Using his terminology,  (see e.g., \cite{KanoveiReeken.Ulam}) we say that these approximate maps are stable.
\begin{definition}\label{defin:ulam}
Let $\mathcal C$ be a class of $\Cstar$-algebras. $\mathcal C$ is said \emph{Ulam stable} if \ref{stab1} has a positive solution for $\mathcal C$.
\end{definition}
Farah shows in \cite[Theorem~5.1]{Farah.C} that the class of finite-dimensional $\Cstar$-algebras is Ulam stable, and \v{S}emrl proved the same for the class of abelian $\Cstar$-algebras in \cite{Semrl.USAbel}. 
Showing that larger classes of $\Cstar$-algebras are Ulam stable presents substantial difficulties even for natural generalizations of the classes treated above, such as the class of UHF algebras.\footnote{We believe one should attempt to prove that the class of unital separable subhomogeneous algebras is Ulam stable, but we do not dare to conjecture it.} 

If $\mathcal P$ is a property of $\Cstar$-algebras, $\mathcal C_{\mathcal P}$ denotes the class of all $\Cstar$-algebras having $\mathcal P$.
\begin{definition}
Let $\mathcal C$ be a class of $\Cstar$-algebras. We say that
\begin{itemize}
\item $\mathcal C$ is \emph{classifiable by approximate isomorphisms} if for all $A\in\mathcal C$ there is $\e>0$ such that if $B\in\mathcal C$ and $A$ and $B$ are $\e$-isomorphic, then $A\cong B$;
\item $\mathcal C$ is \emph{uniformly classifiable by approximate isomorphisms} if there is a positive solution for \ref{stab2} relatively to $\mathcal C$;
\item $\mathcal C$ is \emph{stable under approximate isomorphisms} if there is a positive solution for \ref{stab3} relatively to $\mathcal C$.
\end{itemize}
If $\mathcal P$ is a property of $\Cstar$-algebras, we say that $\mathcal P$ is \emph{stable under approximate isomorphisms} if $\mathcal C_{\mathcal P}$ is.
\end{definition}

The following summarizes some of the known results regarding positive answers to \ref{stab1} and \ref{stab2}. Recall that a $\Cstar$-algebra $A$ is AF, approximately finite-dimensional, if for all $\e>0$ and finite $F\subseteq A$ there is a finite-dimensional subalgebra of $A$ having distance $<\e$ from $F$, see \cite[II.8.2]{Blackadar.OA}.

\begin{theorem}\label{thm:apmap-known}
Suppose that $\varepsilon<\frac{1}{4}$. Then,
\begin{itemize}

\item there is $K>0$ such that if $A$ is a finite-dimensional $\Cstar$-algebra and $B$ is a $\Cstar$-algebra then for every $\varepsilon$-$^*$-homomorphism $\varphi\colon A\to B$ there is a $^*$-homomorphism $\varphi\colon A\to B$ with
\[
 \norm{\varphi-\psi}<K\sqrt{\varepsilon}.
\]
Moreover, if $\varepsilon$ is sufficiently small and $\varphi$ is an $\varepsilon$-isomorphism, then $\psi$ is an isomorphism;

\item the class of abelian algebras is Ulam stable. Moreover, there is $K>0$ such that if $\e\in(0,1)$, then one can choose $\delta=K\e^2$ in Definition~\ref{defin:ulam};

\item\label{apmap3} the class of separable AF algebras is stable under approximate isomorphisms and uniformly classifiable by approximate isomorphisms.
\end{itemize}
\end{theorem}
\begin{proof}
The first statement is \cite[Theorem~1.7]{MKAV.UC} while the second is the main result of \cite{Semrl.USAbel}, so only the third statement requires a proof.

We then need to find $\varepsilon>0$ such that if $A$ is a separable AF algebra and $B$ is a $\Cstar$-algebra then $A\cong B$ whenever $A$ and $B$ are $\e$-isomorphic. By~\cite[Theorem 2.4]{MKAV.UC} there is $\e\in (0,10^{-10})$ such that if $\varphi\colon A\to\mathcal B(H)$ is a $\e$-embedding, then there is an embedding $\psi\colon A\to\mathcal B(H)$ with $\norm{\varphi-\psi}<10^{-10}$, and the choice of $\e$ is uniform over all separable AF algebras.  Pick a $\Cstar$-algebra $B$ such that $A$ and $B$ are $\e$-isomorphic, and let $\varphi'\colon A\to B$ be an $\e$-isomorphism. Without loss of generality, we can assume that $B\subseteq\mathcal B(H)$. By composing $\varphi'$ with such inclusion, we get an $\e$-embedding $\varphi\colon A\to \mathcal B(H)$. Applying \cite[Theorem 2.4]{MKAV.UC}, we can find $\psi\colon A\to\mathcal B(H)$ with $\norm{\varphi-\psi}<10^{-9}$. Notice that $d_{KK}(\psi(A),B)<10^{-10}+10^{-10}<10^{-9}$ hence, by \cite[Theorems 6.1]{Christensen.NI}, $B$ is AF, and by \cite[Theorem 6.2]{Christensen.NI}, $A\cong B$.
\end{proof}
The proof of the third part of Theorem~\ref{thm:apmap-known} can be reproduced verbatim to show the following connection between Kadison-Kastler stability and stability under approximate isomorphisms.
\begin{proposition}\label{prop:genulam}
Let $\mathcal C$ be a class of $\Cstar$-algebras such that there are positive $\delta$ and $\e$ with the following properties:
\begin{enumerate}
\item\label{part1} if $A\in\mathcal C$ and $B$ is a $\Cstar$-algebra with $d_{KK}(A,B)<\e$, then $A\cong B$
\item if  $\varphi\colon A\to\mathcal B(H)$ is a $\delta$-embedding then there is an embedding $\psi\colon A\to\mathcal B(H)$ with $\norm{\varphi-\psi}<\e$. 
\end{enumerate}
Then $\mathcal C$ is  stable under approximate isomorphisms and uniformly classifiable by approximate isomorphisms.\qed
\end{proposition}
\begin{remark}\label{rem:almostlinear}
The largest class known to satisfy condition~\ref{part1} of Proposition~\ref{prop:genulam} is the one of separable nuclear $\Cstar$-algebras, by \cite[Theorem A]{CSSWW}. More than that: a theorem of Johnson (\cite[Theorem 7.1]{Johnson.AMNM}), asserts that there is a constant $K$ such that if $A$ is a separable nuclear $\Cstar$-algebra then for each linear $\varepsilon$-$^*$-homomorphism $A\to\mathcal B(H)$, one can find a $^*$-homomorphism which is $K\varepsilon^2$ close to the original map. Therefore showing that the class of nuclear separable $\Cstar$-algebras is stable under approximate isomorphisms and uniformly classifiable by approximate isomorphisms, `only' amounts to show that one can perturb, uniformly over nuclear separable algebras,  $\e$-$^*$-homomorphisms to linear maps.
\end{remark}
As $K$-theory serves as (part of the) classifying invariant for  large classes of $\Cstar$-algebras, to obtain further answers to \ref{stab2}, we would like a way of comparing the $K$-theory of two $\Cstar$-algebras which are $\varepsilon$-isomorphic. However, the absence of linearity means that the amplification\footnote{If $\varphi\colon A\to B$ is a map, its amplifications are the maps $\varphi^{(n)}\colon M_n(A)\to M_n(B)$ defined as $\varphi^{(n)}((a_{i,j})_{i,j})=(\varphi(a_{i,j})_{i,j})$} of an $\varepsilon$-isomorphism may not be an $\varepsilon$-isomorphism. For this reason, it is difficult to obtain information on the $K$-theory simply from the existence of an approximate isomorphism. (This is different in the case of Kadison-Kastler perturbation, where informations on certain invariants can be obtained from Kadison-Kastler proximity, see \cite{CSSW}). This obstacle disappears if one can compute the $K$-theory of $A$ without having to pass through  $A\otimes\mathcal K(H)$. 

The version of the following (and of Corollary~\ref{cor:KirchUCT}) in case $A$ and $B$ have small Kadison-Kastler distance was proved in \cite{CSSW}. The adaption of the proof to this setting is an easy exercise for an expert in operator algebras, but we reproduce it for completeness.
\begin{lemma}\label{lemma:Kth}
There is $\varepsilon>0$ such that if $A$ and $B$ are unital purely infinite, simple, and $\varepsilon$-isomorphic then
\[
 K_0(A)\cong K_0(B), \,\,\, K_1(A)\cong K_1(B).
\]
Moreover such an isomorphism sends the class of the identity in $K_0(A)$ to the class of the identity in $K_0(B)$.
\end{lemma}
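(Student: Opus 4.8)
The plan is to use the Cuntz picture of $K$-theory for purely infinite simple $\Cstar$-algebras recalled just above the statement, together with the stability theorem for finite-dimensional $\Cstar$-algebras from Theorem~\ref{thm:apmap-known} (the \cite{MKAV.UC} item), to show that an $\epsilon$-isomorphism $\phi\colon A\to B$ induces well-defined maps on projections modulo Murray--von Neumann equivalence and on unitaries modulo the connected component of the identity, and that these maps are group isomorphisms. First I would fix $\epsilon$ small (to be shrunk finitely often below) and recall the elementary functional-calculus facts: if $p\in A$ is a projection then $\phi(p)$ is within, say, $C\epsilon$ of a genuine projection $\tilde p\in B$ (since $\phi(p)$ is nearly self-adjoint by $\epsilon$-$^*$-preservation and nearly idempotent by $\epsilon$-multiplicativity, so its spectrum is within $O(\epsilon)$ of $\{0,1\}$ and a spectral projection is close to it), and this $\tilde p$ is unique up to the obvious perturbation; similarly if $u\in A$ is a unitary then $\phi(u)$ is within $C\epsilon$ of a genuine unitary $\tilde u\in B$, via polar decomposition of the nearly-invertible element $\phi(u)$. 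I would set $[p]\mapsto[\tilde p]$ and $[u]\mapsto[\tilde u]$.

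Next I would verify this is well defined on equivalence classes and is a homomorphism. For $K_0$: the key is that finite sums and Murray--von Neumann equivalences of projections take place inside a finite-dimensional subalgebra $D\subseteq A$ (generated by the relevant partial isometries), and on $D$ we may apply the \cite{MKAV.UC} result to perturb $\phi\rs D$ to an honest $^*$-homomorphism $\psi_D\colon D\to B$ with $\norm{\phi\rs D-\psi_D}<K\sqrt\epsilon$; then $\psi_D$ visibly respects orthogonality, sums, and equivalence of projections, and for $\epsilon$ small enough $\psi_D(p)$ is Murray--von Neumann equivalent to $\tilde p$ (two projections within $<1$ of each other are equivalent). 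Running the symmetric construction for a $4\epsilon$-isomorphism $B\to A$ (which exists, as remarked before Definition~\ref{defin:apmaps}) and checking that the two composites move each projection class by an amount small enough to be the identity gives that the induced map $K_0(A)\to K_0(B)$ is a bijection; additivity is immediate from the $\psi_D$ picture. The unit goes to the unit because $\phi$ is (nearly) unital: $\phi(1_A)$ is within $O(\epsilon)$ of a projection, and by $\epsilon$-multiplicativity and $\epsilon$-surjectivity that projection must be (equivalent to) $1_B$. For $K_1$ the argument is parallel but easier: given unitaries $u,v\in A$, the element $\phi(uv)$ is within $O(\epsilon)$ of both $\widetilde{uv}$ and $\tilde u\tilde v$, so $\widetilde{uv}$ and $\tilde u\tilde v$ are within $<2$ of each other and hence connected by a path of unitaries (a norm-small perturbation of a unitary lies in its $\U_0$-coset), giving $[\widetilde{uv}]=[\tilde u][\tilde v]$ in $\U(B)/\U_0(B)=K_1(B)$; and a unitary $u$ lying in $\U_0(A)$ is a product of exponentials $e^{ia_j}$ with $a_j$ self-adjoint of small norm, each of which $\phi$ sends near an exponential in $B$, so $[\tilde u]=0$, proving well-definedness, and bijectivity again follows from composing with the reverse $4\epsilon$-isomorphism.

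The main obstacle I anticipate is the bookkeeping needed to make "well-defined up to Murray--von Neumann equivalence" and "up to $\U_0$" survive composition: one must check that the constants accumulated (a bounded number of applications of the $C\epsilon$ and $K\sqrt\epsilon$ estimates) stay below the thresholds $1$ (for projections) and $2$ (for unitaries) that guarantee equivalence resp.\ homotopy, and that in particular the composite $B\to A\to B$ of an $\epsilon$-isomorphism with a reverse $4\epsilon$-isomorphism moves every projection/unitary by less than these thresholds, so that it induces the identity on $K_\ast$. This is purely a matter of choosing $\epsilon$ small enough at the outset; none of it requires new ideas beyond the Cuntz picture and the already-cited perturbation result for finite-dimensional algebras. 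A secondary point to be careful about: when I pass to a finite-dimensional subalgebra $D\subseteq A$ to apply \cite{MKAV.UC}, I should take $D$ large enough to contain all the witnessing partial isometries for the finitely many equivalences under consideration, which is always possible since any finite set of elements implementing finitely many Murray--von Neumann equivalences generates a finite-dimensional $\Cstar$-algebra.
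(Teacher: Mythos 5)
Your high-level strategy (Cuntz picture, lift projections and unitaries to nearby genuine ones, show the induced maps are well-defined group isomorphisms) is the same as the paper's, but the step you lean on hardest for $K_0$ well-definedness has a genuine gap. You write that ``any finite set of elements implementing finitely many Murray--von Neumann equivalences generates a finite-dimensional $\Cstar$-algebra,'' and you use this to pass to a finite-dimensional $D$ on which \cite{MKAV.UC} applies. This is false: a partial isometry $v$ with $vv^* = p$, $v^*v = q$ need not generate a finite-dimensional algebra. For instance, if $q = 1$ and $p < 1$, then $v$ is a proper isometry and $\Cstar(v)$ is the Toeplitz algebra; such $v$ always exist in a purely infinite simple $\Cstar$-algebra, which is exactly the setting here. (Only in the special case where $p\perp q$ does $v$ become nilpotent, $v^2 = 0$, and $\Cstar(1,v)$ finite-dimensional.) So the finite-dimensional perturbation result cannot be applied to $\Cstar(p,q,v)$ in general, and the well-definedness of $\tilde\phi_0$ on $K_0$-classes is not established by your argument. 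The paper avoids this by appealing instead to weak stability (semiprojectivity) of the partial-isometry relations: an almost-partial-isometry in $B$ is norm-close to a genuine one, with no finite-dimensionality hypothesis needed. If you want to salvage your route, you would have to reduce to the orthogonal case, e.g.\ by inserting an intermediate projection equivalent to $p$ and orthogonal to both $p$ and $q$ (available because $A$ is purely infinite simple), but this is an additional idea not present in your write-up.

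A secondary issue is the $K_1$ well-definedness. You say $u\in\U_0(A)$ is ``a product of exponentials $e^{ia_j}$ with $a_j$ self-adjoint of small norm,'' but in a general unital $\Cstar$-algebra the number of factors needed is unbounded, and your error estimates then accumulate without control. What makes the argument go through is the result of Phillips, cited in the paper, that for unital purely infinite simple $A$ one has $\U_0(A) = \set{\exp(ia)}{a = a^*,\ \norm{a}\le 2\pi}$, so that a \emph{single} exponential suffices and the error is bounded. Without invoking this (or some substitute giving a uniform bound on the number of factors and their norms), your $K_1$ argument is incomplete. The remaining pieces of your proposal (additivity on orthogonal projections, multiplicativity on unitaries via the ``within $2$, hence homotopic'' observation, and bijectivity via composition with a reverse $4\epsilon$-isomorphism) are sound and close to the paper's, modulo the constant-tracking you rightly flag.
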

\begin{proof}
As said, the key point is that, if $A$ is unital purely infinite and simple, one can compute the $K$-theory of $A$ without passing to $A\otimes\mathcal K(H)$. In particular Cuntz showed (see \cite[p.188]{cuntz1981k}) that in this case $K_0(A)$ is isomorphic to the set of projections in $A$ modulo Murray-von Neumann equivalence (\cite[II.3.3.3]{Blackadar.OA}), and that $K_1(A)$ is isomorphic to $\mathcal U(A)/\mathcal U_0(A)$, where $\mathcal U_0(A)$ is the connected component of the identity.

Fix $\varepsilon<1/16$ and let $\varphi$ be an $\varepsilon$-isomorphism between $A$ and $B$. A well known argument shows that for every projection $p\in A$ there is a projection $q\in B$ with $\norm{q - \varphi(p)} < 1/2$; define $\tilde\varphi_0(p) = q$. We want to show that the map $\tilde\varphi_0$ induces an isomorphism between $K_0(A)$ and $K_0(B)$ which is mapping the unit to the unit. First, we show that $\tilde\varphi_0$ is well defined. Suppose that $p,q\in A$ are Murray-von Neumann equivalent and choose $v\in A$ with $vv^*=p$ and $v^*v=q$. By weak stability of the set of partial isometries (see \cite[\S 4.1]{Loring.LSPP} or \cite[Example 3.2.7]{bourbaki}) we can find a partial isometry $w$ with $\norm{w-\varphi(v)}<\frac{1}{8}$. Then $\norm{\tilde\varphi_0(p)-ww^*}$ and $\norm{\tilde\varphi_0(q)-w^*w}<1/2$, so $\norm{\tilde\varphi_0(p) - \tilde\varphi_0(q)}< 1$, and it follows (see \cite[II.3.3.5]{Blackadar.OA}) that $\tilde\varphi_0(p)$ and $\tilde\varphi_0(q)$ are Murray-von Neumann equivalent. This shows that $\tilde\varphi_0$ induces a map from $K_0(A)$ to $K_0(B)$; to check that $\tilde\varphi_0$ is an isomorphism is routine, and we leave it to the reader.

Similarly, using that two unitaries which are close to each other are in the same connected component, that, in the purely infinite simple unital case, we have that $\mathcal U_0(A)=\{\exp(ia)\mid a=a^*, \norm{a}\leq 2\pi\}$ (see \cite{Phillips.Exp}) and that all almost unitaries are close to unitaries, from $\varphi$ we can define a map $\tilde\varphi_1\colon\mathcal U(A)\to\mathcal U(B)$ which induces an isomorphism between $K_1(A)$ and $K_1(B)$.
\end{proof}

\begin{corollary}\label{cor:KirchUCT}
The class of Kirchberg algebras satisfying the UCT is uniformly classifiable by approximate isomorphisms.
\end{corollary}
\begin{proof}
If $A$ and $B$ are Kirchberg UCT algebras, then $A$ and $B$ are isomorphic if and onyl if they have the same $K$-theory (e.g., \cite{KirchPhil, Phil:Class}). The result then follows from Lemma~\ref{lemma:Kth}
\end{proof}

Before turning on how to relate these concepts to isomorphisms of reduced products, we list few properties which are stable under approximate isomorphisms. Examples of such are ``purely infinite and simple", ``real rank zero", ``tracial" algebras, and many more. In general, let $\mathcal C$ be a class $\Cstar$-algebras, and suppose that both $\mathcal C$ and its complement are closed under isomorphisms, ultraproducts, and ultraroots. (If $\mathcal U$ is an ultrafilter, and $A$ a $\Cstar$-algebra, we say that $A$ is the ultraroot of $A^{\mathcal U}$.). Such classes are known as both axiomatizable and co-axiomatizable in the language of continuous model-theory, see \cite[\S3.13]{bourbaki}. In this case, the property `belonging to $\mathcal C$' is stable under approximate isomorphisms, as the class $\mathcal C$ is isolated by a first order formula (see~\cite[\S3.14]{bourbaki}). We don't know whether certain important properties of $\Cstar$-algebras which are known to be preserved by Kadison-Kastler proximity, such as nuclearity or simplicity, are stable under approximate isomorphisms, see Question~\ref{ques1}.

\subsection*{$\varepsilon$-isomorphisms and isomorphisms of reduced products}
Here we detail the relationship between approximate maps and $^*$-homomorphisms between reduced products. These relation was first studied systematically for `discrete' quotient structure of the form $\SP(\NN)/\mathcal I$ in \cite{Farah.Lifts}.

\begin{proposition}\label{prop:welldefinedmaps}
 Suppose $A_n$ and $B_n$ are sequences of $\Cstar$-algebras and let $A=\prod A_n/\bigoplus A_n$ and $B=\prod B_n/\bigoplus B_n$. Then every sequence $\varphi_n\colon A_n\to B_n$ of $\varepsilon_n$-$^*$-homomorphisms, where $\varepsilon_n\to 0$, induces a $^*$-homomorphism $\Phi\colon A\to B$. Moreover, if each $\varphi_n$ is an $\varepsilon_n$-embedding, then $\Phi$ is an embedding; and if each $\varphi_n$ is an $\varepsilon_n$-isomorphism, $\Phi$ is an isomorphism.
\end{proposition}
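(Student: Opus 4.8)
The plan is to construct $\Phi$ first on the closed unit ball of $A$ by applying the $\phi_n$ coordinatewise, then to extend it to all of $A$ by homogeneity; the guiding observation is that an $\epsilon_n$-$^*$-homomorphism satisfies each of the relevant algebraic identities up to an error of size $O(\epsilon_n)$, and since $\epsilon_n\to 0$ these become \emph{exact} identities after passing to the quotient. Throughout I will use two elementary facts about a reduced product $\prod C_n/\bigoplus C_n$: the norm of the class of a bounded sequence $(c_n)$ equals $\limsup_n\norm{c_n}$, and, consequently, its closed unit ball is exactly the image of $\prod_n (C_n)_{\leq 1}$ under the quotient map.

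The one ingredient that is not purely formal is an approximate-continuity estimate for a contractive $\epsilon$-linear map $\phi$: taking $\mu=0$, $\lambda=\norm z$ and $x=z/\norm z$ in the definition of $\epsilon$-linearity gives $\norm{\phi(z)}<\epsilon+\norm z$ whenever $\norm z\leq 1$ (using $\norm{\phi(z/\norm z)}\leq 1$). Hence, if $(a_n),(a_n')\in\prod_n(A_n)_{\leq 1}$ differ by an element of $\bigoplus A_n$, then, estimating $\phi_n(a_n)-\phi_n(a_n')$ by $\phi_n(a_n-a_n')$ up to $\epsilon_n$ (again by $\epsilon_n$-linearity) and applying the previous bound to $z=a_n-a_n'$, the sequences $(\phi_n(a_n))$ and $(\phi_n(a_n'))$ also differ by an element of $\bigoplus B_n$. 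Since each $\phi_n$ is a contraction, $(\phi_n(a_n))\in\prod_n(B_n)_{\leq 1}$ whenever $(a_n)\in\prod_n(A_n)_{\leq 1}$, so I may unambiguously define $\Phi(x)$, for $x\in A_{\leq 1}$, to be the class of $(\phi_n(a_n))$ for any representative $(a_n)\in\prod_n(A_n)_{\leq 1}$ of $x$.

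Next I would check that on $A_{\leq 1}$ the map $\Phi$ satisfies $\Phi(\lambda x)=\lambda\Phi(x)$ for $\abs\lambda\leq 1$, $\Phi(xy)=\Phi(x)\Phi(y)$, and $\Phi(x^*)=\Phi(x)^*$, in each case by choosing unit-ball representatives, invoking the corresponding defining inequality of an $\epsilon_n$-$^*$-homomorphism coordinatewise, and passing to the quotient, where the $O(\epsilon_n)$ errors vanish. I would then extend $\Phi$ to all of $A$ by $\Phi(x)=\norm x\,\Phi(x/\norm x)$ for $x\neq 0$; this is consistent with the values already assigned on the unit ball (by the homogeneity just verified) and is $\CC$-homogeneous, multiplicativity and $^*$-preservation transfer by homogeneity, and $\norm{\Phi(x)}\leq\norm x$ since the class of $(\phi_n(a_n))$ has norm $\limsup_n\norm{\phi_n(a_n)}\leq 1$ for $(a_n)\in\prod_n(A_n)_{\leq 1}$. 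The one slightly delicate case is additivity: for $x,y\in A$ I pick $M\geq 1$ with $M>\norm x,\norm y$ and representatives all of whose coordinates have norm $\leq M$, apply $\epsilon_n$-linearity with $\lambda=\mu=1$ to $a_n/2M$ and $b_n/2M$ (which lie in $(A_n)_{\leq 1}$, as does their sum), pass to the quotient to obtain $\Phi(\tfrac{x+y}{2M})=\Phi(\tfrac x{2M})+\Phi(\tfrac y{2M})$, and clear the scalar $2M$ by homogeneity. Thus $\Phi$ is a $^*$-homomorphism, and by construction it sends the class of $(a_n)$ to the class of $(\phi_n(a_n))$, which is the asserted induced map.

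For the remaining two clauses I again argue on unit balls. If each $\phi_n$ is $\epsilon_n$-isometric, then for $x\in A_{\leq 1}$ with representative $(a_n)\in\prod_n(A_n)_{\leq 1}$ we get $\norm{\Phi(x)}=\limsup_n\norm{\phi_n(a_n)}=\limsup_n\norm{a_n}=\norm x$, since $\abs{\norm{\phi_n(a_n)}-\norm{a_n}}<\epsilon_n\to 0$; by homogeneity $\Phi$ is isometric, hence an embedding. If moreover each $\phi_n$ is $\epsilon_n$-surjective, then for $y\in B_{\leq 1}$ with representative $(b_n)\in\prod_n(B_n)_{\leq 1}$ I choose $a_n\in(A_n)_{\leq 1}$ with $\norm{\phi_n(a_n)-b_n}<\epsilon_n$, and then the class $x$ of $(a_n)$ satisfies $\norm{\Phi(x)-y}=\limsup_n\norm{\phi_n(a_n)-b_n}=0$, so $\Phi$ maps $A_{\leq 1}$ onto $B_{\leq 1}$ and therefore $A$ onto $B$; together with injectivity this makes $\Phi$ an isomorphism. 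The main obstacle is really just the bookkeeping in the third paragraph — in particular, checking additivity for pairs whose sum leaves the unit ball, and isolating the approximate-continuity estimate — while everything else is a routine transfer of $O(\epsilon_n)$-identities across the quotient map using that $\epsilon_n\to 0$.
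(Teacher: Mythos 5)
The paper states this proposition without proof, treating it as a routine verification, so there is no argument of the paper's to compare against; you are supplying the missing one. Your proof is correct, and it identifies the right non-formal ingredient: the definition of $\epsilon$-linearity only constrains $\phi_n$ on elements of the form $\lambda x+\mu y$ with $x,y\in (A_n)_{\leq 1}$ and $\lambda,\mu$ of modulus $\leq 1$, and says nothing about $\phi_n$ far outside the unit ball, so one really must define $\Phi$ on the unit ball first (using unit-ball representatives, whose existence you correctly justify via $\norm{\pi(c_n)}=\limsup_n\norm{c_n}$) and then extend by homogeneity. The approximate-continuity estimate $\norm{\phi_n(z)}\leq\norm z+\epsilon_n$ for $\norm z\leq 1$ is exactly what makes the definition independent of the choice of representative, modulo $\bigoplus A_n$; note that in applying it to $z=a_n-a_n'$ one needs $\norm{a_n-a_n'}\leq 1$, which holds only for large $n$, but that is all that matters modulo $\bigoplus B_n$, and the degenerate case $z=0$ is covered by taking $\lambda=\mu=0$ in the definition to get $\norm{\phi_n(0)}<\epsilon_n$. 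Your care with additivity outside the unit ball (rescaling by $2M$) and with reconciling the homogeneous extension with the values already assigned on the unit ball is precisely what is needed. One small imprecision: the closing sentence ``it sends the class of $(a_n)$ to the class of $(\phi_n(a_n))$'' should carry the qualifier that $(a_n)$ is a unit-ball representative, since $\phi_n$ is uncontrolled elsewhere; your construction already makes this clear, so this is cosmetic.
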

\begin{proof}
Let $\pi_A$ and $\pi_B$ be the canonical quotient maps from the products to the reduced products. Define, for a contraction $(a_n)_n\in\prod A_n$, 
\[
\Phi(\pi_A((a_n)_n))=\pi_B((\varphi_n(a_n))_n),
\]
and extend $\Phi$ to $A$, by setting $\Phi(a)=\norm{a}\Phi(a/\norm{a})$, in case $\norm{a}>1$. Notice that $\norm{\Phi(\pi_A((a_n)_n))}=\limsup_n\norm{\varphi_n(a_n)}$ whenever $(a_n)_n\in\prod A_n$. In particular, $\Phi$ is well defined, since $\norm{\varphi_n(a_n)}\leq\norm{a_n}+\varepsilon_n$ for all $n$, and $\varepsilon_n\to 0$ as $n\to 0$.

We now sketch the proof that $\Phi$ is linear, and that if each $\varphi_n$ is an $\varepsilon_n$-embedding then $\Phi$ is injective. The other conditions are left to the reader.

If $(a_n)_n$ and $(b_n)_n$ are contractions in $\prod A_n$ and $\lambda$ and $\mu$ are in $\ce$, we have that 
\begin{eqnarray*}
&&\norm{\Phi(\pi_A((\lambda a_n+\mu b_n)_n))-\Phi(\pi_A((\lambda a_n)_n))-\Phi(\pi_A((\mu b_n)_n))}=\\&&\norm{\pi_B((\varphi_n(\lambda a_n+\mu b_n))_n)-\pi_B((\varphi_n(\lambda a_n))_n)-\pi_B((\varphi_n(\mu b_n))_n)}=\\&&\limsup_n\norm{\varphi_n(\lambda a_n+\mu b_n)-\lambda \varphi_n(a_n)-\mu \varphi_n(b_n)}=0
\end{eqnarray*}
This shows that $\Phi$ is linear.

Suppose now that each $\varphi_n$ is an $\varepsilon_n$-embedding, and pick $a\in A$ of norm $1$. We want to show that $\norm{\Phi(a)}=1$. Let $(a_n)_n\in\prod A_n$ with $a=\pi_A((a_n)_n)$ and $\limsup\norm{a_n}= 1$. Since each $\varphi_n$ is an $\varepsilon_n$-embedding, we have that for all $n\in\NN$, $\norm{\varphi_n(a_n)}\geq \norm{a_n}-\varepsilon_n$. As $\varepsilon_n\to 0$, we then have that $\limsup_n\norm{\varphi_n(a_n)}=1$, meaning $\norm{\Phi(a)}=1$.
\end{proof}
The following can be viewed as the converse of Proposition~\ref{prop:welldefinedmaps}. We state it as a lemma, as it will be used later. (Recall that an ideal $\CI\subseteq\SP(\NN)$ is dense if whenever $Y\subseteq\NN$ is infinite then there is an infinite $X\in\CI$ with $X\subseteq Y$, see \S\ref{subsec:DSTandIdeals}).

\begin{lemma}\label{lemma:seqofappmaps}
Let $A_n$, $B_n$ be $\Cstar$-algebras, $g$ an almost permutation of $\NN$ and $\varphi_n\colon A_n\to B_{g(n)}$ be maps where $\prod\varphi_n$ is the lift of an isomorphism
\[
\iso\colon\prod A_n/\bigoplus A_n\to\prod B_n/\bigoplus B_n
\]
on a dense ideal $\CI$. Then there is a sequence $\varepsilon_n$ tending to $0$ such that each $\varphi_n$ is an $\varepsilon_n$-isomorphism.
\end{lemma}

\begin{proof}
Let $\pi_A$ be the canonical quotient map $\prod A_n\to\prod A_n/\bigoplus A_n$. It is enough to show that for every $\varepsilon>0$ there is $n$ such that each $\varphi_m$ is an $\varepsilon$-isomorphism whenever $m\geq n$. We just show $\varepsilon$-additivity, and leave the rest to the reader.

Suppose then that there is an infinite set $I\subseteq\NN$ and, for each $n\in I$, contractions $x_n$ and $y_n$ in $A_n$ with $\norm{x_n},\norm{y_n} \le 1$ and
\[
 \norm{\varphi_n(x_n+y_n)-\varphi_n(x_n)-\varphi_n(y_n)}>\varepsilon.
\]
Choose an infinite $X\in\CI$ with $X\subseteq I$. Such $X$ exists by density of $\CI$. Without loss of generality we may assume that $g$ is defined on $X$. Let $x=\sum_{n\in X} x_n$ and $y=\sum_{n\in X}x_n$. Since $\prod \varphi_n$ is a lift of $\iso$ on $\CI$, and  $\varphi_n\varphi_m=0$ whenever $n\neq m$, we have
\begin{eqnarray*}
 \norm{\iso(\pi_A(x+y))-\iso(\pi_A(x))-\iso(\pi_A(y))}=\\
 =\limsup_{n\in X}\norm{\varphi_n(x_n+y_n)-\varphi_n(x_n)-\varphi_n(y_n)}>\varepsilon.
\end{eqnarray*}
This is a contradiction.
\end{proof}

\begin{definition}\label{defin:trivialredprod}
Let $A_n$ and $B_n$ be $\Cstar$-algebras with no nontrivial central projections.
 An isomorphism $\iso\colon \prod A_n/\bigoplus A_n\to\prod B_n/\bigoplus B_n$ is \emph{asymptotically algebraic} if there are finite $F_1,F_2\subseteq\mathbb N$, a bijection $g\colon \mathbb N\setminus F_1\to\mathbb N\setminus F_2$ and maps $\varphi_n\colon A_n\to B_{g(n)}$ such that the map $\Phi\colon \prod A_n\to \prod B_n$ defined by
\[
 \Phi(x)_{m} = \left\{\begin{array}{ll} \varphi_n(x_n) & m=g(n) \\ 0 & m\in F_2 \end{array}\right.
\]
is a lift of $\iso$. The maps $g$ and $(\varphi_n)_n$ are said to witness the well behaveness of $\iso$.

If there is $n_0$ such that for all $n\geq n_0$ the map $\varphi_n$ can be chosen to be an isomorphism, we say that $\iso$ is \emph{algebraically trivial}.
\end{definition}

\begin{remark}
If one allows $A_n$ or $B_n$ to have nontrivial central projections, it is easy to produce an example of an isomorphism of reduced products which is not asymptotically algebraic. For example, let $A_n=C_n\oplus D_n$, with $C_n$ and $D_n$ unital, and set $B_{2n}=C_n$ and $B_{2n+1}=D_n$. Then the identity isomorphism $\prod A_n/\bigoplus A_n\to\prod B_n/\bigoplus B_n$ is not asymptotically algebraic. One can reformulate the definition of asymptotically algebraic isomorphisms of reduced products, to obtain a more general notion allowing nontrivial central projections, by allowing the range of $\varphi_n$ to be contained in $\prod_{i\in F_n}B_i$, where $F_n$ is a finite set. In order to suppress an exponential growth of the notation, we stick to Definition~\ref{defin:trivialredprod}.
\end{remark}

There are three kinds of isomorphisms for reduced products: topologically trivial, asymptotically algebraic, and algebraically trivial isomorphisms. We focus on how these are related. (The Metric Approximation Property was introduced in Definition~\ref{defin:MAP}.)

\begin{proposition}\label{prop:trivials}
Let $A_n$ and $B_n$ be separable unital $\Cstar$-algebras, and
\[
 \iso\colon\prod A_n/\bigoplus A_n\to\prod B_n/\bigoplus B_n
\]
be an isomorphism. Then
\begin{enumerate}
\item\label{triv1} If $\iso$ is asymptotically algebraic then $\iso$ is topologically trivial;
\item\label{triv2} if $\iso$ is topologically trivial, each $A_n$ and $B_n$ have no nontrivial central projections, and each $A_n$ has the Metric Approximation Property, then $\iso$ is asymptotically algebraic.
\end{enumerate}
\end{proposition}

\begin{proof}
\ref{triv1}: Let $g\colon \NN\setminus F_1\to\NN\setminus F_2$ and $\varphi_n\colon A_n\to B_{g(n)}$ be the maps witnessing that $\iso$ is asymptotically algebraic. By Lemma~\ref{lemma:seqofappmaps} there is a sequence $\varepsilon_n\to 0$ such that each $\varphi_n$ is an $\varepsilon_n$-$^*$-isomorphism. Let
\[
 \Gamma^n=\set{(x,y)\in A_n\times B_{g(n)}}{\varphi_n(x)=y)}
\]
be the graph of $\varphi_n$, and consider its $2^{-n}$-fattening
\[
 \Gamma^{n,2^{-n}}=\{(x,y)\mid \exists (w,z)\in \Gamma^n (\norm{x-w},\norm{y-z}<2^{-n})\}.
\]
$\Gamma^{n,2^{-n}}$ is open in the norm topology and each of its sections is nonmeager. By Theorem~\ref{thm:BorelUnif}, we can find a norm-norm Borel function $\psi_n\colon A_n\to B_{g(n)}$ uniformizing $\Gamma^{n,\varepsilon_n}$. Define
\[
\prod\varphi_n\colon \prod A_n\to\prod B_n 
\]
by 
\[
(\prod\varphi_n((a_n)_n))_k=\begin{cases}
\varphi_n(a_n)&\text{ if there is }n \text{ such that }k=g(n)\\
0&\text{ else},
\end{cases}
\]
and notice that $\prod\varphi_n$ lifts $\Lambda$. 
Equally, we can define $\prod\psi_n$. For $(a_n)_n\in\prod A_n$, we have that $\lim_n\norm{\varphi_n(a_n)-\psi_n(a)}=0$, and therefore for all $x\in\prod A_n$ we have that $\prod\psi_n(x)-\prod\varphi_n(x)\in \bigoplus B_n$. In particular $\prod\psi_n$ lifts $\Lambda$. Since on bounded sets the strict topology on $\prod A_n$ coincides with the product of the norm topologies on each $A_n$, the function $\prod\psi_n$ is Borel when restricted to the set of contractions in $\prod A_n$. Since it lifts $\Lambda$, the latter is topologically trivial.

\ref{triv2}: Suppose $\iso$ is topologically trivial, and that $A_n$ and $B_n$, for each $n$, have no nontrivial central projections and have the metric approximation property. Let $V[G]$ be a forcing extension of the universe $V$ which satisfies $\OCA+\MA_{\aleph_1}$ and has the same $\omega_1$ as $V$ (e.g., \cite[\S2]{Velickovic.OCA1}). Let $\overline{A}_n$ and $\overline{B}_n$ be the completions of $A_n$ and $B_n$, respectively, in $V[G]$. Then, $\overline{A}_n$ and $\overline{B}_n$ are $\Cstar$-algebras with the metric approximation property and no nontrivial central projections; this is because of Shoenfield's absoluteness theorem, as the sentence `there is a nontrival central projection' is absolute, being $\forall\exists$. Since, since each $A_n$ and each $B_n$ has the MAP in $V$, the same operator systems and maps show that each $A_n$ (and each $B_n$) has the MAP in $V[G]$. Since having the MAP is a local property, each $\overline{A}_n$ (and each $\overline{B}_n$) has the MAP. Let $\Gamma$ be the graph of $\iso$; by reinterpreting the Borel code of $\Gamma$ in the extension $V[G]$, we obtain a Borel subset $\overline{\Gamma}$ of $\prod \overline{A}_n \times \prod \overline{B}_n$. The statement ``$\Gamma$ defines an isomorphism'' is $\mathbf{\Pi}^1_2$; thus by Schoenfield's absoluteness theorem, $\overline{\Gamma}$ defines, in $V[G]$, an isomorphism $\overline{\iso}$ from $\prod \overline{A}_n / \bigoplus \overline{A}_n$ to $\prod \overline{B}_n / \bigoplus \overline{B}_n$. Since $\OCA+\MA_{\aleph_1}$ holds in $V[G]$, by Theorem~\ref{theoi:redprod}, $\overline{\iso}$ is asymptotically algebraic. Finally, the statement ``$\overline{\iso}$ is asymptotically algebraic'' is $\mathbf{\Sigma}^1_2$, so again by Schoenfield's absoluteness,  $\iso$ is asymptotically algebraic in $V$.
\end{proof}
Using \cite[Theorem 4.17]{V.Rigidity} in place of Theorem~\ref{theoi:redprod} one can remove the Metric Approximation Property from the hypotheses above.

We provide the key link between Ulam stability and rigidity for quotients in the context of reduced products. What follows is the correspondent in this setting of \cite[\S4, Proposition 6]{Farah.Lifts}. (This was first stated and proved by the second author as \cite[Theorem 5.6 and 5.9]{V.Rigidity}. We reproduced the proof for completeness.)

\begin{theorem}\label{thm:apmapstrivial}
Let $\mathcal C$ be a class of unital separable $\Cstar$-algebras with no nontrivial central projections. Then:
\begin{enumerate}
\item\label{ulam1} $\mathcal C$ is Ulam stable if and only if whenever $A_n$ and $B_n$ are elements of $\mathcal C$ all asymptotically algebraic isomorphisms between $\prod A_n/\bigoplus A_n$ and $\prod B_n/\bigoplus B_n$ are algebraically trivial.
\item\label{ulam2} $\mathcal C$ is stable under approximate isomorphisms if and only if whenever $A_n$ are elements of $\mathcal C$ and $B_n$ are unital separable $\Cstar$-algebras with no nontrivial central projections, then if $\prod A_n/\bigoplus A_n$ and $\prod B_n/\bigoplus B_n$ are isomorphic by an asymptotically algebraic isomorphism we have that $B_n\in\mathcal C$ for all but finitely many $n\in\NN$.
\item\label{ulam3} $\mathcal C$ is uniformly classifiable by approximate isomorphisms if and only if whenever $A_n$ and $B_n$ are elements of $\mathcal C$, if there is an asymptotically algebraic isomorphism between $\prod A_n/\bigoplus A_n$ and $\prod B_n/\bigoplus B_n$ then there is an algebraically trivial one.
\end{enumerate}
\end{theorem}
\begin{proof}
As the proofs of the three statements are analogous, we only prove \eqref{ulam1} and leave the rest to the reader. 

Suppose that $\mathcal C$ is Ulam stable. Fix algebras $A_n$ and $B_n$ in $\mathcal C$ and an asymptotically algebraic isomorphism $\iso\colon\prod A_n/\bigoplus A_n\to\prod B_n/\bigoplus B_n$, Let $g$ be an almost permutation on $\NN$,  $\e_n\to 0$ be a sequence, and $\varphi_n\colon A_n\to B_{g(n)}$ be $\e_n$-isomorphisms witnessing that $\iso$ is asymptotically algebraic. Recall that $\prod \varphi_n$ is a lift for $\iso$. Without loss of generality, by reindexing the $B_n$'s and eventually forgetting about finitely many coordinates, we can assume that $g$ is the identity map and that $\{\e_n\}$ is a decreasing sequence. Let $k_0=0$. If $k_i$ has been defined for all $i<n$, let $k_n$ be the minimum natural greater than $k_{n-1}$ with the property that all $\e_{k_n}$-isomorphisms between elements of $\mathcal C$ can be perturbed up to $2^{-n}$ to isomorphisms. Such a $k_n$ can be defined since $\mathcal C$ is Ulam stable. If $j\in [k_n,k_{n+1})$, let $\psi_j\colon A_j\to B_j$ be an isomorphism with $\norm{\psi_j-\varphi_j}<2^{-n}$. Since for all $(x_n)\in\prod A_n$ we have that $\limsup_n\norm{\varphi_n(x_n)-\psi(x_n)}=0$, we have that $\prod\varphi_n$ and $\prod\psi_n$ define the same map between $\prod A_n/\bigoplus A_n$ and $\prod B_n/\bigoplus B_n$. Such map is $\iso$, which is then algebraically trivial.

Conversely, suppose $\mathcal C$ is not Ulam stable.  Then there is $\e>0$ such that for all $\delta>0$ there is are two $A_\delta$ and $B_\delta$ in $\mathcal C$ and a $\delta$-isomorphism $\varphi_\delta\colon A_\delta\to B_\delta$ that cannot be perturbed to an isomorphism which is uniformly, over the unit ball of $A_\delta$, $\e$ far away from $\varphi_\delta$. Let $A_n=A_{1/n}$, $B_n=B_{1/n}$ and $\varphi_n=\varphi_{1/n}$. Then $\prod\varphi_n$ induced an asymptotically algebraic isomorphism between $\prod A_n/\bigoplus A_n$ and $\prod B_n/\bigoplus B_n$ by Proposition~\ref{prop:welldefinedmaps}. It is routine to check that such isomorphism is not algebraically trivial.
\end{proof}
 \section{A lifting theorem I: Statements}\label{sec:liftstatements}

 In this section we state, and then outline the proof of, Theorem~\ref{thm:lifting}, the lifting results that lies behind all of our main results. The proof is broken into a sequence of lemmas which we will then prove in \S\ref{sec:lemma_proofs}. The following will be fixed for this and the next section.
 
\begin{notation} \label{notation}
We fix:
\begin{itemize} 
\item a sequence of finite-dimensional Banach spaces $\{E_n\}$;
\item a separable, nonunital $\Cstar$-algebra $A$;
\item an increasing approximate identity for $A$ consisting of positive contractions, $\{e_n\}$, with $e_{n+1}e_n=e_n$ for all $n\in\NN$ (such an approximate identity always exists, see e.g., \cite[Proposition 1.9.3]{Farah.Book}).
\end{itemize}
Moreover
\begin{itemize}
\item $\pi$ is the quotient map $\pi\colon \SM(A)\to\SQ(A)$, and $\pi_E$ is the quotient map $\pi_E\colon \prod E_n\to\prod E_n/\bigoplus E_n$.
\end{itemize}
\end{notation}
 All definitions and results are given with these objects in mind. If $I\subseteq\NN$ is a finite interval, we write
 \[
 e_I = e_{\max(I)} - e_{\min(I)}.
 \]
 Given $S\subseteq\NN$, we write
 \[
 E[S] = \prod_{n\in S} E_n
 \]
 If $S\subseteq T$, we view $E[S]$ as the linear subspace of $E[T]$ consisting of those elements with support contained in $S$. For $x\in E[\NN]$ we write $x\restriction S$ for the unique element of $E[S]$ which is equal to $x$ on the coordinates in $S$, and $\supp(x)$ for the set of non-zero entries of $x$. With this, se have that $E[S]=\{x\in\prod E_n\mid \supp(x)\subseteq S\}$. We write $\norm{\cdot}$ for the sup-norm on $E[\NN]$; however, we often work with the (separable, metrizable) product topology on $E[\NN]$ instead of the norm topology. In particular, any discussion involving descriptive-set-theoretic concepts (Borel sets, Baire measurability, etc.) refers to the product topology.

 \begin{definition}\label{defin:asympt}
Working in the setting of Notation~\ref{notation}, let $\alpha \colon E[\NN]\to \SM(A)$ be a function. We say that $\alpha$ is \emph{asymptotically additive} if there exists a sequence of finite intervals $I_n\subseteq\NN$, for $n\in\NN$, and functions $\alpha_n \colon E_n\to e_{I_n} A e_{I_n}$ with $\min(I_n)\to\infty$ as $n\to\infty$ and such that, for each $x\in E[\NN]$, the sum
 \[
 \sum_{n=0}^\infty \alpha_n(x_n)
 \]
 converges, in the strict topology, to $\alpha(x)$.

 On the other hand, we say that $\alpha$ is \emph{block diagonal} if there exist finite intervals $I_n,J_n\subseteq\NN$, for $n\in\NN$, and functions $\alpha_n \colon E[J_n]\to e_{I_n} A e_{I_n}$ such that
 \begin{itemize}
 \item $\NN = \bigcup I_n = \bigcup J_n$,
 \item $n\neq m$ implies $I_n\cap I_m = \emptyset$ and $J_n\cap J_m = \emptyset$, and
 \item for each $x\in E[\NN]$, the sum
 \[
 \sum_{n=0}^\infty \alpha_n(x\rs J_n)
 \]
 converges, in the strict topology, to $\alpha(x)$.
 \end{itemize}
 \end{definition}

 The functions $\alpha_n$ involved in the above definitions are not assumed to have any structure other than what is described; in particular, they are not assumed to be linear, $^*$-preserving, multiplicative, or even continuous. Our block diagonal maps resemble asymptotically additive ones as in \cite[Definition 1.5.1]{Farah.AQ}, in which the intervals $I_n$ were required to be disjoint. The reason for which we need to add a level of complexity is given by the highly noncommutative nature of the objects of our interest. In fact, although every block diagonal function is asymptotically additive, the converse does not hold in general. Also, while a finite sum of asymptotically additive functions is asymptotically additive, the sum of two block diagonal functions may not be block diagonal. If $A$ is a $\Cstar$-algebra and $e\in A$ is positive, $eAe$ is said a corner of $A$.
 
 \begin{lemma}\label{lemma:aa-corners}
Working in the setting of Notation~\ref{notation}, let $\alpha \colon E[\NN]\to \SM(A)$ be an asymptotically additive map and let $q\in \SM(A)$ be positive. Then there is an asymptotically additive map $\gamma\colon E[\NN]\to\SM(A)$ such that $\pi(\gamma(x)) = \pi(q\alpha(x)q)$ for every $x\in E[\NN]$.
 \end{lemma}

 \begin{proof}
 Choose functions $\alpha_n \colon E_n\to e_{I_n} A e_{I_n}$ (where $I_n$ is a finite interval in $\NN$) witnessing that $\alpha$ is asymptotically additive. By Lemma~\ref{lem:stratification} there are finite intervals $J_n^i\subseteq\NN$ for each $n\in\NN$ and $i = 0,1$, and $q_0$ and $q_1$ in $\SM(A)$, such that
 \begin{itemize}
 \item for each $i = 0,1$, the intervals $J_n^i$, for $n\in\NN$, are disjoint and consecutive,
 \item for each $n\in\NN$ and $i = 0,1$, $q_i$ commutes with $e_{J_n^i}$, and
 \item $\pi(q) = \pi(q_0 + q_1)$.
 \end{itemize}
 For each $n\in\NN$, set
 \[
 K_n = \bigcup\set{J_m^i}{m\in\NN\land i\in\{0,1\}\land J_m^i\cap I_n\neq\emptyset}
 \]
 Then $K_n$ is a finite interval containing $I_n$, and $\min(K_n)\to \infty$ as $n\to\infty$. Moreover $\gamma_n^{ij}(x) = q_i \alpha_n(x)q_j$ is in $e_{K_n} A e_{K_n}$ for all $x\in E_n$, so
 \[
 \gamma^{i,j}(x) = \sum_{n=0}^\infty \gamma_n^{i,j}(x_n)
 \]
 converges in the strict topology for each $x\in E[\NN]$, and the resulting function $\gamma^{ij}$ is asymptotically additive. Letting $\gamma = \gamma^{0,0} + \gamma^{0,1} + \gamma^{1,0} + \gamma^{1,1}$, we have
 \[
 \pi(\gamma(x)) = \pi(q \alpha(x) q).\qedhere
 \]
 \end{proof}

The proof of the following Proposition is a straightforward modification of that of Lemma~\ref{lemma:seqofappmaps}. (Notice that finite-dimensionality of the $E_n$'s is not needed).

\begin{proposition}\label{prop:FA.ApproxStruct}
Working in the setting of Notation~\ref{notation},  let $\iso\colon \prod E_n/\bigoplus E_n\to\SQ(A)$ be a map. Suppose
 \[
 \sum\alpha_n\colon E[\NN]\to \SM(A)
 \]
 is an asymptotically additive lift of $\iso$ on a dense ideal $\CI$. Then for every $\varepsilon>0$
 \begin{enumerate}
 \item if $\iso$ is linear there is $n_0$ such that for every $n\geq n_0$, then $\sum_{n_0\leq j\leq n}\alpha_j$ is $\varepsilon$-linear;
 \item if $E_n$ is an operator system and $\iso$ is also $^*$-preserving then there is $n_0$ such that for every $n\geq n_0$, $\sum_{n_0\leq j\leq n}\alpha_j$ is $\varepsilon$-$^*$-preserving;
 \item if $E_n$ is a Banach algebra and $\iso$ is multiplicative then there is $n_0$ such that for all $n\geq n_0$, $\sum_{n_0\leq j\leq n}\alpha_j$ is $\varepsilon$-multiplicative.
 \end{enumerate}
 Also,
 \begin{enumerate}\setcounter{enumi}{3}
 \item if $\supp(x)\in\CI$ and $\iso$ is norm-preserving, then $\lim_n | \norm{x_n}-\norm{\alpha_n(x_n)}|=0$.\qed
 \end{enumerate}
 \end{proposition}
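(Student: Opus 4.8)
The plan is to imitate the proof of Proposition~\ref{prop:seqofappmaps}, proving each clause by contradiction: from a hypothetical failure I will build a single pair of sequences, supported on a set in $\SI$, on which $\iso$ visibly breaks the relevant algebraic law, contradicting that $\alpha$ lifts $\iso$ on $\SI$. First I would fix functions $\alpha_n\colon E_n\to e_{I_n}Ae_{I_n}$ witnessing asymptotic additivity, so that $\min(I_n)\to\infty$ and $\alpha(x)=\sum_n\alpha_n(x_n)$ strictly. When $\iso$ is linear it sends the zero class to $0$, so $\alpha(0)=\sum_n\alpha_n(0)\in A$ (as $\alpha$ lifts $\iso$ on $\SI$ and $\emptyset\in\SI$); subtracting the constant $\alpha_n(0)$ from each $\alpha_n$ alters $\alpha$ only by the element $\alpha(0)\in A$, so I may assume $\alpha_n(0)=0$ for all $n$, whence $\alpha(x)$ depends only on the coordinates in $\supp(x)$. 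I would then prove the $\epsilon$-linearity clause, noting that the $^*$-preserving and multiplicative clauses are formally identical, and that $\epsilon$-linearity is equivalent, up to a constant factor, to the conjunction of $\epsilon$-additivity and $\epsilon$-homogeneity (the non-trivial direction uses that $\lambda x,\mu y$ are contractions when $x,y$ are and $|\lambda|,|\mu|\le 1$); thus it suffices to treat those two, and I would write up $\epsilon$-additivity.

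The heart of the argument is the claim that, for every $\epsilon_0>0$, all but finitely many $\alpha_n$ are $\epsilon_0$-additive. To prove it, suppose not: there are an infinite $T\subseteq\NN$ and contractions $x_n,y_n\in E_n$ $(n\in T)$ with $\norm{\alpha_n(x_n+y_n)-\alpha_n(x_n)-\alpha_n(y_n)}>\epsilon_0$. Since $\SI$ is dense, $T$ contains an infinite $S\in\SI$; since $\min(I_n)\to\infty$, I may thin $S$ to an infinite $S'\subseteq S$ (still in $\SI$, by heredity) along which the intervals $I_n$ are pairwise disjoint. Put $x=\sum_{n\in S'}x_n$ and $y=\sum_{n\in S'}y_n$: both have support in $S'\in\SI$, so $\alpha$ lifts $\iso$ at $x,y,x+y$, and additivity of $\iso$ forces $\alpha(x+y)-\alpha(x)-\alpha(y)\in A$. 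But this element is $s:=\sum_{n\in S'}c_n$ with $c_n=\alpha_n(x_n+y_n)-\alpha_n(x_n)-\alpha_n(y_n)\in e_{I_n}Ae_{I_n}$. Since the $I_n$ $(n\in S')$ are disjoint, the $e_{I_n}$ are mutually orthogonal positive contractions, hence the $c_n$ are pairwise orthogonal and the positive elements $c_n^*c_n$ lie in the mutually orthogonal corners $e_{I_n}Ae_{I_n}$; arguing as in the proof of Lemma~\ref{lemma:aa-corners} (using $(1-e_k)e_{I_n}=e_{I_n}$ once $k<\min(I_n)$, and positivity) one obtains $\norm{\pi_A(s^*s)}=\norm{\pi_A(\sum_{n\in S'}c_n^*c_n)}=\limsup_{n\in S'}\norm{c_n}^2\ge\epsilon_0^2$, so $\norm{\pi_A(\alpha(x+y)-\alpha(x)-\alpha(y))}\ge\epsilon_0>0$, a contradiction. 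Granting the claim (and its analogues), the stated conclusions follow: choosing $n_0$ past the last $\epsilon_0$-non-additive index and, after a harmless regrouping of consecutive $\alpha_n$, reducing to the case of pairwise disjoint $I_n$, the additivity error of $\sum_{n_0\le j\le n}\alpha_j$ on a pair of contractions has norm $\max_{n_0\le j\le n}\norm{\alpha_j(x_j+y_j)-\alpha_j(x_j)-\alpha_j(y_j)}<\epsilon_0$.

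For $\epsilon$-homogeneity I would run the same scheme after a pigeonhole over the compact disc, arranging that the scalars witnessing the failures all lie near a common $\lambda^\ast$, so that a single scalar can be used in the combined evaluation; the $^*$-preserving and multiplicative cases are entirely analogous. For the last bullet, with $\supp(x)=S\in\SI$: for every infinite $T\subseteq S$, thinning $T$ so that the $I_n$ $(n\in T)$ become disjoint, I get $\limsup_{n\in T}\norm{\alpha_n(x_n)}=\norm{\pi_A(\alpha(x\rs T))}=\norm{\iso(\pi(x\rs T))}=\norm{\pi(x\rs T)}=\limsup_{n\in T}\norm{x_n}$, using that $\iso$ is norm-preserving; passing to convergent subsequences and letting $T$ vary yields $\lim_n\big|\norm{x_n}-\norm{\alpha_n(x_n)}\big|=0$.

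I expect the main friction to be bookkeeping rather than a single hard idea: passing correctly between the partial-sum formulation in the statement and the per-coordinate control above — in particular the regrouping step, which is what prevents many small per-coordinate errors from accumulating into a large partial-sum error when the $I_n$ overlap — together with the scalar pigeonhole for homogeneity and multiplicativity. The one genuinely new ingredient over Proposition~\ref{prop:seqofappmaps} is the use of density of $\SI$ to extract, from the set of ``bad'' indices, an infinite subfamily that is simultaneously in $\SI$ and interval-disjoint, which is exactly what makes the orthogonal-corner norm computation available.
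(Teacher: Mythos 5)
The per‑coordinate part of your argument is sound and matches the route the paper points to (it says the proof is ``a straightforward modification'' of Proposition~\ref{prop:seqofappmaps}): normalize so $\alpha_n(0)=0$ (legitimate since $\alpha(0)\in A$), suppose an infinite bad set $T$, use density and heredity of $\SI$ together with $\min I_n\to\infty$ to pass to $S'\in\SI$ with pairwise disjoint $I_n$, build $\bar x,\bar y$ supported on $S'$, and read off $\norm{\pi_A(\alpha(\bar x+\bar y)-\alpha(\bar x)-\alpha(\bar y))}\ge\epsilon_0$ from orthogonality of the corners. All of that is correct (one could skip the $s^*s$ detour and just use $\norm{\pi_A(s)}=\limsup\norm{c_n}$).

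The gap is in the passage from this per‑coordinate control to the stated partial‑sum conclusion, which you file under ``bookkeeping'' and ``harmless regrouping.'' It is not harmless. Having each $\alpha_j$ be $\epsilon_0$-additive for $j\ge n_0$ gives no control on $\norm{\sum_{j\in F}c_j}$ for a finite $F\subseteq[n_0,\infty)$, because the intervals $I_j$ may overlap with unbounded multiplicity and the $c_j$ need not be orthogonal. Regrouping the coordinates into blocks $B_k$ with pairwise disjoint $J_k=\bigcup_{j\in B_k}I_j$ does make the \emph{block} corners orthogonal, but it replaces the quantity to be bounded by $\max_k\norm{\sum_{j\in B_k}c_j}$, i.e.\ you now need the per‑\emph{block} version of your claim, not the per‑coordinate one. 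And the per‑block version does not follow from your contradiction argument using only density of $\SI$: the witnesses $(x^{(k)},y^{(k)})$ now live on whole blocks $B_k$, and density of $\SI$ only gives you an infinite $T\subseteq\bigcup B_{k_i}$ in $\SI$, not a $T$ that is a union of full $B_{k_i}$'s, so restricting the witnesses to $T$ destroys them. This is exactly the point at which the paper, in the analogous Claim inside the proof of Theorem~\ref{thm:redprodfindim}, invokes \emph{nonmeagerness} of $\SI$ (via Proposition~\ref{prop:JT2}) to find an infinite union of intervals $\bigcup_{i\in L}[\min F_i,\min F_{i+1})\in\SI$; mere density of $\SI$ is not enough for that step. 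So your proof, as written, establishes the per‑coordinate claim and the last (norm‑preserving) bullet, but it does not actually reach the ``$\sum_{n_0\le j\le n}\alpha_j$ is $\epsilon$-linear'' conclusion; you either need to assume $\SI$ nonmeager (or ccc/Fin, as in every application the paper makes), or supply a genuinely different argument for the partial sums.
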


 \begin{definition}\label{defin:preserves}
We work in the setting of Notation~\ref{notation}. A function
 \[
 \varphi \colon \prod E_n / \bigoplus E_n \to \SQ(A)
 \]
  \emph{preserves the coordinate structure} if there are positive contractions $p_S\in\SM(A)$ for $S\subseteq\NN$ such that
 \begin{enumerate}[label=(\roman*)]
 \item each $ \pi(p_S)$ commutes with $\varphi(\prod E_n / \bigoplus E_n)$,
 \item if $S\subseteq\NN$ and $x\in E[\NN]$,
 \[
 \pi(p_S)\varphi(\pi(x))=\varphi(\pi(x\restriction S)),\]
 \item if $S$ is finite, $p_S\in A$,
 \item if $S$ and $T$ are disjoint subsets of $\NN$ then $\pi(p_{S\cup T})=\pi(p_T)+\pi(p_S)$, and
 \item if $S\cap T$ is finite then $\pi(p_S)\pi(p_T)=0$.
 \end{enumerate}
 \end{definition}

 Examples of such functions are given by $^*$-homomorphisms, in case each $E_n$ is a unital $\Cstar$-algebra. In this case, $\pi(p_S)$ can be chosen to be a projection, but in general this is not true. For example,  consider $A=C_0(\er^+)$, and let $f_n$ be a positive function of norm $1$ supported on $[n-\frac{1}{4},n+\frac{1}{4}]$. Consider the map $\Phi\colon\ell_\infty\to \SM(A)=C_b(\er^+)$ obtained by $\Phi((a_n))=\sum a_nf_n$. Such a $\Phi$ induces a contraction $\varphi\colon\ell_\infty/c_0\to\SQ(A)$, obtained by taking the quotient map. To show that such a map preserves the coordinate structure, pick $g_n$ to be any positive contraction whose support is included in $[n-\frac{1}{3},n+\frac{1}{3}]$ and such that $g_n\restriction [n-\frac{1}{4},n+\frac{1}{4}]=1$. For $S\subseteq \NN$, let $p_S=\sum_{n\in S}g_n$. These elements witness that $\varphi$ preserves the coordinate structure.
\subsection{The `noncommutative' $\OCA$ lifting Theorem}
 We can now state the main result of this and the following section. Recall that an ideal $\CI\subseteq\SP(\NN)$ is ccc/Fin if it meets every uncountable almost disjoint family $\SA\subseteq\SP(\NN)$ (see \S\ref{subsec:DSTandIdeals}).

 \begin{theorem}[Noncommutative $\OCA$ lifting Theorem] \label{thm:lifting}
 Assume $\OCA$ and MA$_{\aleph_1}$. Let $E_n$ be finite dimensional Banach spaces, and let $A$ be a separable $\Cstar$-algebra. Let
 \[
 \varphi \colon \prod E_n / \bigoplus E_n \to \SQ(A)
 \]
 be a bounded, linear map, which preserves the coordinate structure. Then there is an asymptotically additive function $\alpha \colon E[\NN]\to \SM(A)$ and a ccc/Fin ideal $\CI$ such that for all $S\in\CI$ and $x\in E[S]$,
 \[
 \varphi(\pi_E(x)) = \pi(\alpha(x)).
 \]
 Moreover, there are sequences $\seq{I_n}{n\in\NN}$ and $\seq{J_n}{n\in\NN}$ of consecutive, finite intervals in $\NN$ such that for all $x\in E[I_n]$,
 \[
 \alpha(x)\in e_{J_{n-1}\cup J_n\cup J_{n+1}} A e_{J_{n-1}\cup J_n\cup J_{n+1}}
 \]
 In particular, $\alpha$ is the sum of three block diagonal functions.
 \end{theorem}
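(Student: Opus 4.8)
The overall plan is to produce $\alpha$ by repeatedly refining an arbitrary lift of $\varphi$, the essential step being a dichotomy supplied by $\OCA_\infty$. Dividing by $\norm{\varphi}$ we may assume $\norm{\varphi}\le 1$, so that $\varphi$ sends the unit ball of $\prod E_n/\bigoplus E_n$ into $\SQ(A)_{\le 1}$; we record that each ball $(\prod E_n)_{\le r}$ is compact and metrizable in the product topology, while every norm-bounded subset of $\SM(A)$ is Polish in the strict topology. As a preliminary step we fix a lift $\Phi$ of $\varphi$, defined on bounded subsets of $\prod E_n$, which is $\Cmeas$-measurable (hence Baire measurable) from the product topology to the strict topology. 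This is possible because, using the coordinate projections $p_S$ and the stratification of Lemma~\ref{lem:stratification} to express the graph $\set{(x,t)}{\pi(t)=\varphi(\pi(x))}$ in terms of strictly Borel data, that graph is analytic with full first projection, so Theorem~\ref{thm:JVN} applies. (Alternatively one first argues, by a category argument, that $\varphi$ itself is suitably measurable.)

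Next, let $X$ be the graph of $\Phi$ on a fixed large ball, viewed in the evident subspace topology as a separable metric space on which both coordinate projections are continuous. For a fixed $\epsilon>0$ define a decreasing sequence of partitions $[X]^2=K_0^n\cup K_1^n$ by declaring $\{(x,\Phi(x)),(y,\Phi(y))\}\in K_0^n$ when $x$ and $y$ agree on coordinates $<n$ but $\norm{e_I(\Phi(x)-\Phi(y))e_I}>\epsilon$ for some interval $I$ with $\min(I)\ge n$; each $K_0^n$ is open (since $t\mapsto e_Ite_I$ is strictly continuous into the norm topology) and $K_0^{n+1}\subseteq K_0^n$. This coloring witnesses failure of $\Phi$ to be, modulo $A$, determined in a bounded-corner fashion by initial segments. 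Apply $\OCA_\infty$ to $X$. The second alternative --- an uncountable $Z\subseteq 2^\NN$ and a continuous injection $z\mapsto(x^z,\Phi(x^z))$ with all pairs $K_0$-colored at level $\Delta(z,z')$ --- is ruled out using $\MA_{\aleph_1}$: since $\bb>\aleph_1$ one dominates the relevant ``error'' functions along an uncountable subfamily, then forces, with a ccc poset, an element of $\SM(A)$ simultaneously witnessing the discontinuity of $\Phi$ against uncountably many $x^z$; unwinding this against the coordinate structure exhibits two lifts of a single element of $\SQ(A)$ whose difference is not in $A$, which is absurd. Hence the first alternative holds: $X=\bigcup_k X_k$ with $[X_k]^2\subseteq K_1^k$. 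Running this for each $\epsilon=1/m$ and combining by the fact that a countable intersection of nonmeager hereditary sets containing $\Fin$ is nonmeager, we obtain a comeager set on which $\Phi$ is genuinely strictly continuous and tail determined.

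Since $\Phi$ is Baire measurable, the sets $D_k=\set{x}{(x,\Phi(x))\in X_k}$ are Baire measurable and cover the compact ball, so the union of the basic clopen cylinders in which some $D_k$ is comeager is dense. Feeding this into the proof of Proposition~\ref{prop:talagrand}, and using $K_1^k\subseteq K_1^{k+1}$, we extract a single partition of $\NN$ into consecutive finite intervals $\seq{I_n}{n\in\NN}$ together with patterns on coarser interval blocks, so that $\Phi$ is asymptotically additive and asymptotically localized on the comeager set $\SG$ of sequences following the patterns infinitely often. Localizing $\Phi$ on $\SG$ block by block defines maps $\alpha_n\colon E_n\to e_{I'_n}Ae_{I'_n}$, for suitable finite intervals $I'_n\supseteq I_n$, by $\alpha_n(v)=e_{I'_n}\Phi(\hat v)e_{I'_n}$ where $\hat v\in\SG$ equals $v$ in coordinate $n$ and follows the patterns elsewhere; then $\alpha:=\sum_n\alpha_n$ converges strictly, $\min(I'_n)\to\infty$, so $\alpha$ is asymptotically additive, and the localization and additivity estimates for $\Phi$ on $\SG$ give $\pi(\alpha(x))=\varphi(\pi(x))$ whenever $x\in E[S]$ with $S$ in the ideal $\SI:=\set{S\subseteq\NN}{\pi(\alpha(x))=\varphi(\pi(x))\text{ for all }x\in E[S]}$. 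This $\SI$ contains $\Fin$ and every $S$ captured by the patterns infinitely often, so it is dense and nonmeager; and it is ccc/$\Fin$ because an uncountable almost disjoint family disjoint from $\SI$ would, via the coordinate structure, produce uncountably many pairwise independent obstructions to the existence of a lift of $\varphi$, and a further appeal to $\MA_{\aleph_1}$ turns this into a contradiction with the fact that $\varphi$ is defined on the whole quotient. Finally, merging the $I'_n$ into a partition into consecutive finite intervals $\seq{J_n}{n\in\NN}$, each $I'_n$ meets at most three consecutive $J$-blocks, which gives $\alpha(x)\in e_{J_{n-1}\cup J_n\cup J_{n+1}}Ae_{J_{n-1}\cup J_n\cup J_{n+1}}$ for $x\in E[I_n]$; splitting $\NN$ into residue classes modulo $3$ then expresses $\alpha$ as a sum of three block diagonal functions.

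The principal obstacle is the step ruling out the second alternative of $\OCA_\infty$: the colorings must be designed finely enough that the dichotomy is usable, and converting an uncountable $K_0$-homogeneous family into a genuine contradiction requires a delicate ccc forcing that propagates a single generic object through uncountably many coordinates and then collides with the defining property of a lift --- namely that two lifts of the same element of $\SQ(A)$ differ only by an element of $A$. Subsidiary difficulties are the descriptive-set-theoretic bookkeeping behind the $\Cmeas$-measurable lift, and the passage from comeager-set behaviour to an honest asymptotically additive lift on a ccc/$\Fin$ ideal.
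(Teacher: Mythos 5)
The first step of your proposal contains a fundamental gap that the rest of the argument cannot recover from. You claim that the graph $\set{(x,t)}{\pi(t)=\varphi(\pi(x))}$ is analytic, so that Theorem~\ref{thm:JVN} yields a $\Cmeas$-measurable lift $\Phi$ of $\varphi$ on the whole unit ball as a \emph{preliminary} step. This is unjustified: $\varphi$ is just a bounded linear map between quotient structures, given with no descriptive complexity whatsoever (an arbitrary lift exists only by Choice), and nothing in the hypotheses makes the condition $\pi(t)=\varphi(\pi(x))$ analytic. Establishing that $\varphi$ admits a lift with \emph{any} descriptive structure is essentially the content of the theorem, not something one can assume at the outset. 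Indeed, if such a $\Phi$ were available from the start, Lemma~\ref{lemma:C->asymptotic} (applied to the nonmeager ideal $\SP(\NN)$) would immediately yield the asymptotically additive lift, making $\OCA_\infty$ and $\MA_{\aleph_1}$ unnecessary --- which is absurd, since the conclusion of the theorem is known to fail under $\CH$. In the paper's proof, Jankov--von Neumann is applied only inside Lemma~\ref{lemma:epsilon-baire->asymptotic}, \emph{after} Borel-measurable $\e$-lifts $G_\e$ have been produced by the $\OCA_\infty$ machinery; the set uniformized there is $\Gamma=\set{(x,y)}{\forall n\,\norm{\pi(y-G_{1/n}(x))}\le 1/n}$, whose Borelness flows from the Borelness of the $G_{1/n}$, not from any intrinsic definability of $\varphi$.

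Beyond this, your application of $\OCA_\infty$ is structurally different from the paper's and underspecified at its most delicate point. The paper does not color pairs of points in the graph of a single putative lift; instead it colors pairs of the form $(S,x)$ ranging over a treelike almost-disjoint family (Lemma~\ref{lemma:oca->treelike}), and separately pairs in the ideal $\SI$ identified with $(S,\alpha^S)$ for chosen asymptotically additive local lifts $\alpha^S$ (Lemma~\ref{lemma:oca->alternative}). The exclusion of the second $\OCA_\infty$ alternative in the first of these lemmas is a pigeonhole argument using the orthogonality provided by the coordinate structure (two lifts of the same element of $\SQ(A)$ must agree modulo $A$), while in the second it is where $\MA_{\aleph_1}$ enters via an explicit ccc poset building a treelike family disjoint from $\SI$. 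Your sketch collapses these into a single unspecified ``delicate ccc forcing that propagates a single generic object through uncountably many coordinates,'' which you would need to actually produce, and you would in any case still need Lemmas~\ref{lemma:sigma->single} and~\ref{lemma:sigma-baire->borel} (or analogues) to pass from $\sigma$-$\e$-lifts on treelike families to a single Borel $\e$-lift on a nonmeager ideal. As written, the proposal does not establish the theorem.
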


\begin{remark}
When dealing with isomorphisms, or maps obtained by restricting isomorphisms of coronas, one usually finds that $\CI=\SP(\NN)$, essentially by arguments similar to the one  of Proposition~\ref{prop:onedominates}. This is not the case for embeddings, as \cite[Example 3.2.1]{Farah.AQ} and the nontrivial copies of $\beta\NN\setminus\NN$ exhibited by Dow in \cite{Dow.Copy} show, where the ideal $\CI$ of Theorem~\ref{thm:lifting} is not equal to $\SP(\NN)$. As Vaccaro recently showed (\cite{Vaccaro}), this cannot happen for endomorphisms of the Calkin algebra. Understanding for which coronas we can have such behaviour seems related to commutativity and the Murray-von Neumann semigroup, but we do not have, at the moment, a picture which is clear enough to dare to conjecture anything substantial.
\end{remark}

For a bounded, linear $\varphi \colon \prod E_n / \bigoplus E_n \to \SQ(A)$ which preserves the coordinate structure. Our efforts are focused on finding lifts which have various desirable properties with respect to the ambient topological structure of $E[\NN]$.

 \begin{definition}
Working in the setting of Notation~\ref{notation}, let $\varphi\colon\prod E_n/\bigoplus E_n\to \SQ(A)$ be a bounded linear map which preserves the coordinate structure, let $\e \ge 0$ be given and $X\subseteq E[\NN]$.
 \begin{itemize}
 \item An \emph{$\e$-lift of $\varphi$ on $X$} is a function $F$ with $X\subseteq \dom(F)\subseteq E[\NN]$ and $\ran(F)\subseteq \SM(A)$, such that $\norm{\pi(F(x)) - \varphi(\pi_E(x))} \le \e$ for all $x\in X$ with $\norm{x} \le 1$.
 \item A \emph{$\sigma$-$\e$-lift of $\varphi$ on $X$} is a sequence of functions $F_n$, for $n\in\NN$,  with $X\subseteq \dom(F_n)\subseteq E[\NN]$ and $\ran(F_n) \subseteq \SM(A)$, such that for all $x\in X$ with $\norm{x} \le 1$, there is $n\in\NN$ such that $\norm{\pi(F_n(x)) - \varphi(\pi_E(x))} \le \e$.
 \item If $\CJ\subseteq\SP(\NN)$, we say that $F$ is an \emph{$\e$-lift of $\varphi$ on $\CJ$} if for every $S\in\CJ$, $F$ is an $\e$-lift of $\varphi$ on $E[S]$.
 \end{itemize}
 When $\e = 0$ we refer to \emph{lifts} and \emph{$\sigma$-lifts} on $X$.
 \end{definition}

From now on, we  work towards the proof of Theorem~\ref{thm:lifting}. We need therefore to update our notation.

\begin{notation}\label{notation2}
Working in the setting of Notation~\ref{notation}, we fix a linear bounded map $\varphi\colon\prod E_n/\bigoplus E_n\to \SQ(A)$ which preserves the coordinate structure. Let $\e\geq 0$.  We define $\CI^\e$ (respectively $\CI_C^\e$) to be the set of $S\subseteq\NN$ such that there exists an asymptotically additive (resp. $\Cmeas$-measurable) $\e$-lift of $\varphi$ on $E[S]$. When $\e = 0$ we write $\CI$ and $\CI_C$.
\end{notation}

 Although a general asymptotically additive function $\alpha \colon E[\NN]\to \SM(A)$ may not have any topological structure, if $\alpha$ is an $\e$-lift of $\varphi$ on $X$ then there is a $\Cmeas$-measurable (in fact, even Borel-measurable), asymptotically additive $\beta$ such that $\pi(\beta(x)) = \pi(\alpha(x))$ for all $x\in E[\NN]$. (This can be obtained by replacing $\alpha$ with a skeletal map; for details, see Lemma~\ref{lemma:skeletal-replacement}.) In particular, $\CI^\e \subseteq \CI_C^\e$ for each $\e \ge 0$.

 \begin{lemma}\label{lemma:ideals}
Working in the setting of Notation~\ref{notation2}, for each $\e \ge 0$, $\CI^\e$ and $\CI_C^\e$ are ideals on $\NN$.
 \end{lemma}
 \begin{proof}
 Clearly, each $\CI^\e$ and $\CI_C^\e$ is hereditary. We want to show that $\CI^\e_C$ and $\CI^\e$ are closed under finite unions.

 For $\CI^\e_C$, let $S$ and $T$ in $\CI_C^\e$. Without loss of generality we may assume that $S\cap T = \emptyset$. Let $F$ and $G$ be C-measurable functions $E[\NN]\to \SM(A)$ which are $\e$-lifts of $\varphi$ on $E[S]$ and $E[T]$ respectively. Define a function $H \colon E[\NN]\to \SM(A)$ by
 \[
 H(x) = p_SF(x\restriction S)p_S + p_TG(x\restriction T)p_T
 \]
 Then $H$ is $\Cmeas$-measurable, and if $x\in E[S\cup T]$ has norm at most 1, then
 \[
 \norm{\pi(F(x\restriction S)) - \varphi(\pi_E(x\restriction S))}, \norm{\pi(G(x\restriction T)) - \varphi(\pi_E(x\restriction T))} \le \e
 \]
 and
 \begin{align*}
 \pi(H(x)) - \varphi(\pi_E(x)) & = \pi(p_S)(\pi(F(x\restriction S)) - \varphi(\pi_E(x\restriction S))) \pi(p_S) \\
 & + \pi(p_T)(\pi(G(x\restriction T)) - \varphi(\pi_E(x\restriction T)))\pi(p_T).
 \end{align*}
Since $\pi(p_S)$ and $\pi(p_T)$ are orthogonal, it follows that $\norm{\pi(H(x)) - \varphi(\pi_E(x))} \le \e$, hence $ S\cup T\in\CI^\e_C$.

 Now suppose that in the above, $F$ and $G$ are asymptotically additive. Lemma~\ref{lemma:aa-corners} shows that there is an asymptotically additive $\gamma$ such that $\pi(\gamma(x)) = \pi(H(x))$ for all $x\in E[\NN]$, and hence $S\cup T\in\CI^\e$.
 \end{proof}

 The following five lemmas form the bulk of the proof of Theorem~\ref{thm:lifting}. They tell us that, under the assumption of $\OCA+\MA_{\aleph_1}$, the ideals $\CI^\e$ and $\CI^\e_C$ are, in various senses, large. We state them here, but their proofs, which are long and self-contained, will be deferred to Section~\ref{sec:lemma_proofs}.

 \begin{lemma} \label{lemma:oca->treelike}
Working in the setting of Notation~\ref{notation2}, assume $\OCA$. Let $\e > 0$ and let $\SA\subseteq \SP(\NN)$ be a treelike almost-disjoint family. Then for all but countably many $S\in\SA$, there is a $\sigma$-$\e$-lift of $\varphi$ on $E[S]$ consisting of C-measurable functions.
 \end{lemma}

 \begin{lemma} \label{lemma:sigma->single}
Working in the setting of Notation~\ref{notation2}, let $\e > 0$ and $S\subseteq\NN$. Suppose that there is a $\sigma$-$\e$-lift of $\varphi$ on $E[S]$, consisting of $\Cmeas$-measurable functions, and that $S = \bigcup_{n=1}^\infty S_n$ is a partition of $S$ into infinite sets. Then there is $n\in \NN$ such that $S_n\in \CI_C^{4\e}$.
 \end{lemma}

 \begin{lemma} \label{lemma:oca->alternative}
Working in the setting of Notation~\ref{notation2}, assume $\OCA$ and MA$_{\aleph_1}$. Then either
 \begin{enumerate}
 \item there is an uncountable, treelike, almost disjoint family $\SA\subseteq\SP(\NN)$ which is disjoint from $\CI$, or
 \item for every $\e > 0$, there is a sequence $F_n \colon E[\NN]\to \SM(A)$, for $n\in\NN$, of $\Cmeas$-measurable functions such that for every $S\in\CI^\e$, there is  $n\in\NN$ such that $F_n$ is an $\e$-lift of $\varphi$ on $E[S]$.
 \end{enumerate}
 \end{lemma}

 \begin{lemma} \label{lemma:sigma-baire->borel}
Working in the setting of Notation~\ref{notation2}, suppose $F_n \colon E[\NN] \to \SM(A)$, for $n\in\NN$,  is a sequence of Baire-measurable functions, $\e > 0$, and $\CJ \subseteq\SP(\NN)$ is a nonmeager ideal such that for all $S\in\CJ$, there is $n\in\NN$ such that $F_n$ is an $\e$-lift of $\varphi$ on $E[S]$. Then there is a Borel-measurable map $H \colon E[\NN] \to \SM(A)$ which is a $12\e$-lift of $\varphi$ on $\CJ$.
 \end{lemma}

 \begin{lemma} \label{lemma:C->asymptotic}
Working in the setting of Notation~\ref{notation2}, suppose $F \colon E[\NN] \to \SM(A)$ is a $\Cmeas$-measurable function and $\CJ\subseteq\SP(\NN)$ is a nonmeager ideal such that, for every $S\in\CJ$, $F$ is a lift of $\varphi$ on $E[S]$. Then there is an asymptotically additive $\alpha$ such that, for all $S\in\CJ$, $\alpha$ is a lift of $\varphi$ on $E[S]$; in fact, $\alpha$ is the sum of three block diagonal functions.
 \end{lemma}

Lemma~\ref{lemma:oca->treelike} is proved as Lemma~\ref{lemma:oca->treelike2}, \ref{lemma:sigma->single} is \ref{lemma:sigma->single2}, \ref{lemma:oca->alternative} is \ref{lemma:oca->alternative2}, \ref{lemma:sigma-baire->borel} is \ref{lemma:sigma-baire->borel2}, and \ref{lemma:C->asymptotic} is \ref{lemma:C->asymptotic2}.
 We end the section with a proof of Theorem~\ref{thm:lifting}, using the above lemmas.

 \begin{lemma} \label{lemma:epsilon-baire->asymptotic}
Working in the setting of Notation~\ref{notation2}, suppose that $\CJ\subseteq \SP(\NN)$ is a nonmeager ideal and that for every $\e > 0$, there is a Baire-measurable $G_\e \colon E[\NN]\to \SM(A)$ which is an $\e$-lift of $\varphi$ on $\CJ$. Then there is an asymptotically additive lift of $\varphi$ on $\CJ$.
 \end{lemma}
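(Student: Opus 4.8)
The plan is to manufacture a single \emph{exact}, $\Cmeas$-measurable lift of $\vp$ on $\SJ$ and then quote Lemma~\ref{lemma:C->asymptotic}. Fix a summable sequence $\e_k>0$, say $\e_k=4^{-k}$. By hypothesis the Baire-measurable function $G_{\e_k/12}$ is an $(\e_k/12)$-lift of $\vp$ on $E[S]$ for every $S\in\SJ$, so --- feeding the constant sequence $\langle G_{\e_k/12}\rangle$ into Lemma~\ref{lemma:sigma-baire->borel} --- there is a Borel map $H_k\colon E[\NN]\to\SM(A)$ which is an $\e_k$-lift of $\vp$ on $\SJ$. Since $\vp$ is bounded, $\norm{\pi(H_k(x))}\le\norm{\vp}+\e_k$ for $x\in E[\NN]_{\le 1}$; uniformizing by Theorem~\ref{thm:BorelUnif} the Borel set $\{(x,b)\in E[\NN]_{\le 1}\times A:\norm{H_k(x)-b}<\norm{\vp}+2\}$, whose $x$-sections are nonempty norm-open balls in $A$, and subtracting the selected $b$ from $H_k$, I may also assume $\norm{H_k(x)}\le\norm{\vp}+2$ on $E[\NN]_{\le 1}$; subtracting elements of $A$ has not changed $\pi\circ H_k$.

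Next I would telescope. I build Borel maps $\tilde H_k$ with $\pi\circ\tilde H_k=\pi\circ H_k$ and $\norm{\tilde H_{k+1}(x)-\tilde H_k(x)}<3\e_k$ whenever $S\in\SJ$ and $x\in E[S]_{\le 1}$, by setting $\tilde H_0=H_0$ and, given $\tilde H_k$, noting that for such $S,x$ both $\pi(\tilde H_k(x))$ and $\pi(H_{k+1}(x))$ are within $\e_k$ of $\vp(\pi(x))$, so $\operatorname{dist}(H_{k+1}(x)-\tilde H_k(x),A)=\norm{\pi(H_{k+1}(x))-\pi(\tilde H_k(x))}<2\e_k$; hence the Borel set $\{(x,a)\in E[\NN]\times A:\norm{H_{k+1}(x)-a-\tilde H_k(x)}<3\e_k\}$ has a nonempty norm-open $x$-section for each such $x$, and Theorem~\ref{thm:BorelUnif} gives a Borel selector $a_k$ (extended by $0$ off its domain) with which I put $\tilde H_{k+1}=H_{k+1}-a_k$. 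Then $(\tilde H_k(x))_k$ is norm-Cauchy for $S\in\SJ$, $x\in E[S]_{\le 1}$, so I let $G(x)$ be its limit when $\norm{x}\le 1$ and the limit exists and $G(x)=0$ otherwise; the $\tilde H_k$ are uniformly bounded and Borel on $E[\NN]_{\le 1}$, so $G$ is Borel, and $\pi(G(x))=\lim_k\pi(H_k(x))=\vp(\pi(x))$ for $S\in\SJ$, $x\in E[S]_{\le 1}$. Thus $G$ is a Borel --- hence $\Cmeas$-measurable --- exact lift of $\vp$ on $\SJ$, and Lemma~\ref{lemma:C->asymptotic} applied to $G$ and the nonmeager ideal $\SJ$ produces the required asymptotically additive (in fact, sum-of-three-block-diagonal) lift.

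The delicate point is the bookkeeping that keeps every function Borel through the iteration. The key is to correct by subtracting an element of $A$, which is Polish in its \emph{norm} topology, so that the sections to be uniformized are norm-open balls and Theorem~\ref{thm:BorelUnif} applies without leaving the Borel class; correcting by an arbitrary element of $\SM(A)_{\le r}$, which is Polish only in the strict topology (in which norm balls can be meager), would force the use of the $\Cmeas$-measurable uniformization of Theorem~\ref{thm:JVN}, whose output is not Borel and would degrade under iteration. One also has to check that $\pi$ and the relevant norms are Borel on norm-bounded, strictly-topologized balls, which follows from writing $\norm{z}=\sup\{\norm{za}:a\in A_{\le 1}\}$ and using the strict continuity of $z\mapsto za$ for fixed $a\in A$.
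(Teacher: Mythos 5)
Your proof is correct, and it starts and ends exactly where the paper's does: Lemma~\ref{lemma:sigma-baire->borel} (applied to the constant sequence) turns the hypothesis into Borel $\e$-lifts $H_k$, and Lemma~\ref{lemma:C->asymptotic} converts one exact $\Cmeas$-measurable lift on $\SJ$ into an asymptotically additive one. The step in the middle is genuinely different, though. The paper builds the single Borel relation
\[
  \Gamma=\set{(x,y)}{\forall n\ \norm{\pi(y-G_{1/n}(x))}\le 1/n},
\]
whose $x$-sections are exactly the representatives of $\vp(\pi(x))$ for $x\in E[S]_{\le 1}$, $S\in\SJ$, and applies Jankov--von Neumann (Theorem~\ref{thm:JVN}) once, accepting a $\Cmeas$-measurable rather than Borel selector. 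You instead telescope, correcting at each stage by an element of $A$ chosen via Theorem~\ref{thm:BorelUnif}, and your remark about working with $A$ in its \emph{norm} topology is precisely what keeps the sections nonmeager and the selectors Borel; the resulting Cauchy sequence then produces a genuinely Borel exact lift. This is a mild sharpening of what the paper obtains, bought at the cost of an infinite iteration and the attendant bookkeeping; since Lemma~\ref{lemma:C->asymptotic} only requires $\Cmeas$-measurability, the paper's one-shot uniformization is shorter. One small slip: the bound $\norm{\pi(H_k(x))}\le\norm{\vp}+\e_k$ is only guaranteed for $x\in E[S]_{\le 1}$ with $S\in\SJ$, not for every $x\in E[\NN]_{\le 1}$; but since Theorem~\ref{thm:BorelUnif} tolerates empty sections and you only ever use the corrections on $\bigcup_{S\in\SJ}E[S]_{\le 1}$, this does not affect the argument.
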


 \begin{proof}
 By Lemma~\ref{lemma:sigma-baire->borel} (setting $F_n = G_\e$ for each $n$) we may assume that $G_\e$ is actually Borel-measurable. Define
 \[
 \Gamma = \set{(x,y)}{\forall n\in\NN\quad \norm{\pi(y - G_{1/n}(x))} \le 1/n}
 \]
Since $A$ is Borel in $\SM(A)$, by the same argument in \cite[Lemma 17.4.3]{Farah.Book} (see also \cite[\S17.6]{Farah.Book}), $\Gamma$ is a Borel set. Moreover, if $S\in\CJ$ and $x\in E[S]$, then for any $y$ such that $\pi(y) = \varphi(\pi_E(x))$, we have $(x,y)\in \Gamma$. Let $F$ be a $\Cmeas$-measurable uniformization of $\Gamma$ given by the Jankov-von Neumann theorem (Theorem~\ref{thm:JVN}). Then $F$ is a lift of $\varphi$ on $\CJ$; hence Lemma~\ref{lemma:C->asymptotic} implies that there is an asymptotically additive lift of $\varphi$ on $\CJ$.
 \end{proof}

 \begin{corollary} \label{corollary:epsilon-C->C}
 $\bigcap_{\e > 0} \CI^\e_C = \CI$.
 \end{corollary}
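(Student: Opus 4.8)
The inclusion $\SI\subseteq\bigcap_{\e>0}\SI^\e_C$ is immediate: an exact asymptotically additive lift is in particular an $\e$-lift, and (as noted in the excerpt, just before the proof that the $\SI^\e_C$ are ideals) it may be replaced by a $\Cmeas$-measurable one, so $\SI=\SI^0\subseteq\SI^0_C\subseteq\SI^\e_C$ for every $\e>0$. The content is the reverse inclusion, and the plan is to reduce it, by relabelling the index set, to the case $S=\NN$, where it becomes a direct application of Lemma~\ref{lemma:epsilon-baire->asymptotic}.

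So fix $S\in\bigcap_{\e>0}\SI^\e_C$. If $S$ is finite, then $E[S]\subseteq\bigoplus E_n$, so $\varphi$ annihilates $\pi(E[S])$ and the zero map $E[\NN]\to\SM(A)$ (trivially asymptotically additive, say with intervals $[n,n+1]$) lifts $\varphi$ on $E[S]$; hence $S\in\SI$. Assume then that $S$ is infinite, enumerate it as $S=\{s_0<s_1<\cdots\}$, and work with the reindexed sequence of finite-dimensional Banach spaces $\{E_{s_k}\}_{k\in\NN}$, keeping $A$ and $\{e_n\}$ fixed. The coordinate restriction $\rho\colon E[\NN]\to\prod_k E_{s_k}$, $\rho(x)_k=x_{s_k}$, and the zero-extension $\iota\colon\prod_k E_{s_k}\to E[\NN]$, with image $E[S]$, are continuous and descend to the reduced products; let $\varphi_S$ be the composition of $\varphi$ with the embedding $\prod_k E_{s_k}/\bigoplus_k E_{s_k}\hookrightarrow\prod_n E_n/\bigoplus_n E_n$ induced by $\iota$. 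Then $\varphi_S$ is bounded and linear, and it preserves the coordinate structure with witnessing contractions $p^S_U:=p_{\{s_k\,:\,k\in U\}}$: the five clauses follow at once from the fact that $U\mapsto\{s_k:k\in U\}$ respects finiteness, disjointness and unions. Moreover, composing any $\Cmeas$-measurable $\e$-lift $G_\e$ of $\varphi$ on $E[S]$ with the continuous map $\iota$ produces a $\Cmeas$-measurable $\e$-lift of $\varphi_S$ on all of $\prod_k E_{s_k}$; so, for the reindexed data, the full index set $\NN$ belongs to $\SI^\e_C(\varphi_S)$ for every $\e>0$.

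Now it suffices to handle $S=\NN$. Since $\prod_{k\in T}E_{s_k}\subseteq\prod_k E_{s_k}$ for every $T\subseteq\NN$, the maps $G_\e\circ\iota$ are Baire-measurable $\e$-lifts of $\varphi_S$ on the (improper, but non-meager) ideal $\SP(\NN)$. Applying Lemma~\ref{lemma:epsilon-baire->asymptotic} to $\varphi_S$ with $\SJ=\SP(\NN)$ — its proof uses only non-meagerness of $\SJ$ — yields an asymptotically additive lift $\alpha_S$ of $\varphi_S$ on $\SP(\NN)$, in particular on $\prod_k E_{s_k}$. Pulling back along $\rho$ — set $\beta_n=(\alpha_S)_k$ when $n=s_k$ and $\beta_n=0$ otherwise, reindex the corresponding intervals, and pad the coordinates outside $S$ with trivial intervals $[n,n+1]$ so that $\min(I_n)\to\infty$ persists — gives an asymptotically additive $\beta\colon E[\NN]\to\SM(A)$ equal to $\alpha_S\circ\rho$. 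Since $\iota(\rho(x))=x$ for $x\in E[S]$, we get $\pi(\beta(x))=\pi(\alpha_S(\rho(x)))=\varphi_S(\pi(\rho(x)))=\varphi(\pi(x))$, so $\beta$ witnesses $S\in\SI$.

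The only genuinely delicate point is the bookkeeping in the reduction: verifying that $\varphi_S$ inherits the standing hypotheses of the lifting machinery, and that asymptotic additivity passes between $\varphi$ on $E[S]$ and $\varphi_S$ on the reindexed product in both directions. Everything else is formal. One can avoid invoking Lemma~\ref{lemma:epsilon-baire->asymptotic} for the improper ideal $\SP(\NN)$ by observing that $\SP(S)$ is already non-meager when $\NN\setminus S$ is finite, treating that case directly, and absorbing the genuinely co-infinite case into the reindexing above; but the uniform treatment via $\SP(\NN)$ is cleaner.
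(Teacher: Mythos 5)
Your proof is correct and follows the same basic route as the paper's, namely applying Lemma~\ref{lemma:epsilon-baire->asymptotic}, but you have identified and repaired a real imprecision in the paper's one-line proof (``apply Lemma~\ref{lemma:epsilon-baire->asymptotic} with $\SJ=\SP(S)$''). When $S$ is coinfinite --- which is exactly the case that matters, since in the proof of Theorem~\ref{thm:lifting} the corollary is invoked for members of an uncountable almost-disjoint family --- the family $\SP(S)$ is closed and nowhere dense, hence \emph{meager}, in $\SP(\NN)$, so the lemma's hypothesis fails as literally stated. Your fix (relabel the index set via the order isomorphism $k\mapsto s_k$; check that the transported $\varphi_S$ is still bounded, linear, and preserves the coordinate structure with witnesses $p^S_U = p_{\{s_k:k\in U\}}$; apply the lemma over the new index set with $\SJ$ the full power set; and transport the resulting asymptotically additive lift back along $\rho$, padding the intervals off $S$ so that $\min(I_n)\to\infty$ persists) is the intended, if unstated, reading of the authors' proof, and you have spelled it out carefully. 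Your side remark that $\SP(\NN)$ is improper, but that properness is nowhere used in the proofs of Lemmas~\ref{lemma:sigma-baire->borel}, \ref{lemma:C->asymptotic}, and \ref{lemma:epsilon-baire->asymptotic} --- only hereditariness, closure under finite unions, and nonmeagerness of $\SJ$ are needed --- is also correct. The net effect is that your argument is a strictly more careful version of the paper's.
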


 \begin{proof}
By the remark preceding Lemma~\ref{lemma:ideals}, we have that $\CI\subseteq \CI^\e_C$ for all $\e$, so we only need to show the converse inclusion. Given $S\in \bigcap_{\e > 0} \CI^\e_C$, apply Lemma~\ref{lemma:epsilon-baire->asymptotic} with $\CJ = \SP(S)$. This shows that $S\in\CI$.
 \end{proof}

We are ready to prove Theorem~\ref{thm:lifting}, which asserts that, if we are given a separable $\Cstar$-algebra $A$ and a sequence of finite-dimensional Banach spaces $E_n$, then if  $\varphi\colon\prod E_n/\bigoplus E_n\to \SQ(A)$ is a bounded linear map which preserves the coordinate structure, then, under $\OCA$ and $\MA_{\aleph_1}$, one can find an asymptotically additive (Definition~\ref{defin:asympt}) map $\alpha\colon\prod E_n\to\SM(A)$ which is a lift for $\varphi$ on a ccc/Fin ideal.

 \begin{proof}[Proof of Theorem~\ref{thm:lifting}]
 Suppose the first alternative of Lemma~\ref{lemma:oca->alternative} holds; that is, there is an uncountable, treelike, almost disjoint family $\SA\subseteq\SP(\NN)$ which is disjoint from $\CI$. By Corollary~\ref{corollary:epsilon-C->C}, there is an $\e > 0$ such that $\CI_C^\e$ is disjoint from an uncountable subset of $\SA$. Without loss of generality, we may assume $\CI_C^\e$ and $\SA$ are disjoint. By $\MA_{\aleph_1}$, there is an uncountable  almost disjoint family $\SB$ such that for every $T\in\SB$, there are infinitely many $S\in\SA$ such that $S\subseteq^* T$. This follows from the fact that under Martin's Axiom there cannot be $(\aleph_0,\aleph_1)$-gaps in $\mathcal P(\omega)$, see \cite[III.3.82]{Kunen.2011}. By \cite[Lemma~2.3]{Velickovic.OCAA}, there is an uncountable $\SC\subseteq\SB$ and, for each $T\in\SC$, a partition $T = T_0\cup T_1$, such that for each $i = 0,1$, the family $\SC_i = \set{T_i}{T\in\SC}$ is treelike. By applying Lemma~\ref{lemma:oca->treelike} to $\SC_0$ and $\SC_1$, for all but countably elements $T\in\SC$ there are $\sigma$-$\e/4$-lifts of $\varphi$ on $E[T_0]$ and $E[T_1]$, consisting of $\Cmeas$-measurable functions. Since $E[T]=E[T_0]+E[T_1]$, as in the proof of Lemma~\ref{lemma:ideals} we can show that for all but countably many $T\in\SC$ there are $\sigma$-$\e/4$-lifts of $\varphi$ on $E[T]$, consisting of $\Cmeas$-measurable functions. Fix such a $T$. Let $\{S_n\}$ be sets such that, for all $n\in\NN$, $S_n\in\SA$ and $S_n\subseteq^* T$. Since there are $\sigma$-$\e/4$-lifts of $\varphi$ on $E[\bigcup S_n\cap T]$, by Lemma~\ref{lemma:sigma->single} there is $n\in \NN$ such that $S_n\cap T\in\CI_C^\e$. Since $S_n\subseteq^*T$, and $S_n\in\SA$ we have a contradiction.

 Thus the first alternative of Lemma~\ref{lemma:oca->alternative} fails, hence $\CI$ meets every uncountable treelike almost disjoint family, and the second alternative of Lemma~\ref{lemma:oca->alternative} holds. 
 Hence, by Lemma~\ref{lemma:sigma-baire->borel}, for every $\e > 0$ there is a Borel-measurable map which is an $\e$-lift of $\varphi$ on $E[S]$ for every $S\in\CI$. By Lemma~\ref{lemma:epsilon-baire->asymptotic}, it follows that there is an asymptotically additive function which lifts $\varphi$ on $\CI$, and moreover $\alpha$ is the sum of three block diagonal functions.

 Finally, we show that $\CI$ is not only nonmeager but ccc/Fin. (Recall the definition of ccc/Fin, \S\ref{subsec:DSTandIdeals}). Let $\SA$ be an arbitrary uncountable almost disjoint family. By \cite[Lemma~2.3]{Velickovic.OCAA}, there is an uncountable $\SB\subseteq\SA$ and, for each $S\in\SB$, a partition $S = S_0\cup S_1$, such that for each $i = 0,1$, the family $\SB_i = \set{S_i}{S\in\SB}$ is treelike. Since $\CI$ contains uncountably many elements of $\SB_0$ and $\SB_1$, there is $S\in\SB$ such that $S_0,S_1\in\CI$, hence $S\in\CI$, that is, $\CI$ meets $\SB$; since $\SA$ is arbitrary, $\CI$ is ccc/Fin.
 \end{proof}

 \section{A lifting theorem II: Proofs} \label{sec:lemma_proofs}
The notation for this section is the one fixed in Notation~\ref{notation2}: in particular the algebra $A$, its approximate identity $\{e_n\}$, and the finite-dimensional Banach spaces $\{E_n\}$ are fixed. Equally, we fix a linear bounded map 
\[
\varphi\colon\prod E_n/\bigoplus E_n\to\SQ(A)
\] which preserves the coordinate structure. The positive elements $p_S$, for $S\subseteq\NN$, are chosen  accordingly in $\SM(A)$ (see Definition~\ref{defin:preserves}), and fixed for the entire section.

 For each $n$ we fix a set $X_n$ such that $X_n$ is a finite, $2^{-n}$-dense subset of the unit ball of $E_n$, and $0,1_{E_n}\in X_n$. We call $X_n$ the \emph{skeleton} of $E_n$. We define $\rho_n\colon (E_n)_{\le 1}\to X_n$ to be the map which is the identity on $X_n$ and which sends $x\in (E_n)_{\le 1}$ to the first element of $X_n$ (according to some fixed linear order) which is within $\leq 2^{-n}$ of $x$. Note that $\rho_n\circ \rho_n = \rho_n$. Moreover, $\rho_n$ is Borel-measurable. Let $\rho=\prod\rho_n$.  Clearly, for all $x\in E[\NN]$ with $\norm{x}\le 1$, $\pi(x) = \pi(\rho(x))$.

If $\alpha_n\colon E_n\to A$ are maps with the property that for all sequences $(x_n)\in E[\NN]$ the sequence $(\sum_{m\leq n}\alpha_m(x_m))_n$ strictly converges in $\SM(A)$, we write
\[
\sum\alpha_n\colon E[\NN]\to\SM(A)
\]
for the map sending $x$ to the strict limit of $(\sum_{m\leq n}\alpha_m(x_m))_n$.

 \begin{definition}\label{defin:skeletal}
Let $A$ be a $\Cstar$-algebra and $n\in\NN$. A map $\alpha_n\colon E_n\to A$ is \emph{skeletal} if for all $x\in E_n$ with $\norm{x}\le 1$, we have $\alpha_n(x) = \alpha_n(\rho_n(x))$, where the $\rho_n$'s are the Borel functions choosing elements of $X_n$ defined in the previous paragraph. If $\sum\alpha_n\colon E[\NN]\to\SM(A)$, with $\alpha_n\colon E_n\to A$, we say that $\sum\alpha_n$ is skeletal if each $\alpha_n$ is.
 \end{definition}

 \begin{lemma} \label{lemma:skeletal-replacement}
Working in the setting of Notation~\ref{notation2}, let $\e\geq 0$. Suppose $F$ is an $\e$-lift of $\varphi$ on a set $Z$ with the property that $Z+\bigoplus E_n\subseteq Z$. Then the function $G = F\circ \rho$ is a skeletal $\e$-lift of $\varphi$ on $Z\cap (E[\NN])_{\le 1}$. If $F$ is asymptotically additive, so if $G$.
 \end{lemma}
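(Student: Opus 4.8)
The plan is to reduce all three assertions to two elementary properties of the coordinatewise retraction $\rho=\prod\rho_n$: (i) $\pi(x)=\pi(\rho(x))$ for every $x\in(E[\NN])_{\le 1}$, and (ii) $\rho$ is idempotent, $\rho\circ\rho=\rho$. For (i): if $\norm{x}\le 1$ then each $x_n\in (E_n)_{\le 1}$, so $\rho_n(x_n)\in X_n$ is defined, lies in $(E_n)_{\le 1}$, and $\norm{x_n-\rho_n(x_n)}\le 2^{-n}$; hence $\rho(x)\in (E[\NN])_{\le 1}$ and $x-\rho(x)\in\bigoplus E_n$, so $\pi(x)=\pi(\rho(x))$ and therefore $\varphi(\pi(x))=\varphi(\pi(\rho(x)))$. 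Property (ii) is immediate from $\rho_n\circ\rho_n=\rho_n$; moreover, since $0\in X_n$ forces $\rho_n(0)=0$, the map $\rho$ sends each coordinate subspace $E[S]$, intersected with the unit ball, into itself — this is the relevant case, as the sets $Z$ to which the lemma is applied are always of this form.

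For the $\e$-lift claim, fix $x\in Z\cap(E[\NN])_{\le 1}$. Then $\rho(x)$ again lies in $Z\cap(E[\NN])_{\le 1}$, so, $F$ being an $\e$-lift of $\varphi$ on $Z$, we have $\norm{\pi(F(\rho(x)))-\varphi(\pi(\rho(x)))}\le\e$; combining this with $G(x)=F(\rho(x))$ and $\varphi(\pi(\rho(x)))=\varphi(\pi(x))$ from (i) gives $\norm{\pi(G(x))-\varphi(\pi(x))}\le\e$, as desired. Skeletality of $G$ is precisely the identity $G\circ\rho=G$, which follows from $F\circ\rho\circ\rho=F\circ\rho$ by (ii) (read coordinatewise once a decomposition $G=\sum G_n$ is fixed, as in the next paragraph and Definition~\ref{defin:skeletal}).

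For preservation of asymptotic additivity, suppose $F=\sum_n F_n$ is witnessed by finite intervals $I_n\subseteq\NN$ with $\min(I_n)\to\infty$ and maps $F_n\colon E_n\to e_{I_n}Ae_{I_n}$ whose partial sums converge strictly to $F$. Set $G_n=F_n\circ\rho_n$ (extending $\rho_n$ to all of $E_n$ by first normalizing, which keeps it Borel, bounded, and idempotent). Each $G_n$ still maps into $e_{I_n}Ae_{I_n}$ and is skeletal by idempotence of $\rho_n$; the intervals $I_n$ are unchanged, so still $\min(I_n)\to\infty$; and for $x$ in the unit ball $\sum_n G_n(x_n)=\sum_n F_n(\rho_n(x_n))$ is exactly the defining sum of $F$ evaluated at $\rho(x)$, hence converges strictly to $F(\rho(x))=G(x)$. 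This exhibits $G$ as asymptotically additive (and the behaviour of the extended $G_n$ outside the unit ball is irrelevant, since all of our lifting statements concern the unit ball only).

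I do not expect a genuine obstacle here; this lemma is routine. The one point that needs attention is the bookkeeping around domains — $\rho$ is a priori defined only on $(E[\NN])_{\le 1}$, and the step $\rho(x)\in Z$ in the $\e$-lift argument uses the retraction property $\rho[Z\cap(E[\NN])_{\le 1}]\subseteq Z$, which holds automatically (via $\rho_n(0)=0$) for the coordinate subspaces $Z=E[S]$ that appear throughout.
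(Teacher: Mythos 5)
The paper states this lemma without proof, so there is no argument of the authors' to compare against; you have to be judged on the merits. Your proof is correct and, as you say, routine, but you do catch the one genuinely delicate point: the lemma as literally stated (``some set $Z$'') is false without the retraction hypothesis $\rho[Z\cap (E[\NN])_{\le 1}]\subseteq Z$, since otherwise $F(\rho(x))$ for $x\in Z$ is an evaluation of $F$ at a point where nothing is known. You correctly observe that this hypothesis is automatic for the sets $Z=E[S]$ (and $X[S]$) to which the lemma is actually applied, via $\rho_n(0)=0$, so this is an imprecision in the statement rather than a defect in the mathematics. The two facts you isolate — $\pi(x)=\pi(\rho(x))$ on the unit ball (because $\norm{x_n-\rho_n(x_n)}\le 2^{-n}\to 0$ puts $x-\rho(x)\in\bigoplus E_n$) and $\rho\circ\rho=\rho$ — are exactly what is needed, and your handling of the asymptotic-additivity claim is clean: with $G_n=F_n\circ\rho_n$ (after the harmless normalization extending $\rho_n$ off the unit ball), the defining series for $G$ at $x$ is word-for-word the defining series for $F$ at $\rho(x)$, so strict convergence and the witnessing intervals $I_n$ carry over verbatim, and the coordinatewise skeletality of each $G_n$ is just idempotence of $\rho_n$. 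The only further remark worth making is that, as you implicitly note, ``skeletal'' in Definition~\ref{defin:skeletal} is formally a property of a map equipped with a decomposition $\sum\alpha_n$; for a bare $F$ with no such decomposition, the first sentence of the lemma should be read as saying $G\circ\rho=G$, which is how it is used.
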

 \begin{proof}
Fix $z\in Z$, and write $z=(z_n)_n$ with $z_n\in E_n$. If $F\colon\prod E_n\to\SM(A)$ is an $\e$-lift of $\varphi$, by definition, we have that $\norm{\pi(F(z))-\varphi(\pi_E(z))}<\varepsilon$. Since $z-\rho(z)\in\bigoplus E_n$, then $\rho(z)\in Z$, hence $F$ is an $\e$-lift on $\rho(z)$, that is, $\norm{\pi(F(\rho(z)))-\varphi(\pi_E(\rho(z)))}<\varepsilon$. As $\varphi(\pi_E(\rho(z)))=\varphi(\pi_E(z))$, and $G(z)=F\circ\rho(z)$, we have the thesis.

For the second statement, notice that if $F$ is asymptoptically additive, so is $F\restriction\ran(\rho)$.
 \end{proof}

 We write $X[S]$ for the product $\prod_{n\in S} X_n$, viewed as a subset of $E[S]$. We view $X[\NN]$ as the set of branches through a finitely-branching tree, with the Cantor-space topology. We also view elements of $X[S]$ as functions with domain $S$. Hence if $S\subseteq T$, $x\in X[S]$ and $y\in X[T]$, then $x\subseteq y$ means that $y$ extends $x$, or in other words that $x_n = y_n$ whenever $n\in S$. In this situation we write $y\rs S = x$. If $x$ and $y$ are elements of $X[\NN]$ with  a common extension we denote the minimal one by $x\cup y$. This happens if and only if $x_n=y_n$ whenever $n\in\supp(x)\cap\supp(y)$. We make frequent use of Proposition~\ref{prop:talagrand} with the spaces $X[S]$ and $\SP(\NN)$.

We want to prove Lemma~\ref{lemma:oca->treelike} above. Its proof takes its form from \cite[Lemma 2.2]{Velickovic.OCAA}, already generalized to the continuous setting in \cite[Lemma 7.2]{Farah.C} (see also \cite[Lemma 4.6]{McKenney.UHF}). 
\begin{remark}
The reader familiar with Veli\v{c}kovi\`c's argument should note that our proofs requires additional indexing, and that it is not enough to choose the homogeneous sets $\SH_m$ in the `obvious way', after the proper partition has been defined. A further refinement had to be made: this was done in order to accommodate for the fact that the sets $e_n\SM(A)e_n$ are not compact. (This is the case if  $A=\mathcal K(H)$, or $A=\bigoplus A_n$ where each $A_n$ is finite-dimensional). This further layer of difficulty  makes more difficult the (crucial) definition of the sets $\SF_{k,j,n,m}$, which now need additional indexes. On the other hand the pay up is higher, as we are able to get C-measurable lifts for maps between quotients over ideals which are not $\sigma$-compact in any meaningful way.
\end{remark}

For $a,b\in\SM(A)$ and $\e>0$, we write $a\sim_\e b$ for $\norm{a-b}<\e$ and $a\sim_{m,\e} b$ for $\norm{(1-e_m)(a-b)(1-e_m)}<\e$.

 \begin{lemma}\label{lemma:oca->treelike2}
Working in the setting of Notation~\ref{notation2}, assume $\OCA$. Let $\e > 0$ and let $\SA\subseteq \SP(\NN)$ be a treelike almost-disjoint family. Then for all but countably many $S\in\SA$, there is a $\sigma$-$3\e$-lift of $\varphi$ on $E[S]$ consisting of C-measurable functions.
 \end{lemma}
\begin{proof}

 Fix $\e > 0$ and a treelike, almost-disjoint family $\SA\subseteq\SP(\NN)$. Let $F$ be an arbitrary lift of $\varphi$ such that $\norm{F(x)}\le \norm{\varphi} \norm{x}$ for all $x\in E[\NN]$. First of all, notice that by Lemma~\ref{lemma:skeletal-replacement}, it is enough to find a $\sigma$-$\e$-lift on $X[S]$ for all but countably many elements of $\SA$.

 Let $f\colon \NN\to 2^{<\omega}$ be a bijection witnessing that $\SA$ is treelike, and for each $S\subseteq\NN$, let $\tau(S) = \bigcup f[S]$, the branch of $2^\NN$ containing the image of $S$. Note that if $T\in\SA$ and $S$ is an infinite subset of $T$, then $\tau(S) = \tau(T)$.

 Let $\SR$ be the set of all pairs $(S,x)$ such that for some $T\in\SA$, $S$ is an infinite subset of $T$, and $x\in X[S]$. We define colourings $[\SR]^2 = K_0^m\cup K_1^m$ by $\{(S,x),(T,y)\}\in K_0^m$ if and only if

 \begin{enumerate}[label=($K$-\arabic*)]
 \item\label{K0:branches} $\tau(S)\neq\tau(T)$,
 \item\label{K0:equal} $x\rs (S\cap T) = y\rs (S\cap T)$, and
 \item\label{K0:unequal} $F(x) p_T \not\sim_{m,\e} p_S F(y)$ or $p_T F(x) \not\sim_{m,\e} F(y)p_S$.
 \end{enumerate}

 Note that $K_0^m\supseteq K_0^{m+1}$ for every $m$. We give $\SR$ the separable metric topology obtained by identifying $(S,x)\in\SR$ with the tuple
 \[
 (S,\tau(S),x,F(x),p_S)\in \SP(\NN)\times \SP(\NN)\times X[\NN]\times \SM(A)_{\le \norm{\varphi}} \times \SM(A)_{\le 1}
 \]
 where $\SM(A)$ is given the strict topology, and $\SP(\NN)$ the usual Cantor set topology.
The following claim is the reason for which we require $\SA$ to be treelike, and why \ref{K0:branches} is necessary in the definition of $K_0^m$.
 \begin{claim}\label{claimK0open}
 For each $m$, $K_0^m$ is open.
 \end{claim}
 \begin{proof}
 Suppose $\{(S,x),(T,y)\}\in K_0^m$. By~\ref{K0:branches}, there is $n\in\NN$ such that $\tau(S)\rs n\neq \tau(T)\rs n$. Let $s = f^{-1}(2^n)$; then $S\cap T\subseteq s$. By~\ref{K0:unequal} we may choose $p\in\NN$ and $\delta > 0$ such that
 \[
 \norm{e_p(1 - e_m)(F(x) p_T - p_S F(y))(1 - e_m)} > \e + \delta
 \]
 or
 \[
 \norm{e_p(1 - e_m)(p_T F(x) - F(y) p_S)(1 - e_m)} > \e + \delta.
 \]
 Let $U$ be the set of pairs $\{(\bar{S},\bar{x}),(\bar{T},\bar{y})\}$ in $[\SR]^2$ such that
 $S\cap s = \bar{S}\cap s$, $T\cap s = \bar{T}\cap s$, $x\rs s = \bar{x}\rs s$, $y\rs s = \bar{y}\rs s$, and
 \begin{align*}
 \norm{e_p(1 - e_m) p_T (F(x) - F(\bar{x}))(1 - e_m)} & + \norm{e_p(1 - e_m) F(\bar{y}) (p_S - p_{\bar{S}})(1 - e_m)} \\
 + \norm{e_p(1 - e_m) p_S (F(y) - F(\bar{y})) (1 - e_m)} & + \norm{e_p(1 - e_m)F(\bar{x})(p_T - p_{\bar{T}}) (1 - e_m)} <
\delta
 \end{align*}
 Then $U$ is an open neighbourhood of the pair $\{(S,x),(T,y)\}$, and moreover $U\subseteq K_0^m$.
 \end{proof}

 Recall that for $a$ and $b$ in $2^\NN$, $\Delta(a,b) = \min\set{n}{a(n)\neq b(n)}$.
 \begin{claim}
 The first alternative of $\OCA_\infty$ fails for the colours $K_0^m$ ($m\in\NN$), that is, if $Z\subseteq 2^\NN$ is uncountable and $\zeta \colon Z\to \SR$ is an injection, then there exist distinct $a$ and $b$ in $2^\NN$ such that $\{\zeta(a),\zeta(b)\}\in K_1^{\Delta(a,b)}$.
 \end{claim}
 \begin{proof}
 Let $\zeta\colon Z\to \SR$ be an injection, where $Z\subseteq 2^\NN$ is uncountable, and suppose for sake of contradiction that $\{\zeta(a),\zeta(b)\}\in K_0^{\Delta(a,b)}$ for all distinct $a$ and $b$ in $Z$. Let $\SH = \zeta[Z]$, and let $z\in X[\NN]$ be such that $z\rs S = x$ for all $(S,x)\in\SH$. (Such a $z$ exists by~\ref{K0:equal}.) Then for all $(S,x)\in\SH$, we have
 \[
 \pi(F(z)p_S) = \pi(p_SF(z)) = \pi(F(x))
 \]
 and hence there is $m\in\NN$ such that
 \[
 (1 - e_m)F(z)p_S \sim_{\e/2\norm{\varphi}} (1 - e_m)p_SF(z) \sim_{\e/2\norm{\varphi}} (1 - e_m)F(x),
 \]
 and
 \[
 F(z)p_S(1 - e_m) \sim_{\e/2\norm{\varphi}} p_SF(z)(1 - e_m) \sim_{\e/2\norm{\varphi}} F(x)(1 - e_m).
 \]
 We may thus refine $Z$ to an uncountable subset (which we still call $Z$) such that for a fixed
 $m\in\NN$, the above holds for all $(S,x)\in\SH = \zeta[Z]$. Since $Z$ is uncountable, we may find distinct $a$ and $b$ in $Z$ with $\Delta(a,b)\ge m$. Let $(S,x) = \zeta(a)$ and $(T,y) = \zeta(b)$. Then,
 \[
 F(x)p_T \sim_{m,e/2} p_SF(z)p_T \sim_{m,\e/2} p_SF(y)
 \]
 which implies $F(x)p_T \sim_{m,\e} p_SF(y)$. Similarly, we have $p_TF(x) \sim_{m,\e} F(y)p_S$, and this contradicts that
 $\{(S,x),(T,y)\}\in K_0^m$.
 \end{proof}

By our assumption of $\OCA$ (and hence $\OCA_\infty$), there exists a sequence $\SH_m$ ($m\in\NN$) of sets covering $\SR$, such that $[\SH_m]^2\subseteq K_1^m$. For $n\in\NN$, let $\SH_{j,n,m}$, for $j\in\NN$, be sets such that 
\begin{itemize}
\item $\bigcup_{j}\SH_{j,n,m}=\SH_m$ for all $m$ and $n$;
\item if $(S,x)$ and $(S',x')$ are in $\SH_{j,n,m}$, then $e_np_S\sim_\e e_np_{S'}$ and $e_nF(x)\sim_\e e_nF(x')$. 
\end{itemize}
 Such sets can be found since $e_n\SM(A)=e_nA$ is separable. Fix $\SD_{j,n,m}$ a
 countable dense subset of $\SH_{j,n,m}$, and for $k\in \NN$, let $\SF_{k,j,n,m}
 \subseteq \SD_{j,n,m}$ be a finite set with the property that if $(S,x)\in
 \SD_{j,n,m}$ then there is $(S',x')\in \SF_{k,j,n,m}$ with the property that
 $S\cap k=S'\cap k$ and $x\rs k=x'\rs k$. This finite set can be found since both the set of all traces of elements of $\SD_{j,n,m}$, $\{S\cap k\mid S\in \SD_{j,n,m}\}$, and the set of all $x\rs k$ for $x\in X[\NN]$, are finite, being a subset of $\SP(k)$ and $\prod_{i\leq k}X_k$ respectively. Let 
 \[
 \SC=\{\tau(S)\mid S\in\bigcup_{j,n,m}\SD_{j,n,m}\}.
 \]
Then  $\SC$ is countable. We claim that for every $T\in\SA\setminus\SC$  there is a $\sigma$-$3\e$-lift of $\varphi$ on $X[T]$ consisting of C-measurable functions. 
 
Fix such $T$. For $k\in\NN$, let $k^+$ be the minimum natural number greater than $k$ such that 
 \[
 k^+>\max\{T\cap S\mid S\in\bigcup_{j,n,m\leq k}\SF_{k,j,n,m}\}.
 \]
 Since each $T\cap S$, for $S\in\bigcup_{j,n,m\leq k}\SF_{k,j,n,m}$, is finite, and $\bigcup_{j,n,m\leq k}\SF_{k,j,n,m}$ is finite, $k^+$ exists. Let $k_0=0$ and, for $i\in\NN$, $k_{i+1}=k_i^+$. Define 
 \[
 T_0=T\cap \bigcup_{i}[k_{2i},k_{2i+1}) \,\,\,\text{ and } T_1=T\setminus T_0.
 \]
Let $\Lambda_m^0$ be the set of all pairs $(x,z)$ such that $x\in X[T_0]$, $z-zp_{T_0}\in A$, and for all $k$ there is $(S,y)\in \bigcup_{j,n}\SD_{j,n,m}$ with 
\begin{enumerate}
\item $x\rs (S\cap T_0)=y\rs (S\cap T_0)$
\item $S\cap k=T_0\cap k$
\item $e_kp_S\sim_\e e_kp_{T_0}$
\item $e_kF(y)\sim_\e e_kz$.
\end{enumerate}

 Since each $\SD_{j,n,m}$ is countable and each of the conditions (1)-(4) defines a Borel set, $\Lambda^0_m$ is Borel.

\begin{claim}
If $(T_0,x)\in\SH_m$, then $(x,F(x))\in\Lambda_m^0$.
\end{claim} 
\begin{proof}
Fix $n$. Let $j$ such that $(T_0,x)\in \SH_{j,n,m}$. Fix $i$ such that $k_{2i+1}>j,n,m$. By density of $\SD_{j,n,m}$, we can find $(S,y)\in\SD_{j,n,m}$ such that $S\cap k_{2i+1}=T_{0}\cap k_{2i+1}$ and $x\rs k_{2i+1}=y\rs k_{2i+1}$. By definition of $\SF_{k_{2i+1},j,n,m}$, there is $(S',y')\in \SF_{k_{2i+1},j,n,m}$ such that $S\cap k_{2i+1}=S'\cap k_{2i+1}$ and $y\rs k_{2i+1}=y'\rs k_{2i+1}$. Notice that $T\cap S'\subseteq k_{2i+2}$ and $T_0\cap[k_{2i+1},k_{2i+2})=\emptyset$, so $T_0\cap S'\subseteq k_{2i+1}$, that is, 
\[
x\rs (S'\cap T_0)=x\rs k_{2i+1}=y\rs k_{2i+1}=y'\rs k_{2i+1}=y'\rs (S'\cap T_0),
\]
since $S'\cap k_{2i+1}=S\cap k_{2i+1}=T_{0}\cap k_{2i+1}$. Moreover, since $(S',y')\in \SF_{k_{2i+1},j,n,m}\subseteq\SD_{j,n,m}\subseteq\SH_{j,n,m}$, we have that $e_np_{T_0}\sim_\e e_np_{S'}$ and $e_nF(x)\sim_\e e_nF(y')$. Since $n$ is arbitrary, this concludes the proof.
\end{proof}

\begin{claim}
If $(T_0,x)\in\SH_m$ and $z$ is such that $(x,z)\in \Lambda_m^0$ then
$\norm{\pi(p_{T_0}F(x)-zp_{T_0})}\le 3\e$.
\end{claim}
\begin{proof}
It is enough to show that $\norm{(1-e_m)(p_{T_0}F(x)-zp_{T_0})(1-e_m)}\le 3\e$. Suppose not, then there is $n$ such that 
\[
\norm{e_n(1-e_m)(p_{T_0}F(x)-zp_{T_0})(1-e_m)}>3\e.
\]
Since $(x,z)\in\Lambda_m^0$ we may choose $(S,y)\in\SH_m$ such that 
\[
x\rs (S\cap T_0)=y\rs (S\cap T_0),\,\,\ e_np_S\sim_\e e_np_{T_0},\,\,\text{ and }e_nz\sim_\e e_nF(y).
\]
 Notice that since $(T_0,x)$ and $(S,y)$ are in $\SH_m$, by the $K_1^m$-homogeneity of
 $\SH_m$, we have $p_SF(x)\sim_{m,\e} F(y)p_{T_0}$. Then
\begin{eqnarray*}
e_n(1-e_m)p_{T_0}F(x)(1-e_m)&\sim_\e& (1-e_m)e_np_{S}F(x)(1-e_m)\\
&=& e_n(1-e_m)p_{S}F(x)(1-e_m)\\
&\sim_\e&e_n(1-e_m)F(y)p_{T_0}(1-e_m)\\
&=&(1-e_m)e_nF(y)p_{T_0}(1-e_m)\\
&\sim_\e&(1-e_m)e_nzp_{T_0}(1-e_m)\\
&=&e_n (1-e_m)zp_{T_0}(1-e_m).
\end{eqnarray*}

Bringing all together, we have that 
\[3\e<\norm{e_n(1-e_m)(p_{T_0}F(x)-zp_{T_0})(1-e_m)}\leq3\e.
\]
 This is a contradiction.
\end{proof}
The same construction leads to the definition of $\Lambda_m^1$, for $m\in\NN$.

 For each $m\in\NN$ and $i < 2$, let $F_m^i$ be a $\Cmeas$-measurable uniformization of $\Lambda_m^{T_i}$, which exists by Theorem~\ref{thm:JVN}. Define 
 \[
 G_m^i(x) = p_{T_i}F_m^i(x)p_{T_i};
 \]
  then $G_m^i$ is $\Cmeas$-measurable, and by the above claim, for each $i < 2$ and $x\in X[T_i]$, there is $m$ such that $G_m^i(x)$ is defined and
 \[
 \norm{\pi(G_m^i(x)) - \varphi(\pi_E(x))} \le 3\e.
 \]
Let $H_{m,n}(x) = G_m^0(x\rs T_0) + G_n^1(x\rs T_1)$ for $m,n\in\NN$. If $x\in
X[T]$, pick natural numbers $n$ and $m$ such that
 \[
\max\{ \norm{\pi(G_m^0(x\restriction T_0)) - \varphi(\pi_E(x\restriction T_0))},  \norm{\pi(G_n^1(x\restriction T_1)) - \varphi(\pi_E(x\restriction T_1))}\} \le 3\e.
 \]
Notice that, as $G_m^i$ is obtained by cutting $F_m^i$ by $p_{T_i}$, and the images of $p_{T_i}$ are orthogonal in $\SQ(A)$, the elements $\pi(G_m^0(x\restriction T_i)) - \varphi(\pi_E(x\restriction T_i))$, for $i<2$, are orthogonal. Therefore the norm of their sum is equal to their maximum. As $x=x\restriction T_0+x\restriction T_1$, this shows that $\norm{\pi(H_{m,n}(x)) -\varphi(\pi_E(x))}\leq 3\e$, so the functions $H_{m,n}(x)$ form a $\sigma$-$3\e$-lift of $\varphi$ on $X[T]$.
\end{proof}

The following is Lemma~\ref{lemma:sigma->single}.
 \begin{lemma} \label{lemma:sigma->single2}
Working in the setting of Notation~\ref{notation2}, let $\e > 0$ and $S\subseteq\NN$. Suppose that there is a $\sigma$-$\e$-lift of $\varphi$ on $E[S]$, consisting of $\Cmeas$-measurable functions, and that $S = \bigcup S_n$ is a partition of $S$ into infinite sets. Then there is $n$ such that $S_n\in \CI_C^{4\e}$.
 \end{lemma}

 \begin{proof}
Working towards a contradiction, suppose that, for all $n$, $S_n\not\in\CI_C^{4\e}$. Let $F$ be an arbitrary lift of $\varphi$ on $E[\NN]$, and let $F_n$, for $n\in\NN$, be a $\sigma$-$\e$-lift of $\varphi$ on $E[S]$, consisting of $\Cmeas$-measurable functions. Since each $F_n$ is Baire-measurable, it follows that there is a comeager $Y\subseteq X[S]$ on which each $F_n$ is continuous. By Proposition~\ref{prop:talagrand}, there are a partition of $S$ into finite sets $t_i$ ($i\in\NN$), and $z_i\in X[t_i]$, such that $x\in Y$ whenever $x\in X[S]$ and $x\rs t_i = z_i$ for infinitely many $i$. Define 
 \[
 \CT_0 = \bigcup_it_{2i}, \,\,\CT_1 = \bigcup_i t_{2i+1},\,\,z_0^* = \sum_i z_{2i},\text{ and }z_1^* = \sum_i z_{2i+1}.
 \]
 Then the functions
 \[
 F_n'(x) = F_n(x\rs \CT_0 + z_1^*) - F_n(z_1^*) + F_n(x\rs \CT_1 + z_0^*) - F_n(z_0^*)
 \]
 are continuous on $X[S]$, and form a $\sigma$-$2\e$-lift of $\varphi$ on $X[S]$. Throughout the rest of the proof, we write $F_n$ for $F_n'$.

 For each $n$, let $T_n = \bigcup_{m>n} S_m$. We construct sequences
 \begin{itemize}
 \item $x_n\in X[S_n]$,
 \item $T_n^*\subseteq T_n$, and
 \item $z_n\in X[T_n^*]$,
 \end{itemize}
 such that for all $n < m$,
 \begin{enumerate}
 \item $S_n\sm T_m^*\not\in\CI_C^{4\e}$,
 \item $T_n^*\cap T_m\subseteq T_m^*$,
 \item $z_n\rs (T_n^*\cap T_m^*)\subseteq z_m$,
 \item $z_{n-1}\rs (T_{n-1}^*\cap S_n) \subseteq x_n$, and
 \item for all $y\in X[T_n]$, if $y\supseteq z_n$, then
 \[
 \norm{\pi(F_n(x_0\cup\cdots\cup x_n\cup y) - F(x_n))\pi(p_{S_n}))} > 2\e.
 \]
 \end{enumerate}
 The construction goes by induction on $n$. Suppose we have constructed $x_k$, $T_k^*$ and $z_k$ for all $k < n$. For each $x\in X[S_n\sm T_{n-1}^*]$ and $y\in \SM(A)_{\le 1}$, define $\SE_n(x,y)$ to be the set of $z\in X[T_n\sm T_{n-1}^*]$ such that
 \[
 \norm{\pi(F_n(x_0\cup\cdots\cup x_{n-1}\cup x\cup z_{n-1}\cup z) - y)\pi(p_{S_n})} \le 2\e.
 \]
 Since $F_n$ is continuous, $\SE_n(x,y)$ is Borel for each $x$ and $y$.

 \begin{claim}
 There is $x\in X[S_n\sm T_{n-1}^*]$ such that $\SE_n(x,F(x))$ is not comeager in $X[T_n\sm T_{n-1}^*]$.
 \end{claim}
 \begin{proof}
 Suppose otherwise. Let $\SR$ be the set of $(x,y)\in X[S_n\sm T_{n-1}^*]\times \SM(A)_{\le 1}$ such that $\SE_n(x,y)$ is comeager. Then $\SR$ is analytic and hence has a $\Cmeas$-measurable uniformization $G$. Hence for all $x\in X[S_n\sm T_{n-1}^*]$, $F(x)\cap G(x)\neq \emptyset$, and this implies
 \[
 \norm{\pi((F(x) - p_{S_n}G(x))p_{S_n})}\le 4\e
 \]
 which means that $x\mapsto p_{S_n}G(x)p_{S_n}$ is a $\Cmeas$-measurable $4\e$-lift of $\varphi$ on $X[S_n\sm T_{n-1}^*]$, contradicting our induction hypothesis.
 \end{proof}

 Let $x\in X[S_n\sm T_{n-1}^*]$ be such that $\SE_n(x,F(x))$ is not comeager, and let
 \[
 x_n = x\cup (z_{n-1}\rs (T_{n-1}^*\cap S_n)).
 \]
 Since $\SE_n(x,F(x))$ is Borel, there is a finite $a\subseteq T_n\sm T_{n-1}^*$ and $\sigma\in X[a]$ such that the set of $z\in\SE_n(x,F(x))$ extending $\sigma$ is meager. Applying Proposition~\ref{prop:talagrand}, we may find a partition of $T_n\sm (a\cup T_{n-1}^*)$ into finite sets $s_i$, and $u_i\in X[s_i]$, such that for any $z\in X[T_n\sm T_{n-1}^*]$, if $z$ extends $\sigma$ and infinitely many $u_i$, then $z\not\in\SE_n(x,F(x))$.

 \begin{claim}
 There is an infinite set $I\subseteq\NN$ such that
 \[
 S_m\sm (T_{n-1}^*\cup \bigcup_{i\in I} s_i)\not\in\CI^{4\e}_C
 \]
 for all $m > n$.
 \end{claim}
 \begin{proof}
 Recursively construct infinite sets $J_{n+1}\supseteq J_{n+2}\supseteq\cdots$ such that for each $m > n$,
 \[
 S_m\sm (T_{n-1}^*\cup \bigcup_{i\in J_m} s_i)\not\in\CI^{4\e}_C
 \]
 using the fact that $S_m\sm T_{n-1}^*\not\in\CI^{4\e}_C$ for all $m > n$. Any infinite $I\subseteq \NN$ such that $I\subseteq^* J_m$ for all $m > n$ satisfies the claim.
 \end{proof}

 Let $I$ be as in the claim, and put 
 \[
 T_n^* = T_n\cap (T_{n-1}^*\cup \bigcup_{i\in I} s_i).
 \] 
 Let 
 \[
 z_n = (z_{n-1}\rs (T_{n-1}^*\cap T_n))\cup \bigcup_{i\in I} u_i.
 \]
  This completes the construction.

 Now we let $x = \bigcup_n x_n$. Then $x\in X[S]$, and hence there is $n$ such that
 \[
 \norm{\pi(F_n(x) - F(x))}\le 2\e.
 \]
 Notice that if $y = \bigcup_{m>n} x_m$, then $x = x_0\cup\cdots\cup x_n\cup y$, $y\in X[T_n]$, and $y$ extends $z_n$; hence
 \[
 \norm{\pi(F_n(x) - F(x_n))\pi(p_{S_n})} > 2\e.
 \]
 But $\pi(F(x))\pi(p_{S_n}) = \pi(F(x_n))$. This is a contradiction.
 \end{proof}
The following is Lemma~\ref{lemma:oca->alternative}.
 \begin{lemma}\label{lemma:oca->alternative2}
Working in the setting of Notation~\ref{notation2}, assume $\OCA$ and MA$_{\aleph_1}$. Then either
 \begin{enumerate}
 \item\label{alt:1} there is an uncountable, treelike, almost disjoint family $\SA\subseteq\SP(\NN)$ which is disjoint from $\CI$, or
 \item\label{alt:2} for every $\e > 0$, there is a sequence $F_n\colon E[\NN]\to \SM(A)$, , for $n\in\NN$, of $\Cmeas$-measurable functions such that for every $S\in\CI^\e$, there is an $n$ such that $F_n$ is an $\e$-lift of $\varphi$ on $E[S]$.
 \end{enumerate}
 \end{lemma}

 \begin{proof}

 For each $S\in\CI$, fix an asymptotically additive, skeletal $\alpha^S$ such that $\alpha^S$ is a lift of $\varphi$ on $E[S]$, and $\alpha_n^S = 0$ whenever $n\not\in S$. Since $\alpha^S$ is skeletal, we may identify $\alpha^S$ with an element of the separable metric space $A^\NN$.

 We apply $\OCA_\infty$ instead of $\OCA$. Fix $\e > 0$, and define colourings $[\CI]^2 = K_0^m\cup K_1^m$ by placing $\{S,T\}\in K_0^m$ if and only if there are pairwise disjoint, finite subsets $w_0,\ldots,w_{m-1}$ of $(S\cap T)\sm m$, such that for all $i < m$, there is $(x_n^i)\in X[w_i]$ such that
 \[
 \norm{\sum_{n\in w_i} \alpha_n^S(x_n^i) - \alpha_n^T(x_n^i)} > \e.
 \]
 Define a separable metric topology on $\CI$ by identifying $S\in\CI$ with the pair $(S,\alpha^S)\in\SP(\NN)\times A^\NN$. Then each $K_0^m$ is open in $[\CI]^2$, and $K_0^m\supseteq K_0^{m+1}$.

 The proof now divides into two parts, in which we show that the two alternatives of $\OCA_\infty$ imply, respectively, \eqref{alt:1} and~\eqref{alt:2}.

 Suppose that the first alternative of $\OCA_\infty$ holds, and fix an uncountable $Z\subseteq 2^\NN$ and a map $\zeta\colon Z\to\CI$ such that for all $a$ and $b$ in $Z$, $\{\zeta(a),\zeta(b)\}\in K_0^{\Delta(a,b)}$. 
 
 We now define a poset $\PP$ which is intended to add an uncountable treelike, almost disjoint family disjoint from $\CI^\e$. (Compare this with the poset $\mathcal P_{\omega_1}$ as defined in \cite[p.89]{Farah.AQ}).
 
 The conditions $p\in\PP$ are of the form $p = (I_p,G_p,n_p,s_p,x_p,f_p)$, where
 \begin{enumerate}[label=($\PP$-\arabic*)]
 \item\label{PP-1} $I_p$ is a finite subset of $\omega_1$, $n_p\in\NN$, $G_p\colon I_p\to [Z]^{<\omega}$, $s_p\colon I_p\times n_p\to 2$, $x_p\colon I_p\to X[n_p]$, and $f_p\colon n_p\to 2^{<\omega}$,
 \item\label{PP-2} for all $\xi\in I_p$ and $m,n\in n_p$, if $s_p(\xi,m) = s_p(\xi,n) = 1$, then $f_p(m)$ and $f_p(n)$ are comparable, and
 \item\label{PP-3} for all $\xi\in I_p$ and distinct $S$ and $T$ in $\zeta[G_p(\xi)]$, there exists 
 \[
 w\subseteq \set{n < n_p}{s_p(\xi,n) = 1}
 \]
  such that
 \[
 \norm{\sum_{n\in w} \alpha_n^S(x_p(\xi,n)) - \alpha_n^T(x_p(\xi,n))} >\e.
 \]
 \end{enumerate}
 (We view $x_p$ as a function with domain $I_p\times n_p$ in the obvious way.) We define $p\le q$ if and only if
 \begin{enumerate}[label=($\le$-\arabic*)]
 \item $I_p\supseteq I_q$, $n_p\ge n_q$, $s_p\supseteq s_q$, $f_p\supseteq f_q$, $x_p\supseteq x_q$, for all $\xi\in I_q$, $G_p(\xi)\supseteq G_q(\xi)$, and
 \item for all $m,n\in [n_q,n_p)$, if there exist distinct $\xi$ and $\eta$ in $I_q$ such that $s_p(\xi,m) = s_p(\eta,n) = 1$, then $f_p(m)\perp f_q(n)$.
 \end{enumerate}

Let us spend few words on what the different parts of $\mathbb P$ are intended to do. The sets $I_p$ are an approximation to an uncountable subset of $\omega_1$, $I$, which will index our generic uncountable family of subsets of $\NN$, $\{S_\xi\mid\xi\in I\}$. The function $s_p\restriction(\xi,\cdot)$ decide which naturals are, or not, in $S_\xi$, while the element $x_p\restriction(\xi,\cdot)$ is designed to witness that $S_\xi$ is not in $\mathcal I^{\varepsilon}$. Finally, the functions $f_p$ are intended to show that $\{S_\xi\mid\xi\in I\}$ is treelike and almost disjoint. Before giving the precise definition of such objects, we prove that a generic intersecting $\aleph_1$ dense subsets of $\mathbb P$ exist.

 \begin{claim}
 $\PP$ has the ccc.\footnote{We prove a stronger condition than ccc. In fact, we show that every uncountable subset of $\mathbb P$ contains a set of pairwise compatible elements of size $\aleph_1$. This condition is known as the Knaster condition.}
 \end{claim}
 \begin{proof}
 Let $\SQ\subseteq\PP$ be uncountable. By refining $\SQ$ to an uncountable subset, we may assume that the following hold for $p\in\SQ$.
 \begin{enumerate}
 \item There are $n\in\NN$ and $f\colon N\to 2^{<\omega}$ such that $n_p = N$ and $f_p = f$ for all $p\in\SQ$,
 \item The sets $I_p$ ($p\in\SQ$) form a $\Delta$-system with root $J$, \footnote{If $\mathcal F$ is a family of sets, $\mathcal F$ is a $\Delta$-system with root $r$ if $F\cap G=r$ for all $F\neq G\in\mathcal F$.} and the tails $I_p\sm J$ have the same size $\ell$.
 \item For each $\xi\in J$, the sets $G_p(\xi)$ ($p\in\SQ$) form a $\Delta$-system with root $G(\xi)$, and the tails $G_p(\xi)\sm G(\xi)$ all have the same size $m(\xi)$.
 \item There are functions $t\colon J\times N\to 2$ and $y\colon J\to X[N]$ such that for all $(\xi,n)\in J\times N$ and $p\in\SQ$, $s_p(\xi,n) = t(\xi,n)$ and $x_p(\xi,n) = y(\xi,n)$.
 \item\label{condition-u} If $I_p\sm J = \{\xi_0^p < \cdots < \xi_{\ell-1}^p\}$, then the map $u\colon \ell\times N\to 2$ given by
 \[
 u(i,n) = s_p(\xi_i,n)
 \]
 is the same, for all $p\in\SQ$.
 \item\label{condition-large-Delta} If $\xi\in J$ and $G_p(\xi)\sm G(\xi) = \{z_0^p(\xi),\ldots,z_{m(\xi)-1}^p(\xi)\}$, then for all $p,q\in\SQ$ and $i < m(\xi)$, we have $\Delta(z_i^p(\xi),z_i^q(\xi)) \ge M$, where $M = \max\{N,\sum_{\xi\in J} m(\xi)\}$.
 \end{enumerate}

 Let $p,q\in\SQ$ be given; we claim that $p$ and $q$ are compatible.
 We define an initial attempt at an amalgamation $r = (I_r,G_r,n_r,s_r,x_r,f_r)$ as follows. Let \[
 I_r = I_p\cup I_q,\, n_r = N,\,f_r = f,\,s_r = s_p\cup s_q,\,\text{ and }x_r = x_p\cup x_q.
 \]
 For each $\xi\in I_r$, let $G_r(\xi) = G_p(\xi)\cup G_q(\xi)$. If $r$ were in $\PP$, then $r\leq p$ and $r\leq q$ would hold as required; however, condition~\ref{PP-3} may not be satisfied by $r$.

 It is easily verified that in the following cases, condition~\ref{PP-3} is already satisfied by $r$:
 \begin{enumerate}[label=(\roman*)]
 \item $\xi\not\in J$,
 \item $\xi\in J$ and $S$ and $T$ in $\zeta[G(\xi)]$, and
 \item $\xi\in J$ and $S = \zeta(z_i^p(\xi))$, $T = \zeta(z_j^q(\xi))$, where $i\neq j$.
 \end{enumerate}
 ((i) and (ii)  use the fact that $p$ and $q$ are in $\PP$; (iii) uses, in addition,~\eqref{condition-u} above.) For the last remaining case, fix $\xi\in J$ and $i < m(\xi)$, and put $S = \zeta(z_i^p(\xi))$, $T = \zeta(z_i^q(\xi))$. By~\eqref{condition-large-Delta}, we have $\{S,T\}\in K_0^M$, hence there are $M$ many pairwise-disjoint, finite subsets $w$ of $(S\cap T)\sm M$, such that
 \[
 \exists x\in X[w] \quad \norm{\sum_{n\in w} \alpha_n^S(x_n) - \alpha_n^T(x_n)} > \e.
 \]
 Since $M\ge \sum_{\xi\in J} m(\xi)$, we may choose pairwise disjoint, finite sets $w(\xi,i)$ for each $\xi\in J$ and $i < m(\xi)$, such that for each $\xi\in J$ and $i < m(\xi)$, $w(\xi,i)$ satisfies the above, with $S = \zeta(z_i^p(\xi))$ and $T = \zeta(z_i^q(\xi))$. Let $x^{\xi,i}\in X[w(\xi,i)]$ be the corresponding witness. Let $\bar{N}\ge M$ be large enough to include every set $w(\xi,i)$, and define $s \colon I_r\times \bar{N}\to 2$, $x \colon I_r\to X[\bar{N}]$, and $g \colon \bar{N}\to 2^{<\omega}$ so that
 \begin{itemize}
 \item $s\supseteq s_r$, $x\supseteq x_r$, and $g\supseteq f_r$,
 \item for all $\xi\in J$ and $i < m(\xi)$, and $n \in w(\xi,i)$, $s(\xi,n) = 1$ and $x(\xi,n) = x^{\xi,i}_n$,
 \item $s(\eta,k) = 0$ and $x(\eta,k) = 0$ for all other values of $(\eta,k)\in I_r\times \bar{N}$,
 \item for all $\xi\in J$ and
 \[
 n,n'\in \bigcup_{i< m(\xi)} w(\xi,i),
 \]
 $g(n)$ and $g(n')$ are comparable and extend $\bigcup\set{g(k)}{k < N\land s_r(\xi,k) = 1}$,
 \item for all distinct $\xi$ and $\eta$ in $J$, if
 \[
 n\in \bigcup_{i<m(\xi)} w(\xi,i)\qquad n'\in \bigcup_{i<m(\eta)} w(\eta,i),
 \]
 then $g(n)\perp g(n')$.
 \end{itemize}
 It follows that $r' = (I_r, G_r, \bar{N}, s, x, g)\in \PP$ and $r'\le p,q$, as required.
 \end{proof}

 By MA$_{\aleph_1}$, we may find a filter $\GG\subseteq\PP$ such that $I =
 \bigcup_{p\in\GG}I_p$ is uncountable, and for all $\xi\in I$,
 \[
 \SH_\xi = \zeta[\bigcup_{p\in\GG}G_p(\xi)]
 \]
 is uncountable. For each $\xi\in I$, let
 \[
 S_\xi = \bigcup\set{n}{\exists p\in\GG\; (n < n_p\land s_p(\xi,n) = 1)}.
 \]
 Then we may also assume that $S_\xi$ is infinite for all $\xi\in I$. The function $f = \bigcup_{p\in\GG} f_p$ witnesses that $\SA =
 \set{S_\xi}{\xi\in I}$ is a treelike, almost disjoint family. It remains to show that
 $\SA$ is disoint from $\CI^\e$.

 For each $\xi\in I$, define $x^\xi = \bigcup_{p\in\GG} x_p(\xi,\cdot)$. Note that for any $T$ and $T'$ in $\SH_\xi$, we have
 \begin{gather}\label{eqn:homogeneity}
 \exists w\in [T\cap T'\cap S_\xi]^{<\omega} \quad \norm{\sum_{n\in w} \alpha_n^T(x^\xi_n) - \alpha_n^{T'}(x^\xi_n)} > \e \tag{$\ast$}
 \end{gather}

 \begin{claim}
 For all $\xi\in I$, $S_\xi\not\in\CI^\e$.
 \end{claim}
 \begin{proof}
 Suppose otherwise, and fix $\xi\in I$ and an asymptotically additive $\beta$ which is an $\e$-lift of $\varphi$ on $E[S_\xi]$. For each $T\in\SH_\xi$, there is $N\in\NN$ such that for any finite $w\subseteq T\cap S_\xi\sm N$,
 \[
 \norm{\sum_{n\in w} \alpha_n^T(x^\xi_n) - \beta_n(x^\xi_n)} < \e
 \]
 since $\alpha^T$ and $\beta$ both lift $\varphi$ on $E[S_\xi\cap T]$. By the pigeonhole principle, there is $N$ such that the above holds for all $T$ in an uncountable $\SL\subseteq\SH_\xi$. Moreover, we may find distinct $T,$ and $T'$ in $\SL$ such that $\norm{\alpha_n^T(x) - \alpha_n^{T'}(x)} < \e$ for all $n\in T\cap T'\cap N$ and $x\in X_n$. This contradicts~\eqref{eqn:homogeneity}.
 \end{proof}

 This completes the first part of the proof. For the remainder we assume the second alternative of $\OCA_\infty$, and we prove~\eqref{alt:2}.

 Suppose $\SH\subseteq\CI$ satisfies $[\SH]\subseteq K_1^m$ for some $m$; we show that there is a $\Cmeas$-measurable function $F$ such that, for every $S\in\SH$, $F$ is an $\e$-lift of $\varphi$ on $E[S]$. Let $\SD\subseteq\SH$ be a countable, dense subset of $\SH$ in the topology on $\CI$ defined above. We define $\SR$ to be the subset of $X[\NN]_{\le 1}\times \SM(A)_{\le 1}$ consisting of those $(x,y)$ such that there is a sequence $T_p$ ($p\in\NN$) in $\SD$ for which $y$ is the strict limit of $\alpha^{T_p}(x)$ as $p\to\infty$, and $T_p$ converges (in $\SP(\NN)$) to a set containing the support of $x$. Since $\SD$ is countable, $\SR$ is analytic, and the density of $\SD$ in $\SH$ implies that $(x,\alpha^S(x))\in \SR$ for all $S\in\SH$ and $x\in E[S]_{\le 1}$.

 It suffices to prove that for all $S\in\SH$ and $(x,y)\in\SR$ with $x\in E[S]_{\le 1}$, we have $\norm{\pi(y - \alpha^S(x))}\le \e$; for then any $\Cmeas$-measurable uniformization $F$ of $\SR$ satisfies the required properties. So fix a sequence $T_p$ ($p\in\NN$) witnessing that $(x,y)\in\SR$. Suppose, for sake of contradiction, that $\norm{\pi(y - \alpha^S(x))} > \e$. Then there is $\delta > 0$ such that for all $k\in\NN$,
 \[
 \norm{(1 - e_k)(y - \alpha^S(x))} > \e + \delta.
 \]
 Since $\norm{y - \alpha^S(x))} > \e$, we may find $r_0\in\NN$ and $N_1\in\NN$ such that $T_{r_0}\cap N_1$ contains the support of $x\rs N_1$, and
 \begin{gather}
 \norm{\sum_{n\leq N_1 - 1} \alpha_n^{T_{r_0}}(x_n) - \alpha_n^S(x_n)} > \e + \delta.
 \end{gather}
 Since $\alpha^{T_{r_0}}$ and $\alpha^S$ are asymptotically additive, we may find $k_0$ such that the ranges of $\alpha_n^{T_{r_0}}$ and $\alpha_n^S$ are contained in $e_{k_0} A e_{k_0}$ for each $n < N_1$. Then, as $\norm{(1 - e_{k_0})(y - \alpha^S(x))} > \e + \delta$, we may find $r_1 > r_0$ and $N_2 > N_1$ such that $T_{r_1}\cap N_1 = T_{r_0}\cap N_1$, $T_{r_1}\cap N_2$ contains the support of $x\rs N_2$,
 \begin{gather} \label{cdn:close-1}
 \norm{\sum_{n\leq N_1 - 1} \alpha_n^{T_{r_0}}(x_n) - \alpha_n^{T_{r_1}}(x_n)} < \frac{\delta}{2},
 \end{gather}
 and
 \begin{gather}
 \norm{(1 - e_{k_0})\left(\sum_{n\leq N_2 - 1} \alpha_n^{T_{r_1}}(x_n) - \alpha_n^S(x_n)\right)} > \e + \delta.
 \end{gather}
 It follows that
 \begin{gather}
 \norm{\sum_{N_1\leq n\leq N_2-1} \alpha_n^{T_{r_1}}(x_n) - \alpha_n^S(x_n)} > \e + \frac{\delta}{2}.
 \end{gather}
 Repeating this construction, we may find $N_m > \cdots > N_1$ and a set $T = T_{r_{m-1}}\in \SD$ such that $T\cap N_m$ contains the support of $x\rs N_m$, and for each $i < m$,
 \[
 \norm{\sum_{N_i\leq n \leq N_{i+1}-1} \alpha_n^T(x_n) - \alpha_n^S(x_n)} > \e.
 \]
 Then $\{S,T\}\in K_0^m$, contradicting the $K_1^m$-homogeneity of $\SH$.
 \end{proof}

 A set $\SH\subseteq\SP(\NN)$ is \emph{everywhere nonmeager} if for every nonempty open $\SU\subseteq\SP(\NN)$, $\SH\cap \SU$ is nonmeager. A proof of the following can be found in~\cite[\S 3.10 and \S 3.11]{Farah.AQ}.

 \begin{lemma} \label{lemma:hnm}
Working in the setting of Notation~\ref{notation2}, let $\SH$ be a hereditary and nonmeager subset of $\SP(\NN)$. Then there is  $k$ such that $\set{S\subseteq\NN}{S\sm k\in\SH}$ is everywhere nonmeager. Moreover, if $\SH$ and $\SK$ are hereditary and everywhere nonmeager, then so is $\SH\cap\SK$.\qed
 \end{lemma}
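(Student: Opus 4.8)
The plan is to reduce the lemma to an elementary Baire-category argument, using the following principle about hereditary sets: everywhere-nonmeagerness only ``sees'' the positive coordinates. Write $[a,b]=\set{S\subseteq\NN}{a\subseteq S,\ S\cap b=\emptyset}$ for finite disjoint $a,b\subseteq\NN$; these form a clopen basis of $\SP(\NN)$, so a set is everywhere nonmeager iff it meets every $[a,b]$ in a nonmeager set. The key observation is that, for hereditary $\SG$, the set $\SG\cap[a,b]$ is nonmeager iff $\SG\cap[a,\emptyset]$ is nonmeager: one direction is trivial since $[a,b]\subseteq[a,\emptyset]$, and for the other one uses that the projection $\pi_b\colon S\mapsto S\sm b$ realizes $\SP(\NN)$ as $\SP(\NN\sm b)\times\SP(b)$, hence sends nonmeager sets to nonmeager sets, while $\pi_b(\SG\cap[a,\emptyset])\subseteq\SG\cap[a,b]$ by heredity.

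Given this, I would unwind the first assertion. The set $\SH_k:=\set{S}{S\sm k\in\SH}$ is hereditary, and splitting each $S$ according to its trace on $k=\{0,\dots,k-1\}$ shows that $\SH_k\cap[a,b]$ is nonmeager iff $\SH\cap[a\sm k,\ k\cup b]$ is, which by the previous paragraph holds iff $\SH\cap[a\sm k,\emptyset]$ is nonmeager. Letting $a$ vary over all finite sets, $\SH_k$ is everywhere nonmeager iff $\SH\cap[F,\emptyset]$ is nonmeager for every finite $F\subseteq\NN\sm k$. Call a finite $F$ \emph{bad} if $\SH\cap[F,\emptyset]$ is meager; since $\SH$ is nonmeager, $\emptyset$ is not bad. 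So it remains to find $k$ with the property that every bad set meets $\{0,\dots,k-1\}$.

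This is the heart of the proof, but it is short. If no such $k$ existed, then for each $m$ there would be a bad set disjoint from $\{0,\dots,m-1\}$, so one could pick recursively bad finite sets $F_1,F_2,\dots$ with $\max F_j<\min F_{j+1}$ for every $j$. Each $[F_j,\emptyset]$ is clopen and $\SH\cap[F_j,\emptyset]$ is meager, so $\SH\cap\bigcup_j[F_j,\emptyset]$ is meager; meanwhile $\bigcup_j[F_j,\emptyset]$ is dense (every basic set $[a,b]$ contains $a\cup F_j$ for any $F_j$ disjoint from the finite set $a\cup b$), hence open dense, so $\set{S}{\forall j\ F_j\not\subseteq S}$ is nowhere dense. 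Then $\SH=(\SH\cap\bigcup_j[F_j,\emptyset])\cup(\SH\cap\set{S}{\forall j\ F_j\not\subseteq S})$ would be a union of two meager sets, contradicting nonmeagerness of $\SH$.

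For the ``moreover'' clause, $\SH\cap\SK$ is hereditary, so by the first paragraph it is everywhere nonmeager iff $(\SH\cap\SK)\cap[a,\emptyset]$ is nonmeager for each finite $a$; transporting along $S\mapsto S\sm a$ to $\SP(\NN\sm a)$, this is the intersection of the hereditary sets $\set{T\subseteq\NN\sm a}{T\cup a\in\SH}$ and $\set{T\subseteq\NN\sm a}{T\cup a\in\SK}$, each nonmeager since $\SH,\SK$ are everywhere nonmeager, and such a finite intersection is again nonmeager by the remark following Proposition~\ref{prop:talagrand}. I expect the only genuine subtlety to be the observation in the first paragraph that heredity renders the forbidden coordinates irrelevant; this is what lets one avoid the more delicate task --- implicit in a direct appeal to the Jalali--Naini--Talagrand theorem --- of splicing together Talagrand witnesses for the meagerness of the various sets $\SH\cap[F,\emptyset]$.
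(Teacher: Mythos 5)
Your argument is correct. The paper does not actually prove this lemma --- it simply cites \S\S 3.10--3.11 of \cite{Farah.AQ} --- so there is no in-text proof to compare against, and your self-contained argument fills that gap. The key observation, that for hereditary $\SG$ the traces $\SG\cap[a,b]$ and $\SG\cap[a,\emptyset]$ are simultaneously nonmeager, is sound: the projection $S\mapsto S\sm b$ associated with $\SP(\NN)\cong\SP(\NN\sm b)\times\SP(b)$ preserves nonmeagerness because $\SP(b)$ is finite, and heredity puts the image inside $\SG\cap[a,b]$. The ensuing reduction of everywhere-nonmeagerness of $\SH_k=\set{S}{S\sm k\in\SH}$ to the condition that no bad finite set is contained in $\NN\sm k$, the Baire-category contradiction from pairwise-disjoint bad sets $F_j$ (covering $\SH$ by the meager set $\SH\cap\bigcup_j[F_j,\emptyset]$ together with the nowhere dense complement of the dense open set $\bigcup_j[F_j,\emptyset]$), and the reduction of the ``moreover'' clause to the paper's stated finite-intersection fact via transport along $S\mapsto S\sm a$ are all correct. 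Two small points to spell out: the recursion building $F_1,F_2,\dots$ uses that every bad set is nonempty (true, since $\emptyset$ is not bad), and the trivial case that there are no bad sets, where $k=0$ already works, should be dispatched before starting the recursion.
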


The following is Lemma~\ref{lemma:sigma-baire->borel}
 \begin{lemma} \label{lemma:sigma-baire->borel2}
Working in the setting of Notation~\ref{notation2}, suppose $F_n\colon E[\NN] \to \SM(A)$, for $n\in\NN$, is a sequence of Baire-measurable functions, $\e > 0$, and $\CJ \subseteq\SP(\NN)$ is a nonmeager ideal such that for all $S\in\CJ$, there is  $n$ such that $F_n$ is an $\e$-lift of $\varphi$ on $E[S]$. Then there is a Borel-measurable map $H\colon E[\NN] \to \SM(A)$ which is a $12\e$-lift of $\varphi$ on $\CJ$.
 \end{lemma}

 \begin{proof}
 Let $\SH_n$ be the family of $S\in\CJ$ such that $F_n$ is an $\e$-lift of $\varphi$ on $E[S]$. Then each $\SH_n$ is hereditary, and $\CJ = \bigcup_n\SH_n$.

 \begin{claim}
 $\CJ$ is equal to the union of those $\SH_n$ which are nonmeager.
 \end{claim}

 \begin{proof}
 Let $\SK$ be the union of all of the meager $\SH_n$'s. Then $\SK$ is meager, so there is a sequence of finite sets $a_i$ ($i\in\NN$) such that no infinite union of the $a_i$'s is in $\SK$. Since $\CJ$ is a nonmeager ideal, there is an infinite $L$ such that $\bigcup_{i\in L} a_i\in\CJ$. Let $T=\bigcup_{i\in L} a_i$. Now suppose $S\in\CJ$. Then $S\cup T\in\CJ$, and hence there is $n$ such that $S\cup T\in \SH_n$. By construction $\SH_n$ is nonmeager, and since $\SH_n$ is hereditary, $S\in\SH_n$.
 \end{proof}
 We thus assume, without loss of generality, that every $\SH_n$ is nonmeager. By Lemma~\ref{lemma:hnm}, for each $n\in\NN$ there is $k_n$ such that the set $\SK_n = \set{S\subseteq\NN}{S\sm k_n\in\SH_n}$ is hereditary and everywhere nonmeager. Then, replacing $F_n$ with the function
 \[
 x\mapsto F_n(x\rs [k_n,\infty))
 \]
 defined on $X[\NN]$, we may assume that $F_n$ is an $\e$-lift of $\varphi$ on $X[S]$ for all $S\in\SK_n$.
 Since each $F_n$ is Baire-measurable, there is a comeager $Y\subseteq X[\NN]$ on which every $F_n$ is continuous. Then we may find a partition of $\NN$ into finite sets $a_i$ ($i\in\NN$), and elements $t_i$ of $X[a_i]$, such that any $x\in X$ satisfying $x\rs a_i = t_i$ for infinitely many $i$ must be in $Y$. Since $\CJ$ is nonmeager, there is an infinite $I\subseteq\NN$ such that $\bigcup_{i\in I}a_i\in\CJ$. Let $T=\bigcup_{i\in I}a_i$, and let $I = I_0\cup I_1$ be a partition into infinite sets. Set
 \[
 T_k = \bigcup_{i\in I_k} a_i \qquad \qquad t^k = \sum_{i\in I_k} t_i
 \]
 and
 \[
 G_n(x) = F_n(x\rs T_0 + t^1) - F_n(t^0) + F_n(x\rs \NN\sm T_0 + t^1) - F_n(t^1)
 \]
 It follows that each $G_n$ is continuous on $X[\NN]$. Moreover, if $S\in\CJ$, then since $S\cup T\in\CJ$, there is  $n$ such that $F_n$ is an $\e$-lift of $\varphi$ on $S\cup T$, and so $G_n$ is a $2\e$-lift of $\varphi$ on $S$.

For naturals $m$ and $n$, define
 \[
 \SL_{m,n} = \set{S\subseteq\NN}{\forall x\in X[S]\; \norm{\pi(G_m(x) - G_n(x))} \le 4\e}
 \]
 Then $\SL_{m,n}$ is hereditary, coanalytic, and contains $\SK_m\cap \SK_n$. Since $\SK_m$ and $\SK_n$ are everywhere nonmeager and hereditary, so is (from Lemma~\ref{lemma:hnm}) $\SK_m\cap \SK_n$, and hence $\SL_{m,n}$ is comeager. Define
 \[
 \SE = \bigcap_{m,n\in \NN} \SL_{m,n}
 \]
 Then $\SE$ is comeager, so we may find a partition of $\NN$ into finite sets $b_i$, along with sets $\sigma_i\subseteq b_i$, such that for any $S\subseteq\NN$, if $S\cap b_i = \sigma_i$ for infinitely many $i$, then $S\in\SE$. Since $\SE$ is hereditary, we may assume that $\sigma_i = \emptyset$ for each $i$. Let $T_0$ be the union of the even $b_i$'s, and $T_1$ the union of the odd $b_i$'s. Pick any particular $n^*\in\NN$, and define
 \[
 H(x) = G_{n^*}(x\rs T_0) + G_{n^*}(x\rs T_1)
 \]
 Since $G_{n^*}$ is continuous, so is $H$. We claim that $G$ is an $8\e$-lift of $\varphi$ on $X[S]$, for very $S\in\CJ$. So let $S\in\CJ$; then there is  $m$ such that $S\in\SH_m$. Since $S\cap T_k\in\SE$ for each $k = 0,1$, we have
 \begin{eqnarray*}
 \norm{\pi(H(x)) - \varphi(\pi_E(x))} &\le& \norm{\pi(G_{n^*}(x\rs T_0) - G_m(x\rs T_0))}\\& +& \norm{\pi(G_{n^*}(x\rs T_1) - G_m(x\rs T_1))} \\
& +& \norm{\pi(G_m(x\rs T_0)) - \varphi(\pi_E(x\rs T_0))} \\&+& \norm{\pi(G_m(x\rs T_1)) - \varphi(\pi_E(x\rs T_1))} 
  \le 12\e
 \end{eqnarray*}
 Finally, recall that $H$ is defined only on $X[\NN]$. We may extend $H$ to $E[\NN]$ using the map $\rho\colon E[\NN]\to X[\NN]$ defined above.
 \end{proof}
We are left to prove Lemma~\ref{lemma:C->asymptotic} above.
 \begin{lemma} \label{lemma:C->asymptotic2}
Working in the setting of Notation~\ref{notation2}, suppose $F \colon E[\NN] \to \SM(A)$ is a $\Cmeas$-measurable function and $\CJ\subseteq\SP(\NN)$ is a nonmeager ideal such that for every $S\in\CJ$, $F$ is a lift of $\varphi$ on $E[S]$. Then there is an asymptotically additive $\alpha$ such that for all $S\in\CJ$, $\alpha$ is a lift of $\varphi$ on $E[S]$; in fact, $\alpha$ is the sum of three block diagonal functions.
 \end{lemma}

 \begin{proof}
 As in the proof of Lemma~\ref{lemma:sigma-baire->borel2}, by the Baire-measurability of $F$ and the fact that $\CJ$ is nonmeager, we may assume that $F$ is actually continuous on $X[\NN]$. Given $x$, $y$ and $e$ in $\SM(A)$ we write
 \[
d_e(x,y) = \max\{\norm{e(x - y)}, \norm{(x - y)e}\}
 \]
 We write $X[\CJ]$ for $\bigcup_{S\in\CJ} X[S]$.

 \begin{claim}
 $X[\CJ]$ is a nonmeager subset of $X[\NN]$.
 \end{claim}

 \begin{proof}
 Suppose $X[\CJ]$ is meager. Then we may find an increasing sequence $n_i\in\NN$, and $s_i\in X[[n_i,n_{i+1})]$, such that for any $x\in X[\NN]$, if there are infinitely many $i\in\NN$ such that $x\supseteq s_i$, then $x\not\in X[\CJ]$. But since $\CJ$ is nonmeager, there is an infinite set $L\subseteq\NN$ such that
 \[
 \bigcup_{i\in L} [n_i,n_{i+1}) \in \CJ
 \]
 Now let $x = \sum_{i\in L}s_i$. (Recall that $0\in X_n$ for all $n\in\NN$, so $x\in X[\NN]$.) Then $x\in X[\CJ]$ but $x\supseteq s_i$ for all $i\in L$, a contradiction.
 \end{proof}

 \begin{claim}
 For each $n$ and $\e > 0$, there exists $k > n$ and $u\in X[[n,k)]$ such that for all $x$ and $y$ in $X[\NN]$, if $x\rs [k,\infty) = y\rs [k,\infty)$ and $x,y\supseteq u$, then $d_{1-e_k}(F(x),F(y))\le\e$.
 \end{claim}

 \begin{proof}
 Fix $n$ and $\e > 0$. For each $s\in X[[0,n)]$ and $x\in X[\NN]$, let $x(s) = s + x\rs [n,\infty)$. Define, for each $k\in\NN$,
 \[
 V_k = \set{x\in X[\NN]}{\exists s,t\in X[[0,n)]\text{ s.t. }d_{1-e_k}(F(x(s)),F(x(t))) > \e}
 \]
 By the continuity of $F$, $V_k$ is open. For each $s$ and $t$ in $X[[0,n)]$ and $x\in X[\CJ]$, there is  $k\in\NN$ such that $d_{1-e_k}(F(x(s)),F(x(t))) \le \e$, since $\pi(F(x(s))) = \varphi(\pi_E(x)) = \pi(F(x(t)))$. Since $X[[0,n)]$ is finite, it follows that for each $x\in X[\NN]$ there is  $k$ such that $x\not\in V_k$. So,
 \[
 X[\CJ]\cap \bigcap_{k=0}^\infty V_k = \emptyset.
 \]
 As $X[\CJ]$ is a nonmeager subset of $X[\NN]$, there must be $\ell$ such that $V_\ell$ is not dense in $X[\NN]$. Thus there is  $k \ge \ell$ and $u\in X[[0,k)]$ such that for all $x\in X[\NN]$, if $x\supseteq u$, then $x\not\in V_\ell$. Since for all $x\in X[\NN]$ and $s$ and $t$ in $X[[0,n)]$ we have $x(s)\in V_\ell$ if and only if $x(t)\in V_\ell$, we may take $u\in X[[n,k)]$. Finally, note that $V_k\subseteq V_\ell$, so if $x\in X[\NN]$ and $x\supseteq u$ then $x\not\in V_k$.
 \end{proof}

 \begin{claim} \label{claim:stabilizers}
 There are sequences $n_i\in\NN$, $k_i\in\NN$, and $u_i\in X[[n_i,n_{i+1})]$ such that $n_i < k_i < n_{i+1}$ and for any $x$ and $y$ in $X[\NN]$, if $x\supseteq u_i$ and $y\supseteq u_i$, then
 \begin{enumerate}
 \item\label{eq:stabilizers.tails} $x\rs [n_{i+1},\infty) = y\rs [n_{i+1},\infty)$ implies $d_{1-e_{k_i}}(F(x),F(y)) \le 2^{-i}$, and
 \item\label{eq:stabilizers.heads} $x\rs [0,n_i) = y\rs [0,n_i)$ implies $d_{e_{k_i}}(F(x),F(y))\le 2^{-i}$.
 \end{enumerate}
 \end{claim}

 \begin{proof}
 We go by induction on $i$. Set $n_0 = 0$. Given $n_i$, we first apply the previous claim with $n = n_i$ and $\e = 2^{-i}$ to find $k_i > n_i$ and $v_i\in X[[n_i,k_i)]$ such that for all $x$ and $y$ in $X[\NN]$, if $x\supseteq v_i$, $y\supseteq v_i$, and $x\rs [k_i,\infty) = y\rs [k_i,\infty)$ then $d_{1-e_{k_i}}(F(x),F(y)) \le 2^{-i}$. We then apply the continuity of $F$ to find $n_{i+1} > k_i$ and $u_i\in X[[n_i,n_{i+1})]$ such that $u_i\supseteq v_i$, and for all $x$ and $y$ in $X[\NN]$, if $x\supseteq u_i$ and $y\supseteq u_i$ and $x\rs [0,n_i) = y\rs [0,n_i)$, then $d_{1-e_{k_i}}(F(x),F(y))\le 2^{-i}$.
 \end{proof}

 Given $\zeta < 3$, we define
 \[
 T_\zeta = \bigcup\set{[n_i,n_{i+1})}{i\equiv \zeta\pmod{3}}
 \]
 and
 \[
 v_\zeta = \bigcup\set{u_i}{i\equiv \zeta\pmod{3}}.
 \]
 For each $i$, set $q_i = e_{n_{i+2}} - e_{n_{i-1}}$, and for $i\equiv\zeta\pmod{3}$, let
 \[
 \alpha_i(x) = q_i (F(x\rs [n_i,n_{i+1}) + v_{\zeta+1} + v_{\zeta+2}) - F(v_{\zeta+1}) - F(v_{\zeta+2})) q_i
 \]
 where $\zeta+1$ and $\zeta+2$ are computed modulo $3$. Set
 \[
 \beta_\zeta(x) = \sum_{i\equiv\zeta\pmod{3}} \alpha_i(x).
 \]
 Note that $q_i\perp q_j$ whenever $|i - j| \ge 3$, so the sum in the definition of $\beta_\zeta$ converges strictly. Moreover, $\beta_\zeta$ is block diagonal.

 \begin{claim}
 For each $S\in\CJ$ and $\zeta < 3$, $\beta_\zeta$ lifts $\varphi$ on $X[S\cap T_\zeta]$.
 \end{claim}

 \begin{proof}
 Clearly, the function
 \[
 G_\zeta(x) = F(x\rs T_\zeta + v_{\zeta+1} + v_{\zeta+2}) - F(v_{\zeta+1}) - F(v_{\zeta+2})
 \]
 lifts $\varphi$ on $X[S\cap T_\zeta]$. Now fix $x\in X[S\cap T_\zeta]$ and $i$ with $i\equiv \zeta\pmod{3}$.
 Let $x' = x + v_{\zeta+1} + v_{\zeta+2}$, $t = x\rs [n_i,\infty) + v_{\zeta+1} + v_{\zeta+2}$ and $h = x\rs [n_i,n_{i+1}) + v_{\zeta+1} + v_{\zeta+2}$. Then,
 \[
 G_\zeta(x) - G_\zeta(x\rs [n_i,\infty)) = F(x') - F(t)
 \]
 and
 \[
 G_\zeta(x\rs [n_i,\infty)) - G_\zeta(x\rs [n_i,n_{i+1})) = F(t) - F(h).
 \]
 Since $x',t\supseteq u_{i-2}$ and $x'\rs [n_{i-1},\infty) = t\rs [n_{i-1},\infty)$, by condition~\eqref{eq:stabilizers.tails} of Claim~\ref{claim:stabilizers}, we have
 \[
d_{1 - e_{k_{i-2}}}(F(x'),F(t)) \le 2^{-i+2}
 \]
 On the other hand, $t,h\supseteq u_{i+2}$ and $t\rs [0,n_{i+2}) = h\rs [0,n_{i+2})$, so by condition~\eqref{eq:stabilizers.heads} of Claim~\ref{claim:stabilizers}, we have
 \[
 d_{e_{k_{i+2}}}(F(t),F(h)) \le 2^{-i-2}.
 \]
 Since $q_i \le e_{n_{i+2}} \le e_{k_{i+2}}$ and $q_i \le 1 - e_{n_{i-1}} \le 1 - e_{k_{i-2}}$, it follows that
 \[
 \norm{q_i(G_\zeta(x) - G_\zeta(x\rs [n_i,\infty)))} \le 2^{-i+2}
 \]
 and
 \[
 \norm{q_i(G_\zeta(x\rs [n_i,\infty)) - G_\zeta(x\rs [n_i,n_{i+1})))} \le 2^{-i-2}.
 \]
 Hence $\norm{q_i(G_\zeta(x) - G_\zeta(x\rs [n_i,n_{i+1})))} \le 2^{-i+2} + 2^{-i-2}$ by the triangle inequality, and, using the fact that $\sum\set{q_i}{i\equiv\zeta\pmod{3}} = 1$, we have
 \[
 G_\zeta(x) - \sum_{i\equiv\zeta\pmod{3}} q_i G_\zeta(x\rs [n_i,n_{i+1})) = \sum_{i\equiv\zeta\pmod{3}} q_i (G_\zeta(x) - G_\zeta(x\rs [n_i,n_{i+1}))) \in A
 \]
 since the norms of the terms above are summable. A similar argument shows that
 \[
 \norm{(G_\zeta(x\rs [n_i,n_{i+1})) - G_\zeta(0))(1 - q_i)} \le 2^{-i+2} + 2^{-i-2}.
 \]
 and hence, as $G_\zeta(0)\in A$,
 \[
 \sum_{i\equiv\zeta\pmod{3}} q_i G_\zeta(x\rs [n_i,n_{i+1}))(1 - q_i) \in A.
 \]
 Finally, by combining the above we have
 \[
 G_\zeta(x) - \sum_{i\equiv\zeta\pmod{3}} q_i G_\zeta(x\rs [n_i,n_{i+1})) q_i \in A,
 \]
 thus $G_\zeta(x) - \beta_\zeta(x) \in A$, as required.
 \end{proof}

Since $T_0\cup T_1\cup T_2$ is cofinite, it follows that $\beta = \beta_0 + \beta_1 + \beta_2$ lifts $\varphi$ on $X[S]$, for every $S\in\CJ$.
 \end{proof}

\section{Consequences: Isomorphisms of reduced products I}\label{sec:cons.RedProd1}
Keeping in mind the definition of the Metric Approximation Property (Definition~\ref{defin:MAP}) and that of asymptotically algebraic isomorphism of reduced products (Definition~\ref{defin:trivialredprod}), we state Theorem~\ref{theoi:redprod}, the main result of this section. Later in this section, we explicit some of its consequences.
\begin{theorem}\label{thm:trivialredprod}
 Assume $\OCA+\MA_{\aleph_1}$. Let $A_n$ and $B_n$, for $n\in\NN$, be separable unital $\Cstar$-algebras with no nontrivial central projections. Suppose that each $A_n$ has the metric approximation property. Then all isomorphisms $\iso\colon \prod A_n/\bigoplus A_n\to\prod B_n/\bigoplus B_n$ are asymptotically algebraic.
\end{theorem}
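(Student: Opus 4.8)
The plan is to feed Theorem~\ref{thm:lifting} finite-dimensional approximations of the $A_n$ provided by the metric approximation property, and then use that $\iso$ is an isomorphism to promote the resulting block-diagonal lift to one of the form $\prod\phi_n$. Write $A=\bigoplus_nA_n$ and $B=\bigoplus_nB_n$, so $\SM(A)=\prod_nA_n$, $\SQ(A)=\prod A_n/\bigoplus A_n$, and likewise for $B$; the approximate identity $e_m=\bigoplus_{j\le m}1_{B_j}$ consists of projections with $e_{m+1}e_m=e_m$, as Theorem~\ref{thm:lifting} requires. For $S\subseteq\NN$ set $\mathbf 1_S^A=(1_{A_n})_{n\in S}$ and $p_S^A=\pi_A(\mathbf 1_S^A)$, and likewise $p_S^B$. \emph{Aligning coordinates.} Since each $A_n$ and $B_n$ has no central projections, the only projections near the centre of $A_n$ (resp.\ $B_n$) are $0$ and $1$; consequently an asymptotically central projection in $\prod_nA_n$ is asymptotically $\{0,1\}$-valued, so the central projections of $\SQ(A)$ are exactly the $p_S^A$ and those of $\SQ(B)$ exactly the $p_S^B$. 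Thus $\iso$ induces an automorphism of $\SP(\NN)/\Fin$, which under $\OCA_\infty+\MA_{\aleph_1}$ is trivial, say induced by a bijection $g_0\colon\NN\sm F_1\to\NN\sm F_2$ with $F_1,F_2$ finite. Composing $\iso$ with the reindexing isomorphism determined by $g_0$ and absorbing $F_1,F_2$ into the final exceptional sets, we may assume $\iso(p_S^A)=p_S^B$ for every $S$; it then suffices to produce maps $\phi_n\colon A_n\to B_n$ with $\prod\phi_n$ a lift of $\iso$.

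\emph{The lifting theorem on a MAP approximation.} Using the MAP, fix for each $n$ an increasing sequence of finite-dimensional operator systems $E_n^{(k)}\subseteq A_n$ ($k\in\NN$) with $1_{A_n}\in E_n^{(0)}$, $\overline{\bigcup_kE_n^{(k)}}=A_n$, and $E_n^{(k)}E_n^{(k)}\subseteq E_n^{(k+1)}$, together with contractive linear maps $A_n\to E_n^{(k)}$ approximating the inclusion on ever larger finite sets. For fixed $k$, the restriction $\varphi^{(k)}=\iso\rs\bigl(\prod_nE_n^{(k)}/\bigoplus_nE_n^{(k)}\bigr)$ is a bounded linear map into $\SQ(B)$ preserving the coordinate structure, witnessed by the contractions $p_S^B$ (the defining conditions follow from $\iso(p_S^A)=p_S^B$ and the fact that operator systems are unital and $^*$-closed). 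Theorem~\ref{thm:lifting} then yields an asymptotically additive $\alpha^{(k)}=\sum_n\alpha_n^{(k)}$, with $\alpha_n^{(k)}\colon E_n^{(k)}\to e_{I_n^{(k)}}Be_{I_n^{(k)}}$ for consecutive finite intervals $I_n^{(k)}$, and a ccc/$\Fin$ ideal $\SI^{(k)}$, such that $\alpha^{(k)}$ lifts $\varphi^{(k)}$ on $E^{(k)}[S]$ for all $S\in\SI^{(k)}$; replacing $\SI^{(k)}$ by $\bigcap_{j\le k}\SI^{(j)}$ (a finite intersection of ccc/$\Fin$ ideals is ccc/$\Fin$) we may take the $\SI^{(k)}$ decreasing. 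By Proposition~\ref{prop:FA.ApproxStruct}, for $n$ large each $\alpha_n^{(k)}$ is arbitrarily close to a unital $^*$-homomorphism; comparing $\alpha^{(k)}$ and $\alpha^{(k+1)}$ on $E^{(k)}[S]$ for $S\in\SI^{(k+1)}$, where both lift $\varphi^{(k)}$, the asymptotic-uniqueness argument underlying Proposition~\ref{prop:seqofappmaps} gives that for $n$ large $I_n^{(k)}=I_n^{(k+1)}$ and $\alpha_n^{(k+1)}\rs E_n^{(k)}\approx\alpha_n^{(k)}$. Evaluating at the units $\mathbf 1_S^A$ and using the alignment above, together with a counting argument (the $I_n^{(k)}$ are consecutive intervals and $\SI^{(k)}$ is dense), forces $I_n^{(k)}=\{n\}$ for all but finitely many $n$; this is where the absence of central projections is used a second time, to rule out a block genuinely splitting one coordinate. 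Thus for $n$ large each $\alpha_n^{(k)}$ is a $\delta_{n,k}$-$^*$-homomorphism $E_n^{(k)}\to B_n$ with $\delta_{n,k}\to0$ as $n\to\infty$ for each $k$, and these are approximately coherent in $k$.

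\emph{Assembling $\prod\phi_n$ and concluding.} Pick $k_n\to\infty$ slowly enough that $\delta_{n,k_n}\to0$, and define $\phi_n\colon A_n\to B_n$ by composing a contractive approximation $A_n\to E_n^{(k_n)}$ with $\alpha_n^{(k_n)}$. Using the approximate coherence one checks that $\phi_n$ is an $\e_n$-$^*$-homomorphism with $\e_n\to0$ and that $\prod\phi_n$ is a lift of $\iso$ on the dense ideal $\bigcap_k\SI^{(k)}$. By Proposition~\ref{prop:seqofappmaps} each $\phi_n$ is an $\e_n$-isomorphism, hence, by the proposition that a null sequence of $\e_n$-isomorphisms induces an isomorphism of reduced products, $\prod\phi_n$ induces an isomorphism $\bar\iso\colon\SQ(A)\to\SQ(B)$ with $\pi_B\circ\prod\phi_n=\bar\iso\circ\pi_A$. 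It remains to see $\bar\iso=\iso$: put $\theta=\iso^{-1}\bar\iso\in\Aut(\SQ(A))$. Both $\iso$ and $\bar\iso$ send $p_S^A$ to $p_S^B$ (for $\bar\iso$ because $\phi_n(1_{A_n})\to 1_{B_n}$ appropriately), so $\theta$ fixes every central projection of $\SQ(A)$; and $\theta$ fixes $ap_S^A$ for every $a\in\SQ(A)$ and every $S\in\bigcap_k\SI^{(k)}$, since $\prod\phi_n$ lifts $\iso$ there. Hence $(\theta(a)-a)p_S^A=\theta(ap_S^A)-ap_S^A=0$ for all such $S$, and density of $\bigcap_k\SI^{(k)}$ forces $\theta(a)=a$. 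Therefore $\iso=\bar\iso$ is lifted by $\prod\phi_n$, i.e.\ $\iso$ is trivial in the sense of Definition~\ref{defin:trivialredprod}.

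\emph{The main obstacle} is the middle step. Theorem~\ref{thm:lifting} only deals with bounded, coordinate-structure-preserving maps on products of finite-dimensional Banach spaces, and only furnishes lifts on a ccc/$\Fin$ ideal, whereas we need genuine coordinate-to-coordinate $^*$-homomorphism-like maps $A_n\to B_n$. Producing these requires running the lifting theorem at every level $k$ of the MAP and controlling how the block intervals $I_n^{(k)}$, the ideals $\SI^{(k)}$, and the local maps $\alpha_n^{(k)}$ depend on $k$; recovering approximate multiplicativity, which Theorem~\ref{thm:lifting} does not supply directly since operator systems are not closed under multiplication (hence the requirement $E_n^{(k)}E_n^{(k)}\subseteq E_n^{(k+1)}$ and the comparison of successive levels); and using the absence of central projections to prevent a block from splitting a single coordinate among several of the $B_i$. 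The diagonalisation in the last step, and checking that $\prod\phi_n$ still lifts $\iso$ on a dense ideal despite the MAP approximations being non-uniform in the algebra variable, is the other delicate point.
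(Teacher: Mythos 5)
Your high-level plan matches the paper's: use the MAP to produce finite-dimensional approximations, feed them into Theorem~\ref{thm:lifting} after aligning coordinates via Proposition~\ref{prop:centralproj}, and then assemble coordinate-wise approximate $^*$-isomorphisms $\phi_n\colon A_n\to B_{g(n)}$. The alignment step and the final assembly (using Proposition~\ref{prop:seqofappmaps} and checking $\bar\iso=\iso$) are essentially correct. However, the central step --- the one you yourself flag as the main obstacle --- is filled in a way that does not work, and this is precisely where the paper does something genuinely different.

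Your coherence argument compares $\alpha^{(k)}$ and $\alpha^{(k+1)}$ directly and claims that the asymptotic-uniqueness underlying Proposition~\ref{prop:seqofappmaps} forces $I_n^{(k)}=I_n^{(k+1)}$ eventually and $\alpha_n^{(k+1)}\rs E_n^{(k)}\approx\alpha_n^{(k)}$. This fails on two counts. First, two asymptotically additive lifts of the same map on a common nonmeager ideal need not have matching block intervals; the intervals are an artifact of the construction in Lemma~\ref{lemma:C->asymptotic}, not canonically determined, so there is no reason the $I_n^{(k)}$ should stabilize in $k$. Second, and more importantly, these lifts are only controlled on ideals, so even approximate agreement holds only asymptotically along subsets in $\SI^{(k)}\cap\SI^{(k+1)}$; comparing two fixed consecutive levels does not give the uniform, simultaneous coherence across all levels that the diagonalization $k_n\to\infty$ requires. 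Concretely, if $x\in\prod A_n$ needs a fast-growing level of approximation (the sequence of minimal $m$ with $\|\psi_{n,m}\phi_{n,m}(x_n)-x_n\|<2^{-n}$ is unbounded in an $n$-dependent way not dominated by your $k_n$), the constructed $\prod\phi_n$ has no a priori reason to lift $\iso$ at $x$. Your claim that the absence of central projections ``rules out a block genuinely splitting one coordinate'' also misattributes that hypothesis: the paper only uses it to identify the central projections of the reduced product for the alignment; the compression to $\{n\}$-corners is achieved by defining $\tilde\alpha^f_n=p^B_{\{n\}}\alpha^f_np^B_{\{n\}}$ and then showing (Proposition~\ref{prop:liftoneverything}), using $\iso(p_X^A)=p_X^B$ and a Borel-ideal dichotomy argument, that the compression still lifts --- not by forcing $I_n=\{n\}$.

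What the paper actually does here is structurally different and is the content of Propositions~\ref{prop:afteragivenn}--\ref{prop:openinsometopology} and Lemma~\ref{lem:nohomogredprod}: the approximations are parametrized by functions $f\in\NN^\NN$ rather than by a single natural number $k$, giving a $\sigma$-directed family under $\le^*$; one forms the set $\CX$ of pairs $(f,\alpha)$ where $\alpha$ is a skeletal lift of $\iso_f$ on $\Phi_f(D_f)$ already compressed to corners; one introduces a \emph{second} open coloring $K_0^\e$ on $[\CX]^2$ detecting disagreement of lifts on common domains; one proves there is no uncountable $K_0^\e$-homogeneous set via a $\Delta$-system argument together with $\mathfrak b>\omega_1$ (hence the need for $\OCA_\infty$ or $\MA_{\aleph_1}$); then $\OCA_\infty$ yields a $\sigma$-cover by $K_1^{\e_k}$-homogeneous sets, and $\le^*$-cofinality lets one extract a decreasing chain $\SY_k$ whose $f$-projections are cofinal. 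The maps $\phi_n$ are then defined by evaluating members of the $\SY_k$ at coordinate $n$ for suitable $f$ with $f(n)$ large. The second OCA application is not optional: it is exactly what replaces the unjustified consecutive-level coherence in your argument, and it is why the paper's proof is noticeably longer than what one would expect from ``apply Theorem~\ref{thm:lifting} and diagonalize.'' As written, your proposal has a genuine gap at that point.
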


\subsection{Proof of Theorem~\ref{thm:trivialredprod}}

Throughout this section $A_n$ and $B_n$ are as in the hypotheses of Theorem~\ref{thm:trivialredprod}. As in \S\ref{sec:liftstatements} and \S\ref{sec:lemma_proofs}, if $S\subseteq \NN$, we let $A[S]$ (and $B[S]$) be the subalgebras of $\prod A_n$ (and  of $\prod B_n$) of those elements whose support is contained in $S$. $p_S^A$ and $p_S^B$ denote the identity of $A[S]$ and of $B[S]$ respectively, and $\pi_A$ and $\pi_B$ are the canonical quotient maps $\pi_A\colon\prod A_n\to\prod A_n/\bigoplus A_n$ and $\pi_B\colon\prod B_n\to\prod B_n/\bigoplus B_n$. $\iso$  denotes a fixed isomorphism
\[
\iso\colon\prod A_n/\bigoplus A_n \to\prod B_n/\bigoplus B_n.
\]
The first goal is to simplify $\iso$.

 \begin{proposition}\label{prop:centralproj}
 Let $A$, and $A_n$, for $n\in\NN$, be $\Cstar$-algebras. Then
 \begin{enumerate}
 \item\label{proj1} If $p$ is a noncentral projection in $A$, then there is a contraction $a\in A$ with $\norm{pa-ap}>1/2$;
 \item\label{proj2} if each $A_n$ is a unital $\Cstar$-algebra with no nontrivial central projections, the only central projections in $\prod A_n/\bigoplus A_n$ are of the form $\pi_A(p^A_X)$, for $X\subseteq\NN$.
 \end{enumerate}
\end{proposition}

\begin{proof}
\ref{proj1}: Fix a noncentral projection $p\in A$. Since $p$ is noncentral, we can find an irreducible representation $\sigma \colon A \to \mathcal B(H)$ with $\sigma(p)\notin\{0,1\}$. Choose unit vectors $\xi$ and $\eta$ in $H$ such that $\sigma(p)\xi = \xi$ and $\sigma(p)\eta = 0$. By Kadison's Transitivity Theorem (\cite[II.6.1.13]{Blackadar.OA}) and the irreducibility of $\sigma$, we may find a contraction $a\in A$ with $\norm{\sigma(a)\eta - \xi}<\frac{1}{4}$. Then $\norm{\sigma(pa)\eta}\geq\frac{3}{4}$ and $\norm{\sigma(ap)\eta}=0$, hence then $\norm{ap - pa} \geq \norm{\sigma(ap-pa)}> 1/2$. 

\ref{proj2}: follows from the fact that the only projections in $\prod A_n / \bigoplus A_n$ are those of the form $\pi_A(p)$ for a projection $p\in\prod A_n$. This is because whenever one chooses $\varepsilon>0$ and a $\Cstar$-algebra $A$, if $q\in A$ is an almost projection (that is, $\norm{q-q^2}+\norm{q-q^*}<\varepsilon$), then one can find a projection $p\in A$ with $\norm{p-q}<4\varepsilon$ (e.g., \cite[Example 3.2.7]{bourbaki}). By \ref{proj1}, the same holds for almost central projections: if $q\in A$ is an almost central projection, there is a central projection $p\in A$ which is close to $q$. Therefore central projections in $\prod A_n / \bigoplus A_n$ lift to central projections in $\prod A_n$.
\end{proof}

By Proposition~\ref{prop:centralproj}, an isomorphism $\iso\colon \prod A_n / \bigoplus A_n \to \prod B_n / \bigoplus B_n$ induces an automorphism of the Boolean algebra $\SP(\NN)/\Fin$ by associating to $X$ the unique (modulo finite) $Y$ such that $\Lambda(\pi_A(p_X^A))=\pi_B(p_Y^B)$. In the presence of $\OCA+\MA_{\aleph_1}$, by the main results of \cite{Velickovic.OCAA} we may find an almost permutation $g$ such that for all $X\subseteq\NN$, we have $\Lambda(\pi_A(p_X^A)) = \pi_B(p_{g[X]}^B)$. Setting $A'_n=A_{g(n)}$ if $g(n)$ exists, and $A'_n=\mathbb C$ if not, we have that $g^{-1}$ induces an isomorphism between $\prod A'_n/\bigoplus A'_n$ and $\prod A_n/\bigoplus A_n$. We call such isomorphism $\iso_g$. Notice that $\iso_g$ is asymptotically algebraic, and $\iso$ is asymptotically algebraic if and only if $\iso\circ \iso_g$ is. Substituting $\iso$ with $\iso\circ \iso_g$, we may therefore assume that $\iso(\pi_A(p_X^A)) = \pi_B(p_X^B)$ for every $X\subseteq\NN$.

We fix:
\begin{itemize}
\item a countable dense subset of the unit ball of $A_n$, $\{y_i^n\}$ with $y_0^n=1_{A_n}$ for all $n$;
\item using the Metric Approximation Property of $A$ (Definition~\ref{defin:MAP}), finite dimensional operator systems $E_{n,m}$ and unital linear contractive maps $\varphi_{n,m},\psi_{n,m}$ with $\varphi_{n,m}\colon A_{n}\to E_{n,m}$, $\psi_{n,m}\colon E_{n,m}\to A_n$ with the property that for all $i\leq m$, $\norm{\psi_{n,m} \circ \varphi_{n,m} (y_i)- y_i}<2^{m}$;
\item for each pair of natural numbers $n$ and $m$, a finite $2^{-n-m}$-dense subsets $X_{n,m}\subseteq (E_{n,m})_{\leq 1}$ such that $0$ and $1$ are in $X_{n,m}$. We well order each $X_{n,m}$ so that $0$ is the minimum element of $X_{n,m}$, and define a function $\rho_{n,m}\colon (E_{n,m})_{\leq 1}\to X_{n,m}$ with the property that $\norm{x-\rho_{n,m}(x)}\leq 2^{-n-m}$ for all $x\in (E_{n,m})_{\leq 1}$. Let $\rho=\prod \rho_{n,m}$.
\end{itemize}
With these objects in mind, recall (Definition~\ref{defin:skeletal}) that a map $F\colon\prod E_{n,m}\to\prod B_n$ is skeletal if $F=F\circ\rho$ on $\prod(E_{n,m})_{\leq 1}$.

\begin{definition}\label{defin:skel2}
$\Skel$ is the set of skeletal maps from $\prod E_{n,m} \to\prod B_n$. We identify $\alpha\in\Skel$ with the countable sequence of values $\alpha_{n,m}(x)$, where $x$ ranges over $X_{n,m}$ and $n$ and $m$ over $\NN$. With this identification, $\Skel$ is a topological space with the topology obtained by viewing as a subset of $\prod_{n,m}(\bigoplus_i B_i)$.
\end{definition}

For a given function $f\colon \NN\to\NN$, a skeletal map $\alpha \colon \prod E_{n,f(n)}\to \prod B_n$ can be identified with an element of $\Skel$ by filling in with the value $0$ on any $x\in X_{n,m}$ with $m\neq f(n)$. Since each $X_{n,m}$ is finite and each $B_n$ is separable, with the above topology, $\Skel$ is a separable metrizable space.



For each $f\in\NN^{\NN\uparrow}$ let 
\[
\Phi_f=\prod\varphi_{n,f(n)}\text{ and }\Psi_f=\prod\psi_{n,f(n)}.
\]
Since each of the maps $\varphi_{n,m}$ and $\psi_{n,m}$ is contractive, $\Phi_f$ and $\Psi_f$ induce unital linear contractions
\begin{eqnarray*}
\tilde\Phi_f\colon \prod A_n/\bigoplus A_n\to\prod E_{n,f(n)}/\bigoplus E_{n,f(n)},\\
\tilde\Psi_f\colon \prod E_{n,f(n)}/\bigoplus E_{n,f(n)}\to\prod A_n/\bigoplus A_n.
\end{eqnarray*}
Given an isomorphism $\iso\colon\prod A_n/\bigoplus A_n\to\prod B_n/\bigoplus B_n$, we define
$\iso_f=\iso\circ\tilde\Psi_f$. Notice that 
 \[
 \iso_f\colon\prod E_{n,f(n)}/\bigoplus E_{n,f(n)}\to\prod B_n/\bigoplus B_n.
\]

\begin{proposition}\label{prop:liftoneverything}
Let $f\in\NN^{\NN\uparrow}$ and $\alpha^f\colon\prod E_{n,f(n)}\to \prod B_n$ be an asymptotically additive map which is a lift of $\iso_f$ on a dense and nonmeager ideal $\CI_f$. For $x\in \prod E_{n,f(n)}$, define 
\[
\tilde\alpha^f_n(x) = p_{\{n\}}^B\alpha^f_n(x)p^B_{\{n\}}
\] and let $\tilde\alpha^f = \sum\tilde\alpha^f_n$. Then $\tilde\alpha^f\circ\Phi_f$ is a lift of $\iso$ on
\[
 \set{x\in\prod A_n}{\Psi_f\circ\Phi_f(x)-x \in \bigoplus A_n}.
\]
\end{proposition}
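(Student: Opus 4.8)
The plan is to reduce the claim, for $x$ in the set $D=\{x\in\prod A_n:\Psi_f\Phi_f(x)-x\in\bigoplus A_n\}$, to a statement about $\iso_f$ that can be checked one coordinate block at a time. The first point is that $\tilde\alpha^f$, and hence $\tilde\alpha^f\circ\Phi_f$, is coordinate-wise: since $p^B_{\{n\}}\alpha^f_n(w)p^B_{\{n\}}$ is simply the compression of $\alpha^f_n(w)$ to its $n$-th coordinate, $\tilde\alpha^f\circ\Phi_f$ has the form $\prod_n\theta_n$ for maps $\theta_n\colon A_n\to B_n$, so it commutes with the restriction operators $x\mapsto x\rs S$ and satisfies $\pi_B(p^B_S)\pi_B(\tilde\alpha^f(\Phi_f(x)))=\pi_B(\tilde\alpha^f(\Phi_f(x\rs S)))$ for all $S\subseteq\NN$. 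The map $\iso\circ\pi_A$ has the analogous property because $p^A_S$ is central in $\prod A_n$ and $\iso(\pi_A(p^A_S))=\pi_B(p^B_S)$. Moreover, for $x\in D$ one has $\iso(\pi_A(x))=\iso(\pi_A(\Psi_f\Phi_f(x)))=\iso_f(\pi_E(\Phi_f(x)))$, and likewise $\iso(\pi_A(x\rs S))=\iso_f(\pi_E(\Phi_f(x\rs S)))$ for all $S$ (using $x\rs S\in D$). Finally, since $\alpha^f$ lifts $\iso_f$ at $0$ we have $\alpha^f(0)\in\bigoplus B_n$, so after subtracting $\alpha^f_n(0)$ we may and do assume $\alpha^f_n(0)=0$ for all $n$.

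Let $I_n\subseteq\NN$ be finite intervals with $\alpha^f_n\colon E_{n,f(n)}\to B[I_n]$ and $\min(I_n)\to\infty$ witnessing asymptotic additivity. The heart of the argument is the following local statement: if $S\in\SI_f$ and $S\cap I_n\subseteq\{n\}$ for all $n\in S$, then for every $x\in D$ we have $\pi_B(\tilde\alpha^f(\Phi_f(x\rs S)))=\iso(\pi_A(x\rs S))$. Indeed, writing $v_n=\Phi_f(x\rs S)_n$ and using $\alpha^f_n(0)=0$, one gets $\tilde\alpha^f(\Phi_f(x\rs S))-\alpha^f(\Phi_f(x\rs S))=-\sum_n(1-p^B_{\{n\}})\alpha^f_n(v_n)=-\sum_n p^B_{I_n\setminus\{n\}}\alpha^f_n(v_n)$; the hypothesis $S\cap I_n\subseteq\{n\}$ gives $p^B_{I_n\setminus\{n\}}\alpha^f_n(v_n)=(1-p^B_S)\alpha^f_n(v_n)$ for each $n$ (both sides vanish when $v_n=0$), so the difference equals $-(1-p^B_S)\alpha^f(\Phi_f(x\rs S))$. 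Now $\pi_B(\alpha^f(\Phi_f(x\rs S)))=\iso_f(\pi_E(\Phi_f(x\rs S)))=\iso(\pi_A(\Psi_f\Phi_f(x\rs S)))$, and since $\Psi_f\Phi_f(x\rs S)$ is supported on $S$ and $\iso(\pi_A(p^A_S))=\pi_B(p^B_S)$, this element is fixed by multiplication by $\pi_B(p^B_S)$; hence $(1-p^B_S)\alpha^f(\Phi_f(x\rs S))\in\bigoplus B_n$. Therefore $\pi_B(\tilde\alpha^f(\Phi_f(x\rs S)))=\pi_B(\alpha^f(\Phi_f(x\rs S)))=\iso(\pi_A(x\rs S))$, the last equality because $x\rs S\in D$.

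It remains to propagate the local statement to all of $D$. The key combinatorial observation is that every infinite $W\subseteq\NN$ contains an infinite $W'\in\SI_f$ with $W'\cap I_n\subseteq\{n\}$ for all $n\in W'$: first shrink $W$ inside $\SI_f$ using density, then recursively choose $n_0<n_1<\cdots$ from what remains with $n_{k+1}>\max(I_{n_k})$ and $\min(I_{n_{k+1}})>n_k$ (possible because $\min(I_m)\to\infty$) and set $W'=\{n_k:k\in\NN\}$, which stays in $\SI_f$ by heredity. Suppose now $\pi_B(\tilde\alpha^f(\Phi_f(x)))\neq\iso(\pi_A(x))$ for some $x\in D$; pick a lift $c\in\prod B_n$ of $\iso(\pi_A(x))$ and an infinite $W$ with $\norm{\tilde\alpha^f(\Phi_f(x))_n-c_n}>\delta/2$ for $n\in W$, where $\delta=\norm{\pi_B(\tilde\alpha^f(\Phi_f(x)))-\iso(\pi_A(x))}>0$. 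Thin $W$ to $W'$ as above. Since $\tilde\alpha^f\circ\Phi_f$ is coordinate-wise, $\tilde\alpha^f(\Phi_f(x\rs W'))_n=\tilde\alpha^f(\Phi_f(x))_n$ for $n\in W'$, while $p^B_{W'}c$ lifts $\iso(\pi_A(x\rs W'))=\pi_B(p^B_{W'})\iso(\pi_A(x))$; hence $\norm{\pi_B(\tilde\alpha^f(\Phi_f(x\rs W')))-\iso(\pi_A(x\rs W'))}\ge\delta/2>0$, contradicting the local statement. The step I expect to be the main obstacle is isolating the correct ``large'' family of index sets on which $\tilde\alpha^f$ (which is coordinate-wise) and the possibly spread-out lift $\alpha^f$ agree modulo $\bigoplus B_n$: the family $\{S:S\cap I_n\subseteq\{n\}\ \text{for all}\ n\in S\}$ is not an ideal, so one cannot simply invoke the ccc/$\Fin$-ness of $\SI_f$, and must instead check by hand that any infinite set can be thinned into it without leaving $\SI_f$.
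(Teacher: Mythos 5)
Your proof takes a genuinely different route from the paper's. The paper first shows (its internal ``Claim'') that $\alpha^f(\Phi_f(x))-\tilde\alpha^f(\Phi_f(x))\in\bigoplus B_n$ for all $x$ in the target set, and then deduces that $\alpha^f\circ\Phi_f$ lifts $\iso$ there by considering, for each fixed $x$, the ideal $\SI_x=\{X:p^B_X(\alpha^f(\Phi_f(x))-\rho(x))\in\bigoplus B_n\}$ and invoking the fact that a dense Borel ideal containing $\Fin$ must be either improper or meager (Proposition~\ref{prop:JT2}). You avoid this descriptive set-theoretic step entirely. Instead you isolate a family of ``transversal'' supports --- sets $S\in\SI_f$ with $S\cap I_n\subseteq\{n\}$ for $n\in S$ --- on which $\tilde\alpha^f\circ\Phi_f$ and $\alpha^f\circ\Phi_f$ visibly agree modulo $\bigoplus B_n$ (your local statement), show by a purely combinatorial thinning that every infinite set contains such a transversal set in $\SI_f$, and then propagate by contradiction using the coordinate-wise nature of $\tilde\alpha^f\circ\Phi_f$. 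This is cleaner and more self-contained; the ``is not an ideal'' remark at the end correctly identifies the one place where a standard ideal-theoretic argument would not apply, and your by-hand thinning handles it. The rest of the computation (the $(1-p^B_S)\alpha^f(\Phi_f(x\rs S))\in\bigoplus B_n$ step via $\iso(\pi_A(p^A_S))=\pi_B(p^B_S)$, the verification that $x\rs W'\in D$, and the coordinate-wise propagation) all check out.

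There is, however, one step that is asserted but not actually justified: the normalization ``we may and do assume $\alpha^f_n(0)=0$.'' You justify this by observing $\alpha^f(0)\in\bigoplus B_n$, but that is not enough. Replacing $\alpha^f_n$ by $\alpha^f_n-\alpha^f_n(0)$ preserves ``lift of $\iso_f$ on $\SI_f$'' precisely because $\alpha^f(0)\in\bigoplus B_n$, but it changes $\tilde\alpha^f$ by the \emph{different} constant $\tilde\alpha^f(0)=\sum_n p^B_{\{n\}}\alpha^f_n(0)p^B_{\{n\}}$, and $\alpha^f(0)\in\bigoplus B_n$ does not imply $\tilde\alpha^f(0)\in\bigoplus B_n$ when the intervals $I_n$ overlap. (For instance, if one could arrange $\alpha^f_n(0)=p^B_{\{n\}}-p^B_{\{n+1\}}$, the sum $\alpha^f(0)$ telescopes to $p^B_{\{0\}}\in\bigoplus B_n$ while $\tilde\alpha^f(0)=1$.) So the reduction to the case $\alpha^f_n(0)=0$ needs a separate argument showing that $\norm{p^B_{\{n\}}\alpha^f_n(0)p^B_{\{n\}}}\to 0$. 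In the paper this holds for the lifts produced by Lemma~\ref{lemma:C->asymptotic} (there $\alpha_i(0)=q_iG_\zeta(0)q_i$ with $G_\zeta(0)\in A$ and $q_i\to 0$ strictly), but it is not a formal consequence of ``asymptotically additive lift of $\iso_f$ on $\SI_f$'' alone. You should either add that hypothesis explicitly, or carry the $\alpha^f_n(0)$ terms through the local-statement computation and show that $\sum_{n\notin S}(p^B_{\{n\}}-p^B_S)\alpha^f_n(0)\in\bigoplus B_n$ for the particular transversal $S$ you chose, which requires more care than the blanket normalization.
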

\begin{proof}
First, note that since $\alpha^f=\sum \alpha^f_n$, then $\tilde\alpha^f$ is well defined on $\prod E_{n,f(n)}$.

\begin{claim}\label{claim:coordinatewise}
If $\Psi_f\circ\Phi_f(x)-x\in\bigoplus A_n$ then $\alpha^f(\Phi_f(x))-\tilde\alpha^f(\Phi_f(x))\in\bigoplus B_n$.
\end{claim}

\begin{proof}
Suppose not. Then there are $x$ such that $\Psi_f\circ\Phi_f(x)-x\in\bigoplus A_n$, $\varepsilon>0$, and a sequence $n_k$ such that $\norm{(\alpha^f(\Phi_f(x))-\tilde\alpha^f(\Phi_f(x)))_{n_k}}>\varepsilon$. By the definition of asymptotically additive map we can refine $\{n_k\}_k$ so that for all $k$ and $m$, if $p^B_{\{n_k\}}\alpha^f_m\neq 0$ then $p^B_{\{n_l\}}\alpha^f_m=0$ for all $l\neq k$. Again by refining $\{n_k\}_k$ we can fix $m_i$ with the property that $0=m_0<m_1<\cdots$ and if $m_i\leq j<m_{i+1}$ then $\alpha^f_jp^B_{\{n_k\}}=0$ for all $k\neq i$. By nonmeagerness of $\mathcal I$ and Proposition~\ref{prop:JT2}, there is then an infinite $L$ such that $\bigcup_{i\in L}[m_i,m_{i+1})\in\CI_f$. Let $y_n=x_n$ if $n\in [m_i,m_{i+1})$ for $i\in L$, and $0$ otherwise. Note that for all $i$ (up to finite many) we have that $n_i\in[m_i,m_{i+1})$.

Let $z=y[\{n_i\}]$. Since $\alpha^f(\Phi_f(y))-\tilde\alpha^f(\Phi_f(y))\in\bigoplus B_n$, as $\{n_i\}\in\CI_f$, we have that $w=y-z$ is such that
\[
 \pi^B(\alpha^f(\Phi_f(z))p_{\{n_i\mid i\in L\}}^B)\neq 0,
\]
a contradiction to the fact that $\iso(p^A_{\{n_i\mid i\in L\}})=p^B_{\{n_i\mid i\in L\}}$ and the choice of $\CI_f$.
\end{proof}
Fix any function $\rho\colon\prod A_n\to\prod B_n$ lifting $\iso$ and $x$ such that $\Psi_f\circ\Phi_f(x)-x\in \bigoplus A_n$. Consider
\[
 \CI_x=\{X\subseteq\NN\mid p_X^B(\alpha^f(\Phi_f(x))-\rho(x))\in \bigoplus B_n\}.
\]
This ideal contains $\CI_f$, so is nonmeager. Also, since $\rho(x), \alpha^f(\Phi_f(x))$ are fixed elements of $\prod B_n$, and $\bigoplus B_n$ is Borel in $\prod B_n$, $\CI_x$ is Borel. Since all proper, Borel, dense ideals on $\NN$ are meager (Proposition~\ref{prop:JT2}), $\CI_x=\SP(\NN)$, that is
\[
 (\alpha^f(\Phi_f(x))-\rho(x))=\alpha^f(\Phi_f(x))-\rho(x)\in \bigoplus B_n.\qedhere
\]
\end{proof}
Keeping in mind the definition of $\Skel$ (\ref{defin:skel2}), define
\[
 D_f=\{a\in\prod A_n\mid\forall n\forall m\geq f(n)(\norm{\psi_{n,m}(\varphi_{n,m}(a_n))-a_n}<2^{-n})\},
\]
and $\CX\subseteq\NN^\NN\times\Skel$ by
\[
 \CX=\{(f,\alpha)\mid \alpha\text{ is a skeletal lift of }\iso_f \text{ on }\Phi_f(D_f)\text{, and }\alpha_n=p_{\{n\}}^B\alpha_np_{\{n\}}^B\}.
\]
If $\OCA+\MA_{\aleph_1}$ holds, then by Theorem \ref{thm:lifting}, Proposition~\ref{prop:liftoneverything} and the fact that the asymptotically additive maps in Lemma~\ref{lemma:C->asymptotic} can be chosen to be skeletal (see Lemma~\ref{lemma:skeletal-replacement}), for every $f\in\NN^{\NN}$ we may find an $\alpha$ such that $(f,\alpha)\in \CX$.

\begin{proposition}\label{prop:afteragivenn}
Let $(f,\alpha), (g,\beta)\in \mathcal X$ and $\varepsilon>0$. Then there is $n$ such that for all $m\geq n$ and $x\in A[[n,m]]$ with $\norm{x}\leq 1$, if $x\in D_f\cap D_g$ we have
\[
\norm{\alpha(\Phi_{f}(x))-\beta(\Phi_{g}(x))}\leq\varepsilon.
\]
\end{proposition}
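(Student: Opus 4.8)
The plan is to unpack what membership in $\mathcal{X}$ records about $\alpha$ and $\beta$, reduce the assertion to a coordinatewise estimate, and conclude by a reflection argument of the type already used for Proposition~\ref{prop:seqofappmaps}. The one step with real content is to observe that $\alpha\circ\Phi_f$ is a lift of $\iso$ on \emph{all} of $D_f$ (not merely on a dense ideal), and $\beta\circ\Phi_g$ a lift of $\iso$ on all of $D_g$. Indeed, if $x\in D_f$ then, taking $m=f(k)$ in the definition of $D_f$, the $k$-th coordinate of $\Psi_f(\Phi_f(x))-x$ equals $\psi_{k,f(k)}(\phi_{k,f(k)}(x_k))-x_k$ and has norm $<2^{-k}$; hence $\Psi_f(\Phi_f(x))-x\in\bigoplus A_n$, so $\tilde\Psi_f$ sends the class of $\Phi_f(x)$ to $\pi_A(x)$. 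Since $(f,\alpha)\in\mathcal{X}$, the map $\alpha$ lifts $\iso_f=\iso\circ\tilde\Psi_f$ on $\Phi_f(D_f)$, so $\pi_B(\alpha(\Phi_f(x)))=\iso_f(\text{class of }\Phi_f(x))=\iso(\pi_A(x))$ for every $x\in D_f$; the same argument applies to $\beta$ and $D_g$. Therefore
\[
  \alpha(\Phi_f(x))-\beta(\Phi_g(x))\in\bigoplus B_n\qquad\text{whenever }x\in D_f\cap D_g .
\]

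Next I would pass to coordinates. Because $\alpha$ is skeletal, asymptotically additive, and satisfies $\alpha_k=p_{\{k\}}^B\alpha_k p_{\{k\}}^B$ (writing $\alpha_k$ for the $k$-th component map of $\alpha$, i.e. $\alpha_{k,f(k)}$), the $k$-th coordinate of $\alpha(\Phi_f(x))$ is $\alpha_k(\phi_{k,f(k)}(x_k))$, depending only on $x_k$; likewise for $\beta$, and I set $\delta_k(a)=\alpha_k(\phi_{k,f(k)}(a))-\beta_k(\phi_{k,g(k)}(a))$. The set $D_f$ is a product: $x\in D_f$ iff $x_k\in D_{f,k}$ for all $k$, where $D_{f,k}=\{a\in A_k:\norm a\le1\text{ and }\norm{\psi_{k,m}(\phi_{k,m}(a))-a}<2^{-k}\text{ for all }m\ge f(k)\}$, and $0\in D_{f,k}$ (similarly for $D_{g,k}$). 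In this language the previous paragraph says: if $x\in\prod A_n$ has $\norm x\le1$ and every $x_k$ lies in $D_{f,k}\cap D_{g,k}$, then $\norm{\delta_k(x_k)}\to0$. I will also normalize $\alpha_k(0)=\beta_k(0)=0$ for all $k$, hence $\delta_k(0)=0$; this alters $\alpha$ and $\beta$ only by elements of $\bigoplus B_n$ and keeps them in $\mathcal{X}$.

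The proposition follows once we show that $\sup\{\norm{\delta_k(a)}:k\ge n,\ a\in D_{f,k}\cap D_{g,k},\ \norm a\le1\}\le\epsilon$ for some $n$: for $x$ supported in $[n,m]$ with $\norm x\le1$ and $x\in D_f\cap D_g$, every coordinate $x_k$ is either $0$ or a contraction in $D_{f,k}\cap D_{g,k}$, and $\norm{\alpha(\Phi_f(x))-\beta(\Phi_g(x))}=\sup_k\norm{\delta_k(x_k)}=\sup_{n\le k\le m}\norm{\delta_k(x_k)}\le\epsilon$. To prove such an $n$ exists, suppose not. Then there is an infinite $K\subseteq\NN$ and, for each $k\in K$, a contraction $a_k\in D_{f,k}\cap D_{g,k}$ with $\norm{\delta_k(a_k)}>\epsilon$. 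Let $x_k=a_k$ for $k\in K$ and $x_k=0$ otherwise; then $\norm x\le1$ and every $x_k\in D_{f,k}\cap D_{g,k}$, so $\norm{\delta_k(x_k)}\to0$ by the first step, contradicting $\norm{\delta_k(a_k)}>\epsilon$ for the infinitely many $k\in K$.

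The only point requiring care is the first step: recognizing that being in $\mathcal{X}$ forces $\alpha\circ\Phi_f$ and $\beta\circ\Phi_g$ into the same coset of $\bigoplus B_n$ over the whole common domain $D_f\cap D_g$, rather than merely over a dense ideal. Once that is in hand, the coordinatewise reduction and the diagonalization are routine, mirroring the arguments in Proposition~\ref{prop:seqofappmaps} and Claim~\ref{claim:coordinatewise}.
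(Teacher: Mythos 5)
Your proof is correct and follows essentially the same route as the paper's: both hinge on the observation that membership in $\mathcal{X}$ already makes $\alpha\circ\Phi_f$ and $\beta\circ\Phi_g$ lifts of $\iso$ on all of $D_f$ and $D_g$ respectively, so their difference falls into $\bigoplus B_n$ on $D_f\cap D_g$, and then both derive a contradiction by packing infinitely-many bad witnesses into a single element of $D_f\cap D_g$. Your coordinatewise reformulation via $\delta_k$ is a cosmetic reorganization of the paper's step of reducing to single-coordinate supports and summing, and you make explicit the lift-on-all-of-$D_f$ observation that the paper leaves implicit; the normalization $\alpha_k(0)=\beta_k(0)=0$ is harmless but unnecessary, since $0\in D_f\cap D_g$ already forces $\norm{\alpha_k(0)},\norm{\beta_k(0)}\to 0$, hence $\delta_k(0)\to 0$, by the same step.
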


\begin{proof}
Suppose otherwise and fix $\varepsilon>0$, $n_i\leq m_i\in\NN$, with $n_i\to\infty$, and contractions $x_i\in A[[n_i,m_i]]\cap D_f\cap D_g$ with the property that for all $i$
\[
\norm{\alpha(\Phi_f(x_i))-\beta(\Phi_g(x_i))}>\varepsilon.
\]
Since the norm in $B[[n_i,m_i]]$ is given by the maximum norm over its individual coordinates, we may assume that $m_i=n_i$. Let $x=\sum x_i$. Then
\[
 \norm{\pi_B(\alpha(x)-\beta(x))}>\varepsilon,
\]
This contradicts the fact that $\alpha$ and $\beta$ both lift $\iso$ on $D_f\cap D_g$.
\end{proof}
The following bit of the proof takes inspiration from the fact that the ideal $\CI$ (from \S\ref{sec:liftstatements} and \S\ref{sec:lemma_proofs}) is a P-ideal. It is an adaptation of \cite[Lemma 3.6.8]{Farah.AQ}.

For every $\varepsilon>0$, we define a colouring $[\CX]^2=K_0^\varepsilon\cup K_1^\varepsilon$ by setting $\{(f,\alpha),(g,\beta)\}\in K_0^\varepsilon$ if and only if
\[
 \exists n\exists x\in A_n\cap D_f\cap D_g, \norm{x}\leq1, \norm{\alpha(\Phi_f(x))-\beta(\Phi_g(x))}>\varepsilon.
\]
The following proposition follows easily from the definitions and the topology of $\Skel$ (see after Definition~\ref{defin:skel2}).
\begin{lemma}\label{lemma:openinsometopology}
 $K_0^\varepsilon$ is an open subset of $[\CX]^2$ when $\CX$ is given the separable metric topology obtained from its inclusion in $\NN^\NN\times\Skel$.\qed
\end{lemma}

\begin{lemma}\label{lem:nohomogredprod}
There is no uncountable $K_0^\varepsilon$-homogeneous set for any $\varepsilon >0$.
\end{lemma}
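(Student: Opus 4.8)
The plan is to derive a contradiction from the existence of an uncountable $K_0^\epsilon$-homogeneous set $\mathcal Y\subseteq\mathcal X$. The key point is that, for a pair $((f,\alpha),(g,\beta))\in K_0^\epsilon$, the witnessing bad coordinate $n$ and bad element $x\in A_n$ can be pushed arbitrarily far out: by Proposition~\ref{prop:afteragivenn}, for any two members of $\mathcal X$ there is $n_0$ beyond which $\alpha\circ\Phi_f$ and $\beta\circ\Phi_g$ agree (up to $\epsilon$) on every $x\in A[[n_0,m]]\cap D_f\cap D_g$. So homogeneity in $K_0^\epsilon$ for \emph{all} pairs of an uncountable family is a strong uniformity statement that should fail for cardinality/entropy reasons. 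First I would fix such a $\mathcal Y$ and observe that, since $\Skel$ is separable metrizable and $\NN^\NN$ is separable metrizable, $\mathcal Y$ (viewed inside $\NN^\NN\times\Skel$) has a condensation point; I would refine $\mathcal Y$ to an uncountable subset all of whose elements are close, in this topology, to a fixed $(f_0,\alpha_0)$ — in particular, agreeing with $f_0$ on a long initial segment and with $\alpha_0$ on the values $\alpha_{n,m}(x)$ for $x\in X_{n,m}$ and $(n,m)$ in a long initial segment.

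Next I would exploit the fact that the $\phi_{n,m},\psi_{n,m}$ are fixed in advance (they do not depend on the element of $\mathcal X$), so $\Phi_f$ on a coordinate $n$ depends only on $f(n)$, and for any contraction $x\in A_n$ one has $x\in D_f$ as soon as $f(n)$ is large enough relative to where $x$ sits in the fixed dense sequence $\{y_i^n\}$. Thus, given two members $(f,\alpha),(g,\beta)$ of the refined family that agree with $f_0$ on $[0,N)$ and whose skeletal data agree with $\alpha_0$ on $(n,m)\in[0,N)\times[0,N)$, and given any coordinate $n<N$ with $f(n)=g(n)=f_0(n)<N$: for every contraction $x\in A_n\cap D_f\cap D_g$, the quantities $\alpha(\Phi_f(x))$ and $\beta(\Phi_g(x))$ are computed from skeletal values that \emph{coincide} (after applying the skeletonizing map $\rho_{n,f_0(n)}$, whose output lies in $X_{n,f_0(n)}$ with $f_0(n)<N$), hence they are equal. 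This shows that for such a pair, no bad coordinate $n<N$ can witness $K_0^\epsilon$; combined with Proposition~\ref{prop:afteragivenn} we would need the bad coordinate to also lie beyond some threshold $n_0((f,\alpha),(g,\beta))$, but there is a tension: homogeneity forces a bad coordinate to exist for \emph{every} pair, while the agreement just described, once $N$ is taken large, forbids bad coordinates below $N$.

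The main obstacle is that $N$ depends on the pair, so a single refinement is not enough; I expect to need a pigeonhole/$\Delta$-system style argument to get an uncountable subfamily for which the agreement threshold $N$ is \emph{uniform}, or equivalently to diagonalize: enumerate the refined family as $\{(f_\xi,\alpha_\xi)\}_{\xi<\omega_1}$, and use that $\NN^\NN$ with the product topology is second countable to find, for a cofinal set of $\xi$, a common long initial segment of $f_\xi$ and common skeletal data up to level $N$, with $N\to\infty$ along the cofinal set. Then pick two indices $\xi<\eta$ in this set with the same initial segment and the same skeletal data up to a level $N$ large enough that, by Proposition~\ref{prop:afteragivenn} applied to $(f_\xi,\alpha_\xi)$ and $(f_\eta,\alpha_\eta)$, any bad coordinate must be $\ge$ some $n_0\le N$; but bad coordinates $<N$ are excluded by the agreement argument and bad coordinates $\ge N$ with $f_\xi(n)=f_\eta(n)$ large are also excluded for the same reason, so no bad coordinate exists — contradicting $\{(f_\xi,\alpha_\xi),(f_\eta,\alpha_\eta)\}\in K_0^\epsilon$. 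Packaging the ``agreement up to level $N$ kills all bad coordinates'' step cleanly — tracking how $D_f$, $\rho$, and the skeletal identification interact — is where the real care is needed; everything else is a standard uncountable-pigeonhole argument.
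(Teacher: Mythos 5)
Your plan correctly identifies the two regimes of coordinates (an initial segment where you can force literal agreement, and a tail where you want to invoke Proposition~\ref{prop:afteragivenn}) and correctly isolates the central difficulty: the threshold from Proposition~\ref{prop:afteragivenn} depends on the \emph{pair}, not on a single element, so no amount of refinement alone fixes it. But you do not resolve this difficulty, and the resolution you sketch is circular.

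Concretely: you propose to refine so that $f_\xi$ and the skeletal data agree up to a level $N$, then pick $\xi<\eta$ and ``take $N$ large enough that the Proposition~\ref{prop:afteragivenn} threshold $n_0$ for $(f_\xi,\alpha_\xi),(f_\eta,\alpha_\eta)$ is $\le N$.'' But $n_0$ is produced only after the pair is chosen, and the pair is chosen only after $N$ is fixed during the refinement — there is no way to guarantee $n_0\le N$. And your claim that bad coordinates $n\ge N$ ``with $f_\xi(n)=f_\eta(n)$ large are excluded for the same reason'' does not cover the coordinates $n\ge N$ where $f_\xi(n)\ne f_\eta(n)$; nothing in your refinement controls those, and they are precisely where the problem lives. (There is also a sign slip: Proposition~\ref{prop:afteragivenn} forces a witnessing $n$ to be \emph{below} the threshold, not $\ge$ it, but that is minor compared to the structural gap.) What is genuinely missing is the paper's use of $\mathfrak b>\omega_1$ (a consequence of either $\OCA_\infty$ or $\MA_{\aleph_1}$): one first finds a single $\hat f$ that $\le^*$-dominates every $f$ occurring in $\SY$, chooses $\gamma$ with $(\hat f,\gamma)\in\SX$, and then applies Proposition~\ref{prop:afteragivenn} to compare each $(f,\alpha)\in\SY$ against the \emph{fixed} $(\hat f,\gamma)$. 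This makes the threshold $n_f$ a function of the single element $(f,\alpha)$, which can then be pigeonholed to a uniform $\overline N$; for a witnessing coordinate $n\ge\overline N$ the triangle inequality through $\gamma$ gives $\|\alpha_n(\Phi_f(x))-\beta_n(\Phi_g(x))\|<\epsilon$, and for $n\le\overline N$ the initial-segment agreement you describe finishes the job. Without the dominating $\hat f$ and the detour through $\gamma$, the tail regime cannot be controlled, so the proof as proposed does not go through.
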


\begin{proof}
 Suppose otherwise and let $\varepsilon>0$ and $\mathcal Y$ be a $K_0^\varepsilon$-homogeneous set of size $\aleph_1$. We refine $\mathcal Y$ to an uncountable subset several times. To avoid excessive notation, we  keep the name $\mathcal Y$ for each refinement.

Since we have $\OCA$, $\mathfrak b>\omega_1$ (\S\ref{subsec:ForcingAxiom}), hence we can find $\hat f$ such that for all $(f,\alpha)\in \mathcal Y$ we have that $f<^* \hat f$. Also, we may assume that there is a unique $\bar n$ such that for all $(f,\alpha)\in \mathcal Y$ and $m\geq\bar n$ we have $f(m)<\hat f(m)$. By refining $\mathcal Y$ again we can also assume that if $(f,\alpha)$ and $(g,\beta)$ are in $\mathcal Y$ then $f$ and $g$ agree up to $\bar n$. This is possible because the set $\{f\restriction \bar n\mid (f,\alpha)\in\mathcal Y\}$ is countable. By increasing $\hat f(i)$ for all $i\leq \bar n$, that $f<\hat f$ for all $(f,\alpha)\in \mathcal Y$. In particular $D_f\subseteq D_{\hat f}$.

 Since we are assuming $\OCA$ and $\MA_{\aleph_1}$, we can find a skeletal map $\gamma$ such that $(\hat f,\gamma)\in X$. By Proposition~\ref{prop:afteragivenn} for all $f$ with $(f,\alpha)\in \mathcal Y$ we can find $n_f$ such that if for all $m$ and contractions $x\in A[[n_f,m]]$ we have that
 \[
 \norm{\alpha(\Phi_{f}(x))-\gamma(\Phi_{\hat f}(x))}\leq\varepsilon/2.
 \]
 By refining $\mathcal Y$ we can assume that $\bar N=n_f$ is the same for all $(f,\alpha)\in \mathcal Y$. We refine $\mathcal Y$ once more asking that for all $(f,\alpha)$ and $(g,\beta)$ in $\mathcal Y$ and for all $k\le \bar{N}$, we have $f(k)=g(k)$ and
 \[
 \norm{\alpha_k-\beta_k}<\varepsilon/2
 \]
 (Recall that the space of all skeletal maps $\prod_{n\leq\bar N} E_{k,f(k)}\to\prod_{n\leq\bar N} B_n$ is separable in the operator norm topology.) 
 
 This is the last refinement we need. Pick $(f,\alpha)$ and $(g,\beta)$ in $\mathcal Y$ and $n$ and $x\in D_f\cap D_g$ with $x\in A_n$ and witnessing that $\{(f,\alpha),(g,\beta)\}\in K^\varepsilon_0$. Let $r=\norm{\alpha_n(\Phi_f(x))-\beta_n(\Phi_g(x))}$. If $n\leq\bar N$ then, as $f(n)=g(n)$, we have that $r<\varepsilon/2$, since $\Phi_f(x)=\Phi_g(x)$ and by the last refinement of $\mathcal Y$. On the other hand if $n\geq\bar N$ we have that
 \[
 r\leq \norm{\alpha_n(\Phi_f(x))-\gamma_n(\Phi_{\hat f}(x))}+\norm{\gamma_n(\Phi_{\hat f}(x))- \beta_n(\Phi_g(x))}<\varepsilon.
 \]
 This contradicts the fact that $r>\e$.
 \end{proof}
By the assumption of $\OCA$ (which implies $\OCA_\infty$), for each
$\varepsilon_k=2^{-k}$ we may find a cover $\CX=\bigcup_n \CY_{n,k}$, where each
$\CY_{n,k}$ is a $K_1^{\varepsilon_k}$-homogeneous set. Since $\leq^*$ is a
$\sigma$-directed order, as in \cite[Lemma 2.2.2]{Farah.AQ} we can find sets
$\CY_k$, for $k\in\NN$, and a sequence of natural numbers $\{n_k\}$, such that for all $k\in\NN$,  $\CY_k\supseteq\CY_{k+1}$, $[\CY_k]^2\subseteq K^{\varepsilon_k}_1$ and $P_k= \set{f}{\exists \alpha\; (f,\alpha)\in \CY_k}$ is cofinal in the order $<^{n_k}$, where
\[
 f<^{n_k} g\iff \forall m\geq n_k (f(m)<g(m)).
\]
We are in position to define the maps $\varphi_n\colon A_n\to B_n$ which witness that $\iso$ is asymptotically algebraic. For each $n < n_0$, define $\varphi_n = 0$. For each $n\ge n_0$, let $k$ be such that $n_k \le n < n_{k+1}$, and choose a sequence $(f^{i,n},\alpha^{i,n})\in\CY_k$ such that $f^{i,n}(n)\to \infty$ as $i\to\infty$. We define $\varphi_n\colon A_n\to A_n$ in stages. First, let
\[
 \varphi_n(y^n_m)=\alpha^{i,n}_n(\Phi_{f^{i,n}}(y^n_m))
\]
where $i=\min\{r\mid f^{r,n}(n)>m\}$. If $x\notin \set{y^n_m}{m\in\NN}$ and $\norm{x}\leq 1$, let
\[
 \varphi_n(x)=\varphi_n(y^n_m)
\]
where $m=\min\{r\mid \norm{x-y_r^n}<2^{-k}\}$. Finally, if $\norm{x} > 1$, let $\varphi_n(x)=\norm{x}\varphi_n(x/\norm{x})$.

\begin{claim}
 The map $\varphi = \prod \varphi_n$ lifts $\iso$ on $\prod A_n$.
\end{claim}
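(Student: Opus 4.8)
The plan is to treat a contraction $a\in\prod A_n$ (the general case being similar), to fix once and for all a lift $\rho\colon\prod A_n\to\prod B_n$ of $\iso$, and to prove $\norm{\phi_n(a_n)-\rho(a)_n}\to 0$, which yields $\pi_B(\phi(a))=\pi_B(\rho(a))=\iso(\pi_A(a))$. First I would unwind the definition of $\phi_n$: for $n$ in the $k$th block $[n_k,n_{k+1})$ there is an index $\mu_n$ with $\norm{a_n-y^n_{\mu_n}}<2^{-k}$, with $f^{i_n,n}(n)>\mu_n$ where $i_n=\min\{r\mid f^{r,n}(n)>\mu_n\}$, and with $\phi_n(a_n)=\alpha^{i_n,n}_n(\Phi_{f^{i_n,n}}(y^n_{\mu_n}))$, i.e. $\phi_n(a_n)$ is the $n$th coordinate of $\alpha^{i_n,n}(\Phi_{f^{i_n,n}}(y^n_{\mu_n}))$. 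Setting $\hat a_n=y^n_{\mu_n}$ for $n\ge n_0$ (and $\hat a_n=0$ otherwise) we get $\hat a-a\in\bigoplus A_n$ and $\phi(\hat a)-\phi(a)\in\bigoplus B_n$, so below $a_n$ may freely be replaced by $\hat a_n$. The elementary fact used throughout is: since each $\phi_{n,m},\psi_{n,m}$ is contractive, $\norm{\psi_{n,m}\phi_{n,m}(y^n_j)-y^n_j}<2^{-m}$ for $j\le m$, and every strictly increasing $g\colon\NN\to\NN$ has $g(n)\ge n$, the element $y^n_{\mu_n}$ of $A_n$ — viewed inside $\prod A_j$ with support $\{n\}$ — belongs to $D_g$ as soon as $g(n)>\mu_n$; in particular $y^n_{\mu_n}\in D_{f^{i_n,n}}$, and $p^A_{[N,\infty)}\hat a\in D_g$ whenever $g(n)>\mu_n$ for all $n\ge N$.

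Next, given $\e>0$, choose $k'$ with $2^{-k'}<\e/2$. Using that $P_{k'}$ is cofinal in $<^{n_{k'}}$, I would pick $g_{k'}\in P_{k'}$ with $g_{k'}(n)>\mu_n$ for all $n\ge n_{k'}$, and $\beta_{k'}$ with $(g_{k'},\beta_{k'})\in\CY_{k'}$. Since $(g_{k'},\beta_{k'})\in\CX$, the computation from Proposition~\ref{prop:liftoneverything} shows that $\beta_{k'}\circ\Phi_{g_{k'}}$ is a lift of $\iso$ on $D_{g_{k'}}$: for $b\in D_{g_{k'}}$ one has $\Psi_{g_{k'}}\Phi_{g_{k'}}(b)-b\in\bigoplus A_n$, hence $\pi_B(\beta_{k'}(\Phi_{g_{k'}}(b)))=\iso_{g_{k'}}(\pi(\Phi_{g_{k'}}(b)))=\iso(\pi_A(b))$. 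Applying this with $b=p^A_{[n_{k'},\infty)}\hat a$, which represents $\pi_A(a)$, the element $\beta_{k'}(\Phi_{g_{k'}}(p^A_{[n_{k'},\infty)}\hat a))$ represents $\iso(\pi_A(a))$; comparing it with $\rho(a)$ and recalling that $\beta_{k'}$ is block diagonal (so its $n$th coordinate is $\beta_{k',n}$ of the $n$th input coordinate), there is $N\ge n_{k'}$ with $\norm{\beta_{k',n}(\phi_{n,g_{k'}(n)}(\hat a_n))-\rho(a)_n}<\e/2$ for all $n\ge N$.

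The colouring now closes the loop. Take $n\ge n_{k'}$, say $n\in[n_k,n_{k+1})$ with $k\ge k'$, so that $(f^{i_n,n},\alpha^{i_n,n})\in\CY_k\subseteq\CY_{k'}$ while $(g_{k'},\beta_{k'})\in\CY_{k'}$. If these two pairs coincide then $\phi_n(a_n)=\beta_{k',n}(\phi_{n,g_{k'}(n)}(\hat a_n))$ outright; otherwise, using that $y^n_{\mu_n}\in A_n\cap D_{f^{i_n,n}}\cap D_{g_{k'}}$ (first membership from $f^{i_n,n}(n)>\mu_n$, second from $g_{k'}(n)>\mu_n$ and $n\ge n_{k'}$) together with $[\CY_{k'}]^2\subseteq K_1^{2^{-k'}}$, the definition of $K_0^{2^{-k'}}$ forces $\norm{\alpha^{i_n,n}(\Phi_{f^{i_n,n}}(y^n_{\mu_n}))-\beta_{k'}(\Phi_{g_{k'}}(y^n_{\mu_n}))}\le 2^{-k'}$; taking $n$th coordinates yields $\norm{\phi_n(a_n)-\beta_{k',n}(\phi_{n,g_{k'}(n)}(\hat a_n))}\le 2^{-k'}<\e/2$. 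With the last inequality of the preceding paragraph this gives $\norm{\phi_n(a_n)-\rho(a)_n}<\e$ for all $n\ge N$, and since $\e$ was arbitrary, $\phi(a)-\rho(a)\in\bigoplus B_n$, as desired.

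The step I expect to be the main obstacle is exactly this diagonal bookkeeping. The function $\phi_n$ is built from members of $\CY_{k(n)}$ with $k(n)\to\infty$, so no single homogeneous set of the $\OCA_\infty$ colouring contains all of the pairs that matter; the remedy is to use the nesting $\CY_k\subseteq\CY_{k'}$ to measure every pair occurring past block $k'$ against one reference pair $(g_{k'},\beta_{k'})$, which works only after $g_{k'}$ has been chosen large enough — dominating the rounding indices $\mu_n$ attached to the given $a$ — that the memberships $y^n_{\mu_n}\in D_{f^{i_n,n}}\cap D_{g_{k'}}$ hold and thereby activate the colouring. Everything else (verifying that $\beta_{k'}\Phi_{g_{k'}}$ really does lift $\iso$, and the support/coordinate computations for the block-diagonal maps) is routine once the reference pair is in hand.
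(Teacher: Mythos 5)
Your proof is correct and follows essentially the same strategy as the paper's: pick one reference pair in $\CY_{k'}$ whose second coordinate is known to lift $\iso$, then compare the $n$th coordinate of $\phi$ with that of the reference lift via $K_1^{\epsilon_{k'}}$-homogeneity, using the nesting $\CY_k\subseteq\CY_{k'}$ to put the block-$k$ pairs defining $\phi_n$ into the same homogeneous set. The only differences are cosmetic: you measure against a fixed set-theoretic lift $\rho$ rather than against $\alpha(\Phi_g(x))$ itself, and you spell out the "$\norm{\cdot}\le\epsilon_{k'}$ for every $k'$, hence in $\bigoplus B_n$" step that the paper leaves implicit.
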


\begin{proof}
 Let $x\in \prod A_n$ be given. We may assume that $\norm{x} \le 1$. Moreover, we may assume that $x_n = y^n_{h(n)}$ for some $h\in\NN^\NN$, so that $x\in D_h$. Fix $k\in\NN$, and choose $(g,\alpha)\in \CY_k$ such that $h\le^* g$. By modifying $x$ on finitely many coordinates we may assume that $x\in D_g$. Thus, $\pi_B(\alpha(\Phi_g(x))) = \iso(\pi_A(x))$. Now note that for all $n\ge n_k$ and $i\in\NN$ we have
 \[
 \{(g,\alpha),(f^{i,n},\alpha^{i,n})\}\in K_1^{\varepsilon_k}
 \]
 In particular, if $i$ is the least natural number such that $f^{i,n}(n) > h(n)$, then we have
 \[
 \norm{\alpha_n(\Phi_g(x_n)) - \varphi_n(x_n)} \le \varepsilon_k,
 \]
 hence for all $k\in\NN$ we have that
 \[
\norm{ \pi_B(\alpha(\Phi_g(x))-\varphi(x))}=\limsup_n\norm{\alpha_n(\Phi_g(x_n))-\varphi_n(x_n)}\leq\e_k, 
 \]
 that is, 
 \[
 \pi_B(\varphi(x)) = \pi_B(\alpha(\Phi_g(x))) = \iso(\pi_A(x)).\qedhere
 \]
\end{proof}
This completes the proof of Theorem~\ref{thm:trivialredprod}.
\section{Consequences: Isomorphisms of reduced products II}\label{sec:cons.RedProd2}

If $X$ and $Y$ are locally compact topological spaces, a homeomorphism between $\beta X\setminus X$ and $\beta Y\setminus Y$ is \emph{trivial} if it is induced by a homeomorphism between cocompact subspaces of $X$ and $Y$. Suppose that $X=\sqcup X_i$ and $Y=\sqcup Y_i$ where each $X_i$ and each $Y_i$ is a compact space. If $\varphi$ is a homeomorphism between cocompact subspaces of $X$ and $Y$, then the graph of $\varphi$ is necessarily the union of the graphs of a sequence of homeomorphisms $\varphi_i\colon X_i\to Y_{g(i)}$, where $g$ is an almost permutation of $\NN$. (One obtains $\varphi_i$ as $\varphi\restriction X_i$.)

This definition was given in \cite{Farah-Shelah.RCQ} (see also \cite{Farah.AQ} and \cite{Farah-McKenney.ZD}), and coincides with the definition of algebraically trivial isomorphism between $C(\beta X\setminus X)$ and $C(\beta Y\setminus Y)$ as given in \cite{V.Rigidity} (see \cite[Proposition 2.7]{V.Rigidity}).

A fundamental question is the following: suppose that $A_n$ and $B_n$ are unital $\Cstar$-algebras with no nontrivial central projections, and that $\prod A_n/\bigoplus A_n$ and $\prod B_n/\bigoplus B_n$ are isomorphic. What can we say about the relations between the sequences $(A_n)_n$ and $(B_n)_n$? The following result of Ghasemi, proved in \cite{Ghasemi.FFV}, shows that not much can be said under $\CH$, as in this case it is possible to construct isomorphic reduced products from sequences which do not have anything to do with each other. Its proof uses compactness of the space of first order theories (in continuous model theory), a continuous version of the Fefferman-Vaught theorem, and countable saturation of reduced products.
\begin{theorem}\label{saeed}
Assume $\CH$, and let $A_n$ be separable unital $\Cstar$-algebras. Then there is a subsequence $\{n_k\}_{k\in\NN}$ such that if $S$ and $T$ are infinite subsets of $\NN$ then
\[
\prod_{k\in S} A_{n_k}/\bigoplus_{k\in S}A_{n_k}\cong\prod_{k\in T} A_{n_k}/\bigoplus_{k\in T}A_{n_k}.\qed
\]
\end{theorem}
Keeping in mind our results from \S\ref{sec:apmaps} (in particular Theorems~\ref{thm:apmap-known} and \ref{thm:apmapstrivial} and Corollary~\ref{cor:KirchUCT}) and the definition of algebraically trivial isomorphisms of reduced products (Definition~\ref{defin:trivialredprod}), we state the following consequences of Theorem~\ref{thm:trivialredprod}. We provide an unified proof for all of the corollaries below. (Corollary~\ref{cor:trivialredprod1} is due to Ghasemi, \cite{Ghasemi.FDD}.)
\begin{corollary}\label{cor:trivialredprod1}
The following statement is independent of $\ZFC$: given two sequences of positive natural numbers $(n_i)_i$ and $(m_i)_i$ then all isomorphisms between $\prod M_{n_i}(\ce)/\bigoplus M_{n_i}(\ce)$ and $\prod M_{m_i}(\ce)/\bigoplus M_{m_i}(\ce)$ are algebraically trivial.
\end{corollary}
\begin{corollary}\label{cor:trivialredprod2}
The following statement is independent of $\ZFC$: if $A_n$ are separable unital AF algebras with no nontrivial central projections and $B_n$ are unital separable $\Cstar$-algebras with no nontrivial central projections then $\prod A_n/\bigoplus A_n\cong\prod B_n/\bigoplus B_n$ if and only if $A_n\cong B_{g(n)}$ for some almost permutation $g$ of $\NN$ and all but finitely many $n$.
\end{corollary}
\begin{corollary}\label{cor:trivialredprod3}
The following statement is independent of $\ZFC$: if $X_i$ and $Y_i$ are compact connected metrizable spaces, $X=\sqcup X_i$ and $Y=\sqcup Y_i$, then all homeomorphisms between $\beta X\setminus X$ and $\beta Y\setminus Y$ are trivial.
\end{corollary}
\begin{corollary}\label{cor:trivialredprod4}
The following statement is independent of $\ZFC$: if $A_n$ and $B_n$ are UCT Kirchberg algebras then $\prod A_n/\bigoplus A_n\cong\prod B_n/\bigoplus B_n$ if and only if $A_n\cong B_{g(n)}$ for some almost permutation $g$ of $\NN$ and all but finitely many $n$.
\end{corollary}

\begin{proof}[Proof of Corollaries~\ref{cor:trivialredprod1}--\ref{cor:trivialredprod4}]
That all the statements are true under $\OCA+\MA_{\aleph_1}$ is Theorem~\ref{thm:trivialredprod} together with Theorem~\ref{thm:apmapstrivial}, Theorem~\ref{thm:apmap-known} and Corollary~\ref{cor:KirchUCT}. The first statement was shown to be consistent with $\ZFC$ in \cite{Ghasemi.FDD}, and to hold $\OCA+\MA_{\aleph_1}$ in \cite{McKenney.UHF}.

Theorem~\ref{saeed} shows that under $\CH$ all such statements are false, as we can apply it, for each class as above, to an infinite sequence of pairwise nonisomorphic objects.
\end{proof}

\section{Consequences: the Coskey-Farah conjecture}\label{sec:cons.Borel}
Recall that, by Definition~\ref{defin:Boreliso}, if $A$ and $B$ are separable $\Cstar$-algebras, an isomorphism $\iso\colon\SQ(A)\to\SQ(B)$ is topologically trivial if its graph
\[
 \Gamma_\iso=\{(a,b)\in \SM(A)\times\SM(B)\mid \norm{a},\norm{b}\leq 1\wedge  \iso(\pi_A(a))=\pi_B(b)\}
\]
is Borel in the product of the strict topologies. The following is a generalization of Conjecture~\ref{conj:theconj} \eqref{theconj:PFA}.

\begin{conjecture}
 Let $A$ and $B$ be separable $\Cstar$-algebras. Then $\PFA$ implies that all isomorphisms between $\SQ(A)$ and $\SQ(B)$ are topologically trivial.
\end{conjecture}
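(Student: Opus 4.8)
The statement is the common generalization of Theorem~\ref{theoi:Borel} obtained by dropping its two standing hypotheses: that $A$ and $B$ carry increasing approximate identities of projections, and that $A$ has the metric approximation property. The plan is to follow the architecture of the proof of Theorem~\ref{theoi:Borel}. Given an isomorphism $\iso\colon\SQ(A)\to\SQ(B)$, one uses $\OCA_\infty+\MA_{\aleph_1}$ (a consequence of $\PFA$) to produce, on a large ideal, an asymptotically additive and hence Borel lift, and then uses $\MA_{\aleph_1}$ to amalgamate these local lifts into a single Borel map $\SM(A)_{\le 1}\to\SM(B)_{\le 1}$ whose graph witnesses that $\iso$ is trivial in the sense of Definition~\ref{defin:Boreliso}. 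The local step is driven by Theorem~\ref{thm:lifting}, applied to the stratification of $\SM(A)$ provided by Lemma~\ref{lem:stratification} into the block-diagonal corners $e_{I_n}Ae_{I_n}$. Of the two hypotheses to be removed, the first should be the easier: every separable $\Cstar$-algebra is $\sigma$-unital, so an approximate identity with $e_ne_{n+1}=e_n$ exists unconditionally and Lemma~\ref{lem:stratification} applies verbatim, and the coordinate-structure elements $p_S$ in Theorem~\ref{thm:lifting} are only required to be positive contractions, not projections. The remaining work on this front is to arrange the coordinate structure on $\SM(A)$ out of the (non-orthogonal) corners $e_{I_n}Ae_{I_n}$ rather than out of genuine summands, which is a matter of bookkeeping with the stratification.

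The genuine obstacle is the metric approximation property. In the present proof it is used at exactly one place: to replace each corner $e_{I_n}Ae_{I_n}$ by a finite-dimensional operator space $E_{n,m}$, via the finite-rank contractions $\phi_{n,m},\psi_{n,m}$ of \S\ref{sec:cons.RedProd}, so that the finite-dimensionality hypothesis on the index spaces in Theorem~\ref{thm:lifting} is met. That hypothesis is used pervasively in the proof of Theorem~\ref{thm:lifting}: the skeletons $X_n$ are finite, Proposition~\ref{prop:talagrand} requires the factors of the product to be finite, the $\Delta$-system and Knaster arguments in Lemma~\ref{lemma:oca->alternative} manipulate the finite data $s_p\colon I_p\times n_p\to 2$, and the open colorings of Lemma~\ref{lemma:oca->treelike} are built from norm data on finite-dimensional balls. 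Without the metric approximation property the $E_{n,m}$ cannot be formed, and by Szankowski's theorem \cite{Szank:AP} there are separable $\Cstar$-algebras for which no finite-rank approximation of the identity exists at all; so some genuinely new input is needed here, and I expect this to be the crux. Every known route to a $\PFA$-rigidity theorem for a quotient of a metric structure passes through a lifting theorem of the type of Theorem~\ref{thm:lifting}, and the finiteness of the index spaces is precisely the feature that the metric approximation property is used to manufacture.

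Two attacks suggest themselves. The first is to prove a version of Theorem~\ref{thm:lifting} in which the $E_n$ are arbitrary separable operator spaces --- for instance the unit balls of the corners $e_{I_n}Ae_{I_n}$ themselves --- by replacing the finite skeletons $X_n$ with countable $2^{-n}$-dense subsets and redeveloping the comeagerness machinery of Proposition~\ref{prop:talagrand} and the ccc poset of Lemma~\ref{lemma:oca->alternative} over products of infinite compact metric spaces; what is really used is a Talagrand-style fusion, which is not obviously confined to the finite case, so this may be feasible although technically heavy. The second is to approximate $\SQ(A)$ from within: exhibit $A$ as a limit of separable subalgebras with the metric approximation property, apply Theorem~\ref{theoi:Borel} to control $\iso$ on each of the resulting subquotients, and argue that the Borel graphs so obtained cohere to a Borel graph for $\iso$. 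As an intermediate target, the commutative case $A=C_0(X)$ --- where $\SQ(A)=C(\beta X\sm X)$ and $\iso$ is a homeomorphism of $\beta X\sm X$ --- should be within reach of the older OCA lifting theorem of \cite[Theorem~3.3.5]{Farah.AQ} combined with a decomposition of $X$ into compact pieces, and would already be a meaningful partial confirmation of the conjecture.
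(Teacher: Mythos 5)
The statement you address is presented in the paper as a conjecture, not a theorem, and the paper offers no proof of it; there is therefore no argument of the authors against which to compare yours. Your text is, as you half-acknowledge by speaking of ``attacks'' and ``the crux'', a research plan rather than a proof --- the appropriate register, but one in which nothing can be certified correct. Within that register, your diagnosis of the metric approximation property as the central obstacle is sound: the finiteness of the $E_n$ drives every step of the proof of Theorem~\ref{thm:lifting}, from the finite skeletons to Proposition~\ref{prop:talagrand} to the $\Delta$-system argument in the ccc claim, and Szankowski's theorem forecloses any hope of manufacturing finite-dimensional approximations without MAP.

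Where you go astray is in dismissing the other dropped hypothesis --- the increasing approximate identity of projections --- as ``a matter of bookkeeping''. It is not. Without projections, the blocks $e_I = e_{\max I} - e_{\min I}$ from Lemma~\ref{lem:stratification} are not pairwise orthogonal: for consecutive intervals $I, J$ one computes $e_I e_J = e_b - e_b^2 \neq 0$ where $b = \max I = \min J$. Consequently, setting $p_S = \sum_{n\in S}(e_n - e_{n-1})$ does not produce a coordinate structure, since $\pi(p_S)\pi(p_T)$ need not vanish for disjoint $S, T$: the cross-terms with $|n - m| = 1$ lie outside $A$ and there can be infinitely many of them, so the hypotheses of Theorem~\ref{thm:lifting} fail. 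Your suggested ``within reach'' warm-up, $A = C_0(X)$ with $X$ connected, is precisely the case in which $A$ has no nonzero projections at all; far from being an easy intermediate step, it isolates the projections obstacle in its purest form. The two hypotheses you propose to drop are thus genuinely independent obstructions --- one blocks the commutative case (which always has MAP), the other blocks the case without MAP (which may well have plenty of projections) --- and both require new ideas, not only the one you locate at MAP alone.
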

The main result proved in this section is the following.

\begin{theorem}\label{thm:Borel}
 Assume $\OCA$ and $\MA_{\aleph_1}$. Let $A$ and $B$ be separable $\Cstar$-algebras where $A$ has the metric approximation property and both $A$ and $B$ have an increasing approximate identity of projections. Then every isomorphism from $\SQ(A)$ to $\SQ(B)$ is topologically trivial.
\end{theorem}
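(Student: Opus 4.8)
The plan is to reduce to the reduced-product situation and apply the lifting theorem, Theorem~\ref{thm:lifting}, and then to show that the graph $\Gamma_\Lambda$ is \emph{analytic}; since the complement of such a graph is again analytic, $\Gamma_\Lambda$ is then Borel. Fix increasing approximate identities of projections $(e^A_n)$ for $A$ and $(e^B_n)$ for $B$. For the reduction to analyticity: the map $z\mapsto\|\pi_B(z)\|=\lim_n\|(1-e^B_n)z(1-e^B_n)\|$ is Borel on $\SM(B)_{\le1}$ in the strict topology, so $(a,b)\notin\Gamma_\Lambda$ iff there are $k\in\NN$ and $b'$ with $(a,b')\in\Gamma_\Lambda$ and $\|\pi_B(b-b')\|>1/k$; thus $\Gamma_\Lambda$ analytic implies its complement analytic, and a set which is both analytic and coanalytic is Borel. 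Making the proof of Lemma~\ref{lem:stratification} Borel by always choosing least witnesses turns the decomposition $a=a_0+a_1$ modulo $A$ into a Borel map $a\mapsto(a_0,a_1)$, together with Borel maps $a\mapsto\vec I^{a,i}$ recording the consecutive interval partitions along which $a_0,a_1$ are block diagonal. Since $\Lambda$ is linear, it then suffices to prove that
\[
  \Gamma^{\mathrm{bd}}=\set{(\vec I,t,b)}{\vec I\text{ a consecutive interval partition},\ (\forall n)(te^A_{I_n}=e^A_{I_n}t),\ \Lambda(\pi_A(t))=\pi_B(b)}
\]
is analytic (each $e^A_{I_n}=e^A_{\max I_n}-e^A_{\min I_n}$ is a projection).

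Fix such a partition $\vec I$. Each corner $A_n:=e^A_{I_n}Ae^A_{I_n}$ inherits the metric approximation property (compress the finite-rank approximations by $e^A_{I_n}$), so choose finite-dimensional operator systems $E_n$ and unital contractions $\phi_n\colon A_n\to E_n$, $\psi_n\colon E_n\to A_n$ with $\psi_n\circ\phi_n$ uniformly close to the identity on a fixed countable dense set; these induce a unital contraction $\widetilde\Psi_{\vec I}\colon\prod E_n/\bigoplus E_n\to\prod A_n/\bigoplus A_n\hookrightarrow\SQ(A)$, and I set $\psi_{\vec I}:=\Lambda\circ\widetilde\Psi_{\vec I}\colon\prod E_n/\bigoplus E_n\to\SQ(B)$. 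This is bounded, linear, and preserves the coordinate structure: for $S\subseteq\NN$ the projection $\sum_{n\in S}e^A_{I_n}$ gives an element of $\SQ(A)$ central in $\prod A_n/\bigoplus A_n$, so its $\Lambda$-image is a projection of $\SQ(B)$ commuting with the range of $\psi_{\vec I}$; taking positive contraction lifts $p_S\in\SM(B)$ (necessarily $p_S=0$ for finite $S$) and using only that $\Lambda$ is a $^*$-isomorphism checks the remaining clauses. Theorem~\ref{thm:lifting} now provides an asymptotically additive, skeletal (hence Borel) $\alpha_{\vec I}$, a finite sum of maps block diagonal against $(e^B_n)$, together with a ccc$/\Fin$ — hence nonmeager — ideal $\SI$ on which $\alpha_{\vec I}$ lifts $\psi_{\vec I}$. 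Compressing $\alpha_{\vec I}$ as in the proof of Proposition~\ref{prop:liftoneverything}, and using that a proper Borel ideal containing the finite sets is meager (Proposition~\ref{prop:JT2}) while $\SI$ is not, upgrades this to: at every $t\in\prod_n A_n$ where $\psi_n\circ\phi_n$ is exact modulo $\bigoplus A_n$, the compressed lift of $\alpha_{\vec I}\circ\prod\phi_n$ lifts $\Lambda$ at $t$. Covering the relevant sets by a countable cofinal family as in the endgame of the proof of Theorem~\ref{thm:trivialredprod} then yields, for each $\vec I$, a Borel map $G_{\vec I}$ lifting $\Lambda$ on all of $\prod_n A_n$.

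I expect the main obstacle to be making the previous paragraph uniform in $\vec I$, so as to conclude that $\Gamma^{\mathrm{bd}}$ is analytic. The auxiliary data ($E_n$, $\phi_n$, $\psi_n$; the lifts $p_S$ of the central projections; the map $\alpha_{\vec I}$, which issues from a non-constructive forcing-axiom argument) are not Borel functions of $\vec I$, and the condition singling out the correct $\alpha$ — that $\alpha$ lifts $\Lambda\circ\widetilde\Psi_{\vec I}$ on a nonmeager ideal — refers to $\Lambda$ itself and so is not visibly Borel. The plan is to write $\Gamma^{\mathrm{bd}}$ as the projection of a set of tuples $(\vec I,t,b,\mathrm{data})$, where $\mathrm{data}$ records the operator systems, the maps $\phi_n,\psi_n$, a skeletal asymptotically additive $\alpha$, a Talagrand-type combinatorial witness (via Proposition~\ref{prop:talagrand}) that the set on which $\alpha$ lifts $\psi_{\vec I}$ is nonmeager, and the requirement that $b$ agree modulo $B$ with the compressed lift attached to $\mathrm{data}$; one must then show that (i) by Theorem~\ref{thm:lifting} such data exist over every point of $\Gamma_\Lambda$, and (ii) any data meeting the retained conditions forces $\Lambda(\pi_A(t))=\pi_B(b)$ by the Borel-ideal argument above. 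Arranging (ii) without circular appeal to $\Gamma_\Lambda$ — here one exploits that $\alpha$ and its compression are block diagonal against $(e^B_n)$ and so interact with $B$ in a Borel way — together with the definability bookkeeping certifying that the whole tuple set is $\mathbf{\Sigma}^1_1$, is where essentially all of the remaining work lies; this last part is analogous to, and no easier than, the absoluteness argument matching Definition~\ref{defin:Boreliso} with Definition~\ref{defin:trivialredprod} for reduced products.
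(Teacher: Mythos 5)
Your framework matches the paper's: decompose a multiplier via Lemma~\ref{lem:stratification} into block-diagonal pieces, pass through finite-dimensional operator systems via the metric approximation property, apply Theorem~\ref{thm:lifting} to get skeletal asymptotically additive lifts on nonmeager ideals, upgrade these to lifts on everything via Proposition~\ref{prop:onedominates} and the Jalali-Naini/Talagrand dichotomy (this is Lemma~\ref{lem:liftforeverything.Borel} in the paper, which is the analogue of your use of Proposition~\ref{prop:liftoneverything}), and finally characterize $\Gamma_\Lambda$ descriptive-set-theoretically. You even correctly isolate the obstacle: the choice of lift $\alpha$ is not a Borel function of the auxiliary parameter $(f,\mathbb I)$, because the condition ``$\alpha$ is a lift of $\Lambda_{f,\mathbb I}$ on a nonmeager ideal'' refers to $\Lambda$ itself, which is precisely what we are trying to show has Borel graph.

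But your proposed resolution --- project a set of tuples $(\vec I,t,b,\mathrm{data})$ where $\mathrm{data}$ includes a Talagrand-type witness that $\alpha$ lifts $\psi_{\vec I}$ on a nonmeager ideal --- does not close the circle, because that witness condition is still a statement about $\Lambda$ and is therefore not \emph{a priori} Borel. Saying that ``arranging (ii) without circular appeal to $\Gamma_\Lambda$'' is ``where essentially all of the remaining work lies'' understates the matter: that remaining work is the core of the proof, and it requires a genuinely new idea, not merely ``definability bookkeeping.''

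The missing idea in the paper is a second application of $\OCA_\infty$, this time to the space
\[
\mathcal X=\{(f,\mathbb I,\alpha^0,\alpha^1)\mid \alpha^i\text{ is a skeletal lift of }\Lambda_{f,\mathbb I^i}\text{ on }\Phi_{f,\mathbb I^i}(D_f[\mathbb I^i])\},
\]
equipped with the separable metric topology obtained from $\NN^\NN\times\PP\times\Skel\times\Skel$. One defines colorings $K_0^\epsilon$ detecting disagreement of two lifts on common domains, shows (via Lemma~\ref{lem:afteragivenpoint} and a $\Delta$-system style refinement argument using $\mathfrak b>\omega_1$) that there is no uncountable $K_0^\epsilon$-homogeneous set, and uses $\OCA_\infty$ plus $\sigma$-directedness of $\leq^*\times\leq_1$ to extract, for each $k$, a $K_1^{\epsilon_k}$-homogeneous, cofinal $\SY_k$ and a \emph{countable} dense $\mathcal D_k\subseteq\SY_k$. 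The crucial point is that the sets $\mathcal D_k$ are \emph{fixed once and for all}, before any Borelness claim is made. The final lemma of \S\ref{sec:cons.Borel} then characterizes $\Gamma_\Lambda\restriction \SM(A)^+_{\leq1}\times\SM(A)^+_{\leq1}$: the analytic clause (condition~\ref{thm:Borelgraph,c2}) says ``there exist decompositions $\pi(a)=\pi(x_0+x_1)$, $\pi(b)=\pi(y_0+y_1)$ and sequences from $\mathcal D_k$ satisfying Borel approximation estimates,'' and the coanalytic clause (condition~\ref{thm:Borelgraph,c3}) is its universal counterpart. Since the only non-Borel objects that would otherwise enter ($\Lambda$, the ideals $\SI_{f,\mathbb I}$) have been replaced by existential and universal quantification over the fixed countable $\mathcal D_k$ and strict limits of explicit partial sums, both clauses are genuinely $\mathbf\Sigma^1_1$ and $\mathbf\Pi^1_1$ respectively, and their equivalence (proved via Lemma~\ref{lem:almostthere}, which uses the $K_1^{\epsilon_k}$-homogeneity to pin down the output up to $O(\epsilon_k)$) gives Borelness on positive contractions; the general case then follows by the real/imaginary/positive/negative decomposition. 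Your observation that analyticity of $\Gamma_\Lambda$ alone would suffice (via the Borel map $z\mapsto\norm{\pi_B(z)}$) is correct and could in principle spare you the coanalytic half, but it does not help with the central problem of producing the analytic characterization in the first place.
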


This proof is similar to the one of Theorem~\ref{thm:trivialredprod}, being fundamentally based on applying Theorem~\ref{thm:lifting}; however, the remainder of this proof is much more technical. The reason for the increase in difficulty is that, in case $A=\bigoplus A_n$, one can find a sequence $\{p_{n}\}\subseteq A$ such that  each $a\in\SM(A)$ can be written as $a=\sum p_{n}ap_{n}$, which allows for a stratification of $\SQ(A)$ into reduced products parametrized by functions $f \colon \NN\to\NN$ controlling the degree to which $p_{n} a p_{n}$ is approximated. (This scenario can be reproduced if, in technical terms, $A$ has an approximate identity which is quasi-central for $\SM(A)$, see \cite[1.9]{Farah.Book}). In the general case, our reduced products have to be parametrized by two interleaved sequences of intervals $I_n^i\subseteq\NN$, for $n\in\NN$ and $i=0,1$,  \`a la Lemma~\ref{lem:stratification}, in addition to a function $f \colon \NN\to\NN$ controlling the degree of approximation. (See Lemma~\ref{lem:summing} for details.)


\subsection{Notation}\label{subsec:notlast}
To avoid excessive notation, we prove Theorem~\ref{thm:Borel} in the case $A=B$. As usual, we use $\OCA_\infty$ instead of $\OCA$.

$A$ denotes a separable $\Cstar$-algebra with the metric approximation property (Definition~\ref{defin:MAP}) and an increasing approximate identity of projections $(p_n)$.  $\pi$ denotes the quotient map $\pi\colon \SM(A)\to\SQ(A)$. Given $S\subseteq\NN$, define $p_S = \sum_{n\in S}(p_n - p_{n-1})$. (We set $p_{-1} = 0$.). If $\mathbb I=\{I_n\}$ is a partition of $\NN$ into consecutive intervals and $X\subseteq\NN$ we let
\[
p_X^{\mathbb I}=\sum_{n\in X} p_{I_n}.
\]
Since the $p_{I_n}$ are mutually orthogonal and uniformly bounded in norm elements of $A$, the sequence of partial sum $(\sum_{n\in X, n\leq m} p_{I_n})_m$ converges strictly in $\SM(A)$ to an element which we call $\sum_{n\in X}p_{I_n}$.

Let $(Y_n)$ be an increasing sequence of finite subsets of $A_{\leq 1}$ whose union is dense in $A_{\leq 1}$, and with the additional property that $p_S\in Y_n$ for all $S\subseteq n$. Since $A$ has the metric approximation property, we can find a finite-dimensional operator system $E_n$ and contractive linear maps $\varphi_n\colon A\to E_n$, $\psi_n\colon E_n\to A$ such that
\[
\norm{\psi_n(\varphi_n(x)) - x} \le 2^{-n}\norm{x}\text{ for all }x\in Y_n.
\]
For each $n$, fix a finite $X_n$, with a distinguished linear order, which is $2^{-n}$-dense in the unit ball of $E_n$. As before, we require that $0$ is the minimum of $X_n$. When referring to skeletal maps (Definition~\ref{defin:skeletal}) and to the space $\Skel$ (Definition~\ref{defin:skel2}), we always take the spaces $E_n$ and the sets $X_n$ as fixed. In this case, $\Skel$ is identified with a subset of $\SM(A)^{\NN}$.
\subsection{The proof}
Let $\mathrm{Part}_\NN$ be the set of all partitions of $\NN$ into consecutive finite intervals, as in \cite[\S9.7]{Farah.Book}. We turn $\mathrm{Part}_\NN$ into a poset by setting 
\[
\mathbb I\leq_1\mathbb J\text{ if and only if }\exists i_0\forall i\geq i_0 i\exists j (I_i\cup I_{i+1}\subseteq J_j\cup J_{j+1}).
\]
 (In \cite{Farah.Book} the order on $\mathrm{Part}_\NN$ is denoted by $\leq^*$. As we already have $\leq^*$ below, we decided to denote differently the two orders to avoid confusion.)

For $f\in\NN^\NN$ and $\mathbb I\in\mathrm{Part}_\NN$, we define
\[
 \Phi_{f,\mathbb I} \colon \SM(A)\to \prod_n E_{f(\max I_n)}\text{ and }\Psi_{f,\mathbb I} \colon \prod_n E_{f(\max I_n)}\to \SM(A)
 \]
 by
 \begin{align*}
 \Phi_{f,\mathbb I}(t)(n) & = \varphi_{f(\max I_n)}(p_{I_n}t p_{I_n}), \text{ and } \\
 \Psi_{f,\mathbb I}((x_n)) & = \sum)n p_{I_n} \psi_{f(\max I_n)}(x_n) p_{I_n} \text{ for } (x_n)\in\prod E_{f(\max I_n)}.
 \end{align*}
The infinite sum in the definition of $ \Psi_{f,\mathbb I}((x_n))$ is the strict limit of the partial sums. For each $(x_n)\in \prod E_{f(\max I_n)}$ this limit exists in $\SM(A)$, since the $p_{I_n}$'s are orthogonal and the $\psi_{f(\max I_n)}$ are uniformly bounded in norm as $n\to\infty$. Also, $\Phi_{f,\mathbb I}$ and $\Psi_{f,\mathbb I}$ are contractive and linear and moreover
 \[
 \Phi_{f,\mathbb I}(A)\subseteq\bigoplus_n E_{f(\max I_n)}\text{ and }\Psi_{f,\mathbb I}(\bigoplus_n E_{f(\max I_n)})\subseteq A.
\]
 For each $\mathbb I\in\mathrm{Part}_\NN$, set 
 \[
 \mathbb I^0 = \seq{I_{2n}\cup I_{2n+1}}{n\in\NN}\text{ and }\mathbb I^1 = \seq{I_{2n+1}\cup I_{2n+2}}{n\in\NN}
 \]
where $I_{-1}=\emptyset$.
 \begin{lemma}\label{lem:summing}
 Let $t\in \SM(A)$. Then there are $f\in\NN^\NN$, $\mathbb I\in\mathrm{Part}_\NN$, and $x^i\in\prod E_{f(\max I^i_n)}$, for $i=0,1$, such that
 \[
 t - (\Psi_{f,\mathbb I^0}(x^0) + \Psi_{f,\mathbb I^1}(x^1))\in A
 \]
 \end{lemma}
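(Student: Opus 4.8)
The plan is to reduce everything to Lemma~\ref{lem:stratification} and the metric approximation property of $A$. First I would apply Lemma~\ref{lem:stratification} to the given $t$, using the approximate identity of projections $(p_n)$: this produces finite intervals $I_n^i$ ($n\in\NN$, $i=0,1$), each family disjoint and consecutive, together with $t_0,t_1\in\SM(A)$ such that $t_i$ commutes with $p_{I_n^i}$ for every $n$, and $t-(t_0+t_1)\in A$. After interleaving the two families $\{I_n^0\}$ and $\{I_n^1\}$ appropriately, I can realize both as $\mathbb{I}^0$ and $\mathbb{I}^1$ for a single partition $\mathbb{I}=\{I_n\}\in\PP$ into consecutive finite intervals; the commutation conditions become ``$t_i$ commutes with $p_{I_n^i}$'' where $I_n^0 = I_{2n}\cup I_{2n+1}$ and $I_n^1 = I_{2n+1}\cup I_{2n+2}$. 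The point of this first step is that it is now enough to show that each $t_i$ can be captured, modulo $A$, by a map of the form $\Psi_{f,\mathbb{I}^i}$, since sums and finite modifications of elements of $A$ stay in $A$.

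Next I would use the metric approximation property to pick, for each $i=0,1$ and each $n$, an element $x^i_n$ in the appropriate finite-dimensional operator system so that $p_{I_n^i}\psi(\phi(p_{I_n^i} t_i p_{I_n^i}))p_{I_n^i}$ approximates $p_{I_n^i} t_i p_{I_n^i}$ within a rapidly decaying error. Concretely, for $n$ large enough that the relevant corners $p_{I_n^i} t_i p_{I_n^i}$ (suitably truncated) lie in the fixed finite sets $Y_{k}$, the choice of maps $\phi_k,\psi_k$ with $\norm{\psi_k(\phi_k(y))-y}\le 2^{-k}\norm{y}$ for $y\in Y_k$ gives an error at most $2^{-k}$. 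Choosing $f\in\NN^\NN$ growing fast enough — here $f$ must be increasing and tend to infinity quickly enough that, evaluated at $\max I^i_n$, the errors $2^{-f(\max I^i_n)}$ are summable — I set $x^i = (x^i_n)_n \in \prod_n E_{f(\max I^i_n)}$ with $x^i_n = \phi_{f(\max I^i_n)}(p_{I^i_n} t_i p_{I^i_n})$. Then $\Psi_{f,\mathbb{I}^i}(x^i) = \sum_n p_{I^i_n}\psi_{f(\max I^i_n)}(x^i_n)p_{I^i_n}$, and the difference $t_i - \Psi_{f,\mathbb{I}^i}(x^i)$ is a strictly convergent sum of orthogonal corners whose norms are summable, hence lies in $A$. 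Using that $t_i$ commutes with $p_{I^i_n}$ so that $\sum_n p_{I^i_n} t_i p_{I^i_n}$ differs from $t_i$ only by an element of $A$ (this is exactly the content we extracted from Lemma~\ref{lem:stratification}), we conclude $t_i - \Psi_{f,\mathbb{I}^i}(x^i)\in A$ for each $i$.

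Combining, $t-(t_0+t_1)\in A$ and $t_i - \Psi_{f,\mathbb{I}^i}(x^i)\in A$ yield $t - (\Psi_{f,\mathbb{I}^0}(x^0)+\Psi_{f,\mathbb{I}^1}(x^1))\in A$, which is the claim. The one subtle point — the part I expect to be the main obstacle — is the bookkeeping that matches the two interleaved families from Lemma~\ref{lem:stratification} with the \emph{single} partition $\mathbb{I}$ underlying the definitions of $\mathbb{I}^0,\mathbb{I}^1$, while simultaneously arranging that $f$ is increasing and that $f(\max I_n)$ grows fast enough to make the approximation errors summable; one must also verify that truncating the corners $p_{I^i_n} t_i p_{I^i_n}$ so that they land in $Y_k$ costs only another summable error, using again that $p_{I^i_n} t_i$ and $t_i p_{I^i_n}$ are in $A$. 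None of these steps is deep, but they require care to state so that a single $f$ and a single $\mathbb{I}$ work for both $i=0$ and $i=1$ at once.
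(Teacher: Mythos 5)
Your proposal is correct and follows essentially the same route as the paper's own proof: apply Lemma~\ref{lem:stratification} to decompose $t$ modulo $A$ into $t_0+t_1$ with $t_i$ commuting with $p_{I_n^i}$, then use the metric approximation property to approximate each corner $p_{I_n^i} t_i p_{I_n^i}$ within $2^{-n}$ and sum. The extra care you take (landing in $Y_k$ via density, interleaving bookkeeping) is reasonable but not genuinely different from what the paper does implicitly; one small overclaim is that $\sum_n p_{I_n^i} t_i p_{I_n^i}$ differs from $t_i$ "only by an element of $A$" — since $t_i$ commutes with each $p_{I_n^i}$ and the $I_n^i$ partition $\NN$, this is an exact equality.
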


 \begin{proof}
 By Lemma~\ref{lem:stratification} we may find $\mathbb{I}\in\mathrm{Part}_\NN$ and $t_0$ and $t_1$ in $\SM(A)$ such that $t_i$ commutes with $p_{I^i_n}$ for each $n\in\NN$ and $i = 0,1$, and $\pi(t) = \pi(t_0 + t_1)$. Now for each $n$, choose $f(n)\in\NN$ large enough that for each $i = 0,1$,
 \[
 \norm{p_{I_n^i} t_i p_{I_n^i} - \psi_{f(n)}(\varphi_{f(n)}(p_{I_n^i} t_i p_{I_n^i}))} < 2^{-n}
 \]
 It follows that
 \[
 t_i - \Psi_{f,\mathbb I^i}(\Phi_{f,\mathbb I}(t_i))\in A
 \]
 Setting $x^i = \Phi_{f,\mathbb I}(t_i)$, we are done.
 \end{proof}
Let $D[\mathbb I]=\{x\in \SM(A)\mid x=\sum p_{I_n}xp_{I_n}\}$ and, for $f\in\NN^\NN$,
\[
D_f[\mathbb I]= \{x\in D[\mathbb I]\mid \forall n\forall m\geq f(\max I_n) (\norm{p_{I_n}xp_{I_n}-\psi_m(\varphi_m(p_{I_n}xp_{I_n}))}<2^{-n})\}.
\]

\begin{proposition}\label{prop:mainproperties}
Let $f$ and $g$ in $\NN^\NN$ and $\mathbb I$ and $\mathbb J$ in $\mathrm{Part}_\NN$. Then
\begin{enumerate}
\item\label{str1} if $f\leq^*g$ then $\pi(D_f[\mathbb I])\subseteq \pi(D_g[\mathbb I])$;
\item\label{str2} if $\mathbb I\leq_1 \mathbb J$ and $f$ is strictly increasing, then $f\leq_* g$ implies that
\[
\pi(D_f[\mathbb I^0]+D_f[\mathbb I^1])
\subseteq\pi(D_g[\mathbb J^0]+D_g[\mathbb J^1]);
\]
\item\label{str3} for every $t\in \SM(A)$ there are $f$, $\mathbb I$, $x_0$ and $x_1$ such that $t-x_0-x_1\in A$, $x_0\in D_f[\mathbb I^0]$, $x_1\in D_f[\mathbb I^1]$. Moreover, if $t$ is positive, $x_0$ and $x_1$ may be chosen to be positive.
\end{enumerate}
\end{proposition}
\begin{proof}
\eqref{str1}: Let $f$ and $g$ be such that $f\leq^* g$. Let $n_0$ be such that if $n\geq n_0$ then $f(n)\leq g(n)$. Pick $x\in D_f[\mathbb I]$, and let $y=(1-p_{\max I_{n_0}})x(1-p_{\max I_{n_0}})$. Notice that $y\in D[\mathbb I]$, and moreover if $n\geq n_0$, then $p_{I_n}xp_{I_n}=p_{I_n}yp_{I_n}$. Let $\ell\in\NN$. If $\ell\leq n_0$, then $p_{I_\ell}yp_{I_\ell}=0$ and for all $m$ we have that $\psi_m(\varphi_m(p_{I_\ell}yp_{I_\ell}))=0$. If $\ell\geq n_0$, pick $m\geq g(\max I_\ell)$. Since $\ell\geq n_0$, then $\max I_\ell\geq n_0$, hence $g(\max I_\ell)\geq f(\max I_\ell)$. Since $x\in D_f[\mathbb I]$, we have that 
\[
\norm{p_{I_\ell}yp_{I_\ell}-\psi_m(\varphi_m(p_{I_\ell}yp_{I_\ell}))}=\norm{p_{I_\ell}xp_{I_\ell}-\psi_m(\varphi_m(p_{I_\ell}xp_{I_\ell}))}<2^{-\ell}.
\]
This shows that $y\in D_g[\mathbb I]$. Since $\pi(x)=\pi(y)$, we are done.

\eqref{str2}: Without loss of generality we may assume that if $I_i\subseteq J_j$, then $i>j$. Pick $x\in D_f[\mathbb I^0]$, and fix an even $i_0$ such that if $i\geq i_0$ there is $j$ such that $I_i\cup I_{i+1}\subseteq J_j\cup J_{j+1}$. Let $y=(1-p_{I_{i_0}})x(1-p_{I_{i_0}})$, so that $\pi(x)=\pi(y)$. Let 
\[
C_0=\{i\geq i_0\mid \exists j (I^0_i\subseteq J^0_{j})\}.
\]
Define
\[
y_0=\sum_{i\in C_0}p_{I^0_i}yp_{I^0_i}, \text{ and } y_1=y-y_0.
\]
Notice that by our choice of $i_0$, we have that $y_0\in D[\mathbb J^0]$ and $y_1\in D[\mathbb J^1]$. We prove that  $y_0\in D_f[\mathbb J^0]$ and leave the rest to the reader. Write $y_0=\sum p_{J^0_n}y_0p_{J^0_n}$. For $n\in\NN$, we want to show that if $m\geq f(\max (J_n^0))$ then 
\[
\norm{p_{J^0_n}y_0p_{J^0_n}-\psi_m(\varphi_m(p_{J^0_n}y_0p_{J^0_n}))}<2^{-n}.
\]
If $n$ is such that $p_{J^0_n}y_0p_{J^0_n}=0$, there is nothing to prove, as $\varphi_m(0)=0=\psi_m(0)$ for all $n$. If not, let $D=\{i\mid I^0_i\subseteq J_n^0\}$. $D$ is nonempty, as if $p_{J^0_n}y_0p_{J^0_n}\neq 0$, then $n$ is greater than the least $j$ such that $I_{i_0}$ was a subset of $J_j$. By the definition of $y_0$, we have that 
\[
p_{J^0_n}y_0p_{J^0_n}=\sum_{i\in D}p_{I^0_i}y_0p_{I^0_i}.
\]
Let $m\geq f(\max (J_n^0))$. Since $f$ is increasing, for all $i\in D$ we have that $f(\max (I^0_i))< f(\max (J_n^0))$. Since $p_{I^0_i}y_0p_{I^0_i}=p_{I^0_i}xp_{I^0_i}$, $x\in D_f[\mathbb I^0]$, and $m\geq f(\max (I^0_i))$, then 
\[
\norm{p_{I^0_i}y_0p_{I^0_i}-\psi_m(\varphi_m(p_{I_i^0}y_0p_{I^0_i}))}=\norm{ p_{I^0_i}x_0p_{I^0_i}-\psi_m(\varphi_m(p_{I_i^0}x_0p_{I^0_i}))}<2^{-i}, 
\]
hence 
\[
\norm{p_{J^0_n}y_0p_{J^0_n}-\psi_m(\varphi_m(p_{J_n^0}y_0p_{J^0_n}))}<\sum_{i\in D}2^{-i}<2^{-\min D+1}.
\] 
Since $\min D<n$, we are done. 

\eqref{str3}: This follows from Lemma~\ref{lem:summing}.
\end{proof}
Let $\iso$ be an automorphism of $ \SQ(A)$. For each $f\in\NN^\NN$ and $\mathbb I\in\mathrm{Part}_\NN$, the map
\[
\Psi'_{f,\mathbb I}\colon \prod E_{f(\max I_n)}/\bigoplus E_{f(\max I_n)}\to \SQ(A),
\]
 defined as
\[
\Psi'_{f,\mathbb I}(\pi(x))=\pi(\Psi_{f,\mathbb I}(x)),
\]
is a well defined contractive linear map, and so is $\Lambda_{f,\mathbb I}=\Lambda\circ\Psi'_{f,\mathbb I}$.

\begin{proposition}\label{prop:onedominates}
Let $\mathbb I\in\mathrm{Part}_\NN$, and $\CI\subseteq\SP(\NN)$ be a dense nonmeager ideal. Let $q\in\SQ(A)$ be a projection. If $q\geq \pi(p_X^\mathbb I)$ for all $X\in\CI$, then $q=1$.
\end{proposition}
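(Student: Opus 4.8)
The plan is to argue by contradiction. Assuming $q\neq 1$, I would fix any lift $a\in\SM(A)$ of the nonzero projection $1-q$ and turn the domination hypothesis into a statement about a Borel ideal on $\NN$. For $X\in\SI$ the inequality $q\ge\pi(p_X^{\mathbb I})$ gives $q\pi(p_X^{\mathbb I})=\pi(p_X^{\mathbb I})$, hence $\pi(ap_X^{\mathbb I})=(1-q)\pi(p_X^{\mathbb I})=\pi(p_X^{\mathbb I})-q\pi(p_X^{\mathbb I})=0$, i.e.\ $ap_X^{\mathbb I}\in A$. So I would set
\[
  \SJ=\set{X\subseteq\NN}{ap_X^{\mathbb I}\in A},
\]
observe $\SI\subseteq\SJ$, and note that $\SJ$ is therefore nonmeager.

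Next I would verify that $\SJ$ is a Borel ideal containing the finite sets. Writing $\mathbb I=\seq{I_n}{n\in\NN}$: each $I_n$ is finite so $p_{I_n}\in A$, hence $p_X^{\mathbb I}\in A$ for finite $X$ and $\SJ\supseteq\Fin$. The projections $p_{I_n}$ are pairwise orthogonal, so $X\subseteq Y$ implies $p_X^{\mathbb I}=p_Y^{\mathbb I}p_X^{\mathbb I}$ and thus $ap_X^{\mathbb I}=(ap_Y^{\mathbb I})p_X^{\mathbb I}$; since $A$ is an ideal of $\SM(A)$ this gives hereditariness, and since $p_{X\cup Y}^{\mathbb I}=p_X^{\mathbb I}+p_{Y\setminus X}^{\mathbb I}$ we get closure under finite unions. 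For Borelness I would use that $X\mapsto p_X^{\mathbb I}$ is continuous from $\SP(\NN)$ into $\SM(A)$ with the strict topology: given $b\in A$ and $\e>0$, choosing $m$ with $\norm{b-p_mb},\norm{b-bp_m}<\e$ makes the seminorm values $\norm{p_X^{\mathbb I}b},\norm{bp_X^{\mathbb I}}$ depend, up to $2\e$, only on the finite restriction $X\cap\set{n}{\min I_n\le m}$ (as $p_{I_n}p_m=0=p_mp_{I_n}$ whenever $\min I_n>m$). Composing with the strictly continuous map $y\mapsto ay$, the map $X\mapsto ap_X^{\mathbb I}$ is strictly continuous, so $\SJ$ is its preimage of $A$; and $A$ is a Borel subset of the ball $(\SM(A))_{\le\norm{a}}$ in the strict topology, since $A=\set{y}{\lim_n\norm{y-p_nyp_n}=0}$ and each $y\mapsto\norm{y-p_nyp_n}$ is Borel there.

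Finally, $\SJ$ is a Baire-measurable ideal containing the finite sets, so Proposition~\ref{prop:JT2} forces $\SJ$ to be meager unless it is improper; being nonmeager, it must contain $\NN$. Since $\mathbb I$ partitions $\NN$ we have $p_{\NN}^{\mathbb I}=\sum_n p_{I_n}=1$, so $\NN\in\SJ$ means $a=ap_{\NN}^{\mathbb I}\in A$, whence $1-q=\pi(a)=0$, contradicting $q\neq 1$.

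I expect the only delicate point to be the Borelness of $\SJ$, and in particular the claim that $A$ is a Borel subset of $\SM(A)$ in the strict topology; this is the exact analogue — via the approximate identity of projections — of the fact that $\bigoplus B_n$ is Borel in $\prod B_n$ used in the proof of Proposition~\ref{prop:liftoneverything}, and it is what allows the Jalali-Naini--Talagrand dichotomy to apply. Everything else is routine bookkeeping with the orthogonal projections $p_{I_n}$. (Note that only nonmeagerness of $\SI$, not its density, is actually used.)
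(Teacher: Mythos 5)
Your proof is correct, and it takes a genuinely different route from the paper's, though both ultimately rest on the Jalali--Naini--Talagrand characterization of nonmeager ideals. The paper argues concretely: it fixes a lift $r\in\SM(A)$ of $1-q$, uses $\norm{\pi_A(r)}=1$ to extract a coarser partition $\mathbb J$ of $\NN$ and a sequence of consecutive $\mathbb J$-blocks on which $r$ has norm close to $1$, and then applies (the contrapositive of) Proposition~\ref{prop:JT2} to $\SI$ to produce an $X\in\SI$ absorbing infinitely many of these blocks, giving $\norm{\pi_A(rp_X^{\mathbb I})}>1/2$ and hence a contradiction. You instead transfer the problem to the auxiliary ideal $\SJ=\set{X}{ap_X^{\mathbb I}\in A}$, verify it is Borel, hereditary, closed under finite unions, contains $\Fin$, and contains the nonmeager $\SI$, and then apply Proposition~\ref{prop:JT2} to $\SJ$ itself to conclude it is improper, so $a\in A$ and $1-q=0$. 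The two arguments are essentially dual: the paper exploits nonmeagerness of $\SI$ to escape a chosen interval partition, while you exploit the Baire rigidity of the auxiliary Borel ideal. Your route is shorter and avoids the diagonalization, at the cost of the Borelness verification, which you carry out correctly; and in fact this ``Borel ideal is either meager or everything'' device is exactly the one the paper itself deploys in Proposition~\ref{prop:liftoneverything} and Lemma~\ref{lem:liftforeverything.Borel}, where Borelness of $A$ (equivalently $\bigoplus B_n$) in the strict topology on balls is asserted and used, so your proof fits naturally into the paper's toolbox. Your parenthetical observation that only nonmeagerness of $\SI$ is needed is also correct, and it applies equally to the paper's own proof after passing from $\SI$ to $\SI+\Fin$, which preserves the domination hypothesis since $\pi_A(p_F^{\mathbb I})=0$ for finite $F$.
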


\begin{proof}
Let $q_X=\pi_A(p_X^{\mathbb I})$. Suppose that $q\in\SQ(A)$ is a nontrivial projection such that $qq_X=q_X$ for all $X\in\CI$. Let $r=1-q$, and suppose that $\norm{r}=1$. By Lemma~\ref{lem:stratification}, we can find $\mathbb J\in\mathrm{Part}_\NN$ and a sequence $n_i$ such that each $J_n$ is a union of finitely many $I_i$'s, and $\norm{rp_{[n_i,n_{i+1})}^{\mathbb J}}>1-2^{-i}$. Let $k_i$ such that $I_k\subseteq J_i$ for all $k$ with $k_i\leq k\leq k_{i+1}$. Then $\norm{rp_{[k_{n_i},k_{n_{i+1}})}}>1-2^{-i}$. Let $L$ be infinite such that $X=\bigcup_{i\in L}[k_{n_i},k_{n_{i+1}})\in\CI$. Then $\norm{\pi_A(rp_X^\mathbb I)}>\frac{1}{2}$, a contradiction.
\end{proof}
\begin{remark}
While in a situation of a reduced product, or in the abelian case, the density of $\CI$ is the only hypothesis one needs to get results equivalent to Proposition~\ref{prop:onedominates}, in the general case the nonmeagerness of $\CI$ is key; in fact, one can modify a result of Wofsey (see \cite[Proposition 2.2 and 2.3]{Wofsey}) to give a counterexample to the thesis of Proposition~\ref{prop:onedominates} if $\CI$ is taken to be a meager ideal. In particular, let $e_n$ be the canonical base of a separable Hilbert space $H$. Let $p_{n}$ be the projection onto $\overline\spann\{e_i\mid i\leq n\}$. Then it is possible to find $\mathbb I\in \mathrm{Part}_\NN$ and a meager dense ideal $\mathcal I\subseteq\NN$ such that there is a nonzero projection $q\in\mathcal B(H)$ with $\pi(q)\geq\pi(p_X^{\mathbb I})$ for all $X\in\mathcal I$. A version of the above where the $p_X^{\mathbb I}$ were not assumed to be projections to begin with was proved as \cite[Lemma~3.11]{V.Rigidity}.
\end{remark}
Thanks to Proposition~\ref{prop:onedominates} we can show that the ideals provided by Theorem~\ref{thm:lifting} are in fact as large as $\SP(\NN)$. This corresponds to Proposition~\ref{prop:liftoneverything}.

\begin{lemma}\label{lem:liftforeverything.Borel}
Let $\mathbb I\in\mathrm{Part}_\NN$ and $f\in\NN^\NN$ such that $f(n)\geq n$. Suppose that $\alpha_{f,\mathbb I}$ is an asymptotically additive map that is a lift of $\Lambda_{f,\mathbb I}$ on a nonmeager ideal $\CI_{f,\mathbb I}$. Then $\alpha_{f,\mathbb I}\circ\Phi_{f,\mathbb I}$ is a lift of $\Lambda_{f,\mathbb I}$ on
\[
\{x\in D[\mathbb I]\mid x-\Psi_{f,\mathbb I}(\Phi_{f,\mathbb I}(x))\in A\}.
\]
\end{lemma}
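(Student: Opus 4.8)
The plan is to strip away the maps $\Psi$, $\Psi'$ first, then run an ideal argument analogous to the one behind Proposition~\ref{prop:liftoneverything}, replacing the Boolean‐algebra input used there by Proposition~\ref{prop:onedominates}. Write $y=\Phi_{f,\mathbb I}(x)$ and fix once and for all a (choice) lift $\rho\colon \SM(A)\to\SM(A)$ of $\Lambda$. Since $x\in D[\mathbb I]$ and $x-\Psi_{f,\mathbb I}(\Phi_{f,\mathbb I}(x))\in A$, and since $\pi\circ\Psi_{f,\mathbb I}=\Psi'_{f,\mathbb I}\circ\pi$, we get $\pi(x)=\Psi'_{f,\mathbb I}(\pi(y))$ and hence $\Lambda(\pi(x))=\Lambda_{f,\mathbb I}(\pi(y))$. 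So the lemma reduces to the single equality $\pi(\alpha_{f,\mathbb I}(y))=\Lambda(\pi(x))$, i.e.\ to showing $\alpha_{f,\mathbb I}(y)-\rho(x)\in A$.

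For each $X\subseteq\NN$ fix a positive contraction $q_X\in\SM(A)$ lifting $\Lambda(\pi(p^{\mathbb I}_X))$, chosen (via a $\Cmeas$‑measurable selection, as in Theorem~\ref{thm:JVN}) so that $X\mapsto q_X$ is $\Cmeas$‑measurable and $q_\NN=1$. Put
\[
  \SI_x=\set{X\subseteq\NN}{q_X\bigl(\alpha_{f,\mathbb I}(y)-\rho(x)\bigr)\in A}.
\]
Using $p^{\mathbb I}_{X'}p^{\mathbb I}_X=p^{\mathbb I}_{X'}$ for $X'\subseteq X$, $p^{\mathbb I}_{X\cup X'}=p^{\mathbb I}_X+p^{\mathbb I}_{X'\sm X}$, and the additivity of $Z\mapsto\Lambda(\pi(p^{\mathbb I}_Z))$ modulo $A$, one checks that $\SI_x$ is an ideal; it contains $\Fin$ (finite $X$ gives $p^{\mathbb I}_X\in A$, so $q_X\in A$); and since $\alpha_{f,\mathbb I}(y)$, $\rho(x)$ are fixed, $X\mapsto q_X$ is $\Cmeas$‑measurable, and $A$ is Borel in the bounded part of $\SM(A)$ with the strict topology, $\SI_x$ has the Baire property. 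Once we show $\SI_{f,\mathbb I}\subseteq\SI_x$, the ideal $\SI_x$ is a Baire‑measurable ideal containing $\Fin$ which is nonmeager; by Proposition~\ref{prop:JT2} it cannot be proper, so $\NN\in\SI_x$, and since $q_\NN=1$ this gives $\alpha_{f,\mathbb I}(y)-\rho(x)\in A$, as desired.

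It remains to prove $\SI_{f,\mathbb I}\subseteq\SI_x$; this is the heart of the matter. Fix $X\in\SI_{f,\mathbb I}$. On one side, since $x\in D[\mathbb I]$ we have $p^{\mathbb I}_X x=p^{\mathbb I}_X x p^{\mathbb I}_X$, so $q_X\rho(x)\equiv \Lambda(\pi(p^{\mathbb I}_X))\Lambda(\pi(x))\Lambda(\pi(p^{\mathbb I}_X))\pmod A$; and because each coordinate of $\Psi_{f,\mathbb I}(z)$ lies in $p_{I_n}Ap_{I_n}$, the map $\Psi'_{f,\mathbb I}$ (hence $\Lambda_{f,\mathbb I}$) preserves the coordinate structure with the projections $\pi(p^{\mathbb I}_S)$, so $\Lambda_{f,\mathbb I}(\pi(y\rs X))=\Lambda(\pi(p^{\mathbb I}_X))\Lambda_{f,\mathbb I}(\pi(y))\Lambda(\pi(p^{\mathbb I}_X))$. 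Combined with the hypothesis that $\alpha_{f,\mathbb I}$ lifts $\Lambda_{f,\mathbb I}$ on $E[X]$ (so $\pi(\alpha_{f,\mathbb I}(y\rs X))=\Lambda_{f,\mathbb I}(\pi(y\rs X))$) and with $\Lambda_{f,\mathbb I}(\pi(y))=\Lambda(\pi(x))$, this yields
\[
  \pi(q_X\rho(x))=\pi(\alpha_{f,\mathbb I}(y\rs X)).
\]
On the other side, asymptotic additivity of $\alpha_{f,\mathbb I}$ gives $\pi(\alpha_{f,\mathbb I}(y))=\pi(\alpha_{f,\mathbb I}(y\rs X))+\pi(\alpha_{f,\mathbb I}(y\rs(\NN\sm X)))$ modulo $A$ (after normalising so $\alpha_{f,\mathbb I}(0)\in A$, which holds as $\emptyset\in\SI_{f,\mathbb I}$), and $\Lambda(\pi(p^{\mathbb I}_X))\pi(\alpha_{f,\mathbb I}(y\rs X))=\pi(\alpha_{f,\mathbb I}(y\rs X))$ by the display above. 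Hence $X\in\SI_x$ is equivalent to
\[
  \Lambda(\pi(p^{\mathbb I}_X))\,\pi\!\bigl(\alpha_{f,\mathbb I}(y\rs(\NN\sm X))\bigr)=0 .
\]
To verify this I would use the ``moreover'' clause of Theorem~\ref{thm:lifting}: $\alpha_{f,\mathbb I}$ is carried by consecutive interval systems $\seq{\hat I_n}{n}$, $\seq{\hat J_n}{n}$ with $\alpha_{f,\mathbb I}(z)\in p_{\hat J_{n-1}\cup\hat J_n\cup\hat J_{n+1}}Ap_{\hat J_{n-1}\cup\hat J_n\cup\hat J_{n+1}}$ whenever $\supp(z)\subseteq\hat I_n$. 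It suffices to prove the displayed identity for a cofinal (in $\SI_{f,\mathbb I}$ under $\subseteq^*$) family of $X$, so we may take $X$ to be a union of blocks $\hat I_n$; then $\alpha_{f,\mathbb I}(y\rs(\NN\sm X))$ splits, modulo $A$, into the corresponding block corners, and for $X'\in\SI_{f,\mathbb I}$ with $X'\subseteq\NN\sm X$ we have $\Lambda(\pi(p^{\mathbb I}_X))\pi(\alpha_{f,\mathbb I}(y\rs X'))=0$ (as $p^{\mathbb I}_X p^{\mathbb I}_{X'}=0$); using density of $\SI_{f,\mathbb I}$ (recall ccc/$\Fin$ ideals are dense and nonmeager) together with the block decomposition, Proposition~\ref{prop:onedominates} then forces the full tail term to be annihilated by $\Lambda(\pi(p^{\mathbb I}_X))$. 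I expect this last step — controlling the complementary tail $\alpha_{f,\mathbb I}(y\rs(\NN\sm X))$ against $\Lambda(\pi(p^{\mathbb I}_X))$ using the interval control of $\alpha_{f,\mathbb I}$ and Proposition~\ref{prop:onedominates} — to be the only genuinely delicate point; everything else is bookkeeping with the coordinate structure and the Jalali--Naini--Talagrand dichotomy.
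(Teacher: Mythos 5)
Your overall strategy matches the paper's: reduce to a single equality, define an auxiliary ideal $\SI_x$ cut out by lifts of the projections $\iso(\pi(p^{\mathbb I}_X))$, show it is Borel/Baire-measurable, contains $\Fin$, and contains the given nonmeager ideal, then conclude $\SI_x = \SP(\NN)$ by the Jalali--Naini--Talagrand dichotomy. Your cutters differ from the paper's: you use arbitrary lifts $q_X$ of $\Lambda(\pi(p^{\mathbb I}_X))$ normalised so that $q_\NN = 1$, whereas the paper uses $\alpha(\Phi(p^{\mathbb I}_X))$ itself. Choosing $q_\NN = 1$ lets you skip the paper's separate step that $1 - \alpha(\Phi(1))\in A$, which may look like a shortcut, but that step is not merely there to handle the set $\NN$: it is established via a Claim that the elements $\alpha_n(\phi_{f(\max I_n)}(p_{I_n}))$ are asymptotically close to positive contractions (so that $\pi(\alpha(\Phi(p^{\mathbb I}_Y)))$ is positive for every $Y\subseteq\NN$, not only $Y\in\SI_{f,\mathbb I}$), followed by an application of Proposition~\ref{prop:onedominates} to the positive element $\iso^{-1}(\pi(\alpha(\Phi(1))))$. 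That positivity is the structural input you do not supply, and it is exactly what makes the complementary tail behave.

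Concretely, the gap in your proposal is the claim
\[
\Lambda(\pi(p^{\mathbb I}_X))\,\pi\bigl(\alpha_{f,\mathbb I}(y\rs(\NN\sm X))\bigr)=0 \qquad (X\in\SI_{f,\mathbb I}),
\]
which you correctly identify as the crux. Your sketched justification appeals to Proposition~\ref{prop:onedominates} ``to force the full tail term to be annihilated,'' but that proposition says something quite different — that a projection dominating every $\pi(p^{\mathbb I}_Y)$ for $Y$ in a dense nonmeager ideal must be $1$ — and there is no visible mechanism by which it yields the displayed annihilation. Knowing the equation for $X'\subseteq\NN\sm X$ in the ideal does not pass to all of $\NN\sm X$ for a general asymptotically additive $\alpha$, because nothing forces the blocks of $\alpha(y\rs(\NN\sm X))$ to land inside the corner of $1-\Lambda(\pi(p^{\mathbb I}_X))$; that is where one needs the positivity/near-projection structure of the $\alpha_n(\phi_{f(\max I_n)}(p_{I_n}))$ and the identity $\pi(\alpha(\Phi(1)))=1$. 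A secondary issue: you invoke the ``moreover'' clause of Theorem~\ref{thm:lifting} (consecutive interval systems $\hat I_n$, $\hat J_n$) to arrange the block decomposition, but the lemma's hypothesis is only that $\alpha_{f,\mathbb I}$ is \emph{some} asymptotically additive lift on a nonmeager ideal; asymptotic additivity by itself does not give the disjoint-consecutive-interval structure you use, so that clause is not available from the lemma's stated hypotheses.
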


\begin{proof}
Let $\alpha=\alpha_{f,\mathbb I}$, $\Phi=\Phi_{f,\mathbb I}$, $\Psi=\Psi_{f,\mathbb I}$ and $\CI=\CI_{f,\mathbb I}$. We  first show that $\alpha(\Phi(1))-1\in A$, and then prove that this is sufficient to obtain our thesis. Since $p_{I_n}\in Y_{\max I_n}$ for all $n$, then $\Psi(\Phi(p_X^\mathbb I))-p_X^\mathbb I\in A$. Since $\alpha$ is a lift for $\Lambda_{f,\mathbb I}$, if $X\in\CI$ then
\[
\alpha(\Phi(p^\mathbb I_X))-\Lambda(\pi(p^\mathbb I_X))\in A.
\]
 \begin{claim}
For all $Y\subseteq X$ we have $\pi(\alpha(\Phi(p_Y^\mathbb I)))\leq\pi(\alpha(\Phi(p_X^\mathbb I)))$.
 \end{claim}
 \begin{proof}
Since $\alpha$ is asymptotically additive, we have that
 \[
 \pi(\alpha(\Phi(p_X^\mathbb I)))=\pi(\alpha(\Phi(p_{X\setminus Y}^\mathbb I)))+\pi(\alpha(\Phi(p_Y^\mathbb I))).
 \]
 Therefore it is enough to prove that each $\pi(\alpha(p_X))$ is positive. For this, it suffices to prove that for all $\varepsilon>0$ there is $n_0$ such that for all $n\geq n_0$ there is a positive contraction $x_n\in A$ with
\[
\norm{\alpha_n(\varphi_{f(\max I_n)}(p_{I_n}))-x_n}<\varepsilon.
\]
If so then $\alpha(\Phi(p_X^{\mathbb I}))-\sum_{n\in X}x_n\in A$. Suppose the converse and let $\varepsilon>0$ and $n_k$ be an infinite sequence contradicting the hypothesis. Since $\alpha$ is asymptotically additive we have that each $\alpha_{n_k}$ has range included in $(p_{m'_k}-p_{m_k})A(p_{m'_k}-p_{m_k})$ where $m_k\to\infty$ as $k\to\infty$. By passing to a subsequence we can assume that $\{n_k\}_k\in\CI$ and that for all $k$, $m'_{k}<m_{k+1}$.  Let $Z=\{n_k\}_k$. Since $Z\in\CI$ we have that $\pi(\alpha(\Phi(p_Z^{\mathbb I}))$ is a projection, as $\pi(\alpha(\Phi(p_Z^{\mathbb I}))\iso(\pi(p_Z^{\mathbb I}))$. Also
\[
\alpha(\Phi(p_Z^{\mathbb I}))\in\prod (p_{m'_k}-p_{m_k})A(p_{m'_k}-p_{m_k}).
\]
Since the quotient maps and the inclusions
\[
\prod (p_{m'_k}-p_{m_k})A(p_{m'_k}-p_{m_k})\subseteq\SM(A)
\]
 and
 \[
 \prod (p_{m'_k}-p_{m_k})A(p_{m'_k}-p_{m_k})/\bigoplus (p_{m'_k}-p_{m_k})A(p_{m'_k}-p_{m_k})\subseteq\SQ(A)
 \]
 commute, there is a sequence of positive contractions $(x_{n_k})$ with
 \[
 \norm{x_{n_k}-\alpha_{n_k}(\varphi_{f(\max n_k)}(p_{I_{n_k}}))}\to 0.
 \]
 \end{proof}

Since $\Phi(p^\mathbb I_X)\leq 1$, as an element of $\prod E_{f(\max I_n)}$, we have that $\pi(\alpha(1))$ dominates $\pi(\alpha(\Phi(p^\mathbb I_X)))=\iso(\pi(p_X^\mathbb I))$. Since $\iso$ is an automorphism and $\pi(\alpha(\Phi(1)))$ is positive, we can apply Proposition~\ref{prop:onedominates} to get
\[
1-\alpha(\Phi(1))\in A.
\]
Fix now $x$ as in the hypothesis, so that $\Psi(\Phi(x))-x\in A$. Let $y$ such that $\pi(y)=\iso(\pi(x))$ and define
\[
\CI_x=\{X\subseteq\NN\mid \alpha(\Phi(p^\mathbb I_X))(\alpha(\Phi(x))-y)\in A\}.\
\]
This is an ideal containing $\CI$ and so is nonmeager and contains all finite sets. We want to prove that $\CI_{x}=\SP(\NN)$. The argument is similar to the one of Proposition~\ref{prop:liftoneverything}.

Since $\alpha(\Phi(p_X^\mathbb I))$ is defined as the limit of the partial sums $\sum_{i\leq j} \alpha_i(\varphi_i(p_{X}^{\mathbb I}))$, and since $\Phi$ sends strictly convergent sequences in $\{p_X^\mathbb I\mid X\subseteq\NN\}$ to sequences which are converging in the product topology of $\prod E_{f(\max I_n)}$, $\alpha\circ\Phi$ is strictly-strictly continuous when restricted to $\{p_X^\mathbb I\mid X\subseteq\NN\}$. Since $x$, $\mathbb I$, $f$, $\Phi$ and $y$ are fixed, multiplification on the left is a strictly-strictly continuous operation, and $A$ is Borel in the strict topology of $ \SM(A)$, determining whether $\alpha(\Phi(p^\mathbb I_X))(\alpha(\Phi(x))-y)$ is in $A$ is a Borel condition. In particular,  $\CI_x$ is Borel. Since every Borel nontrivial ideal containing all finite sets is meager, we have $\CI_x=\SP(\NN)$. From this and $1-\alpha(\Phi(1))\in A$, we have
\[
\pi(\alpha(\Phi(x)))-\iso(\pi(x)))=0.\qedhere
\]
\end{proof}
 Recall the definition of skeletal map from Definition~\ref{defin:skeletal}, and define
\begin{eqnarray*}
\mathcal X=\{(f,\mathbb I,\alpha^0,\alpha^1)\mid\alpha^i\text{ is a skeletal lift of }\Lambda_{f,\mathbb I^i} \text{ on }\Phi_{f,\mathbb I^i}(D_f[\mathbb I^i])\text{ for } i=0,1\\
\text{ and } \alpha^0\restriction D_f[\mathbb I^0]\cap D_f[\mathbb I^1]=\alpha^1\restriction D_f[\mathbb I^0]\cap D_f[\mathbb I^1])\}.
\end{eqnarray*}
Note that $\mathrm{Part}_\NN$ has a natural Polish topology obtained by identifying every $\mathbb{I}\in\mathrm{Part}_\NN$ with an element of $\NN^\NN$. Recalling that we identified the set of skeletal maps $\Skel$, with its separable metrizable topology, with a subset of $\SM(A)^\NN$ (see Definition~\ref{defin:skel2}) that $\mathcal X$ carries a natural Polish topology.
\begin{lemma}
For all $f\in\NN^\NN$ and $\mathbb I\in\mathrm{Part}_\NN$, there are $\alpha^0$ and $\alpha^1$ in $\Skel$ such that $(f,\mathbb I,\alpha^0,\alpha^1)\in \mathcal X$.
\end{lemma}
\begin{proof}
By Theorem \ref{thm:lifting}, Lemma \ref{lem:liftforeverything.Borel}, and the fact that the asymptotically additive maps in Lemma~\ref{lemma:C->asymptotic} can be chosen to be skeletal (see Lemma~\ref{lemma:skeletal-replacement}), for every $\mathbb I\in\mathrm{Part}_\NN$ and $f\in\NN^{\NN}$ there are $\alpha^0$ and $\alpha^1$ such that, for $i=0,1$, $\alpha^i$ is a skeletal lift of $\Lambda_{f,\mathbb I^i}$ on $\Phi_{f,\mathbb I^i}(D_f[\mathbb I^i])$.

We are left to ensure that $\alpha^0$ and $\alpha^1$ agree on $D_f[\mathbb I^0]\cap D_f[\mathbb I^1]$.
 If $x\in D_f[\mathbb I^0]\cap D_f[\mathbb I^1]$, then $x\in D[\mathbb I]$. Write $x=\sum x_n$ with $x_n=p_{I_n}xp_{I_n}$. Modify $\alpha^1$ on $D[\mathbb I]$ by defining $\tilde\alpha^1_n(\Psi_{f,\mathbb I^1}(x_{2n+1}))=\alpha^0_{n}(\Psi_{f,\mathbb I^0}(x_{2n+1}))$ and $\tilde\alpha^1_n((\Psi_{f,\mathbb I^1}(x_{2n+2}))=\alpha^0_{n+1}(\Psi_{f,\mathbb I^0}(x_{2n+2}))$. Since $\alpha^0(\Psi_{f,\mathbb I^0}(x))$ is a lift of $\Lambda(x)$, so is $\tilde\alpha^1$. Moreover, $\tilde\alpha$ can be chosen to be skeletal by composing with the map $\rho$ as defined in  (see~\ref{subsec:notlast}).
 \end{proof}

\begin{lemma}\label{lem:afteragivenpoint}
Let $(f,\mathbb I,\alpha^0,\alpha^1)$ and $(g,\mathbb J,\beta^0,\beta^1)$ be in $\mathcal X$ and let $\varepsilon>0$. Then there is $M$ such that for all $n>M$ and $x=p_{[M,n]}xp_{[M,n]}$ with $ \norm{x}\leq 1$, if $x\in D_f[\mathbb I^i]\cap D_g[\mathbb J^j]$ we have \[\norm{\alpha^i(\Phi_{f, \mathbb I^i}(x))-\beta^j(\Phi_{g,\mathbb J^j}(x))}\leq\varepsilon.\]
\end{lemma}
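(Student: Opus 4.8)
The plan is to follow the strategy of Proposition~\ref{prop:afteragivenn}, with the coordinatewise sum there replaced by a sum along a sequence of blocks pushed out to infinity. Fix one of the four pairs $(i,j)\in\{0,1\}^2$; it suffices to produce an $M_{ij}$ that works for this pair, and then take $M=\max_{i,j}M_{ij}$. Suppose no such $M_{ij}$ exists. Then I would recursively construct finite intervals $[M_k,n_k]\subseteq\NN$ and contractions $x_k=p_{[M_k,n_k]}x_kp_{[M_k,n_k]}\in D_f[\mathbb I^i]\cap D_g[\mathbb J^j]$ with $M_k\to\infty$ and
\[
  \norm{\alpha^i(\Phi_{f,\mathbb I^i}(x_k))-\beta^j(\Phi_{g,\mathbb J^j}(x_k))}>\epsilon .
\]
At stage $k$, having chosen $x_1,\dots,x_{k-1}$, their images under $\alpha^i\circ\Phi_{f,\mathbb I^i}$ and $\beta^j\circ\Phi_{g,\mathbb J^j}$ are finite sums of elements of finite corners of $A$, hence lie in $p_{N}Ap_{N}$ for some $N=N_{k-1}$; I would then take $M_k>N_{k-1}$ so large that, by asymptotic additivity of $\alpha^i$ and $\beta^j$ (whose component maps take values in corners $p_LAp_L$, $L$ a finite interval with $\min L\to\infty$), every relevant component map indexed by an interval of $\mathbb I^i$ or $\mathbb J^j$ lying past $M_k$ has range with support disjoint from $p_{N_{k-1}}Ap_{N_{k-1}}$. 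The assumed failure of the lemma at $M_k$ then furnishes $n_k>M_k$ and $x_k$ as above; since $x_k$ is supported on a union of intervals of $\mathbb I^i$ and of $\mathbb J^j$ lying past $M_k$, the elements $d_k:=\alpha^i(\Phi_{f,\mathbb I^i}(x_k))-\beta^j(\Phi_{g,\mathbb J^j}(x_k))$ then lie in pairwise disjoint finite corners $p_{R_k}Ap_{R_k}$ with $\min R_k\to\infty$, and $\norm{d_k}>\epsilon$.

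Next I would set $x=\sum_k x_k$. The $x_k$ have pairwise orthogonal left and right supports, so $x$ is a well-defined contraction in $\SM(A)$; moreover each $x_k$ is supported on a union of intervals of $\mathbb I^i$ (resp.\ $\mathbb J^j$), and the defining conditions of $D_f[\mathbb I^i]$ and $D_g[\mathbb J^j]$ are local to those intervals, so $x\in D_f[\mathbb I^i]\cap D_g[\mathbb J^j]$. Using asymptotic additivity of $\alpha^i,\beta^j$ and linearity of $\Phi_{f,\mathbb I^i},\Phi_{g,\mathbb J^j}$, I would check that $\alpha^i(\Phi_{f,\mathbb I^i}(x))=\sum_k\alpha^i(\Phi_{f,\mathbb I^i}(x_k))$ and likewise for $\beta^j$, both sums converging strictly, so that $\alpha^i(\Phi_{f,\mathbb I^i}(x))-\beta^j(\Phi_{g,\mathbb J^j}(x))=\sum_k d_k$.

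Then I would estimate $\norm{\pi(\sum_k d_k)}$. For each $K$ the partial sum $\sum_{k<K}d_k$ lies in $A$, so $\norm{\pi(\sum_k d_k)}\le\sup_{k\ge K}\norm{d_k}$; conversely, for any $a\in A$ and any $K$, since $\sum_{k\ge K}d_k$ is supported past the corner containing $\sum_{k<K}d_k$, I would bound $\norm{\sum_kd_k-a}$ below by $\sup_{k\ge K}\norm{d_k}-\norm{(1-p_{N_{K-1}})a(1-p_{N_{K-1}})}$ and let $K\to\infty$, using $\norm{(1-p_m)a(1-p_m)}\to0$. Together these give $\norm{\pi(\sum_kd_k)}=\limsup_k\norm{d_k}\ge\epsilon$. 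On the other hand, unwinding the definition of $\mathcal X$ — using, as in Lemma~\ref{lem:summing}, that $\Psi_{f,\mathbb I^i}(\Phi_{f,\mathbb I^i}(x))-x\in A$ for $x\in D_f[\mathbb I^i]$ and similarly with $g,\mathbb J^j$, together with Lemma~\ref{lem:liftforeverything.Borel} — shows that $\alpha^i\circ\Phi_{f,\mathbb I^i}$ and $\beta^j\circ\Phi_{g,\mathbb J^j}$ are both lifts of $\iso$ on $D_f[\mathbb I^i]\cap D_g[\mathbb J^j]$. Hence $\pi(\alpha^i(\Phi_{f,\mathbb I^i}(x)))=\iso(\pi(x))=\pi(\beta^j(\Phi_{g,\mathbb J^j}(x)))$, so $\pi(\sum_kd_k)=0$, a contradiction.

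The main obstacle is the bookkeeping in the recursive construction: $M_{k+1}$ must be fixed before $x_{k+1}$ is produced, yet one needs the image of the not-yet-chosen $x_{k+1}$ to be supported past the corners of all earlier images. This is possible precisely because asymptotic additivity forces the component maps of $\alpha^i$ and $\beta^j$ to take values in corners $p_LAp_L$ with $\min L\to\infty$, so that choosing $M_{k+1}$ sufficiently large automatically controls where $\alpha^i(\Phi_{f,\mathbb I^i}(x_{k+1}))$ and $\beta^j(\Phi_{g,\mathbb J^j}(x_{k+1}))$ live. Everything else is routine.
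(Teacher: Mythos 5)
Your proof is correct and follows essentially the same strategy as the paper's: assume the lemma fails, produce a sequence of block-supported contractions $x_k$ with $\norm{\alpha^i(\Phi_{f,\mathbb I^i}(x_k))-\beta^j(\Phi_{g,\mathbb J^j}(x_k))}>\epsilon$ whose images under the asymptotically additive lifts live in pairwise orthogonal corners, sum them up, and observe that the two quantities must agree modulo $A$ because both $\alpha^i\circ\Phi_{f,\mathbb I^i}$ and $\beta^j\circ\Phi_{g,\mathbb J^j}$ lift $\Lambda$ on $D_f[\mathbb I^i]\cap D_g[\mathbb J^j]$. The only real difference is bookkeeping: you enforce the orthogonality of the images by choosing $M_{k+1}$ large enough in advance of producing $x_{k+1}$, whereas the paper first extracts the bad sequence wholesale (with a somewhat terse ``modifying $\mathbb I$ and $\mathbb J$ if necessary'' to reduce to the $(0,0)$ case) and then passes to a subsequence $n_k$ afterward to separate the supports of the images; your pigeonhole reduction to a single fixed $(i,j)$ is spelled out more explicitly than the paper's. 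Both routes are valid, and the lower-bound estimate $\norm{\pi(\sum_k d_k)}\ge\limsup_k\norm{d_k}$ you give is the needed half of the argument, since the $d_k$ lie in $A$ in pairwise orthogonal corners of the approximate identity.

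Two small points worth tightening in a final write-up. First, when you say each $x_k$ is ``supported on a union of intervals of $\mathbb I^i$ and of $\mathbb J^j$ lying past $M_k$,'' you actually need $M_{k+1}$ chosen past not only $n_k$ but past the ends of the intervals of $\mathbb I^i$ and $\mathbb J^j$ meeting $[M_k,n_k]$, so that no single interval straddles consecutive blocks; otherwise $x=\sum_k x_k$ need not lie in $D_f[\mathbb I^i]\cap D_g[\mathbb J^j]$. This is easily arranged but should be said. Second, you do not need Lemma~\ref{lem:liftforeverything.Borel} to get that both maps lift $\Lambda$ on $D_f[\mathbb I^i]\cap D_g[\mathbb J^j]$: this is immediate from the definition of $\mathcal X$ together with the fact that $\Psi_{f,\mathbb I^i}\circ\Phi_{f,\mathbb I^i}-\id$ maps $D_f[\mathbb I^i]$ into $A$.
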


\begin{proof}
We work towards a contradiction. Since for every $f$ and $\mathbb I$ there are $\alpha^0,\alpha^1$ such that $(f,\mathbb I,\alpha^0,\alpha^1)\in \mathcal X$, modifying $\mathbb I$ and $\mathbb J$ if necessary, we can assume there exist $\varepsilon>0$ and $(f,\mathbb I,\alpha^0,\alpha^1), (g, \mathbb J,\beta^0,\beta^1)$ such that there is an increasing sequence $m_1<m_2<\cdots$ and $x_i=p_{[m_i,m_{i+1})}x_ip_{[m_i,m_{i+1})}$ with $\norm{x_i}\leq 1$, $x_i\in D_f[\mathbb I^0]\cap D_g[\mathbb J^0]$ and such that for every $i$ we have that
\[
\norm{\alpha^0(\Phi_{f,\mathbb I^0}(x_i))-\beta^0(\Phi_{g,\mathbb J^0}(x_i))}>\varepsilon.
\]
Let $x=\sum x_i$. Since the image of each $\alpha^0_n$ and $\beta^0_n$ is included in a corner\footnote{Corners were define right before Lemma~\ref{lemma:aa-corners} as cutdowns by positive elements.} $(p_i-p_j)A(p_i-p_j)$, where $j\to\infty$ as $n\to\infty$, we can find an increasing sequence $n_k$ such that for every $l\neq k$ we have that
\[
\alpha^0_{n_k}\alpha^0_{n_l}=\beta^0_{n_k}\beta^0_{n_l}=\beta^0_{n_k}\alpha^0_{n_l}.
\]
Setting $Y=\bigcup [m_{n_k},m_{n_k+1})$ and $z=p_Yxp_Y$ we have that $z\in D_f[\mathbb I^0]\cap D_g[\mathbb J^0]$, $\norm{z}\leq 1$ and

\begin{eqnarray*}
\norm{\pi(\alpha^0(\Phi_{f,\mathbb I^0}(z))-\beta^0(\Phi_{g,\mathbb J^0}(z)))}&\geq&\\
\limsup\norm{\alpha^0(\Phi_{f,\mathbb I^0}(x_{n_k}))-\beta^0(\Phi_{g,\mathbb J^0}(x_{n_k}))} &\geq&\varepsilon.
\end{eqnarray*}
On the other hand since $(f,\mathbb I,\alpha^0,\alpha^1)$ and $(g,\mathbb I,\beta^0,\beta^1)$ are in $\mathcal X$ and $z\in D_f[\mathbb I^0]\cap D_g[\mathbb J^0]$ we have
\[
\pi(\alpha^0(\Phi_{f,\mathbb I}(z))=\iso(\pi(z))=\pi(\beta^0(\Phi_{g,\mathbb J}(z))),
\]
a contradiction.
\end{proof}

 For a fixed $\varepsilon>0$, define a colouring $[\mathcal X]^2=K_0^\varepsilon\cup K_1^\varepsilon$ with
\[
\{(f,\mathbb I,\alpha^0,\alpha^1),(g,\mathbb J,\beta^0,\beta^0)\}\in K_0^\varepsilon
\]
if and only if there are $i$ and $j$ in $\{0,1\}$, $n\in\NN$ and $x=p_nxp_n$ with $\norm{x}=1$ and such that
\[
x\in D_f[\mathbb I^i]\cap D_g[\mathbb J^j]\text{ and }\norm{\alpha^i(\Phi_{f,\mathbb I^i}(x))-\beta^j(\Phi_{g,\mathbb J^j}(x))}>\varepsilon.
\]
Again, the following is proved by applying the definition of the natural separable metrizable topology on $\Skel$ (see Definition~\ref{defin:skel2}).
\begin{proposition}\label{prop:SkeletalPolish}
 $K_0^\varepsilon$ is an open subset of $[\SX]^2$ when $\SX$ is given the separable metric topology obtained from its inclusion in $\NN^\NN\times\mathrm{Part}_\NN\times\Skel\times\Skel$.\qed
\end{proposition}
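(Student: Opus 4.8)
The plan is to verify openness of $K_0^\e$ straight from the definition of the product topology on $\NN^\NN\times\PP\times\Skel\times\Skel$, by showing that every pair $P=\{(f,\mathbb I,\alpha^0,\alpha^1),(g,\mathbb J,\beta^0,\beta^1)\}$ in $K_0^\e$ has an open neighbourhood contained in $K_0^\e$; by symmetry this may be checked in $\SX\times\SX$. First I would fix a witness $(i,j,n,x)$ for $P\in K_0^\e$, so that $x=p_nxp_n$, $\norm{x}=1$, $x\in D_f[\mathbb I^i]\cap D_g[\mathbb J^j]$, and $\norm{\alpha^i(\Phi_{f,\mathbb I^i}(x))-\beta^j(\Phi_{g,\mathbb J^j}(x))}>\e$.

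The main step is to observe that this witness is \emph{finitary}. Since $x=p_nxp_n$ and the $p_m$ are projections increasing to $1$, if $k$ is chosen so that the $k$-th block of $\mathbb I^i$ contains $n$, then $p_Bxp_B=0$ for every block $B$ of $\mathbb I^i$ of index larger than $k$; hence $\Phi_{f,\mathbb I^i}(x)$ is supported on its first $k+1$ coordinates, whose values depend only on finitely many of the intervals constituting $\mathbb I$ and on finitely many values of $f$. For the same reason, whether $x\in D_f[\mathbb I^i]$ holds is decided by this same finite amount of data about $(f,\mathbb I)$, because for blocks past the $k$-th the inequality in the definition of $D_f[\mathbb I^i]$ reduces to $0<2^{-m}$. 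The symmetric statements hold with $(\mathbb J^j,g)$ in place of $(\mathbb I^i,f)$. Finally, since $\alpha^i$ is skeletal, writing $\alpha^i=\sum_m\alpha^i_m$ and normalising so that $\alpha^i_m(0)=0$ (which we may), we get
\[
  \alpha^i\bigl(\Phi_{f,\mathbb I^i}(x)\bigr)=\sum_{m\le k}\alpha^i_m\bigl(\rho(\Phi_{f,\mathbb I^i}(x)(m))\bigr),
\]
a finite sum of values of $\alpha^i$ at fixed skeleton points; under the identification of $\Skel$ with a subset of $\SM(A)^\NN$ each such value is a single coordinate of $\alpha^i$, so $\alpha^i(\Phi_{f,\mathbb I^i}(x))$ is a continuous function of $\alpha^i$ for the topology of $\Skel$, and likewise for $\beta^j(\Phi_{g,\mathbb J^j}(x))$.

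To conclude I would take $V$ to be the basic open set of tuples $(\tilde f,\tilde{\mathbb I},\tilde\alpha^0,\tilde\alpha^1,\tilde g,\tilde{\mathbb J},\tilde\beta^0,\tilde\beta^1)$ agreeing with the corresponding components of $P$ on the finitely many coordinates isolated above. On $V$ one has $x\in D_{\tilde f}[\tilde{\mathbb I}^i]\cap D_{\tilde g}[\tilde{\mathbb J}^j]$, and $\Phi_{\tilde f,\tilde{\mathbb I}^i}(x)$, $\Phi_{\tilde g,\tilde{\mathbb J}^j}(x)$ are the fixed finite tuples $\Phi_{f,\mathbb I^i}(x)$, $\Phi_{g,\mathbb J^j}(x)$; hence on $V$ the assignment
\[
  (\tilde f,\tilde{\mathbb I},\tilde\alpha^0,\tilde\alpha^1,\tilde g,\tilde{\mathbb J},\tilde\beta^0,\tilde\beta^1)\longmapsto\norm{\tilde\alpha^i\bigl(\Phi_{\tilde f,\tilde{\mathbb I}^i}(x)\bigr)-\tilde\beta^j\bigl(\Phi_{\tilde g,\tilde{\mathbb J}^j}(x)\bigr)}
\]
is a finite sum of coordinate evaluations of $\tilde\alpha^i$ and $\tilde\beta^j$ followed by the norm. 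Coordinate evaluations and finite sums are continuous for the strict topology, while $w\mapsto\norm{w}=\sup\{\norm{aw}:a\in A_{\le 1}\}$ is a supremum of strictly continuous functions and hence strictly lower semicontinuous; so this assignment is lower semicontinuous on $V$. Since it exceeds $\e$ at $P$, there is an open $V'\subseteq V$ containing $P$ on which it still exceeds $\e$, and then $(i,j,n,x)$ witnesses membership in $K_0^\e$ for every point of $V'$. Thus $V'\subseteq K_0^\e$, which proves $K_0^\e$ open.

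The one point I expect to require care is the finitariness claim: precisely tracking how cutting $x$ down to $p_nAp_n$ annihilates all blocks of $\mathbb I^i$ and $\mathbb J^j$ beyond a fixed index (so that only finitely many values of $f$ and $g$ are relevant), and keeping the bookkeeping between $\mathbb I$ and its two interleavings $\mathbb I^0,\mathbb I^1$ straight. Everything after that is a mechanical unwinding of the product topology together with the lower semicontinuity of the norm, which is presumably why the authors call the statement ``direct from the definitions''.
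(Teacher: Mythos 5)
The paper offers no argument for this proposition (it is declared to ``follow directly from the definitions''), so there is nothing to compare against; your proof supplies the missing argument and is essentially correct. The structure --- fix a witness $(i,j,n,x)$, note that the containments $x\in D_f[\mathbb I^i]$ and the value $\Phi_{f,\mathbb I^i}(x)$ are determined by finitely many coordinates of $(f,\mathbb I)$ because $x=p_nxp_n$ annihilates all blocks of $\mathbb I^i$ past a fixed index, then use lower semicontinuity of the norm --- is the right one.

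One point you flag in passing deserves to be stated as a genuine hypothesis rather than a throwaway: the assumption that $\alpha^i_m(0)=0$ is load-bearing. If $\alpha^i_m(0)\neq0$ for infinitely many $m$, then $\alpha^i(\Phi_{f,\mathbb I^i}(x))$ contains the tail $\sum_{m>k}\alpha^i_m(0)$, which depends on infinitely many coordinates of $\alpha^i$ in the product topology on $\Skel$, and fixing finitely many coordinates as you do no longer pins down that summand; the open-neighbourhood argument then breaks. This is not a harmless local ``WLOG'': normalising one representative moves the point $P$ in $\SX$, and a neighbourhood of the normalised point says nothing about a neighbourhood of $P$. The correct fix is the global one: since $0\in\Phi_{f,\mathbb I^i}(D_f[\mathbb I^i])$ and $\Lambda_{f,\mathbb I^i}(0)=0$, every $\alpha^i$ admitted into $\SX$ satisfies $\alpha^i(0)=\sum_m\alpha^i_m(0)\in A$, so replacing each $\alpha^i_m$ by $\alpha^i_m-\alpha^i_m(0)$ yields another skeletal lift; one should therefore build the convention $\alpha^i_m(0)=0$ into the definition of $\SX$ once and for all. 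With that convention the finite-sum identity you write is correct (note only that the $\rho$ there should be the coordinate map $\rho_m$), and the rest of your argument --- continuity of coordinate projections and finite sums, strict lower semicontinuity of $w\mapsto\norm{w}=\sup\{\norm{wa}:a\in A_{\le 1}\}$ --- closes it out. The paper elides this normalisation; you noticed it, which is the only non-mechanical step here.
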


\begin{lemma}
There is no uncountable $K_0^\varepsilon$-homogeneous set for any $\varepsilon > 0$.
\end{lemma}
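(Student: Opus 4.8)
The plan is to follow the proof of Lemma~\ref{lem:nohomogredprod}, paying the extra price demanded by the interval parameter $\mathbb I\in\PP$. Suppose toward a contradiction that $\mathcal Y\subseteq\mathcal X$ is $K_0^\epsilon$-homogeneous with $|\mathcal Y|=\aleph_1$; as usual I will pass to uncountable subsets repeatedly, keeping the name $\mathcal Y$. Both $\OCA_\infty$ and $\MA_{\aleph_1}$ give $\mathfrak b>\omega_1$, and $(\PP,\leq_1)$ is $\leq^*$-like in the sense that any $\aleph_1$-sized subfamily has a $\leq_1$-upper bound; so I may first fix $\hat f\in\NN^\NN$ with $\hat f(n)\ge n$ and $\hat{\mathbb I}\in\PP$ such that $f<^*\hat f$ and $\mathbb I\le_1\hat{\mathbb I}$ for every $(f,\mathbb I,\alpha^0,\alpha^1)\in\mathcal Y$. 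Refining $\mathcal Y$, I may additionally assume: there is a single threshold $\bar M$ past which $f<\hat f$ and the $\le_1$-containments are witnessed, for all members; all the $f$'s agree on $[0,\bar M]$ and all the $\mathbb I$'s agree there; and (modifying $\hat f$ on finitely many coordinates) the relations $f<\hat f$ and $\mathbb I\le_1\hat{\mathbb I}$ hold with no exceptions, so that Proposition~\ref{prop:mainproperties} supplies the needed inclusions among the various $D_f[\mathbb I^i]$ and $D_{\hat f}[\hat{\mathbb I}^{i'}]$ modulo $A$.

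Using $\OCA_\infty+\MA_{\aleph_1}$ together with the remarks following the definition of $\mathcal X$, fix $\gamma^0,\gamma^1$ with $(\hat f,\hat{\mathbb I},\gamma^0,\gamma^1)\in\mathcal X$; this plays the role of the master element $(\hat f,\gamma)$ from Lemma~\ref{lem:nohomogredprod}. Apply Lemma~\ref{lem:afteragivenpoint} with $\epsilon/8$ to each member of $\mathcal Y$ against the master, obtaining thresholds $M_w$; refine $\mathcal Y$ so that $M_w$ is constant, equal to some $M\ge\bar M$ compatible with the common initial interval structure. The upshot is that for every member $w=(f,\mathbb I,\alpha^0,\alpha^1)$ and every contraction $x$ that is block-diagonal and supported above $M$, the value $\alpha^i(\Phi_{f,\mathbb I^i}(x))$ is pinned down, modulo $\epsilon/4$, by the master (after splitting $x$ according to whichever $\hat{\mathbb I}^{i'}$-blocks its $\mathbb I^i$-blocks fall into, and using asymptotic additivity). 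Finally, since the restrictions of the pair $(\alpha^0,\alpha^1)$ to the finitely many coordinates lying below $M$ range over a separable metric space, refine $\mathcal Y$ once more so that for any two members these restrictions are within $\epsilon/2$; together with the compatibility clause $\alpha^0\restriction D_f[\mathbb I^0]\cap D_f[\mathbb I^1]=\alpha^1\restriction D_f[\mathbb I^0]\cap D_f[\mathbb I^1]$ built into $\mathcal X$, this controls the two members on the ``low'' part even when the superscripts disagree.

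Now take $w=(f,\mathbb I,\alpha^0,\alpha^1)$, $w'=(g,\mathbb J,\beta^0,\beta^1)$ in $\mathcal Y$ and $i,j\in\{0,1\}$, $n\in\NN$, $x=p_nxp_n$ with $\norm{x}=1$, $x\in D_f[\mathbb I^i]\cap D_g[\mathbb J^j]$, witnessing $\{w,w'\}\in K_0^\epsilon$, so $\norm{\alpha^i(\Phi_{f,\mathbb I^i}(x))-\beta^j(\Phi_{g,\mathbb J^j}(x))}>\epsilon$. Decompose $x=x^{\mathrm{low}}+x^{\mathrm{high}}$, where $x^{\mathrm{low}}$ is the sum of the block-corners of $x$ meeting $[0,M]$ and $x^{\mathrm{high}}$ collects the rest; both are block-diagonal, with disjoint supports, and $x^{\mathrm{high}}$ is supported above $M$. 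By asymptotic additivity and the coordinatewise linearity of the $\Phi$-maps, $\alpha^i(\Phi_{f,\mathbb I^i}(x))=\alpha^i(\Phi_{f,\mathbb I^i}(x^{\mathrm{low}}))+\alpha^i(\Phi_{f,\mathbb I^i}(x^{\mathrm{high}}))$, and likewise for $\beta^j$. The low parts differ by at most $\epsilon/2$ by the separability refinement (using that $f,g$ and $\mathbb I,\mathbb J$ agree on $[0,M]$, so the $\Phi$-images of $x^{\mathrm{low}}$ coincide, and the compatibility clause to pass between superscripts). The high parts differ by at most $\epsilon/4+\epsilon/4$, routing each through the master via Lemma~\ref{lem:afteragivenpoint}. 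Adding up contradicts $\norm{\alpha^i(\Phi_{f,\mathbb I^i}(x))-\beta^j(\Phi_{g,\mathbb J^j}(x))}>\epsilon$.

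The hard part will be the interval bookkeeping: arranging a single master $(\hat f,\hat{\mathbb I})$ that absorbs every member simultaneously compatibly with Proposition~\ref{prop:mainproperties} and Lemma~\ref{lem:afteragivenpoint}, carrying out the block-diagonal splitting $x=x^{\mathrm{low}}+x^{\mathrm{high}}$ so that both summands genuinely inherit membership in the relevant $D$-sets, and checking that the superscript mismatch $i\neq j$ is resolved by the compatibility clause of $\mathcal X$ and not merely swept under the rug. The set-theoretic skeleton, on the other hand, is a faithful translation of the reduced-product argument.
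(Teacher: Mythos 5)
Your plan is a faithful reconstruction of the paper's argument: fix a master $(\hat f,\hat{\mathbb I},\hat\alpha^0,\hat\alpha^1)\in\SX$ via $\mathfrak b>\omega_1$ and $\aleph_1$-directedness of $\PP$, apply Lemma~\ref{lem:afteragivenpoint} against the master to get uniform thresholds, refine for agreement of $f$, $\mathbb I$, and the low-coordinate skeletal data, and then derive a contradiction from a witness $x$ by splitting its support. The one place where the paper does more work than your sketch lets on is precisely the interval bookkeeping you flag at the end: rather than a single low/high decomposition, the paper runs a three-way case analysis on the witness's support $[n,n']$ (entirely below $\overline K=\max I_{\overline M+2}$, entirely above $\overline M$, or straddling), using the buffer $(\overline M,\overline K]$ where the interval partitions of all members agree to place the cut point so that both pieces land in the relevant $D$-sets; the cross-superscript case $i\neq j$ is then handled exactly as you anticipate, via the compatibility clause built into $\SX$.
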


\begin{proof}
Working towards a contradiction, let $\varepsilon>0$ and $\mathcal Y$ be a $K_0^\varepsilon$-homogeneous set of size $\aleph_1$. As in Lemma~\ref{lem:nohomogredprod}, we refine $\mathcal Y$ to an uncountable subset of itself several times, but we keep the name $\mathcal Y$.

Since we have, by $\OCA$, tgat $\mathfrak b>\omega_1$ (\S\ref{subsec:ForcingAxiom}, then  \cite[Lemma~3.4]{Farah.C} implies that all sets of size $\aleph_1$ are $\leq_1$-bounded. Hence we may find $\hat f$ and $\hat{\mathbb I}$ with the property that for all $(f,\mathbb I,\alpha^0,\alpha^1)\in \mathcal Y$ we have $f<^*\hat f$ and $\mathbb I<_1 \hat{\mathbb I}$. By definition of $<^*$ and $<_1$, if $(f,\mathbb I,\alpha^0,\alpha^1)\in \mathcal Y$ there are $n_f$ and $m_{\mathbb I}$ such that for all $n\geq n_f$ and $m\geq m_{\mathbb I}$ we have $f(n)<\hat f(n)$ and that there is $k$ such that $I_{m}\cup I_{m+1}\subseteq \hat I_k\cup\hat I_{k+1}$. We refine $\mathcal Y$ so that $n_f=\overline n$ and $m_{\mathbb I}=\overline m$, whenever $(f,\mathbb I,\alpha^0,\alpha^1)\in\mathcal Y$.

Fix $\hat\alpha^0,\hat\alpha^1$ such that $(\hat f,\hat{\mathbb I},\hat \alpha^0,\hat\alpha^1)\in \mathcal X$. Thanks to Lemma~\ref{lem:afteragivenpoint} for all $(f,\mathbb I,\alpha^0,\alpha^1)\in \mathcal Y$ we can find $M_f\geq\overline m$ such for all $n\geq M_f$ and $x=p_{[M_f,n]}xp_{[M_f,n]}$ with $\norm{x}\leq 1$ then if $x\in D_f[\mathbb I^i]\cap D_{\hat f}[\hat {\mathbb I}^j]$ for $i$ and $j$ in $\{0,1\}$, then we have
\begin{equation}\label{OCAequation1}
\norm{\sum_{k}\alpha^i_k(\Phi_{f,\mathbb I^i}(p_{I_k^i}xp_{I_k^i}))-\sum_l\hat\alpha^j_k(\Phi_{g,\mathbb J^j}(p_{\hat I_l^j}xp_{\hat I_l^j}))}\leq\varepsilon/2.
\end{equation}
We can suppose $M_f=\overline M$ for all elements of $\mathcal Y$. Again using the pigeonhole principle, we can assure that for all $k\leq\max\{\overline n, \overline M+3\}$ we have that $f(k)=g(k)$ and $I_{k}=J_{k}$ if $(f,\mathbb I,\alpha^0,\alpha^1)$ and $(g,\mathbb J,\beta^0,\beta^1)$ are in $\mathcal Y$. Note that $\overline K=\max I_{\overline M+2}>\overline M$. Also, for every $k$ such that $2k\leq \overline M$ and $(f,\mathbb I,\alpha^0,\alpha^1)$ and $(g,\mathbb J,\beta^0,\beta^1)$ in $\mathcal Y$, the domains of $\alpha^0_k$ and of $\beta^0_k$ are the same, as well as the domains of $\alpha^1_k$ and $\beta^1_k$. These domains in fact depends only on $f(k)$ and $I_k$. Also, for $x=p_{\overline K}x p_{\overline K}$,
\begin{eqnarray*}
x\in D_f[\mathbb I^0]&\Rightarrow&x\in D_f[\mathbb J^0], \,\, \Phi_{f,\mathbb I^0}(x)=\Phi_{g,\mathbb J^0}(x)\text{ and }\\
x\in D_g[\mathbb I^1]&\Rightarrow& x\in D_f[\mathbb J^1], \,\, \Phi_{f,\mathbb I^1}(x)=\Phi_{g,\mathbb J^1}(x).
\end{eqnarray*}
Since the space of all skeletal maps from $\sum_{k\mid 2k\leq \overline M}E_{f(i)}$ to $A$ is separable in the uniform topology, we can refine $\mathcal Y$ to an uncountable subset such that whenever $(f,\mathbb I,\alpha^0,\alpha^1)$ and $(g,\mathbb J,\beta^0,\beta^1)$ are in $\mathcal Y$, if $2k\leq\overline M$, then

\begin{enumerate}[label=(O\arabic*)]
\item\label{OCAcond1} $\norm{\alpha^0_k-\beta^0_k}<\varepsilon/(2\overline M)$;
\item\label{OCAcond2} $\norm{\alpha^1_k-\beta^1_k}<\varepsilon/(2\overline M)$.
\end{enumerate}
This is the final refinement we need. Pick $(f,\mathbb I,\alpha^0,\alpha^1)$ and
$(g,\mathbb J,\beta^0,\beta^1)$ in $\mathcal Y$ and $x$ witnessing that
$\{(f,\mathbb I,\alpha^0,\alpha^1),(g, \mathbb J,\beta^0,\beta^1)\}\in
K_0^\varepsilon$. Then $x=p_{[n,n']}xp_{[n,n']}$ for some natural numbers $n$ and $n'$.

\begin{itemize}
\item If $n'\leq\overline K$, then, since $I_k=J_k$ for all $k$ such that $p_{I_k}xp_{I_k}\neq 0$, we have that either $x\in D_f[\mathbb I^0]\cap D_g[\mathbb J^0]$, or $x\in D_f[\mathbb I^1]\cap D[\mathbb J^1]$ (or both). If the first case applies, we have a contradiction thanks to condition~\ref{OCAcond1}, while the second case is contradicted by condition \ref{OCAcond2}. In case $x\in D_f[\mathbb I^0]\cap D_f[\mathbb I^1]\subseteq D[\mathbb I]$, and
\[
\norm{\alpha^i(\Phi_{f,\mathbb I^i}(x)-\beta^{1-i}(\Phi_{f,\mathbb J^{1-i}}(x))}>\varepsilon,
\]
then the contradiction comes from that $\alpha^0(\Phi_{f,\mathbb I^0}(x))=\alpha^1(\Phi_{f,\mathbb I^1}(x))$, and conditions \ref{OCAcond1} and \ref{OCAcond2}.
\item If $n>\overline M$, then (\ref{OCAequation1}) and the triangle inequality lead to a contradiction.
\item if $n\leq\overline M<\overline K<n'$, since $\overline M<\max I_{\overline M}<\max I_{\overline M+1}<\overline K$, we can split $x=y+z$ where $y=p_{k}xp_{k}$ and $z=p_{(k,n']}xp_{(k,n']}$ for some $k$ with $\overline M<k\leq\overline K$. But then, if $x\in D_f[\mathbb I^0]$ then $\alpha^0(\Phi_{f,\mathbb I^0}(x))=\alpha^0(\Phi_{f,\mathbb I^0}(y))+\alpha^0(\Phi_{f,\mathbb I^0}(z))$, and we reach a contradiction by the triangle inequality. 
\end{itemize}
The case of $x\in D_f[\mathbb I^1]$ is treated similarly.
\end{proof}

Fix $\varepsilon_k=2^{-k}$ and write $\mathcal X=\bigcup_n \SX_{n,k}$ where each $\SX_{n,k}$ is $K_1^{\varepsilon_k}$-homogeneous, thanks to $\OCA_\infty$. Since the product order on $\NN^\NN\times\mathrm{Part}_{\NN}$ is $\sigma$-directed, we can inductively construct, by repeatedly using  \cite[Lemma 2.2.2]{Farah.AQ}, sets $\mathcal D_k$ and $\SY_k$, for $k\in\NN$, such that
\begin{itemize}
\item $\SY_{k+1}\subseteq\SY_k$;
\item $\mathcal D_k$ is a countable dense subset of $\SY_k$;
\item $\SY_k$ is $K^{\varepsilon_k}_1$-homogeneous;
\item $\SY_k$ is cofinal when $\NN^\NN\times\mathrm{Part}_\NN$ is considered with the product order.
\end{itemize}

\begin{lemma}\label{lem:almostthere}
Suppose that $x_0$ and $x_1$ in $\SM(A)$ are such that there are, for $\ell\in\NN$, natural numbers $n_\ell^0$ and $n_\ell^1$, and $(f_\ell,\mathbb I_\ell, \alpha^0_\ell,\alpha^1_\ell)\in\mathcal D_k$ such that for every $\ell$:

\begin{enumerate}[label=(\arabic*)]
\item\label{lem:cond:Borelgraph,c1} there is $i$ such that
\[
\max (I_\ell)_{2i}=n_\ell^0\text{ and }\max (I_\ell)_{2i+1}=n_\ell^1;
\]
\item\label{lem:cond:Borelgraph,c2} if $\ell<\ell'$, then for every $i$ such that $\max (I_\ell)_i\leq\max\{n_\ell^0,n_\ell^1\}$ we have $(I_{\ell})_i=(I_{\ell'})_i$ and $f_\ell(i)=f_{\ell'}(i)$; and
\item\label{lem:cond:Borelgraph,c3} for every $p_{n_\ell^0}x_0p_{n_\ell^0}\in D_{f_\ell}[\mathbb I_\ell^0]$, $p_{n_\ell^1}x_1p_{n_\ell^1}\in D_{f_\ell}[\mathbb I_\ell^1]$.
\end{enumerate}
Then
\[
\norm{\pi(\lim_\ell\sum_{j<\ell}\alpha_\ell^0(\Phi_{f_\ell,\mathbb I_\ell^0}(p_{(n_j^0,n_{j+1}^0]}x_0p_{(n_j^0,n_{j+1}^0]})))-\Lambda(\pi(x_0))}<10\varepsilon_k
\]
and
\[
\norm{\pi(\lim_\ell\sum_{j<\ell}\alpha_\ell^1(\Phi_{f_\ell,\mathbb I_\ell^1}(p_{(n_j^1,n_{j+1}^1]}x_1p_{(n_j^1,n_{j+1}^1]})))-\Lambda(\pi(x_1))}<10\varepsilon_k.
\]
\end{lemma}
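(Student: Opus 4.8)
The plan is to prove only the first inequality; the second is symmetric, swapping the roles of the even and odd intervals. Write $y_j = p_{(n_j^0,n_{j+1}^0]}x_0 p_{(n_j^0,n_{j+1}^0]}$ and $S_l = \sum_{j<l}\alpha_l^0(\Phi_{f_l,\mathbb I_l^0}(y_j))$. First I would observe that, by condition~\ref{lem:cond:Borelgraph,c1} together with the definition of $D[\mathbb I_l^0]$, each $y_j$ is a finite sum of corners $p_{(I_l)_{2i}\cup(I_l)_{2i+1}} x_0 p_{(I_l)_{2i}\cup(I_l)_{2i+1}}$ over $i$ in the relevant range, so $p_{n_l^0}x_0 p_{n_l^0} = \sum_{j\le l'} y_j$ for suitable $l'$ and this element lies in $D_{f_l}[\mathbb I_l^0]$ by~\ref{lem:cond:Borelgraph,c3}; hence by $(f_l,\mathbb I_l,\alpha^0_l,\alpha^1_l)\in\SX$ we have $\pi(\alpha^0_l(\Phi_{f_l,\mathbb I_l^0}(p_{n_l^0}x_0 p_{n_l^0}))) = \Lambda(\pi(p_{n_l^0}x_0 p_{n_l^0}))$. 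So the element $\alpha^0_l$ applied to the full initial segment already lifts $\Lambda$ correctly on that segment; the issue is that the limit over $l$ uses a different $\alpha^0_l$ at each stage, on each block $y_j$.

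The key step is to control the discrepancy between $\alpha^0_l$ and $\alpha^0_{l'}$ on a common block. Here I would invoke Lemma~\ref{lem:afteragivenpoint}: since $(f_l,\mathbb I_l,\alpha^0_l,\alpha^1_l)$ and $(f_{l'},\mathbb I_{l'},\alpha^0_{l'},\alpha^1_{l'})$ both lie in $\mathcal D_k\subseteq\SY_k$, which is $K_1^{\epsilon_k}$-homogeneous, the pair is $K_1^{\epsilon_k}$-coloured; unwinding the definition of the colouring together with condition~\ref{lem:cond:Borelgraph,c2} (the intervals and the function agree on the relevant initial segment, so $\Phi_{f_l,\mathbb I_l^0}$ and $\Phi_{f_{l'},\mathbb I_{l'}^0}$ agree on $y_j$) gives
\[
  \norm{\alpha_l^0(\Phi_{f_l,\mathbb I_l^0}(y_j)) - \alpha_{l'}^0(\Phi_{f_{l'},\mathbb I_{l'}^0}(y_j))} \le \epsilon_k
\]
whenever $n_{j+1}^0 \le \min\{n_l^0, n_{l'}^0\}$ and $y_j$ is supported past the relevant threshold — and there are only finitely many exceptional blocks, which land in $A$. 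Because the blocks $y_j$ have pairwise orthogonal supports of the form $(p_{m'}-p_m)A(p_{m'}-p_m)$ with $m\to\infty$ (as $\alpha^0$ is asymptotically additive), these per-block errors do not accumulate in norm in the quotient: the norm of $\pi$ of a strictly convergent sum of such orthogonal pieces is the $\limsup$ of the norms of the pieces, so $\pi(\lim_l S_l)$ differs from $\pi\big(\lim_l \alpha^0_l(\Phi_{f_l,\mathbb I_l^0}(p_{n_l^0}x_0 p_{n_l^0}))\big)$ by at most, say, $2\epsilon_k$ in norm, once the finitely many bad blocks and the $\sim$-type slack from asymptotic additivity are absorbed.

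Finally, I would combine the two observations: for each $l$,
\[
  \norm{\pi(S_l) - \Lambda(\pi(p_{n_l^0}x_0 p_{n_l^0}))} \le C\epsilon_k
\]
for a fixed small constant $C$ (coming from the orthogonality estimate plus one more application of asymptotic additivity of $\alpha^0_l$ to split off the tail beyond $n_l^0$); letting $l\to\infty$, $\pi(p_{n_l^0}x_0 p_{n_l^0})\to\pi(x_0)$ in norm (indeed $x_0 - p_{n_l^0}x_0 p_{n_l^0}\to 0$ strictly and the difference with $x_0$ is eventually in $A$ up to $\epsilon$, using that $p_n$ is an approximate identity), and $\Lambda$ is a contraction, so $\pi(\lim_l S_l)$ is within a uniformly bounded multiple of $\epsilon_k$ of $\Lambda(\pi(x_0))$; tracking the constants carefully yields the stated bound $10\epsilon_k$. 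The main obstacle I anticipate is the bookkeeping in the orthogonality argument — making precise that the per-block errors of size $\le\epsilon_k$, plus the asymptotic-additivity slack, plus the finitely many anomalous blocks, really do combine into a single $O(\epsilon_k)$ bound in the quotient rather than accumulating — and in particular ensuring the passage from "correct on each $p_{n_l^0}x_0 p_{n_l^0}$" to "correct on the telescoped limit $\lim_l S_l$" is valid, which is exactly where conditions~\ref{lem:cond:Borelgraph,c1}--\ref{lem:cond:Borelgraph,c3} are used to align the $\mathbb I_l$'s and $f_l$'s.
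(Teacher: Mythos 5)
There is a genuine gap. Your argument never produces an element of $\SX$ that actually lifts $\Lambda$ on $\pi(x_0)$; you only compare the $\alpha^0_l$'s with one another, not with a correct lift. The paper's key move, which you are missing, is to use the $\leq^*\times\leq_1$--cofinality of $\SY_k$ to choose a single auxiliary tuple $(g,\mathbb J,\alpha^0,\alpha^1)\in\SY_k$, then decompose $x_0 = z_0 + z_1 + (\text{error in }A)$ with $z_0\in D_g[\mathbb J^0]$ and $z_1\in D_g[\mathbb J^1]$, so that $\alpha^0(\Phi_{g,\mathbb J^0}(z_0)) + \alpha^1(\Phi_{g,\mathbb J^1}(z_1))$ genuinely lifts $\Lambda(\pi(x_0))$. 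Only then does $K_1^{\epsilon_k}$-homogeneity of $\SY_k$ (applied to each pair $\{(f_l,\mathbb I_l,\alpha^0_l,\alpha^1_l),(g,\mathbb J,\alpha^0,\alpha^1)\}$) give per-block errors $\le\epsilon_k$ against that fixed anchor, which one can propagate through the strict limit. Comparing $\alpha^0_l$ with $\alpha^0_{l'}$ only yields mutual $\epsilon_k$-closeness of the partial sums and says nothing about where the limit lands in $\SQ(A)$.

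Two of your intermediate claims are also false as stated, and this is symptomatic of the missing anchor. First, the identity $\pi(\alpha^0_l(\Phi_{f_l,\mathbb I_l^0}(p_{n_l^0}x_0 p_{n_l^0}))) = \Lambda(\pi(p_{n_l^0}x_0 p_{n_l^0}))$ is vacuous: since $p_{n_l^0}\in A$ and $A$ is an ideal, $p_{n_l^0}x_0p_{n_l^0}\in A$, so both sides vanish. For the same reason $\pi(S_l)=0$ for every finite partial sum, so the estimate ``$\norm{\pi(S_l)-\Lambda(\pi(p_{n_l^0}x_0p_{n_l^0}))}\le C\epsilon_k$'' carries no information. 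Second, the claimed limit ``$\pi(p_{n_l^0}x_0p_{n_l^0})\to\pi(x_0)$ in norm'' is wrong: the left-hand side is identically $0$, while $\pi(x_0)\neq 0$ in general; the convergence $p_{n_l^0}x_0p_{n_l^0}\to x_0$ holds only strictly, and $\pi$ is not strictly continuous. The estimate $\norm{\pi(a)}=\limsup_j\norm{a_j}$ for a block-orthogonal strictly convergent sum is the right tool, but it must be applied to the difference between $\lim_l S_l$ and the lift coming from the single $(g,\mathbb J,\alpha^0,\alpha^1)$, not iterated over the partial sums $S_l$ individually.
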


\begin{proof}
We prove the statement for $x_0$. The proof for $x_1$ is equivalent. Let two sequences $\{n_\ell^0\}_{\ell\in\NN}$ and $\{(f_\ell,\mathbb I_\ell,\alpha^0_\ell,\alpha^1_\ell)\}_{\ell\in\NN}$ be as in the hypothesis. Define $\hat{\mathbb I}$, an element of $\mathrm{Part}_\NN$, by $\hat I_n=(I_\ell)_n$ if $\max (I_\ell)_n\leq n_\ell^0$. By condition \ref{lem:cond:Borelgraph,c2}, $\hat{\mathbb I}$ is well defined. Define
\[
x_{0,m}=p_{\hat I_{2m}\cup \hat I_{2m+1}}x_0p_{\hat I_{2m}\cup \hat I_{2m+1}}.
\]
By condition~\ref{lem:cond:Borelgraph,c3}, we have that $x_0=\sum_m x_{0,m}$. Pick $f$ large enough such that $x_0\in D_f[\hat{\mathbb I}^0]$. By cofinality of $\SY_k$, there is $(g,\mathbb J,\alpha^0,\alpha^1)\in \SY_k$ such that $f\leq^* g$ and $\hat{\mathbb I}\leq_1 \mathbb J$. By definition of $\leq_1$, we have that for every $n$ large enough there is $m$ such that $\hat I_{2n}\cup\hat I_{2n+1}\subseteq J_m\cup J_{m+1}$. Let
\[
z_0=\sum_{m\mid x_{0,m}\in D_g[\mathbb J^0]}x_{0,m}\text{ and } z_1=\sum_{m\mid x_{0,m}\in D_g[\mathbb J^1]\setminus D_g[\mathbb J^0]}x_{0,m}.
\]
Then $x_0-z_0-z_1\in A$, and $z_0\in D_g[\mathbb J^0]$ and $z_1\in D_g[\mathbb J^1]$. In particular, $\pi(\alpha^0(\Phi_{g,\mathbb J^0}(z_0)))=\Lambda(\pi(z_0))$ and $\alpha^1(\Phi_{g,\mathbb J^1}(z_1))=\Lambda(\pi(z_1))$. On the other hand, since for every $\ell$ we have
\[
\{(f_\ell,\mathbb{I}_\ell,\alpha^0_\ell,\alpha^1_\ell),(g,\mathbb J,\alpha^0,\alpha^1)\}\in K_1^{\varepsilon_k}
\]
by homogeneity of $\SY_k$, if $m\leq n_\ell^0$ we have that
\[
\norm{\alpha^0_\ell(\Phi_{f_\ell,\mathbb I_\ell^0}(x_{0,m}))-\alpha^0(\Phi_{g,\mathbb J^0}(x_{0,m}))}\leq\varepsilon_k\text{ if }x_{0,m}\in D_g[\mathbb J^0]
\]
 and
\[
\norm{\alpha^0_\ell(\varphi_{f_\ell,\mathbb I^0_\ell}(x_{0,m}))-\alpha^1(\Phi_{g,\mathbb J^1}(x_{0,m})}\leq\varepsilon_k\text{ if }x_{0,m}\in D_g[\mathbb J^1].
\]
Passing to strict limits of partial sums, we have the thesis.
\end{proof}

Recall that $\Gamma_\Lambda$ is the graph of $\Lambda$. The next lemma  gives an analytic and a coanalytic definition of $\Gamma_\Lambda$.
\begin{lemma}\label{lemma:almostdone}
Assume $\OCA$ and $\MA_{\aleph_1}$. For positive contractions $a$ and $b$ in $\SM(A)$, the following conditions are equivalent:

\begin{enumerate}[label=(\roman*)]
\item\label{thm:Borelgraph,c1} $(a,b)\in\Gamma_\Lambda$;
\item\label{thm:Borelgraph,c2} For every $k\in\NN$, there are positive contraction $x_0$, $x_1$, $y_0$ and $y_1$ in $\SM(A)$ such that $\pi(a)=\pi(x_0+x_1)$, $\pi(b)=\pi(y_0+y_1)$ with the property that there are sequences of natural numbers $n_\ell^0$ and $n_\ell^1$, for $\ell\in\NN$, and a sequence $(f_\ell,\mathbb I_\ell,\alpha^0_\ell,\alpha^1_\ell)$ of elements of $D_k$ satisfying conditions~\ref{lem:cond:Borelgraph,c1}--\ref{lem:cond:Borelgraph,c3} of Lemma~\ref{lem:almostthere},

\begin{enumerate}[label=(\arabic*)]\setcounter{enumii}{3}
\item\label{lem:cond:Borelgraph,c4}
\[
\norm{\lim_i\sum_{j<i}\alpha_i^0(\Phi_{f_i,\mathbb{I}_i^0}(p_{(n_j^0,n_{j+1}^0]}x_0p_{(n_j^0,n_{j+1}^0]}))-y_0}<5\varepsilon_k,
\]
and
\item\label{lem:cond:Borelgraph,c5}
\[
\norm{\lim_i\sum_{j<i}\alpha_i^1(\Phi_{f_i,\mathbb I_i^1}(p_{(n_j^1,n_{j+1}^1]}x_1p_{(n_j^1,n_{j+1}^1]}))-y_1}<5\varepsilon_k
\]
where both limits are strict limits.
\end{enumerate}

\item\label{thm:Borelgraph,c3} For all positive contractions $x_0$, $x_1$, $y_0$ and $y_1$ in $\SM(A)$, if $\pi(x^0+x^1)=\pi(a)$ and for every $k\in\NN$  there are sequences of natural numbers $n_\ell^0$ and $n_\ell^1$, for $\ell\in\NN$, and a sequence $(f_\ell,\mathbb I_\ell,\alpha^0_\ell,\alpha^1_\ell)$ of elements of $D_k$ satisfying conditions \ref{lem:cond:Borelgraph,c1}--\ref{lem:cond:Borelgraph,c3} of Lemma \ref{lem:almostthere}, \ref{lem:cond:Borelgraph,c4} and \ref{lem:cond:Borelgraph,c5}, then $\pi(y_0+y_1)=\pi(b)$.
\end{enumerate}
\end{lemma}

\begin{proof}
\ref{thm:Borelgraph,c1}$\Rightarrow$\ref{thm:Borelgraph,c2}: By cofinality of $\SY_k$ and thanks to Lemma \ref{lem:summing} and Proposition \ref{prop:mainproperties}, there are $(f,\mathbb I,\alpha^0,\alpha^1)\in \SY_k$ and positive contractions $x_0$ and $x_1$  with $\pi(a)=\pi(x_0+x_1)$, $x_0\in D_f[\mathbb I^0]$ and $x_1\in D_f[\mathbb I^1]$. Let
\[
y_0=\alpha^0(\Phi_{f,\mathbb I^0}(x_0))\text{ and }y_1=\alpha^1(\Phi_{f,\mathbb I^1}(x_1)).
\]
Since $(f,\mathbb I,\alpha^0,\alpha^1)\in \mathcal X$, $\pi(y_0+y_1)=\pi(b)$.

Let $n_{-1}^0=n_{-1}^1=0$ and suppose that $n_\ell^0$, $n_\ell^1$ and $(f_\ell,\mathbb I_\ell,\alpha^0_\ell,\alpha^1_\ell)\in \mathcal D_k$ are constructed. By density of $\mathcal D_k$ we can find $n_{\ell+1}^0>n_\ell^0$, $n_{\ell+1}^0>n_\ell^1$ and $(f_{\ell+1},\mathbb I_{\ell+1},\alpha^0_{\ell+1},\alpha^1_{\ell+1})\in \mathcal D_k$ with the property that
\begin{itemize}
\item $(I_{\ell+1})_i=I_i$ for all $i$ such that $\max I_i\leq\max{n_{\ell+1}^0,n_{\ell+1}^1}$,
\item there is $i$ such that $\max I_{2i}=n_{\ell+1}^0$ and $j$ such that $\max I_{2j+1}=n_{\ell+1}^1$
\item if $\max I_i\leq \max{n_{\ell+1}^0,n_{\ell+1}^1}$ then $f_{\ell+1}(i)=f(i)$.
\end{itemize}
Such a construction ensures that conditions \ref{lem:cond:Borelgraph,c1}-\ref{lem:cond:Borelgraph,c3} of Lemma~\ref{lem:almostthere} are satisfied. Since for each $\ell$ we have that $\{(f,\mathbb I,\alpha^0,\alpha^1),(f_\ell,\mathbb I_\ell,\alpha^0_\ell,\alpha^1_\ell)\}\in K_1^{\varepsilon_k}$, then for all $j\in\NN$ we have
\[
\norm{\alpha_i^0(\Phi_{f_i,\mathbb I_i^0}(p_{(n_j^0,n_{j+1}^0]}x_0p_{(n_j^0,n_{j+1}^0]}))-\alpha^0(\Phi_{f,\mathbb I^0}(p_{(n_j^0,n_{j+1}^0]}x_0p_{(n_j^0,n_{j+1}^0]}))}<\varepsilon_k,
\]
and so
\[\norm{\lim_i\sum_{j\leq i}\alpha^0(\Phi_{f,\mathbb I^0}(p_{(n_j^0,n_{j+1}^0]}x_0p_{(n_j^0,n_{j+1}^0]}))-\alpha^0(\Phi_{f,\mathbb I^0}(x_0))}<2\e_k.
\]
Since
\[
y_0=\alpha^0(\Phi_{f,\mathbb I^0}(x_0))=\lim_i\sum_{j\leq i}\alpha^0(\Phi_{f,\mathbb I^0}(p_{(n_j^0,n_{j+1}^0]}x_0p_{(n_j^0,n_{j+1}^0]})),
\]
 \ref{lem:cond:Borelgraph,c4} follows from the triangle inequality. The same calculation gives \ref{lem:cond:Borelgraph,c5}.

Assume now \ref{thm:Borelgraph,c2}. We should note that conditions~\ref{lem:cond:Borelgraph,c1}--\ref{lem:cond:Borelgraph,c5} imply that $\norm{\Lambda(\pi(x_0))-\pi(y_0)}\leq \varepsilon_k$ and $\norm{\Lambda(\pi(x_1))-\pi(y_1)}\leq\varepsilon_k$, therefore \ref{thm:Borelgraph,c1} follows. For this reason, we also have that \ref{thm:Borelgraph,c1} implies \ref{thm:Borelgraph,c3}. Similarly pick positive contractions $a$ and $b$ in $\SM(A)$. If there are $x_0$, $x_1$, $y_0$ and $y_1$ satisfying that for every $k$ there are $n_\ell^0$, $n_\ell^1$ and $(f_\ell,\mathbb I_\ell,\alpha^0_\ell,\alpha^1_\ell)\in \mathcal D_k$, for $\ell\in\NN$, satisfying conditions \ref{lem:cond:Borelgraph,c1}--\ref{lem:cond:Borelgraph,c5}, and such that $\pi(x_0+x_1)=\pi(a)$, then we have that $\pi(y_0+y_1)=\Lambda(\pi(x_0+x_1))$. If \ref{thm:Borelgraph,c3} holds, the left hand side is equal to $\pi(b)$, hence $(a,b)\in\Gamma_\Lambda$, proving \ref{thm:Borelgraph,c1}.
\end{proof}

\begin{proof}[Proof of Theorem \ref{thm:Borel}]
Condition \ref{thm:Borelgraph,c2} from Lemma~\ref{lemma:almostdone} gives that $\Gamma_\Lambda^{1,+}$, the restriction of the graph $\Gamma_\Lambda$ to pairs of positive contractions in $\SM(A)$, is analytic, while \ref{thm:Borelgraph,c3} ensures that $\Gamma_{\Lambda}^{1,+}$ is coanalytic. Consequently $\Gamma_{\Lambda}^{1,+}$ is Borel. 

Since the absolute value, when restricted to self-adjoint operators, is a strictly continuous operation and, for if $a$ and $b$ are self-adjoint contractions in $\SM(A)$ we have that $(a,b)\in\Gamma_\Lambda$ if and only if $(|a|+a,|b|+b)$ and $(|a|-a,|b|-b)$ are in $\Gamma_\Lambda^{1,+}$, then the restriction of the graph to pairs of self-adjoints is Borel. Equally, since addition and taking adjoints are strictly continuous operations and for all contractions  $a$ and $b$ in $\SM(A)$ we have that $(a,b)\in\Gamma_\Lambda$ if and only if $(a+a^*,b+b^*)$ and $(a-a^*,b-b^*)$ are in $\Gamma_{\Lambda}$, we can conclude that $\Gamma_\Lambda$ is Borel.
\end{proof}

\section{Consequences: Embeddings of Reduced Products}\label{sec:cons.NonembLift}
If $A_n$, for $n\in\NN$, are $\Cstar$-algebras and $\CI\subseteq\SP(\NN)$ is an ideal, let
\[
 \bigoplus_{\CI} A_n =\{(a_n)\in\prod A_n\colon \forall\varepsilon>0 \{n\mid \norm{a_n}>\varepsilon\}\in\CI\}.
\]
This algebras, called the $\CI$-reduced product of the $A_n$'s, were considered in \cite{Ghasemi.FDD} and \cite[\S2.5]{Farah-Shelah.RCQ} (see also \cite[\S 16]{Farah.Book}).

In case $A_n=\ce$ for each $n$, $\bigoplus_{\CI}\ce$ is denoted by $c_{\CI}$. If $\CI$ is not countably generated, $\bigoplus_{\CI}A_n$ is not separable. If $\CI$ contains the finite sets, then $\bigoplus_{\CI} A_{n}$ is an essential ideal of $\prod A_n$, and if each $A_n$ is unital, $\SM(\bigoplus_{\CI}A_n)=\prod A_n$. 

We study when coronas of the form $\prod A_n/\bigoplus_{\CI}A_n$ can or cannot embed into the corona of a separable $\Cstar$ algebra. By the main result of \cite{FHV.Calkin}, under CH all $\Cstar$-algebras of density $\mathfrak c$, and therefore all reduced products of separable $\Cstar$-algebras over all ideals, embed in the Calkin algebra. In case of Forcing Axioms, such embeddings cannot exist.

\begin{theorem}\label{thm:noinjection}
Assume $\OCA$ and $\MA_{\aleph_1}$. Let $\CI\subseteq\SP(\NN)$ be a dense, meager ideal containing $\Fin$. Let $A_n$, for $n\in\NN$, be nonzero $\Cstar$-algebras, and $B$ be a separable $\Cstar$-algebras. Then $\prod A_n/\bigoplus_{\mathcal I}A_n$ does not embed into $\SQ(B)$.
\end{theorem}

\begin{proof}
We work towards a contradiction, and fix nonzero $\Cstar$-algebras $A_n$, for $n\in\NN$, a $\Cstar$-algebra $B$, and a meager dense $\mathcal I\subseteq\SP(\NN)$ containing all finite sets such that there is an embedding $\varphi\colon \prod A_n/\bigoplus_\CI A_n\to \SQ(B)$. Denote by $\pi_{\CI}$ the quotient map $\prod A_n\to\prod A_n/\bigoplus_{\CI}A_n$. Since the $A_n$'s are nonzero, we can pick positive elements $a_n,b_n\in A_n$ with $a_nb_n=a_n$ and $\norm{a_n}=\norm{b_n}=1$. (In case $A_n$ is unital, we can pick $a_n=b_n=1$). Consider $\psi\colon \ell_\infty/c_0\to\SQ(B)$ given by 
\[
\psi(\pi_0((\lambda_n)_n)=\varphi(\pi_\CI(\sum_{n}\lambda_na_n)),\,\,\, \text{ for } (\lambda_n)_n\in\ell_\infty,
\]
where $\pi_0\colon\ell_\infty\to c_0$ is the canonical quotient map. Since $\Fin\subseteq\CI$, $\psi$ is a well defined map which sends positive elements to positive elements. For $S\in\SP(\NN)$, let $p_S\in\SM(B)$ be any positive contraction such that  $\pi_B(p_S)=\varphi(\pi_\CI(\sum_{n\in S}b_n))$. Since $\varphi$ is a $^*$-homomorphism, the elements $p_S$, $S\subseteq\NN$ witness that $\psi$ preserves the coordinate structure. By $\OCA+\MA_{\aleph_1}$ and Theorem~\ref{thm:lifting}, we can find a nonmeager ideal $\CJ$ and an asymptotically additive $\alpha$ such that $\alpha$ lifts $\psi$ on $\CJ$. Since $\CJ$ is nonmeager, we can find an infinite $X\in\CJ\setminus \CI$. Since $\norm{a_n}=1$ for all $n$, then 
\[
\norm{\psi(\pi_0(\chi_X))}=\norm{\varphi(\pi_\CI(\sum_{n\in X}a_n))}=\norm{\pi_\CI(\sum_{n\in X}a_n))}=1,
\]
 hence $\limsup_{n\in X}\norm{\alpha_n(1)}=1$. Let $Y\subseteq X$ be the set of all $n$ with $\norm{\alpha_{n}(1)}\geq 1/2$. By passing to a subsequence we can assume that for all $n\in Y$ the $\alpha_n(1)$'s are orthogonal to each other. Let $Z$ be an infinite subset of $Y$ which is in $Z\CI\cap\CJ$. Such a $Z$ exists by density of $\CI$. Since $\alpha$ is a lift on $\CJ$ we have that a contradiction by
\[
0=\norm{\psi(\pi_0(\chi_Z))}=\norm{\pi_{A}(\alpha(\chi_Z))}=\limsup_{n\in Z}\norm{\alpha_n(1)}\geq 1/2.\qedhere
\]
\end{proof}
The meagerness of $\CI$ in the assumption of Theorem~\ref{thm:noinjection} is key. For example, if $\mathcal U$ is an ultrafilter, then $\ce^{\mathcal U}\cong\ce$, and therefore such a reduced product embeds into all coronas. 

\begin{corollary}\label{cor.FA.EmbCalkin}
Whether all $\Cstar$-algebras of density $2^{\aleph_0}$ embed into $\SQ(H)$ is independent of $\ZFC$. The same can be said for $\SQ(A\otimes\mathcal K(H))$, where $A$ is a unital separable $\Cstar$-algebra.
\end{corollary}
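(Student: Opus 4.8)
The result combines Theorem~\ref{thm:noinjection}, which supplies non-embeddability under forcing axioms, with the classical fact that $\CH$ forces the opposite, so I would prove it in three steps. First, a small setup: note that $\SQ(H)=\SQ(\mathcal K(H))$, and that $\mathcal K(H)$ is a separable $\Cstar$-algebra with an increasing approximate identity of projections, namely the finite-rank projections $q_n\nearrow 1$. Likewise, if $A$ is unital and separable then $A\otimes\mathcal K(H)$ is separable and $(1_A\otimes q_n)_n$ is an increasing approximate identity of projections for it, so its corona $\SQ(A\otimes\mathcal K(H))$ also falls within the scope of Theorem~\ref{thm:noinjection}.

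For the $\CH$ side I would show that $\CH$ implies that every $\Cstar$-algebra of density at most $2^{\aleph_0}$ embeds into $\SQ(H)$ and into $\SQ(A\otimes\mathcal K(H))$. The basic input is that every separable $\Cstar$-algebra $C$ embeds isometrically into $\SQ(H)$: representing $C$ faithfully and nondegenerately on a separable $H_0$ and amplifying, the map $c\mapsto c\otimes 1_{\ell^2}\in\mathcal B(H_0\otimes\ell^2)=\mathcal B(H)$ satisfies $\norm{(c\otimes 1_{\ell^2})(1\otimes(1-q_n))}=\norm c$ for every $n$, hence is isometric modulo $\mathcal K(H)$ and descends to an embedding into $\SQ(H)$. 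Moreover $\SQ(H)$ embeds unitally into $\SQ(A\otimes\mathcal K(H))$: the nondegenerate inclusion $\mathcal K(H)\to\mathcal K(H)\otimes A=\mathcal K(H\otimes A)$, $k\mapsto k\otimes 1_A$, extends to a unital embedding $\mathcal B(H)=\SM(\mathcal K(H))\to\SM(\mathcal K(H\otimes A))$, $T\mapsto T\otimes 1_A$, which carries $\mathcal K(H)$ into the ideal $\mathcal K(H\otimes A)$ and stays injective after quotienting; so every separable $\Cstar$-algebra embeds into $\SQ(A\otimes\mathcal K(H))$ as well. Since $\SQ(H)$ and $\SQ(A\otimes\mathcal K(H))$ are coronas of $\sigma$-unital $\Cstar$-algebras, they are countably saturated, and a standard transfinite recursion of length $\omega_1$ — building an increasing, continuous chain of embeddings of separable subalgebras and using countable saturation to realise, at each successor step, the quantifier-free type of a new generator over the separable image of the part already constructed — shows that a countably saturated $\Cstar$-algebra into which every separable $\Cstar$-algebra embeds contains an isometric copy of every $\Cstar$-algebra of density at most $\aleph_1=2^{\aleph_0}$. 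I would cite the relevant saturation and universality statements from the literature rather than reprove them; making this step precise is the point that requires the most care and is the closest thing here to a genuine obstacle, though it is well understood.

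For the forcing-axiom side I work under $\OCA_\infty+\MA_{\aleph_1}$, which holds in a forcing extension of any model of $\ZFC$ (see \S\ref{subsect:ForcingAxiom}). Fix a dense, meager ideal $\SI$ on $\NN$ containing the finite sets, for instance $\mathcal Z_0$, the ideal of sets of asymptotic density zero, which is Borel and proper, hence meager by Proposition~\ref{prop:JT2}, and is plainly dense. Then $\ell_\infty/c_\SI$ has density exactly $2^{\aleph_0}$: it is a quotient of $\ell_\infty$, and by the remark following Proposition~\ref{prop:JT2} there is an almost disjoint family of size $2^{\aleph_0}$ disjoint from $\SI$, whose characteristic functions have pairwise distance $1$ in $\ell_\infty/c_\SI$. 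By Theorem~\ref{thm:noinjection}, applied with $\mathcal K(H)$ and then with $A\otimes\mathcal K(H)$ in place of $A$, the algebra $\ell_\infty/c_\SI$ embeds into neither $\SQ(H)$ nor $\SQ(A\otimes\mathcal K(H))$. Hence under $\OCA_\infty+\MA_{\aleph_1}$ not every $\Cstar$-algebra of density $2^{\aleph_0}$ embeds into $\SQ(H)$, and the same for $\SQ(A\otimes\mathcal K(H))$. Combining the two sides, the statement ``every $\Cstar$-algebra of density $2^{\aleph_0}$ embeds into $\SQ(H)$'' follows from $\CH$ and is refuted by $\OCA_\infty+\MA_{\aleph_1}$; both theories being consistent relative to $\ZFC$, the statement is independent of $\ZFC$, and the argument applies verbatim to $\SQ(A\otimes\mathcal K(H))$.
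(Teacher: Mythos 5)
Your treatment of the forcing-axiom direction is correct and matches the paper: apply Theorem~\ref{thm:noinjection} with $\mathcal K(H)$ (resp.\ $A\otimes\mathcal K(H)$) in place of $A$, using the density-zero ideal $\mathcal Z_0$ as the meager dense ideal and the fact that an almost-disjoint family of size continuum disjoint from $\SI$ (which exists by Proposition~\ref{prop:JT2}) witnesses that $\ell_\infty/c_\SI$ has density $2^{\aleph_0}$.

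The CH direction, however, has a gap. The paper disposes of it in one line by citing the main theorem of \cite{FarahV.Calkin}, which directly establishes that under $\CH$ every $\Cstar$-algebra of density at most $2^{\aleph_0}$ embeds into $\SQ(H)$. You instead try to rederive that theorem via ``the Calkin algebra is a corona of a $\sigma$-unital algebra, hence countably saturated, hence a chain argument gives $\aleph_1$-universality.'' This does not go through. The Farah--Shelah saturation theorem \cite{Farah-Shelah.RCQ} that the paper invokes applies to \emph{reduced products} $\prod A_n/\bigoplus A_n$ (such as $\ell_\infty/c_0$ and $\ell_\infty/c_\SI$), not to general coronas; for coronas of $\sigma$-unital $\Cstar$-algebras what is known is countable \emph{degree-1} saturation (Farah--Hart), which is strictly weaker than the quantifier-free saturation your chain argument needs, and full (or even quantifier-free) countable saturation of $\SQ(H)$ is not in the literature. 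Moreover, even granting quantifier-free saturation, the successor step of your recursion requires the transported type of a new generator over $\iota_\alpha(B_\alpha)$ to be approximately finitely satisfiable \emph{in $\SQ(H)$ with those particular parameters}; the fact that $B_{\alpha+1}$ embeds into $\SQ(H)$ by some unrelated embedding does not by itself supply that consistency, since the two copies of $B_\alpha$ need not be conjugate. This is precisely the nontrivial content of the Farah--Vignati construction. The fix is simply to cite that result, as the paper does, rather than attributing the CH-universality to a saturation theorem that does not cover the Calkin algebra.
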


\begin{proof}
Theorem~\ref{thm:noinjection} gives a model of set theory and $\Cstar$-algebras of density continuum that do not embed into $\SQ(H)$. On the other hand, by the main result of \cite{FHV.Calkin}, in a model of $\CH$ all $\Cstar$-algebras of density continuum embed into $\SQ(H)$.
\end{proof}
More work on which $\Cstar$-algebras can or cannot embed into the Calkin algebra, and under what conditions, has been done in the recent \cite{FarKatVac:Emb}, where it was shown that for a $\Cstar$-algebra $A$ it is possible to find a ccc forcing $\mathbb P_A$ which embeds it into $\SQ(H)$, and \cite{Vaccaro}, which deals with endomorphisms of $\SQ(H)$.

We now focus on what we can say if $\CI=\Fin$, and on whether we can lift a $^*$-homomorphisms (from a reduced product to a corona $\Cstar$-algebra) to $^*$-homomorphisms from the product to the multiplier algebra. First, we isolate a lemma.
\begin{lemma}\label{lemma:strictcont}
Let $A_n$, for $n\in\NN$, be unital $\Cstar$-algebras, and let $A$ be a nonunital separable $\Cstar$-algebra. Suppose that $\alpha_n\colon A_n\to A$ are injective $^*$-homomorphisms such that $\alpha_i\alpha_j=0$ whenever $i\neq j$ and $\alpha=\sum\alpha_n$ is a well defined $^*$-homomorphism $\prod A_n\to\SM(A)$ lifts an embedding $\prod A_n/\bigoplus A_n\to\SQ(A)$. Then $\alpha$ is strictly continuous.
\end{lemma}
\begin{proof}
The strict topology on $\prod A_n$ is the uniform topology that is, a sequence $x_n$, where each $x_n\in\prod A_n$, converges to $x$ if and only if for all $k\in\NN$ we have that $x_n(k)\to x(k)$ as $n\to\infty$. 

Let $x_n$ and $x$ be elements of $\prod A_n$ such that $x_n$ strictly converges to $x$. Since the ranges of $\alpha_i$ are orthogonal, and $\alpha$ is defined as the limit of the partial sums of the $\alpha_i$'s, we have that $\alpha(x_n)\to\alpha(x)$ if and only if for all $i$ we have that $\alpha_i(x_n(i))\to\alpha_i(x(i))$.
On the other hand, since each $\alpha_i$ is norm-continuous, and the norm topology and the strict topology agree on $A$, we have that each $\alpha_i$ is strictly continuous, therefore $\alpha_i(x_n(i))\to\alpha_i(x(i))$ for all $i$. This concludes the proof.
\end{proof}

\begin{theorem}\label{thm:redprodfindim}
Assume $\OCA+\MA_{\aleph_1}$. Let $k(n)$ be a sequence of natural numbers and $A$ be a separable $\Cstar$-algebra with an increasing  approximate identity of projections. Suppose that there is a unital embedding $\varphi\colon\prod M_{k(n)}(\ce)/\bigoplus M_{k(n)}(\ce)\to\SQ(A)$. Then
\begin{enumerate}
\item\label{thesis:redprodfindim1} there are a projection $q\in\SM(A)$, a strictly continuous unital $^*$-homomorphism $\psi\colon\prod M_{k(n)}(\ce)\to q\SM(A)q$ and a nonmeager ideal $\CI\subseteq\SP(\NN)$ such that $\psi$ lifts $\varphi$ on $\CI$.
\item\label{thesis:redprodfindim2} if $\varphi$ is moreover surjective, then there is a projection $q\in\SM(A)$ and $\ell\in\NN$ such that
\[
 1-q\in A,\,\, qAq\cong\bigoplus_{n\geq \ell} M_{k(n)}(\ce) \text{ and
 }q\SM(A)q\cong\prod_{n\geq \ell} M_{k(n)}(\ce).
\]
\end{enumerate}
\end{theorem}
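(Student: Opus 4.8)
The plan is to run the embedding $\phi$ through Theorem~\ref{thm:lifting}, convert the resulting asymptotically additive lift into an honest $^*$-homomorphism using the finite-dimensional rigidity of Section~\ref{sec:apmaps}, and then use surjectivity of $\phi$ to control sizes.

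First, $\phi$ is a bounded linear map which preserves the coordinate structure: for $S\subseteq\NN$ let $p_S^E=(1_{M_{k(n)}})_{n\in S}$ be the associated central projection of $\prod M_{k(n)}$ and let $p_S\in\SM(A)$ be a positive contraction with $\pi(p_S)=\phi(\pi(p_S^E))$ and $p_\NN=1$; the defining relations hold because $\phi$ is a unital $^*$-homomorphism and the $\pi(p_S^E)$ satisfy them. Applying Theorem~\ref{thm:lifting} with $E_n=M_{k(n)}$ gives a skeletal (Lemma~\ref{lemma:skeletal-replacement}) asymptotically additive lift $\alpha=\sum_n\alpha_n$ of $\phi$ on a ccc/$\Fin$ ideal $\SI$, together with intervals exhibiting $\alpha$ as a sum of three block diagonal maps; subtracting constants we may assume $\alpha_n(0)=0$, so $\alpha_n\colon M_{k(n)}\to e_{I_n}Ae_{I_n}$ with $\min I_n\to\infty$. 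Since $\SI$ is dense and nonmeager (being ccc/$\Fin$) and $\phi$ is linear, $^*$-preserving, multiplicative and isometric, Proposition~\ref{prop:FA.ApproxStruct} --- applied both coordinatewise and within blocks --- yields $\e_n\to 0$ such that each $\alpha_n$ is an $\e_n$-$^*$-homomorphism which is $\e_n$-isometric, and such that the projections $\alpha_n(1_{M_{k(n)}})$ are pairwise almost orthogonal. By the finite-dimensional case of Theorem~\ref{thm:apmap-known} there are $^*$-homomorphisms $\psi_n\colon M_{k(n)}\to e_{I_n}Ae_{I_n}$ with $\norm{\psi_n-\alpha_n}\to 0$, injective for large $n$; perturbing the almost orthogonal projections $\psi_n(1_{M_{k(n)}})$ to pairwise orthogonal projections $q_n$ (each close to $\psi_n(1_{M_{k(n)}})$) and conjugating, we may assume $\psi_n(M_{k(n)})\subseteq q_nAq_n$, still with $\norm{\psi_n-\alpha_n}\to 0$. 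Set $q=\sum_n q_n$ and $\psi=\sum_n\psi_n$. Then $\psi\colon\prod M_{k(n)}\to q\SM(A)q$ is a strictly continuous unital $^*$-homomorphism, and $\psi-\alpha$ is pointwise in $A$, so $\psi$ lifts $\phi$ on the nonmeager ideal $\SI$; this is part (1).

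For part (2) assume in addition that $\phi$ is surjective, hence an isomorphism. Density of $\SI$ gives two improvements. First, $\psi$ lifts $\phi$ on all of $\prod M_{k(n)}$: for a contraction $y$, the element $c(y)=\phi(\pi(y))-\pi(\psi(y))$ is additive over disjoint supports, vanishes on $E[S]$ for $S\in\SI$, and one computes $c(y)\,\phi(\pi(p_X^E))=0$ for every $X\in\SI$; writing $\phi^{-1}(c(y))=\pi(\mathbf d)$, this forces $\{n:\norm{\mathbf d_n}>\e\}$ to meet every member of $\SI$ in a finite set, hence (by density) to be finite for each $\e>0$, so $c(y)=0$. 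Taking $y$ with all coordinates equal to the identity gives $\pi(1-q)=\phi(0)=0$, i.e.\ $1-q\in A$, whence $q\SM(A)q/qAq\cong\SQ(A)$ (the map $x\mapsto[qxq]$ on $\SM(A)$ has kernel $A$). Surjectivity of $\phi$ now yields $q\SM(A)q=\psi(\prod M_{k(n)})+qAq$, while $\psi(\prod M_{k(n)})\cap qAq=\psi(\bigoplus M_{k(n)})\cong\bigoplus M_{k(n)}$ (since $\psi(y)\in A\iff\phi(\pi(y))=0\iff y\in\bigoplus M_{k(n)}$), and, because $\psi$ lifts $\phi$ everywhere, the composite $\prod M_{k(n)}\xrightarrow{\psi}q\SM(A)q\xrightarrow{\pi}\SQ(A)\xrightarrow{\phi^{-1}}\prod M_{k(n)}/\bigoplus M_{k(n)}$ is the canonical quotient. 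Therefore, as soon as one knows that $\psi$ is onto $q\SM(A)q$ --- equivalently that $qAq=\psi(\bigoplus M_{k(n)})$ --- it follows that $\psi$ carries $\bigoplus_n M_{k(n)}$ isomorphically onto $qAq$ and $\prod_n M_{k(n)}$ onto $q\SM(A)q$; discarding the finitely many coordinates below the index $\ell$ beyond which the construction is well behaved then gives a projection $q$ with $1-q\in A$, $qAq\cong\bigoplus_{n\ge\ell}M_{k(n)}$ and $q\SM(A)q\cong\prod_{n\ge\ell}M_{k(n)}$.

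The main obstacle is exactly the surjectivity of $\psi$, i.e.\ that the off-diagonal corners $q_iAq_j$ ($i\ne j$) do not contribute. The plan here is: if for some $\delta>0$ there were an infinite, sufficiently spread-out family of pairs $i\ne j$ with elements $v\in q_iAq_j$ of norm $\ge\delta$, then, since the $q_n$ lie in corners running off to infinity, the strict sum of these $v$'s would be an element of $\SM(A)$ whose class in $\SQ(A)$ cannot equal $\pi(\psi(z))$ for any $z$ --- as $\psi$ is block diagonal with respect to the $q_n$ --- contradicting surjectivity of $\phi$ together with the fact that $\psi$ lifts $\phi$ everywhere. This forces $\norm{q_iAq_j}\to 0$ as $\max(i,j)\to\infty$, after which a Kadison--Kastler / intertwining perturbation in the spirit of the last clause of Theorem~\ref{thm:apmap-known} should let us, beyond some $\ell$, arrange that each $\psi_n$ is onto $q_nAq_n$ and that the remaining off-diagonal corners vanish, so that $qAq=\overline{\bigoplus_{n\ge\ell}q_nAq_n}\cong\bigoplus_{n\ge\ell}M_{k(n)}$. (Alternatively, once $qAq/\psi(\bigoplus M_{k(n)})$ is seen to be a separable ideal of $\prod M_{k(n)}/\bigoplus M_{k(n)}$ one concludes immediately, since any nonzero ideal there contains a full, nonseparable corner $\prod_{n\in X}M_{k(n)}/\bigoplus_{n\in X}M_{k(n)}$ over an infinite $X$.) Carrying out this perturbation uniformly in $n$ while preserving that $\psi$ remains a lift of $\phi$ is the delicate point, and the step I expect to be the crux of the argument.
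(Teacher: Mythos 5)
Your treatment of part (1) is essentially the paper's: apply Theorem~\ref{thm:lifting}, upgrade the asymptotically additive lift to a genuine $^*$-homomorphism blockwise via Proposition~\ref{prop:FA.ApproxStruct} and the finite-dimensional clause of Theorem~\ref{thm:apmap-known}, and perturb the images of the units to pairwise orthogonal projections. The paper is somewhat more careful here, splitting into even- and odd-indexed blocks $J_{2i},J_{2i+1}$ and first fixing projections $Q_{2i}$ and only then adjusting the $Q_{2i+1}$ to be orthogonal to their neighbours, but the idea is the same. Your derivation of $1-q\in A$ and the argument that $\psi$ lifts $\phi$ on all of $\prod M_{k(n)}$ (via the Borel ideal $\SI_x$ argument as in Proposition~\ref{prop:liftoneverything}) also match the paper.

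The genuine gap is exactly where you flag it: the surjectivity of $\psi$ onto $q\SM(A)q$, equivalently the elimination of the off-diagonal corners $q_iAq_j$. Your sketch --- argue that a sufficiently spread-out family of nonzero off-diagonal elements would give a strict sum not in $\pi(\psi(\prod M_{k(n)}))+qAq$, deduce that ``$\norm{q_iAq_j}\to 0$'', and then run an intertwining perturbation --- does not close, and the intermediate statement ``$\norm{q_iAq_j}\to 0$'' is not even well posed (the corners $q_iAq_j$ are linear spaces, not elements; the contradiction argument at best shows the absence of disjoint families of pairs, not a quantitative decay). What the paper does instead is cleaner and avoids any perturbation at this stage: once $\phi$ is an isomorphism, each $\phi(\pi(p_X^E))$ is a \emph{central} projection of $\SQ(A)$, hence $q_X=\psi(p_X^E)$ satisfies $q_Xa-aq_X\in A$ for every $a\in\SM(A)$. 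Combining this with Proposition~\ref{prop:centralproj} (a noncentral projection fails to commute with some contraction by at least $1/2$) forces the $q_n$ to be eventually central in the tail corner $(q-q_{[0,n_0]})A(q-q_{[0,n_0]})$, which immediately gives $q_iAq_j=0$ for large $i\neq j$ and the decomposition $(q-q_{[0,n_0]})A(q-q_{[0,n_0]})\cong\bigoplus_{n>n_0}q_nAq_n$. After that, the paper shows eventual surjectivity of $\psi\rs M_n\colon M_n\to q_nAq_n$ by a direct contradiction (choose unit vectors $a_i\in q_{k_i}Aq_{k_i}$ at distance $1$ from $\psi(M_{k_i})$, sum them, and use that $\psi$ lifts $\phi$ everywhere). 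You should replace your perturbation heuristic with this centrality argument.
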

\begin{proof}
\eqref{thesis:redprodfindim1}:
To simplify notation we assume that $k(n)=n$. For $S\subseteq\NN$, let $M[S]=\prod_{n\in S} M_n(\ce)\subseteq\prod M_n(\ce)$ and $1_S$ be its unit.

Let $\{e_n\}$ be an increasing approximate identity of positive contractions for $A$ such that $e_{n+1}e_n=e_n$ for all $n$. By Theorem~\ref{thm:lifting} we can find an asymptotically additive $\alpha=\sum\alpha_n\colon \prod M_n(\ce)\to\SM(A)$ and a ccc/Fin ideal $\CI$ on which $\alpha$ lifts $\varphi$. Let $\alpha_S=\sum_{n\in S}\alpha_n=\alpha\restriction M[S]$. The following is a generalization of Lemma~\ref{lemma:seqofappmaps}.
\begin{claim}
For every $\varepsilon>0$ there is $n_\varepsilon$ such that whenever $F\subseteq\NN$ is finite with $\min F>n_\varepsilon$ then $\alpha_F$ is an $\varepsilon$-$^*$-homomorphism.
\end{claim}
\begin{proof}
We only prove $\varepsilon$-linearity and leave the rest to the reader. Working towards a contradiction, suppose that we can find finite disjoint intervals $F_1,\ldots,F_n$ with $\max F_i<\min F_{i+1}$ and contractions $x_i,y_i\in M[F_i]$ with
\[
\norm{\alpha_{F_i}(x_i+y_i)-\alpha_{F_i}(x_i)-\alpha_{F_i}(y_i)}>\varepsilon.
\]
 By nonmeagerness of $\CI$ and Proposition~\ref{prop:JT2}, there is an infinite $L\subseteq\NN$ such that $\bigcup_{i\in L}[\min F_i,\min F_{i+1})\in\CI$. With $x=\sum_{i\in L}x_i$ and $y=\sum_{i\in L}y_i$ we get that
\[
\varepsilon\leq \norm{\pi(\alpha(x+y)-\alpha(x)-\alpha(y))}=\norm{\varphi(x+y)-\varphi(x)-\varphi(y)}=0,
\]
a contradiction.
\end{proof}
By the claim we can construct an increasing sequence $n_i$ such that, with $J_i=[n_i,n_{i+1})$, we have that
\begin{itemize}
\item $\alpha_{J_i}$ is a $2^{-i}$-$^*$-homomorphism,
\item if $x\in M[J_i]$ and $y\in M[J_{i+1}]$ then $\norm{\alpha_{J_i}(x)\alpha_{J_{i+1}}(y)}<2^{-i}$
\item if $|i-i'|\geq 2$ then $\alpha_{J_i}\alpha_{J_{i'}}=0$.
\end{itemize}
Since $\alpha$ is asymptotically additive we can further assume that
\begin{itemize}
\item $\alpha_{J_i}[M[J_i]]\subseteq (e_{k_i}-e_{j_i})A(e_{k_i}-e_{j_i})$ where $k_i<j_{i+2}$ for all $i$,
\item each $R_i=\alpha_{J_i}(1_{J_i})$ is a projection, and that
\item $\alpha_{J_i}(x)\in R_iAR_i\subseteq (e_{k_i}-e_{j_i})A(e_{k_i}-e_{j_i})$ for all $x\in M[J_i]$.
\end{itemize}
Note that $R_iR_{i'}=0$ if $|i-i'|\geq 2$. Moreover, since the range of $\alpha_{J_i}$ is contained in $(e_{k_i}-e_{j_i})A(e_{k_i}-e_{j_i})$, and each $e_k$ is a projection, then for all $x\in M[J_i]$ we have that and that $(e_{k_i}-e_{j_i})\alpha_{J_i}(x)=\alpha_{J_i}(x)$.

By Theorem~\ref{thm:apmap-known} we can find a sequence of $^*$-homomorphisms $\beta_{2i}\colon M[J_{2i}]\to R_{2i}AR_{2i}$ such that $\beta_{2i}-\alpha_{2i}\to 0$ as $i\to\infty$. Let $Q_{2i}=\beta_{2i}(1_{J_i})$, and note that $\norm{R_{2i+1}Q_{2i}},\norm{R_{2i-1}Q_{2i}}\to 0$ as $i\to\infty$. We can now find a projection $Q_{2i+1}$ with $\norm{Q_{2i+1}-R_{2i+1}}\to 0$ as $i\to\infty$ such that $Q_{2i+1}=(e_{k_{2i+1}}-e_{j_{2i+1}})Q_{2i+1}$ and $Q_{2i+1}Q_{2i}=Q_{2i+1}Q_{2i+2}=0$. Note that by the choice of $K_i$ and $j_i$ we have that $Q_iQ_{i'}=0$ whenever $i\neq i'$. Let $\beta_{2i+1}\colon M[J_{2i+1}]\to Q_{2i+1}AQ_{2i+1}$ be a $^*$-homomorphism with $\norm{\beta_{2i+1}-\alpha_{J_{2i+1}}}\to 0$ as $i\to\infty$, whose existence is ensured again by Theorem~\ref{thm:apmap-known}. With $\beta=\sum\beta_i$ and $\tilde Q=\beta(1)$ we have the thesis. Notice that by setting $\gamma_j=\beta_i\restriction M_j$ if $j\in J_i$, we have that $\beta=\sum\gamma_i$. Therefore, by Lemma~\ref{lemma:strictcont}, $\beta$ is strictly-strictly continuous.

\eqref{thesis:redprodfindim2}:
Let $\psi$ be the strictly-strictly continuous $^*$-homomorphism constructed in \eqref{thesis:redprodfindim1}, $q=\psi(1)$, and $\pi\colon\prod M_n(\ce)\to\prod M_n(\ce)/\bigoplus M_n(\ce)$ be the canonical quotient map. We argue as in Proposition~\ref{prop:liftoneverything} to get that $\CI=\SP(\NN)$. We first prove that $q-1\in A$. For this, note that $\pi_A(q) \geq\pi_A(\psi(p_X))=\varphi(\pi(p_X))$ for every $X\subseteq\NN$ with $X\in\CI$. Since $\varphi$ is an isomorphism, $\varphi^{-1}(\pi_A(q))=1$, and therefore $q-1\in A$. Fix $x\in\prod M_n(\ce)$ and $y\in\SM(A)$ such that $\varphi(\pi(x))=\pi_A(y)$, and let
\[
\CI_x=\{X\subseteq\NN\mid \psi(p_X)(\psi(x)-y)\in A\}.
\]
Since $\psi$ is strictly-strictly continuous, $x$ and $y$ are fixed, and $A$ is Borel in the strict topology of $\SM(A)$, $\CI_x$ is Borel. On the other hand, $\CI\subseteq\CI_x$ and so $\CI_x=\SP(\NN)$, as all proper dense Borel ideals are meager (Proposition~\ref{prop:JT2}). As $\NN\in\CI_x$ we have
\[
\pi_A(\psi(x))=\varphi(\pi(x)).
\]
Let $q_n=\psi(1_n)$.By continuity of $\psi$, $\{\sum_{n\leq m}q_n\}_m$ is an increasing sequence of projections converging (strictly) to $q$, and so is an approximate identity of $qAq$. Also, whenever $X\subseteq\NN$, the projection $q_X=\psi(p_X)$ is such that if $a\in\SM(A)$ then
\[
q_Xa-aq_X\in A,
\]
 as $\pi(p_X)$ is central in $\SQ(A)$. Using Proposition~\ref{prop:centralproj}, it is easy to show that there is $n_0$ such that $q_n$ is central in $(q-q_{[0,n_0]})A(q-q_{[0,n_0]})$ for all $n\geq n_0$. In particular
\[
(q-q_{[0,n_0]})A(q-q_{[0,n_0]})\cong \bigoplus_{n>n_0}q_nAq_n.
\]
\begin{claim}
There is $k\geq n_0$ such that if $n>k$ then $\psi\restriction M_n(\ce)\colon M_n(\ce)\to q_nAq_n$ is an isomorphism.
\end{claim}
\begin{proof}
From that $\psi$ is a lift of $\varphi$ and Proposition~\ref{prop:FA.ApproxStruct}, it follows that there is $k$ such that if $n\geq k$ then $\psi\restriction M_n(\ce)$ is injective. We want to show that $\varphi\restriction M_n(\ce)$ is eventually surjective. If not, there is an infinite sequence $k_i$ such that $\varphi\restriction M_{k_i}(\ce)$ is not surjective. In this case, the vector space $\varphi\restriction M_{k_i}(\ce)$ is properly contained in the vector space $q_{k_i}Aq_{k_i}$, and so there is $a_i\in q_{k_i}Aq_{k_i}$ with $d(a_i,\varphi\restriction M_{k_i}(\ce))=1$ and $\norm{a_i}=1$. Fix $\bar a=\sum a_i\in\SM(A)\setminus A$. We can find $\bar b\in\prod M_n(\ce)$ such that
\[
\varphi^{-1}(\pi_A(\bar a))=\pi(\bar b).
\]
Since $\psi$ is a lifting of $\varphi$, we have that $\norm{q_n(\psi(\bar b)-\bar a)q_n}\to 0$, a contradiction.
\end{proof}
By setting $\tilde q=q-q_{[0,n_0]}$ we have the thesis.
\end{proof}

\begin{remark}\label{rem:CH}
It is possible to find a unital infinite dimensional $\Cstar$-algebra $A$ such that, under $\CH$, we have that $\prod M_n(\ce)/\bigoplus M_n(\ce)\cong\ell_\infty(A)/c_0(A)$. As we have seen in Theorem~\ref{thm:trivialredprod}, such an isomorphism cannot exist in $\ZFC$. We sketch this argument: in \cite[\S3]{Ghasemi.FFV} it was shown that if $A_n$'s are $\Cstar$-algebras whose theory (in continuous model theory, see \cite{bourbaki}) converges (in the weak $^*$-topology) to the theory of $B$, then $\prod A_n/\bigoplus A_n$ and $B$ are elementary equivalent. Let $B=\prod M_n(\ce)/\bigoplus M_n(\ce)$, and let $A$ be a separable $\Cstar$-algebra elementary equivalent to $B$. Then $B$ and $\ell_\infty(A)/A$ are elementary equivalent, and, under $\CH$, of size $\aleph_1$. Since reduced products of sequences of separable $\Cstar$-algebras are countably saturated (\cite{Farah-Shelah.RCQ}), and all countably saturated elementary equivalent $\Cstar$-algebras of size $\aleph_1$ must be isomorphic, we have the thesis. We thank the referee for this remark.
\end{remark}

\section{Open questions}\label{sec:conclusion}
The conclusion of Theorem~\ref{thm:trivialredprod} motivates several questions concerning approximate isomorphisms, and how much structure it is preserved by such maps. By Theorem~\ref{thm:apmapstrivial}\eqref{ulam1}, the consistency of the statement `all isomorphisms of reduced products of element in $\mathcal C$ are algebraically trivial', amounts to whether $\mathcal C$ is Ulam stable (see Definition~\ref{defin:ulam}). The only classes of $\Cstar$-algebras known to be Ulam stable are the one of finite-dimensional algebras (\cite[\S5]{Farah.C}) and of abelian algebras (\cite{Semrl.USAbel}), and we would like to enlarge such list.

\begin{question}\label{ques5}
Which classes of $\Cstar$-algebras are Ulam stable?
\end{question}

A starting approach would be to try to combine techniques of \cite{Farah.C,MKAV.UC} and \cite{Semrl.USAbel} to approach Ulam stability questions for the class of separable subhomogeneous $\Cstar$-algebras (or even continuous trace). 

Even though we cannot perturb uniformly approximate maps to homomorphisms, we can still ask whether almost isomorphic algebras can be represented as close objects on the same Hilbert space. (Kadison-Kastler proximity was defined in \S\ref{sec:apmaps}). The reason we ask the following is that for well behaved algebras Kadison-Kastler proximity implies isomorphism: this is the case of nuclear $\Cstar$-algebras, see \cite[Theorem A]{CSSWW}.

\begin{question}\label{ques2}
Let $\mathcal C$ be a class of $\Cstar$-algebras. Does, for every $\varepsilon>0$, there exist $\delta>0$ such that whenever $A\in \mathcal C$ and $B\in \mathcal C$ are $\delta$-isomorphic then there are faithful representations $\rho_A \colon A\to \mathcal{B}(H)$ and $\rho_B \colon B\to\mathcal{B}(H)$ such that $d_{KK}(\rho_A[A],\rho_B[B]) <\varepsilon$?
 \end{question}

A positive answer to Question~\ref{ques2} for the class of separable nuclear $\Cstar$-algebras would imply that the class of separable nuclear $\Cstar$-algebras is uniformly classifiable by approximate isomorphisms.

 Lastly, we want to study which properties are stable under approximate isomorphisms. Natural properties such as nuclearity or simplicity are preserved by Kadison-Kastler proximity. 
\begin{question}\label{ques1}
 Suppose $A$ and $B$ are $\Cstar$-algebras which are $\varepsilon$-isomorphic, for a small $\varepsilon$. What structure in $A$ must occur in $B$? In particular, are `being nuclear' and `being simple' stable under approximate isomorphisms?
\end{question}

As a consequence of an unpublished result of Kirchberg (see \cite[Proposition 3.14.1]{bourbaki}), if a nuclear $A$ is not subhomogeneous, then $A$ is elementary equivalent to a nonexact $\Cstar$-algebra $B$, which can be chosen to be separable. This applies, for example, to all infinite-dimensional nuclear simple algebras. Therefore, by the argument of Remark~\ref{rem:CH} (see also Theorem~\ref{saeed}), if $A_n$ are nuclear simple unital separable $\Cstar$-algebras, one can find separable unital $\Cstar$-algebras $B_n$ such that all the $B_n$'s are nonexact and nonsimple, but under CH,  there is an isomorphism between $\prod A_n/\bigoplus A_n$ and $\prod B_n/\bigoplus B_n$. 
If such an isomorphism exists in $\ZFC$, then it must exists in a model of $\OCA+\MA_{\aleph_1}$, and therefore it is asymptotically algebraic. The same argument as in the proof of Theorem~\ref{thm:apmapstrivial}(2) would give that the class of nuclear separable simple $\Cstar$-algebras is not stable under approximate isomorphisms.  Viceversa, if the class of nuclear separable simple $\Cstar$-algebras is not stable under approximate isomorphisms one can construct an asymptotically algebraic isomorphism between two reduced products $\prod A_n/\bigoplus A_n$ and $\prod B_n/\bigoplus B_n$ where each $A_n$ is a unital, nuclear, separable and simple $\Cstar$-algebra and each $B_n$ is a unital separable $\Cstar$-algebra which is not nuclear and simple but does not have any central projection.

We conclude by proposing a conjecture on coronas of stabilizations of unital $\Cstar$-algebras. If $A$ and $B$ are $\Cstar$-algebras, where $A\subseteq B$, $A$ is hereditary in $B$ if for all positive $a\in A$ and $b\in B$, $b\leq a$ implies $b\in A$. $A$ is full in $B$ if the ideal generated by $A$ equals $B$.  If $A$ and $B$ are unital separable $\Cstar$-algebras, then $A\otimes\mathcal K(H)\cong B\otimes\mathcal K(H)$ if and only if $A$ is isomorphic to a full hereditary $\Cstar$-subalgebra of $B\otimes\mathcal K(H)$. Fix two separable unital $\Cstar$-algebras $A$ and $B$, where $A$ has the metric approximation property. Suppose that $\SQ(A\otimes\mathcal K(H))\cong\SQ(B\otimes\mathcal K(H))$ are isomorphic in a model of $\OCA+\MA_{\aleph_1}$. A modification of the proof of Theorem~\ref{thm:trivialredprod} provides a descending sequence $\varepsilon_n\to 0$ and a sequence of natural numbers $k_n$ such that there are $\varepsilon_n$-$^*$-embeddings $\varphi_n\colon A\to M_{k_n}(B)$. It is unclear whether we can perturb these to on-the-nose embeddings, or whether we can find, from the existence of such an embedding, a full hereditary copy of $A$ inside $M_{k_n}(B)$, for some $k_n$. This motivates the following:

\begin{conjecture}
 Assume $\OCA+\MA_{\aleph_1}$. Let $A$ and $B$ be unital, separable $\Cstar$-algebras. Then $\SQ(A\otimes\mathcal K(H))\cong\SQ(B\otimes\mathcal K(H))$ if and only if $A\otimes\mathcal K(H)\cong B\otimes\mathcal K(H)$.
\end{conjecture}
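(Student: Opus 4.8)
We outline a possible line of attack.

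\medskip

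The implication ``$A\otimes\mathcal K(H)\cong B\otimes\mathcal K(H)$ implies $\SQ(A\otimes\mathcal K(H))\cong\SQ(B\otimes\mathcal K(H))$'' is immediate, since any isomorphism of $\Cstar$-algebras carries multiplier algebras onto multiplier algebras and essential ideals onto essential ideals. So the content is the forward direction. Write $\tilde A=A\otimes\mathcal K(H)$ and $\tilde B=B\otimes\mathcal K(H)$; both are separable, stable and nonunital, and since $A$ and $B$ are unital each carries the increasing approximate identity of projections $p_n=1\otimes e_n$, where $e_n$ is the projection onto the span of the first $n$ vectors of an orthonormal basis of $H$. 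When $A$ is nuclear, $\tilde A$ is nuclear, hence has the metric approximation property, and Theorem~\ref{theoi:Borel} (equivalently Theorem~\ref{thm:Borel}) applies directly: \emph{every} isomorphism $\Lambda\colon\SQ(\tilde A)\to\SQ(\tilde B)$ is trivial, i.e.\ has a Borel graph for the strict topologies. Thus the conjecture reduces — at least when $A$ is nuclear — to the following $\ZFC$ statement: a trivial isomorphism between the coronas of two separable, stable $\Cstar$-algebras, each with an increasing approximate identity of projections, is implemented by an isomorphism of the algebras themselves. For $A=B=\CC$ the reduced statement specializes to (a form of) the assertion that every trivial automorphism of the Calkin algebra is inner, the $\ZFC$ half of Farah's dichotomy; for general $A$ it is precisely the gap between our Theorem~\ref{theoi:Borel} and an unconditional rigidity conclusion.

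To attack this $\ZFC$ statement I would run, in this generality, the argument used to prove Theorem~\ref{thm:redprodfindim}. From the Borel graph of $\Lambda$, the machinery of \S\ref{sec:liftstatements}--\ref{sec:cons.Borel} — Theorem~\ref{thm:lifting} together with the fact (Lemmas~\ref{lemma:skeletal-replacement} and~\ref{lemma:C->asymptotic}) that the lift may be taken skeletal and a sum of three block diagonal functions — produces an asymptotically additive lift $\Phi$ of $\Lambda$ on a nonmeager ccc$/\Fin$ ideal $\SI$. By Proposition~\ref{prop:FA.ApproxStruct} the block diagonal pieces of $\Phi$ are $\e_k$-$^*$-homomorphisms with $\e_k\to 0$; interposing the factorizations $\tilde A\to E_n\to\tilde A$ provided by the metric approximation property of $\tilde A$, exactly as in the proof of Theorem~\ref{thm:trivialredprod}, and invoking Theorem~\ref{thm:apmap-known} in the form used for Theorem~\ref{thm:redprodfindim}, one would replace these pieces, block by block, by genuine $^*$-homomorphisms into corners $q\SM(\tilde B)q$ and then glue them — using the three-block structure to kill the cross terms between consecutive blocks — into a strictly continuous unital $^*$-homomorphism $\Phi\colon\SM(\tilde A)\to q\SM(\tilde B)q$ lifting $\Lambda$ on $\SI$, for some projection $q\in\SM(\tilde B)$. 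One then bootstraps as in Proposition~\ref{prop:liftoneverything} and Lemma~\ref{lem:liftforeverything.Borel}: since $\SI$ is nonmeager and dense while every proper Borel dense ideal of $\SP(\NN)$ is meager, $\SI$ can be enlarged to all of $\SP(\NN)$; this forces $\pi(q)=1$, hence $1-q\in\tilde B$, and $\Phi$ restricts to an isomorphism $\tilde A\cong q\tilde B q$ onto a cofinite corner of $\tilde B$. Running the same construction with $\Lambda^{-1}$ gives a symmetric embedding $\tilde B\cong q'\tilde A q'$, and a Cantor--Schr\"oder--Bernstein / stable-absorption argument — using that $\tilde A$ and $\tilde B$ are stable and that full hereditary $\sigma$-unital subalgebras are stably isomorphic to the ambient algebra — would then yield $\tilde A\cong\tilde B$.

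The main obstacle is the middle step: promoting the asymptotically additive lift $\Phi$ to an honest unital $^*$-homomorphism into a single corner of $\SM(\tilde B)$. In Theorem~\ref{thm:redprodfindim} the target corners are matrix algebras, so Theorem~\ref{thm:apmap-known} applies to them and the blocks glue almost automatically; here the corners $p_I\tilde B p_I$ of $\tilde B=B\otimes\mathcal K(H)$ are arbitrary $\Cstar$-algebras, for which none of the approximation results of Theorem~\ref{thm:apmap-known} is available. Pushing the approximate $^*$-homomorphism through the finite-dimensional operator systems $E_n$ supplied by the metric approximation property of $\tilde A$ does produce a $^*$-homomorphism out of $\tilde A$, but assembling its images inside one corner of $\SM(\tilde B)$, and controlling the cross terms between consecutive blocks, needs a genuinely new argument: the approximate identity of projections of $\tilde B$ is far less rigid than that of a reduced product, and it is exactly here that the general case outstrips Theorem~\ref{thm:redprodfindim}. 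Secondary difficulties are that the final Cantor--Schr\"oder--Bernstein step is not standard for nonsimple stable $\Cstar$-algebras and must be verified against the explicit form of $q$, and that the reduction above uses the metric approximation property of $\tilde A$, so dropping the nuclearity hypothesis from the conjecture would require strengthening Theorem~\ref{theoi:Borel} itself, which seems beyond current methods.
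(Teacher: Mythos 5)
This statement is labeled as a \emph{conjecture} in \S\ref{sec:conclusion}; the paper offers no proof of it, only the remark that a positive solution (for the algebras where the Coskey--Farah conjecture is already settled) would amount to showing that an isomorphism with Borel graph between the coronas is implemented by an isomorphism $A\otimes\mathcal K(H)\cong B\otimes\mathcal K(H)$.  Your outline correctly recognizes this: the forward direction, under the stated forcing axioms, reduces (once $\tilde A=A\otimes\mathcal K(H)$ has the metric approximation property so that Theorem~\ref{theoi:Borel} applies) to exactly that $\ZFC$ statement, and you rightly refrain from claiming a proof of it.  The obstruction you locate is the genuine one and the same one the paper leaves open: Theorem~\ref{thm:lifting} and Proposition~\ref{prop:FA.ApproxStruct} give an asymptotically additive lift whose block pieces are $\e_k$-$^*$-homomorphisms with $\e_k\to0$, but to promote these to honest $^*$-homomorphisms into the corners $p_I\tilde B p_I$ one needs a perturbation/approximation result in the spirit of Theorem~\ref{thm:apmap-known} for those corners, and that theorem is available only for finite-dimensional, abelian, unital AF, or Kirchberg targets.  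For general $\tilde B$ no such result is known; this is precisely why Theorem~\ref{thm:redprodfindim} is confined to $\prod M_{k(n)}/\bigoplus M_{k(n)}$ and does not extend to arbitrary stable domains.

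Two secondary points.  First, the reduction to the $\ZFC$ problem already requires $\tilde A$ (equivalently $A$) to have the metric approximation property, a hypothesis that is \emph{not} part of the conjecture as stated; so for non-MAP $A$ even the Borel-graph step is currently out of reach and one would first need to strengthen Theorem~\ref{theoi:Borel}, as you note.  Second, the Cantor--Schr\"oder--Bernstein step in your sketch is actually avoidable: if the construction produced an isomorphism $\tilde A\cong q\tilde B q$ with $1-q\in\tilde B$ (so $q$ is full), then stability of $\tilde A$ and $\tilde B$ together with Brown's stable isomorphism theorem for full, $\sigma$-unital hereditary subalgebras already yields $\tilde A\cong\tilde B$ directly, with no need for the two-sided argument.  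This does not remove the central difficulty, which lies entirely in producing that exact $^*$-homomorphism from the approximate one.
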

\bibliographystyle{plain}
\bibliography{library}

\begin{thebibliography}{10}

\bibitem{ARS}
U.~Abraham, M.~Rubin, and S.~Shelah.
\newblock On the consistency of some partition theorems for continuous
  colorings, and the structure of {$\aleph\sb 1$}-dense real order types.
\newblock {\em Ann. Pure Appl. Logic}, 29(2):123--206, 1985.

\bibitem{Arveson.Notes}
W.~Arveson.
\newblock Notes on extensions of {$C^*$}-algebras.
\newblock {\em Duke Math. J.}, 44(2):329--355, 1977.

\bibitem{Baum.Aleph1}
J.~E. Baumgartner.
\newblock All {$\aleph _{1}$}-dense sets of reals can be isomorphic.
\newblock {\em Fund. Math.}, 79(2):101--106, 1973.

\bibitem{Baum}
J.~E. Baumgartner.
\newblock Applications of the proper forcing axiom.
\newblock In {\em Handbook of set-theoretic topology}, pages 913--959.
  North-Holland, Amsterdam, 1984.

\bibitem{Berg}
I.D. Berg.
\newblock An extension of the {W}eil-von {N}eumann theorem to normal operators.
\newblock {\em Trans. Amer. Math. Soc.}, 160:365--371, 1971.

\bibitem{Blackadar.OA}
B.~Blackadar.
\newblock {\em Operator algebras}, volume 122 of {\em Encyclopaedia of
  Mathematical Sciences}.
\newblock Springer-Verlag, Berlin, 2006.
\newblock Theory of $C{\sp{*}}$-algebras and von Neumann algebras, Operator
  Algebras and Non-commutative Geometry, III.

\bibitem{BDF.ext}
L.~G. Brown, R.~G. Douglas, and P.~A. Fillmore.
\newblock Extensions of {$C\sp*$}-algebras and {$K$}-homology.
\newblock {\em Annals of Mathematics. Second Series}, 105(2):265--324, 1977.

\bibitem{Busby}
R.~C. Busby.
\newblock Double centralizers and extensions of {$C\sp{\ast} $}-algebras.
\newblock {\em Trans. Amer. Math. Soc.}, 132:79--99, 1968.

\bibitem{Christensen.NI}
E.~Christensen.
\newblock Near inclusions of {$C\sp{\ast} $}-algebras.
\newblock {\em Acta Math.}, 144(3-4):249--265, 1980.

\bibitem{CSSW}
E.~Christensen, A.~Sinclair, R.~Smith, and S.~White.
\newblock Perturbations of {$C^\ast$}-algebraic invariants.
\newblock {\em Geom. Funct. Anal.}, 20(2):368--397, 2010.

\bibitem{CSSWW}
E.~Christensen, A.~Sinclair, R.~Smith, S.~White, and W.~Winter.
\newblock Perturbations of nuclear {$C\sp *$}-algebras.
\newblock {\em Acta Math.}, 208(1):93--150, 2012.

\bibitem{Coskey-Farah}
S.~Coskey and I.~Farah.
\newblock Automorphisms of corona algebras, and group cohomology.
\newblock {\em Trans. Amer. Math. Soc.}, 366(7):3611--3630, 2014.

\bibitem{cuntz1981k}
J.~Cuntz.
\newblock K-theory for certain $\mathrm{C}^*$-algebras.
\newblock {\em Ann. of Math.}, 113(1):135--164, 1981.

\bibitem{Dixmier}
J.~Dixmier.
\newblock {\em {$\Cstar$}-algebras}.
\newblock North-Holland Publishing Co., Amsterdam-New York-Oxford, 1977.

\bibitem{Dow.Copy}
A.~Dow.
\newblock A non-trivial copy of {$\beta\mathbb{N}\setminus\mathbb{N}$}.
\newblock {\em Proc. Amer. Math. Soc.}, 142(8):2907--2913, 2014.

\bibitem{DowHart.ContImg}
A.~Dow and K.~P. Hart.
\newblock {$\omega^*$} has (almost) no continuous images.
\newblock {\em Israel J. Math.}, 109:29--39, 1999.

\bibitem{DowHart.Measure}
A.~Dow and K.~P. Hart.
\newblock The measure algebra does not always embed.
\newblock {\em Fund. Math.}, 163(2):163--176, 2000.

\bibitem{Elliott.Der2}
G.~A. Elliott.
\newblock Derivations of matroid {$C^{\ast} $}-algebras. {II}.
\newblock {\em Ann. of Math. (2)}, 100:407--422, 1974.

\bibitem{Farah.AQ}
I.~Farah.
\newblock Analytic quotients: theory of liftings for quotients over analytic
  ideals on the integers.
\newblock {\em Mem. Amer. Math. Soc.}, 148(702):xvi+177, 2000.

\bibitem{Farah.Lifts}
I.~Farah.
\newblock Liftings of homomorphisms between quotient structures and {U}lam
  stability.
\newblock In {\em Logic {C}olloquium '98 ({P}rague)}, volume~13 of {\em Lect.
  Notes Log.}, pages 173--196. Assoc. Symbol. Logic, Urbana, IL, 2000.

\bibitem{Farah.LG}
I.~Farah.
\newblock Luzin gaps.
\newblock {\em Trans. Amer. Math. Soc.}, 356(6):2197--2239, 2004.

\bibitem{Farah.C}
I.~Farah.
\newblock All automorphisms of the {C}alkin algebra are inner.
\newblock {\em Ann. of Math. (2)}, 173(2):619--661, 2011.

\bibitem{Farah.Book}
I.~Farah.
\newblock {\em Combinatorial set theory of {C}*-algebras}.
\newblock Springer, 2019.

\bibitem{Farah-Hart.CS}
I.~Farah and B.~Hart.
\newblock Countable saturation of corona algebras.
\newblock {\em C. R. Math. Rep. Acad. Sci., Canada}, 35(2):35--56, 2013.

\bibitem{bourbaki}
I.~Farah, B.~Hart, M.~Lupini, L.~Robert, A.~Tikuisis, A.~Vignati, and
  W.~Winter.
\newblock Model theory of {$\Cstar$}-algebras.
\newblock {\em to appear in Mem. Amer. Math. Soc.}, arxiv:1602.08072, 2018.

\bibitem{FHV.Calkin}
I.~{Farah}, I.~Hirshberg, and A.~{Vignati}.
\newblock {The Calkin algebra is $\aleph_1$-universal}.
\newblock {\em to appear in Israel Journal of Mathematics, arXiv:1707.01782},
  2017.

\bibitem{FarKatVac:Emb}
I.~Farah, G.~Katsimpas, and A.~Vaccaro.
\newblock Embedding {$\Cstar$}-algebras into the {C}alkin algebra.
\newblock arXiv:1810.00255, to appear in International mathematics Research
  Notices, 2018.

\bibitem{Farah-McKenney.ZD}
I.~Farah and P.~McKenney.
\newblock Homeomorphisms of \v{C}ech-stone remainders: the zero-dimensional
  case.
\newblock arXiv:1211.4765 [math.LO], November 2012.

\bibitem{Farah-Shelah.RCQ}
I.~Farah and S.~Shelah.
\newblock Rigidity of continuous quotients.
\newblock {\em J. Inst. Math. Jussieu}, 15(1):1--28, 2016.

\bibitem{Ghasemi.FDD}
S.~Ghasemi.
\newblock Isomorphisms of quotients of {FDD}-algebras.
\newblock {\em Israel J. Math.}, 209(2):825--854, 2015.

\bibitem{Ghasemi.FFV}
S.~Ghasemi.
\newblock Reduced products of metric structures: a metric {F}eferman-{V}aught
  theorem.
\newblock {\em J. Symb. Log.}, 81(3):856--875, 2016.

\bibitem{Giordano2018}
T.~Giordano, D.~Kerr, N.~C. Phillips, and A.~Toms.
\newblock {\em Crossed products of {$C^*$}-algebras, topological dynamics, and
  classification}.
\newblock Advanced Courses in Mathematics. CRM Barcelona.
  Birkh\"{a}user/Springer, Cham, 2018.

\bibitem{Jalali-Naini}
S.-A. Jalali-Naini.
\newblock {\em The monotone subsets of Cantor space, filters, and descriptive
  set theory}.
\newblock PhD thesis, Oxford, 1976.

\bibitem{Johnson.AMNM}
B.~E. Johnson.
\newblock Approximately multiplicative maps between {B}anach algebras.
\newblock {\em J. London Math. Soc. (2)}, 37(2):294--316, 1988.

\bibitem{Just}
W.~Just.
\newblock A weak version of {${\rm AT}$} from {${\rm OCA}$}.
\newblock In {\em Set theory of the continuum ({B}erkeley, {CA}, 1989)},
  volume~26 of {\em Math. Sci. Res. Inst. Publ.}, pages 281--291. Springer, New
  York, 1992.

\bibitem{Just.EU}
Winfried Just.
\newblock Repercussions of a problem of {E}rd{\H o}s and {U}lam on density
  ideals.
\newblock {\em Canad. J. Math.}, 42(5):902--914, 1990.

\bibitem{Kadison-Kastler}
R.~V. Kadison and D.~Kastler.
\newblock Perturbations of von {N}eumann algebras. {I}. {S}tability of type.
\newblock {\em Amer. J. Math.}, 94:38--54, 1972.

\bibitem{Kanamori}
A.~Kanamori.
\newblock {\em The higher infinite}.
\newblock Springer Monographs in Mathematics. Springer-Verlag, Berlin, second
  edition, 2003.
\newblock Large cardinals in set theory from their beginnings.

\bibitem{KanoveiReeken.Ulam}
V.~Kanove\u{\i} and M.~Reeken.
\newblock On {U}lam's problem concerning the stability of approximate
  homomorphisms.
\newblock {\em Tr. Mat. Inst. Steklova}, 231(Din. Sist., Avtom. i Beskon.
  Gruppy):249--283, 2000.

\bibitem{Kechris.CDST}
A.~S. Kechris.
\newblock {\em Classical descriptive set theory}, volume 156 of {\em Graduate
  Texts in Mathematics}.
\newblock Springer-Verlag, New York, 1995.

\bibitem{KirchPhil}
E.~Kirchberg and N.~C. Phillips.
\newblock Embedding of exact {$C^*$}-algebras in the {C}untz algebra {$\mathcal
  O_2$}.
\newblock {\em J. Reine Angew. Math.}, 525:17--53, 2000.

\bibitem{Kunen.2011}
K.~Kunen.
\newblock {\em Set theory}, volume~34 of {\em Studies in Logic (London)}.
\newblock College Publications, London, 2011.

\bibitem{Loring.LSPP}
T.~A. Loring.
\newblock {\em Lifting solutions to perturbing problems in {$C\sp
  *$}-algebras}, volume~8 of {\em Fields Institute Monographs}.
\newblock American Mathematical Society, Providence, RI, 1997.

\bibitem{McKenney.UHF}
P.~McKenney.
\newblock Reduced products of {UHF} algebras.
\newblock arXiv:1303.5037, March 2013.

\bibitem{MKAV.UC}
P.~McKenney and A.~Vignati.
\newblock Ulam stability for some classes of {$\Cstar$}-algebras.
\newblock {\em Proceedings of the Royal Society of Edinburgh: Section A
  Mathematics}, pages 1--15, 2018.

\bibitem{Moore.OCA}
J.T. Moore.
\newblock Some remarks on the {O}pen {C}oloring {A}xiom.
\newblock preprint: http://pi.math.cornell.edu/$\sim$justin/Ftp/OCA$\_$c2.pdf
  2019.

\bibitem{Parovicenko}
I.I. Parovi{\v{c}}enko.
\newblock On a universal bicompactum of weight {$\aleph$}.
\newblock {\em Dokl. Akad. Nauk SSSR}, 150:36--39, 1963.

\bibitem{Phillips.Exp}
N.~C.\ Phillips.
\newblock Exponential length and traces.
\newblock {\em Proc. Roy. Soc. Edinburgh Sect. A}, 125(1):13--29, 1995.

\bibitem{Phil:Class}
N.~C. Phillips.
\newblock A classification theorem for nuclear purely infinite simple
  $\mathrm{C}^*$-algebras.
\newblock {\em Doc. Math.}, 5:46--114, 2000.

\bibitem{Phillips-Weaver}
N.~C. Phillips and N.~Weaver.
\newblock The {C}alkin algebra has outer automorphisms.
\newblock {\em Duke Math. J.}, 139(1):185--202, 2007.

\bibitem{Shelah.PF}
S.~Shelah.
\newblock {\em Proper forcing}, volume 940 of {\em Lecture Notes in
  Mathematics}.
\newblock Springer-Verlag, Berlin, 1982.

\bibitem{Shelah-Steprans.PFAA}
S.~Shelah and J.~Stepr{\=a}ns.
\newblock P{FA} implies all automorphisms are trivial.
\newblock {\em Proc. Amer. Math. Soc.}, 104(4):1220--1225, 1988.

\bibitem{Sikonia}
W.~Sikonia.
\newblock The von {N}eumann converse of {W}eyl's theorem.
\newblock {\em Indiana Univ. Math. J.}, 21:121--124, 1971.

\bibitem{Semrl.USAbel}
P.~S\v{e}mrl.
\newblock Nonlinear perturbations of homomorphisms on {$C(X)$}.
\newblock {\em Quart. J. Math. Oxford Ser. (2)}, 50(197):87--109, 1999.

\bibitem{Szank:AP}
A.~Szankowski.
\newblock $\mathcal {B}({H})$ does not have the approximation property.
\newblock {\em Acta Math.}, 147(1-2):89--108, 1981.

\bibitem{Talagrand.Compacts}
M.~Talagrand.
\newblock Compacts de fonctions mesurables et filtres non mesurables.
\newblock {\em Studia Math.}, 67(1):13--43, 1980.

\bibitem{Todorcevic.PPIT}
S.~Todor{\v{c}}evi{\'c}.
\newblock {\em Partition problems in topology}, volume~84 of {\em Contemporary
  Mathematics}.
\newblock American Mathematical Society, Providence, RI, 1989.

\bibitem{Todorcevic.FA}
S.~Todor{\v{c}}evi{\'c}.
\newblock {\em Notes on forcing axioms}, volume~26 of {\em Lecture Notes
  Series. Institute for Mathematical Sciences. National University of
  Singapore}.
\newblock World Scientific Publishing Co. Pte. Ltd., Hackensack, NJ, 2014.
\newblock Edited and with a foreword by Chitat Chong, Qi Feng, Yue Yang,
  Theodore A. Slaman and W. Hugh Woodin.

\bibitem{Vaccaro}
A.~Vaccaro.
\newblock Trivial endomorphisms of the calkin algebra, 2019.

\bibitem{Velickovic.OCA1}
B.~Veli{\v{c}}kovi{\'c}.
\newblock Applications of the open coloring axiom.
\newblock In Haim Judah, Winfried Just, and Hugh Woodin, editors, {\em Set
  Theory of the Continuum}, pages 137--154, New York, NY, 1992. Springer US.

\bibitem{Velickovic.OCAA}
B.~Veli{\v{c}}kovi{\'c}.
\newblock {$\textrm{OCA}$} and automorphisms of
  {$\mathcal{P}(\omega)/\textrm{Fin}$}.
\newblock {\em Topology Appl.}, 49(1):1--13, 1993.

\bibitem{V.Nontrivial}
A.~Vignati.
\newblock Nontrivial homeomorphisms of \v {C}ech-{S}tone remainders.
\newblock {\em {M}\"unster {J}ournal of {M}athematics}, 10:189--200, 2017.

\bibitem{V.Rigidity}
A.~Vignati.
\newblock Rigidity conjectures.
\newblock arXiv:1812.01306, 2018.

\bibitem{vonNeumannROO}
J.~von Neumann.
\newblock On rings of operators. {R}eduction theory.
\newblock {\em Ann. of Math. (2)}, 50:401--485, 1949.

\bibitem{WinterAbel}
W.~Winter.
\newblock {QDQ} vs. {UCT}.
\newblock {\em Operator Algebras and Applications: The Abel Symposium 2015}.

\bibitem{Wofsey}
E.~Wofsey.
\newblock {$P(\omega)/{\rm fin}$} and projections in the {C}alkin algebra.
\newblock {\em Proc. Amer. Math. Soc.}, 136(2):719--726, 2008.

\bibitem{Woodin.Abs}
W.~H. Woodin.
\newblock Beyond {$\tilde \Sigma^2_1$} absoluteness.
\newblock In {\em Proceedings of the {I}nternational {C}ongress of
  {M}athematicians, {V}ol. {I} ({B}eijing, 2002)}, pages 515--524. Higher Ed.
  Press, Beijing, 2002.

\end{thebibliography}
\end{document}